\documentclass[a4paper,11pt,reqno]{amsart}

\usepackage[a4paper,margin=3cm]{geometry}
\usepackage{microtype} 

\usepackage{mathtools,amssymb,amsmath,amsthm}
\usepackage{thmtools}
\usepackage{xfrac}
\usepackage{stmaryrd,mathrsfs,bm,yfonts}
\usepackage{esint}
\usepackage{enumitem}

\usepackage[dvipsnames]{xcolor}
\usepackage[
  pagebackref,
  colorlinks=true,
  linkcolor=MidnightBlue,
  citecolor=Black,
  urlcolor=MidnightBlue,
  hypertexnames=false
]{hyperref}
\usepackage{doi}
\usepackage[capitalise,nameinlink]{cleveref}
\usepackage{enumitem}

\crefformat{equation}{(#2#1#3)}
\Crefformat{equation}{(#2#1#3)}
\crefrangeformat{equation}{(#3#1#4)--(#5#2#6)}
\Crefrangeformat{equation}{(#3#1#4)--(#5#2#6)}
\crefmultiformat{equation}
  {(#2#1#3)}
  { and~(#2#1#3)}
  {, (#2#1#3)}
  { and~(#2#1#3)}
\Crefmultiformat{equation}
  {(#2#1#3)}
  { and~(#2#1#3)}
  {, (#2#1#3)}
  { and~(#2#1#3)}

\usepackage{tikz,tikz-3dplot}

\usepackage{autonum}

\newtheorem{thm}{Theorem}[section]
\newtheorem*{thm*}{Theorem}
\newtheorem{lemma}[thm]{Lemma}
\newtheorem{proposition}[thm]{Proposition}
\newtheorem{corollary}[thm]{Corollary}

\theoremstyle{definition}
\newtheorem{definition}[thm]{Definition}
\newtheorem{rmk}[thm]{Remark}

\numberwithin{equation}{section}

\newcommand{\R}{\mathbb{R}}
\newcommand{\C}{\mathbb{C}}

\newcommand{\ang}[1]{\langle #1\rangle}

\newcommand{\vol}{\mathrm{Vol}}

\newcommand{\F}{\mathbf{F}}

\newcommand{\hau}{\mathcal{H}}

\newcommand{\eps}{\varepsilon}
\newcommand{\spt}{\mathrm{spt}}

\newcommand{\dist}{\mathrm{dist}}
\newcommand{\tr}{\mathrm{tr}}
\newcommand{\diam}{\mathrm{diam}}
\newcommand{\B}{\mathbf{B}}

\newcommand{\Lip}{\mathrm{Lip}}

\renewcommand{\epsilon}{\varepsilon}

\def\XXint#1#2#3{{\setbox0=\hbox{$#1{#2#3}{\int}$ }
  \vcenter{\hbox{$#2#3$ }}\kern-.6\wd0}}

\renewcommand{\div}{\textrm{div}}

\newcommand{\de}{\partial}

\makeatletter
\renewcommand{\tocsection}[3]{%
  \indentlabel{\@ifnotempty{#2}{\bfseries\ignorespaces\makebox[\@ifempty{#1}{25pt}{75pt}][l]{#1 #2\quad}}}\bfseries#3}
\renewcommand{\tocsubsection}[3]{%
  \indentlabel{\@ifnotempty{#2}{\ignorespaces\makebox[30pt][l]{#1 #2\quad}}}#3}
\newcommand\@dotsep{4}
\def\@tocline#1#2#3#4#5#6#7{\relax
  \ifnum #1>\c@tocdepth \else
    \par \addpenalty\@secpenalty\addvspace{#2}%
    \begingroup \hyphenpenalty\@M
    \@ifempty{#4}{\@tempdima\csname r@tocindent\number#1\endcsname\relax}{\@tempdima#4\relax}%
    \parindent\z@ \leftskip#3\relax \advance\leftskip\@tempdima\relax
    \rightskip\@pnumwidth plus1em \parfillskip-\@pnumwidth
    #5\leavevmode\hskip-\@tempdima{#6}\nobreak
    \leaders\hbox{$\m@th\mkern \@dotsep mu\hbox{.}\mkern \@dotsep mu$}\hfill
    \nobreak
    \hbox to\@pnumwidth{\@tocpagenum{\ifnum#1=1\bfseries\fi#7}}\par
    \nobreak
    \endgroup
  \fi}
\AtBeginDocument{%
\expandafter\renewcommand\csname r@tocindent0\endcsname{0pt}}
\def\l@section{\@tocline{1}{0pt}{1pc}{25pt}{}}
\def\l@subsection{\@tocline{2}{0pt}{2pc}{30pt}{}}
\makeatother

\begin{document}

\title[Anisotropic min-max with genus bounds]{Min-Max Construction of Anisotropic Minimal Surfaces with Genus Bound}

\author[A. De Rosa]{Antonio De Rosa}
\address{Department of Decision Sciences and BIDSA, Bocconi University, Milano, Italy}
\email{antonio.derosa@unibocconi.it}
\author[A. Halavati]{Aria Halavati}
\address{Department of Decision Sciences and BIDSA, Bocconi University, Milano, Italy}
\email{aria.halavati@unibocconi.it}
\author[L. Wang]{Ling Wang}
\address{Department of Decision Sciences and BIDSA, Bocconi University, Milano, Italy}
\email{ling.wang@unibocconi.it}

\begin{abstract}
We establish an anisotropic analogue of the celebrated theorem of Meeks--Simon--Yau: every minimizing sequence of surfaces within a fixed isotopy class converges to a smooth stable anisotropic minimal surface, with genus lower semicontinuity. This result also strengthens White's foundational existence theory for anisotropic minimal disks.
As an application, we develop an anisotropic Simon--Smith min--max theory. In every closed $3$-manifold, we construct anisotropic min--max sequences within fixed isotopy classes whose limits are stable anisotropic minimal surfaces that are smooth except possibly at a single point. If the integrand satisfies either an ellipticity bound or a $C^3$-pinching condition, we remove the singular point by proving two independent removable singularity theorems for anisotropic minimal surfaces that are smooth and stable away from finitely many points.
These removable singularity results also allow to remove the singularities arising in the anisotropic Almgren--Pitts min--max construction in $3$-manifolds of De Philippis--De Rosa and in its multiparameter variants.
\end{abstract}

\maketitle
\tableofcontents

\section{Introduction}

The study of the area functional and its critical points is one of the oldest themes in geometric analysis. Given a Riemannian manifold $N^n$ and an integer $1 \leq k \leq n-1$, one seeks $k$-dimensional surfaces that are critical points of the area functional. 
A classical approach to this question is through minimization. This became possible through the foundational work of Caccioppoli and De Giorgi \cite{DeGiorgi} on sets of finite perimeter, and the Federer--Fleming theory \cite{FedererFleming1960} of currents, which provided the necessary compactness and lower semicontinuity tools. However, direct minimization produces non-trivial minimal surfaces only if the topology of the ambient manifold is rich enough. 

A fundamentally different approach, allowing for the construction of non-minimizing critical points, was introduced by Almgren \cite{Almgren1965Varifolds,Almgren62} via min--max methods and the theory of varifolds. In codimension one, the related regularity theory was developed by Pitts \cite{Pitts}, who proved that every closed $n$-dimensional manifold with $n \leq 6$ contains a non-trivial closed minimal hypersurface. Schoen--Simon \cite{SS} later extended this framework to all dimensions, establishing the existence of minimal hypersurfaces with optimal regularity, namely with singular sets of dimension at most $n-8$. However, controlling the topology of the constructed minimal hypersurfaces seems out of reach in this framework.

In order to obtain a genus control, a natural refinement is to seek critical points within a fixed \emph{isotopy class}. This introduces significant difficulties in the regularity theory, as the class of admissible competitors is significantly smaller. Also, there are examples of sequences of surfaces along which the genus is not lower semicontinuous, hence the need of a fine control of the limit.
A major breakthrough in this direction was achieved in $3$-dimensional ambient manifolds by Meeks--Simon--Yau \cite{MSY}, who showed that minimizing sequences within a fixed isotopy class converge in the sense of varifolds to smooth limits, and enjoy lower semicontinuity of genus. Their approach relies on the regularity theory developed by Almgren--Simon \cite{AS}, and has had important topological applications.

Building upon the regularity framework of \cite{MSY,AS}, Simon--Smith \cite{smith} outlined a min--max theory for surfaces in $3$-manifolds constrained to isotopy classes. This program was completed by Colding--De Lellis \cite{ColdingDeLellisMinMax} and De Lellis--Pellandini \cite{Delellis-Pellandini}, and later refined by Ketover \cite{K}, who established sharp genus bounds for Heegaard sweepouts.

\medskip

The aim of this paper is to build the anisotropic counterpart of the Simon--Smith min-max theory \cite{ColdingDeLellisMinMax,Delellis-Pellandini,K,smith}, by developing an anisotropic analogue of the existence and regularity theorem of Meeks--Simon--Yau \cite{MSY}. More precisely, we fix a closed ambient $3$-manifold $N$ endowed with an anisotropic surface energy of the form
\begin{align}
    \F(\Sigma) = \int_{\Sigma} F(x,\nu_\Sigma)\,\mathrm{d}\hau^2,
\end{align}
where $F$ is an even elliptic integrand defined on the unit tangent bundle of $N$, and $\nu_\Sigma$ denotes the unit normal to $\Sigma$. Our goal is to construct closed smooth anisotropic minimal surfaces with controlled topology.

Extending the existence and regularity theory of minimal surfaces to critical points of such anisotropic energies has been a central topic in geometric measure theory since their introduction by Almgren \cite{Almgren68}. We refer to the survey \cite{DeR-survey} for an overview on the anisotropic minimal surface theory.  
For minimizers, the theory in codimension one largely parallels that of the area functional, with the main difference that the optimal regularity of anisotropic energy minimizers is a singular set of zero $\hau^{n-3}$-measure, as proved by Almgren--Schoen--Simon \cite{SSA}. This is almost sharp in view of the singular minimizing cone in $\R^4$ constructed by Morgan \cite{Morgan1990}, see also the recent work by Mooney--Yang \cite{MooneyYang2024}. In the last decade there has been a large body of work aimed at expanding the existence and regularity theory of anisotropic minimal surfaces \cite{CL,DRK,DKS,DRR,DRT,SantilliKolasinski2025}. 

Recently, De Philippis--De Rosa \cite{DePhilippisDeRosa} established the min-max construction of anisotropic minimal surfaces that are smooth away from at most one point, via the Almgren--Pitts scheme. In subsequent work with Li \cite{DDL}, they could remove the singular point and they extended the construction to every dimension, yielding anisotropic minimal closed hypersurfaces smooth outside an $\hau^{n-3}$-negligible set. However, as their isotropic counterparts \cite{Pitts,SS}, these results do not provide a control on the topology of the resulting surfaces. Moreover, their strategy for removing the singular point relies on a delicate construction of nested sweep-outs. Such a construction does not appear to extend naturally to multiparameter min--max schemes, thereby obstructing the study of higher-index minimal surfaces. It also presents a significant obstacle to establishing an anisotropic analogue of the multiplicity one theorem proved by Zhou \cite{Zhou2020} and to the related program of proving Yau's conjecture \cite{YC} for generic metrics envisioned by Marques--Neves in their work on the Morse Index Conjecture \cite{MarquesNeves2016MorseIndexMultiplicity,MarquesNeves2018MorseIndex}. We remark that De Rosa--Pigati have also recently initiated a program to construct anisotropic min-max minimal hypersurfaces via a suitable anisotropic Allen-Cahn approximation \cite{DeRosaPigati2025}, constructing (possibly singular) anisotropic min-max hypersurfaces, yielding an anisotropic analogue of the classical result of Hutchinson--Tonegawa \cite{HutchinsonTonegawa2000}.

Our goal is to develop the variational theory for anisotropic minimal surfaces in fixed isotopy classes in ambient $3$-manifolds. Progress in this direction has been limited. To the best of our knowledge, the only results in this direction are the existence of at least one minimizing disk with prescribed boundary in a convex set proved by White \cite{W-1}, and the compactness for smooth anisotropic stationary surfaces under genus bounds \cite{White}.

A major obstacle to extending the classical isotropic theory is the lack of a monotonicity formula in the anisotropic setting. As shown by Allard \cite{Allard}, monotonicity essentially characterizes the area functional. Without it, key tools break down: tangent varifolds are not necessarily cones, density bounds are generally unavailable, and the structure theory used in \cite{MSY}, which relies heavily on the analysis of tangent cones and their spherical links as geodesic nets \cite{Allard-geodesic}, no longer applies.

\medskip

In this paper, we overcome these obstructions by adopting a completely different approach. Specifically, we combine the construction of White \cite{W-1} with the strategy of Hass and Scott \cite{HS} to show that limit varifolds of minimizing sequences within a fixed isotopy class admit replacements in annuli, in the sense of Pitts \cite{Pitts}. By subsequently implementing Pitts’ regularity scheme, we establish the full anisotropic counterparts of the results of Meeks–Simon–Yau \cite{MSY} (\cref{thm:Meeks-Simon-Yau-without-boundary}) and Almgren–Simon \cite{AS} (\cref{thm:white-improvement}).
We also note that White \cite{W-1} proved that, for a fixed boundary on the 2-sphere, there exists at least one minimizing sequence of smooth disks converging to a smooth disk. Using our regularity scheme, we strengthen this result by showing that the same conclusion holds for the limit of any minimizing sequence of disks: namely, every such sequence converges to a smooth disk.
We summarize the three contributions outlined above in the following informal statement, and refer the reader to \Cref{sec:mainresults} for the precise formulations.

\begin{thm*}[\cref{thm:white-improvement}, \cref{thm:Meeks-Simon-Yau-without-boundary} and \cref{thm:Meeks-Simon-Yau-with-boundary}]
    In any Riemannian $3$-manifold endowed with an even elliptic integrand $F$, every $\mathbf{F}$-minimizing sequence of surfaces $\{\Sigma_i\}$ in a fixed isotopy class (with or without prescribed boundary) converges, in the varifold sense, to a varifold induced by a smooth embedded surface $\Sigma$ (possibly with multiplicity). Furthermore, the genus is lower semicontinuous under this convergence.
\end{thm*}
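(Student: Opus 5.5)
The plan follows the strategy announced in the introduction: prove that the limit varifold of any minimizing sequence in an isotopy class has \emph{replacements in small annuli} in the sense of Pitts, and then run Pitts' regularity scheme, which needs no monotonicity. Let $\Sigma_i$ be $\F$-minimizing in the isotopy class $\mathfrak{I}$, and let $V$ be a varifold limit of $\Sigma_i$ after passing to a subsequence. Comparing with ambient isotopies shows that $V$ is $\F$-stationary. Since $F$ is elliptic and $N$ is compact, $\F$ is comparable to area, which gives uniform area bounds and two-sided density bounds. These density bounds come from a comparison/isoperimetric argument rather than from monotonicity.

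\textbf{Local structure.} For every $x$ there is $r_x>0$ such that in every annulus $A\subset B_{r_x}(x)$ the surfaces $\Sigma_i$ are almost minimizing in $A$ with respect to isotopies supported in $A$; otherwise one could lower the energy in $\mathfrak I$. Following Meeks--Simon--Yau and Hass--Scott, I would carry out $\gamma$-reductions, i.e.\ surgeries along small compressing disks. These decrease the energy by an arbitrarily small amount, lower the genus, and reduce $\Sigma_i\cap B$ to a union of disks and spheres in suitable balls $B$, with the disks having boundaries on $\partial B$ (for generic radii and coarea). The discarded components carry vanishing $\F$-mass in the limit, so the reduced sequence still converges to $V$ inside the ball.

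\textbf{Replacement via White.} On each disk component I would apply White's construction for $\F$-minimizing disks with prescribed boundary in a convex ball, upgraded to the statement that every minimizing sequence of disks converges to a smooth embedded disk; this is the disk case of the theorem. Hass--Scott cut-and-paste arguments keep the minimizers embedded and disjoint and make the replacement realizable by isotopy up to small error. The result is a varifold $V^*$ that coincides with $V$ outside the annulus, is a smooth stable $\F$-minimal surface inside it, and satisfies $\F(V^*)=\F(V)$. From here Pitts' scheme applies. Taking successive replacements on overlapping annuli and using the compactness of stable anisotropic surfaces (curvature estimates of White / Schoen--Simon type for elliptic integrands, in place of monotonicity) makes them glue smoothly. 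The limit then shows that $V$ is smooth away from $x$, and a removable-singularity argument with stable blow-ups shows it is smooth at $x$ as well, possibly with integer multiplicity.

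\textbf{Genus bound and main obstacle.} The genus of the reduced surfaces is at most that of $\Sigma_i$. The limit surface is obtained as a smooth graphical limit, with multiplicity, of the reduced pieces away from finitely many points. A handle count as in MSY then gives $\sum_j \mathrm{genus}(\Sigma^j)\le\liminf \mathrm{genus}(\Sigma_i)$, with the standard weighting for one-sided components. I expect the main obstacle to be the replacement step, and in particular showing that the disk-replacement varifold is a genuine limit of competitors in $\mathfrak I$ whose energy cannot drop. This requires controlling the isotopy class after the cut-and-paste operations and ruling out energy concentration on the thin necks removed by $\gamma$-reduction, all without monotonicity. To handle this I would first use the density bounds from ellipticity together with a coarea selection of good radii.
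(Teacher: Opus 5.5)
Your plan has two genuine gaps, and both sit exactly where the anisotropic setting departs from the isotropic one.

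\textbf{The removable-singularity step.} After obtaining smoothness of $V$ in $B_{r_x}(x)\setminus\{x\}$, you invoke ``a removable-singularity argument with stable blow-ups''. For a general even elliptic $F$ no such argument is available. Without monotonicity, blow-ups at $x$ need not be cones, the density at $x$ may be infinite because sheets can accumulate, and there is no Bernstein theorem for stable $\F$-minimal surfaces. The paper can remove an isolated singularity only under the extra hypotheses \eqref{assumption1} or \eqref{assumption2} (\cref{sec:removable-singularity-bernstein,sec:remove-singularity-separation}), so this step cannot be used for the present statement.

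It is also unnecessary. A minimizing sequence in an isotopy class is $\epsilon_i$-almost minimizing for isotopies supported in \emph{any} open set; unlike in min-max, it is not confined to centre-dependent annuli. Hence the radius at which replacements exist is bounded below uniformly, depending only on the distance to the boundary. Since $\spt\|V\|$ has no isolated points, every $p\in\spt\|V\|$ lies in a punctured ball $B_\rho(q)\setminus\{q\}$ around some nearby $q\ne p$, where smoothness is already known. This overlapping-punctured-balls argument is how the paper concludes (Step~4 in the proof of \cref{thm:white-improvement}).

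\textbf{The replacement step is circular.} You build the replacement from the statement that \emph{every} $\F$-minimizing sequence of disks converges to a smooth disk. That statement is \cref{thm:white-improvement} itself, the new regularity input that Pitts' scheme is supposed to establish. If it were available, the Meeks--Simon--Yau reduction to stacked almost-minimizing disks would already give the whole theorem, with no annular replacements at all.

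What may legitimately be used is only White's \emph{existence} of exact $\F$-minimizing disks in small convex sets, together with compactness for sequences of \emph{exact} minimizers (\cref{lemma:disk-compactness}). With these, the paper covers the annulus by a Whitney family of convex sets. In each set it swaps $M_k$ for exact minimizers spanning the outermost intersection curves. The removed pieces of $M_k$ that could reach the inner ball are controlled by the tentacles estimate (\cref{lemma:tentacles-estimate}), which shows they have area $O(\epsilon_k)$ in $B_{\rho/2}$. This is the concrete resolution of the ``main obstacle'' you flag, and it does not come from density bounds; an upper density bound is not available a priori without monotonicity.

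\textbf{Smooth gluing.} Gluing across overlaps and across the intermediate sphere does not follow from curvature estimates for stable surfaces. Those estimates control each piece separately but cannot exclude a fold or a corner along the interface. The paper uses minimality instead: the Hass--Scott cut-and-paste argument excludes folding, and a disk with a bend cannot be $\F$-minimizing.

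Finally, the prescribed-boundary case is not addressed. It needs $\F$-convexity of $\partial A$ and the boundary density lemma to ensure the boundary is not lost in the limit.
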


Using the theorem above, we construct replacements for varifolds arising from the min–max procedure within a fixed isotopy class. A central difficulty in the anisotropic min–max regularity theory is to ensure that consecutive replacements glue smoothly. This issue was addressed by De Philippis–De Rosa \cite{DePhilippisDeRosa} and De Philippis–De Rosa–Li \cite{DDL} through the construction of anisotropic constant (non-zero) mean curvature surfaces, which allows one to focus on the smooth gluing at points of multiplicity one only.
However, this strategy is not applicable in our setting, as minimizers of the prescribed mean curvature functional over isotopy classes enjoy only $C^{1,1}$ regularity, as shown by Sarnataro--Stryker \cite{SarnataroStryker2025OptimalRegularity}. We overcome this difficulty by proving smooth gluing at points of arbitrary multiplicity via an induction argument on the multiplicity. As a consequence, we prove in \cref{thm:aniso-simon-smith-one-singularity} that there exists a min--max sequence whose associated varifolds converge to a smooth limit away from at most one point.
We state below an informal version of this result and refer the reader to \cref{thm:aniso-simon-smith-one-singularity} for the precise statement.

\begin{thm*}[\cref{thm:aniso-simon-smith-one-singularity}]
    In any Riemannian $3$-manifold endowed with an even elliptic integrand $F$, for any saturated set of generalized families of surfaces $\Lambda$, there is a min–max sequence obtained from $\Lambda$ and converging in the sense of varifolds to a smooth embedded anisotropic minimal surface, away from at most one point.
\end{thm*}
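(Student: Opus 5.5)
The plan is to follow the Colding--De Lellis scheme for the Simon--Smith theory, with the anisotropic Meeks--Simon--Yau theorem (\cref{thm:Meeks-Simon-Yau-without-boundary}) and its Almgren--Simon counterpart (\cref{thm:white-improvement}) replacing their isotropic versions. The argument has four steps.

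\textbf{Step 1: pull-tight.} I adapt the pull-tight procedure to $\F$. Using the anisotropic first variation, I build a continuous map from non-stationary varifolds to isotopies that strictly decrease $\F$ by an amount depending on the distance to the set of $\F$-stationary varifolds. Applying this to a minimizing sequence of families in $\Lambda$ gives a new minimizing sequence in $\Lambda$, since $\Lambda$ is saturated. In this new sequence, every min--max sequence accumulates only on $\F$-stationary varifolds.

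\textbf{Step 2: almost minimizing property.} I run the Almgren--Pitts/Colding--De Lellis combinatorial argument to find a min--max sequence $\Sigma_j$ that is $(\epsilon_j,\delta_j)$-almost minimizing in small annuli. The output is: for every point $x$, there is a radius $r(x)>0$ such that $\Sigma_j$ is almost minimizing, with respect to isotopies, in every annulus $A(x,s,r)$ with $s<r\le r(x)$. The argument is purely combinatorial on the parameter space, so it transfers verbatim to $\F$.

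\textbf{Step 3: replacements.} In such an annulus I take an $\F$-minimizing sequence in the isotopy class of $\Sigma_j$, restricted to isotopies supported in the annulus and not increasing $\F$ by more than $\epsilon_j$. By \cref{thm:Meeks-Simon-Yau-without-boundary}, or its local version in \cref{thm:white-improvement}, this sequence converges to a varifold $V_j$. Inside the annulus, $V_j$ is a smooth stable anisotropic minimal surface, possibly with multiplicity. Letting $j\to\infty$ and using White's compactness for stable anisotropic surfaces with bounded genus, I obtain a replacement $V'$ for the limit $V$. The replacement is stationary, agrees with $V$ outside the annulus, has the same $\F$-mass, and is smooth and stable in the annulus. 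I then iterate to get second replacements in overlapping annuli.

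\textbf{Step 4: regularity.} I follow Pitts' regularity scheme without monotonicity. Stable anisotropic curvature estimates, which come from the genus bound and stability, give smooth convergence of the replacements. The remaining task is to show that consecutive replacements glue: on the overlap of two annuli they are stable minimal and coincide on an open set, and I want to conclude they coincide across it. At points of multiplicity one this follows from unique continuation for the elliptic anisotropic equation. For higher multiplicity I argue by induction on the multiplicity. Near a sheet of multiplicity $k$, the surfaces are graphs over a common smooth surface. I peel off the top sheet using the maximum principle for the anisotropic equation, so that the remaining part has multiplicity $k-1$ and the inductive hypothesis applies.

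Once gluing is established, $V$ is smooth in every punctured ball $B(x,r(x))\setminus\{x\}$. A covering argument in which the radii $r(x)$ can be chosen uniformly except at at most one point, as in Pitts, then gives smoothness of $V$ away from a single point.

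\textbf{Main obstacle.} I expect the main obstacle to be the gluing at arbitrary multiplicity in Step 4. Without monotonicity, tangent cones are not available to control densities. So I first have to show that the sheets of the replacements converge as ordered graphs with integer multiplicities; for this I would use the varifold convergence from \cref{thm:Meeks-Simon-Yau-without-boundary} together with the curvature estimates. Only then can the strong maximum principle applied sheet by sheet carry the induction.
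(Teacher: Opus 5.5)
\textbf{Verdict: there is a genuine gap, in the gluing part of Step 4.} Your Steps 1--3 and the final covering follow the paper's route: pull-tight, the almost-minimizing trichotomy of De Philippis--De Rosa, and replacements via the anisotropic Meeks--Simon--Yau theory. The gap sits exactly where the paper does its new work (\cref{lem:aniso-smooth-gluing}).

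\textbf{Why unique continuation cannot do the gluing.} Gluing is not a unique-continuation problem. Let $V''$ be the replacement of $V'$ in $\operatorname{An}(x;s,t)$. Outside $B_t(x)$ it equals $m\,\mathbf v(\Sigma')$; inside it is a new stable surface $\Sigma''$. The two pieces do \emph{not} coincide on any open set a priori. $\Sigma''$ is only known to be smooth in the open annulus, and its closure may end on $\gamma=\Sigma'\cap\partial B_t(x)$ in any configuration compatible with stationarity. Unique continuation gives $\Sigma''=\Sigma'$ on $\operatorname{An}(x;\rho,t)$ only \emph{after} you know that $V''$ is regular at points of $\gamma$, and that regularity is the whole content of the gluing.
\begin{itemize}
\item Even for $m=1$, stationarity alone does not rule out bad configurations. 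One sheet outside and two inner sheets ending on $\gamma$ at suitable angles (a $Y$-junction) have balanced fluxes, and so do inner sheets folding back.
\item In the isotropic theory this is excluded through the conical structure of tangent varifolds at $y\in\gamma$, which rests on monotonicity. Anisotropically, tangent varifolds need not be cones, so that route is closed.
\item Your fallback, ``varifold convergence plus curvature estimates,'' does not give ordered graphs at the interface. The stable curvature estimates for $\Sigma''$ degenerate like $\operatorname{dist}(\cdot,\partial \operatorname{An}(x;s,t))^{-1}$, that is, precisely at $\partial B_t(x)$. Without monotonicity there is also no density bound limiting the number of inner sheets near $y$.
\end{itemize}

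\textbf{Why the induction as stated does not work.} Near $y$ the outer piece is a single smooth surface $\Sigma'$ counted $m$ times. There is no ``top sheet'' to peel off, and the maximum principle cannot separate coincident sheets. The argument also presupposes the ordered-graph structure at the interface, which is what must be proved. Induction on multiplicity is the right overarching idea; the mechanism that makes it work is missing.

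\textbf{What the paper does instead, following De Philippis--De Rosa--Li.}
\begin{itemize}
\item Pick $z\in\Sigma'$ near $y$ but outside $\operatorname{An}(x;s,t)$.
\item Take a \emph{third} replacement $V'''$ in a thin convex annulus $C$. It is bounded by two spherical caps meeting at angle $3\varepsilon$ along a circle through $z$, tangent to $\Sigma'$ there.
\item Every tangent varifold of $V'''$ at $z$ is then a multiplicity-$m$ half-plane on one side of the spine line. On the other side it is a union of sectors confined to a wedge of opening $3\varepsilon$.
\item Balancing boundary fluxes along the spine gives $m b_0=\sum_j q_j b_j$. Each $b_j$ is close to $b_0$, hence $\sum_j q_j\le m$.
\item The induction on $m$ enters to show that $V'''$ has a single branch in $C$ near $z$. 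If it split, every branch would have multiplicity at most $m-1$. A fourth replacement in $\operatorname{An}(z;\rho/2,\rho)$, together with the inductive hypothesis, extends each branch smoothly up to $\gamma$. Two distinct branches touching along $\gamma$ then contradict the strong maximum principle.
\item The constancy theorem forces the single branch to carry multiplicity $m$, so the tangent varifold at $z$ is $m\,\mathbf v(T_z\Sigma')$.
\item Graphicality gives smooth gluing at $z$, and the information is propagated back to $y$.
\end{itemize}
Without an argument of this kind, your Step 4 does not close.
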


In the area case, the removability of isolated singularities follows from the monotonicity formula together with a standard capacity argument based on stability. In contrast, for surfaces that are stationary with respect to an anisotropic integrand, an isolated singularity may carry infinite multiplicity and have positive $2$-capacity, so that this classical approach no longer applies. What can essentially go wrong is the accumulation of sheets as we zoom in around a singular point, which will blow up the density estimate. 

We develop two independent approaches to remove the last singularity arising in \cref{thm:aniso-simon-smith-one-singularity}, corresponding to two different assumptions on the integrand $F$. In the first method, we follow the lamination approach developed in \cite{MPR-removable-singularity,FcS}. The starting point is a Bernstein-type theorem for complete $\F$-stationary and $\F$-stable surface in $\R^3\setminus\{p\}$, under the ellipticity bound
\begin{equation}\label{assumption1}
\frac{\max_{\nu}\lambda_{\max} \Psi_F(p,\nu)}
{\min_{\nu}\lambda_{\min} \Psi_F(p,\nu)}<8.
\end{equation}
Here \(\Psi_F\) is the ellipticity tensor defined in
\eqref{eq:def-Psi_F}.
Under \eqref{assumption1}, the Bernstein-type conclusion is given in
\cref{thm:anisotropic-bernstein}.  The proof combines the stability inequality with conformal methods to show that such a surface has vanishing Gaussian curvature.
Given an isolated singularity, we rescale the surface around the singular point and prove via curvature estimates that any sequence of blow-ups subconverges to an $\F$-minimal lamination of $\R^3\setminus\{0\}$. By the Bernstein theorem, every leaf of the limiting lamination must be flat, hence we deduce quantitative curvature decay near the singularity. The asymptotic flatness of all blow-up laminations is then used to control the topology of small level sets of the distance function and to obtain a uniform density bound at the singular point. Once finite density is established, a standard capacity argument extends the stability inequality across the singularity, implying that the singularity is removable.

The second removability criterion we introduce is based on a separation argument, inspired by Bamler--Kleiner \cite{BK}. We introduce a separation function, which measures the distance between nearby graphical sheets of the surface on small annuli around the singularity. Stability gives curvature control at the scale of the distance to the origin, so locally the surface decomposes into graphical sheets. If the density were large, many such sheets would have to accumulate. When two sheets are sufficiently close, their separation satisfies an approximate anisotropic Jacobi equation. The key assumption is that the extrinsic distance $|x|$ is a sub-solution for the same anisotropic Jacobi operator $\mathcal{J}_F$, in the sense that
\begin{equation}\label{assumption2}
\mathcal J_F |x| \geq -C.
\end{equation}
Comparing the separation with $|x|$, and applying a De Giorgi-type iteration, we prove that the sublevel sets of the separation decay super-exponentially. This strong decay rules out excessive accumulation of sheets and implies that the density can blow up at most logarithmically near the singularity. A log-log cutoff then shows that the singularity has zero capacity, so stability extends across it and  yields removability.
Assumption \eqref{assumption2} is satisfied, for instance, under a $C^3$-pinching condition on $F$ (see \cref{def:C3-pinching}).

In fact, we establish a more general result, independent of the min–max scheme:

\begin{thm*}[\cref{thm:removable-singularity}]
Assume $F$ satisfies either \eqref{assumption1} or \eqref{assumption2}.
    Let $\Sigma$ be an $\F$-stationary surface which is smooth and $\F$-stable away from a point $p\in \overline{\Sigma}\setminus\Sigma$. Then $p$ is a removable singularity.
\end{thm*}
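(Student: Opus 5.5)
The plan is to reduce removability to a finite density bound at $p$, and then use stability to finish. Once $\Theta(\|\Sigma\|,p)<\infty$, say $\|\Sigma\|(B_r(p))\le Cr^2$, the point $p$ has zero $2$-capacity relative to $\Sigma$: a logarithmic cutoff $\eta_\epsilon$ gives $\int_\Sigma|\nabla\eta_\epsilon|^2\to0$. This extends the $\F$-stationarity and the $\F$-stability inequality across $p$. Stability then yields curvature estimates of the form $|A|(x)\le C/|x-p|$ uniformly. Every blow-up at $p$ is therefore a smooth $\F$-stable, $\F$-stationary surface in $\R^3\setminus\{0\}$ with finite density, and it extends stably across $0$. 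By the anisotropic Bernstein theorem (\cref{thm:anisotropic-bernstein}) under \eqref{assumption1}, or by a direct stability/flatness argument in the second case, it is a finite union of planes through $0$ with multiplicity. Embeddedness forces all these planes to coincide. An Allard-type (anisotropic $\varepsilon$-regularity) argument at small scale then shows that $\Sigma$ is a finite union of smooth graphs over a common plane near $p$. The maximum principle for the elliptic $\F$-mean curvature equation makes each sheet smooth across $p$, which is the desired removability.

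\textbf{Case \eqref{assumption1}.} I would first obtain the scale-invariant curvature bound $|A|(x)|x-p|\le C$ from stability, away from $p$, through anisotropic Schoen-type estimates. I would then rescale at scales $r_k\to0$ and extract a limit $\F_p$-minimal lamination $\mathcal L$ of $\R^3\setminus\{0\}$, where $\F_p$ is the frozen integrand. The leaves of $\mathcal L$ are stable, because limit leaves of stable surfaces are stable. By \cref{thm:anisotropic-bernstein} every leaf is flat. This requires proving that the leaves are complete in $\R^3\setminus\{0\}$, which follows from the curvature bound. Since every leaf is flat, the curvature decays: $|A|(x)|x-p|\to0$. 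Consequently, for small $r$, $\Sigma\cap\partial B_r(p)$ is a union of nearly great circles, transverse to the sphere, and the number of components is locally constant in $r$ via Morse theory for $|x-p|$ restricted to $\Sigma$. A fixed radius $r_0$ gives a finite count $m$. Hence $\|\Sigma\|(B_r)\le C m r^2$, which yields the density bound.

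\textbf{Case \eqref{assumption2}.} Stability again gives $|A||x|\le C$. On each dyadic annulus $A_k=B_{2^{-k}}\setminus B_{2^{-k-1}}$, $\Sigma$ splits into graphical sheets over a plane. I would define the separation function $s(x)=$ the distance from $x$ to the nearest other sheet, normalized by $|x|$. Where $s$ is small, the difference of two adjacent sheets solves a linear elliptic equation, $\mathcal J_F u=O(|u|^2/|x|^3)$. Using the sub-solution property $\mathcal J_F|x|\ge-C$ and a comparison argument, I would show that the sublevel sets $\{s<\tau\}$ are controlled. A De Giorgi iteration over the annuli should then give super-exponential decay of $\|\Sigma\|(\{s<e^{-j}\}\cap A_k)$. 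This decay limits the number of sheets on $A_k$ to $N_k\le Ck$. So $\|\Sigma\|(B_r)\le Cr^2\log(1/r)$. That is not finite density, but it still has zero capacity via a log-log cutoff: taking $\eta=\phi(\log\log(1/|x|))$ gives $\int|\nabla\eta|^2\lesssim\sum_k N_k/(k\log k)^2\to0$ on the relevant range. Stability therefore extends across $p$. Once stability holds across $p$, the standard anisotropic stability argument (test functions $|x|^{-\alpha}$-type, or iterating the sheet count with the extended inequality) improves the bound to finite density, and the first paragraph applies.

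The main obstacle I expect is the De Giorgi iteration for the separation in the second case. Making the approximate Jacobi equation precise uniformly across sheets, and exploiting \eqref{assumption2} to prevent sheets from accumulating faster than logarithmically, is the delicate quantitative heart of the argument. In the first case, the subtle step is the topological control of $\Sigma\cap\partial B_r$ from asymptotic flatness of all blow-up laminations, where the leaves could a priori spiral or accumulate.
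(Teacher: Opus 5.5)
The density and capacity parts of your proposal follow the paper. In case \eqref{assumption1} you use the curvature bound $|A||x-p|\le C$, lamination blow-ups, the Bernstein theorem, curvature decay, and a Morse count for $|x-p|^2$. In case \eqref{assumption2} you use the separation function, the approximate Jacobi inequality together with \eqref{assumption2}, super-exponential decay of sublevel sets, the bound $\hau^2(\Sigma\cap B_r)\lesssim r^2\log(1/r)$, and the $\log\log$ cutoff. One small correction in case \eqref{assumption1}: the number of components of $\Sigma\cap\partial B_r$ is not locally constant. At small radii the critical points of $|x-p|^2$ are nondegenerate minima, where circles are born or die. The count is non-increasing as $r\downarrow 0$, which is all you need. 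The circles also need not be near great circles, since leaves of the blow-up lamination can be planes missing the origin.

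The genuine gap is how you finish case \eqref{assumption2}. You send it back through your first paragraph, and that paragraph needs an actual finite density bound. Blow-ups must have quadratic area growth both for the log-cutoff flatness argument and for being a finite union of planes. With only $\hau^2(\Sigma\cap \mathrm{An}_r)\lesssim r^2\log(1/r)$, the rescaled surfaces may carry mass of order $\log(1/r_k)\to\infty$ in the unit annulus.

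Your upgrade to finite density (``$|x|^{-\alpha}$-type test functions, or iterating the sheet count'') is not an argument. Test functions like $|x|^{-\alpha}\eta$ have infinite Dirichlet energy on $\Sigma$ even under a finite density bound. More fundamentally, without monotonicity there is no standard route from extended stability back to a density bound; that is exactly the obstruction this part of the paper works around.

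The missing idea is that no upgrade is needed. Once the stability inequality holds across $p$, use it directly at $p$. The logarithmic bound still gives finite area in $B_{r_0}(p)$, because $\sum_k k\,4^{-k}<\infty$. You can then apply the curvature estimate for $\F$-stable surfaces on a ball centred at $p$ to get $|A|$ bounded near $p$; this is what the paper does with \cite[Theorem~2.8]{DePhilippisDeRosa}. Alternatively, test the extended inequality with a cutoff equal to $1$ near $p$, absorbing the lower-order $x$-dependent terms using the finite area. This gives $\int_{\Sigma\cap B_{r_0}(p)}|A|^2<\infty$, and White's removable singularity theorem for finite total curvature \cite{White} applies.

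The same shortcut should replace your blow-up/Allard ending in case \eqref{assumption1}. As written, that step is also not justified, for two reasons. First, anisotropic Allard regularity requires density close to one, whereas the tangent plane at $p$ may carry multiplicity $m\ge 2$. Second, the decay $|A||x-p|\to 0$ by itself does not stop the approximating planes from rotating slowly, so you do not yet have a common plane. Repairing it would require first splitting $\Sigma$ near $p$ into density-one components, each stationary across $p$.
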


As a direct corollary, we are able to remove the singularities arising in our min–max construction and thereby obtain one of our main results, \cref{thm:aniso-simon-smith-ketover}. Moreover, we can use \cref{thm:removable-singularity} to show that multiparameter min–max procedures also produce smooth minimal surfaces (see \cref{rmk:aniso-almgren-pitts}). In particular, our approach shows that the anisotropic minimal surfaces constructed in $3$-manifolds by De Philippis--De Rosa in \cite{DePhilippisDeRosa} are smooth as well, when $F$ satisfies either \eqref{assumption1} or \eqref{assumption2}.
While the singular points in \cite{DePhilippisDeRosa} were already removed in \cite{DDL}, the argument therein relies on a delicate construction of nested sweepouts, which does not  extend to multiparameter min–max schemes. Our results therefore improve the current state of the art in the anisotropic min–max theory \`a la Almgren–Pitts \cite{DePhilippisDeRosa,DDL}, allowing for the construction of higher-index minimal surfaces. This also opens the way to studying the anisotropic analogue of the proof of the multiplicity one conjecture in \cite{Zhou2020}, as well as its applications to Yau’s conjecture \cite{YC} for generic metrics via the Morse index conjecture \cite{MarquesNeves2016MorseIndexMultiplicity,MarquesNeves2018MorseIndex}, when $F$ satisfies either \eqref{assumption1} or \eqref{assumption2}.

\newtheorem*{ack*}{Acknowledgement}
\begin{ack*}
    The authors were funded by the European Union: the European Research Council (ERC), through StG ``ANGEVA'', project number: 101076411. Views and opinions expressed are however those of the authors only and do not necessarily reflect those of the European Union or the European Research Council. Neither the European Union nor the granting authority can be held responsible for them.
\end{ack*}

\subsection{Organization and sketch of proofs}
Here we outline the structure of the paper.

In \cref{sec:setup}, we set up the problem and define the notation. For readers who wish to use the results of our paper as a black box, we refer them to \cref{sec:mainresults}, where the main results are stated.

In \cref{sec:aniso-MSY}, we prove the anisotropic counterpart of Meeks-Simon-Yau's celebrated compactness theorem \cite[Theorem 1]{MSY}, and we also improve White's existence result in \cite[Theorem 3.4]{W-1}. The main strategy can be sketched as follows:
\begin{enumerate}
    \item Given a sequence of isotopic $\F$-minimizing surfaces, we use \cref{prop:replacement-thm-AS-aniso} and finitely many neck-pinch surgeries to reduce the problem to proving regularity of a sequence of $\F$-minimizing stacked disks. Roughly speaking, we only need to show that the limit of {\emph{any}} sequence of $\F$-minimizing disks is smooth, which in turn is an improvement of the results of White \cite{W-1} on the existence of \emph{one} sequence of $\F$-minimizing disks with this property.

    \item Unlike Almgren--Simon \cite{AS}, we cannot study blow-up limits. However, in \cref{prop:replacements-in-annuli}, we show that the varifold limit of $\F$-minimizing disks admits replacements in all annuli smaller than some uniform constant. To construct these replacements, we cover the annulus with a Whitney covering, with respect to the inner boundary, by balls with convex boundary. On each such ball, we replace our sequence with an $\F$-minimizing disk, whose existence was proved in \cite{W-1}. We then use the gluing strategy of Hass--Scott \cite{HS} to show that, on the overlap regions, these replacements glue smoothly together in the limit. We also show, by a simple estimate in \cref{lemma:tentacles-estimate}, that such replacements can be performed without affecting the isotopy or the mass of the limit varifold in the inner ball.

    \item The smooth gluing here is simpler than in the min-max case and follows from the same strategy of Hass--Scott, since the minimizing radius does not degenerate in the limit. Since this radius is uniform around all points, we can cover the entire limit with annuli and conclude smoothness.
\end{enumerate}

In \cref{sec:min-max}, we construct min-max sequences and establish the corresponding regularity theory. Most of the Simon--Smith min-max theory follows identically as in \cite{ColdingDeLellisMinMax}. The main point of divergence is the smooth gluing of consecutive replacements. Unlike De Philippis--De Rosa--Li \cite{DePhilippisDeRosa,DDL}, who use CMC surfaces to ensure that in most points the replacements will have multiplicity one, in the case of isotopies the best regularity one can hope for CMC surfaces is $C^{1,1}$ (see the example by Sarnataro--Stryker \cite{SarnataroStryker2025OptimalRegularity}).  Hence we deal with sheets of high multiplicity directly using an inductive gluing argument, which we roughly sketch below:
\begin{enumerate}
    \item Since the analysis of blow-ups is not feasible here, we use an induction on the multiplicity of the points at the interface of two consecutive replacements. The case of multiplicity one follows \cite{DDL}. Now consider a sheet of multiplicity $k\geq 2$. Using a strategy similar to \cite{DDL}, namely by taking further replacements in thin annuli bounded by spherical caps meeting at a small angle, we ensure that if by contradiction the second replacement splits into at least two sheets, then their multiplicities are strictly less than $k$. We then center an annulus around the split and perform another replacement. By induction, the sheets with multiplicity less than $k$ glue smoothly. Then, using the maximum principle and unique continuation, we can ensure that the sheets of the second replacement continue smoothly into the interior of the third replacement, meeting at the spine. Combining this with the regularity of replacements yields a contradiction.

    \item As in the results of \cite{DePhilippisDeRosa}, we obtain a limit surface that is smooth away from at most one singular point. In the general multiparameter case, this would be replaced by finitely many points. If this singular point is removable and the limit is smooth, we prove the optimal genus upper bound as in \cite{K}. In the next sections, we show how to remove this singularity under additional hypotheses.
\end{enumerate}

In \cref{sec:removable-singularity-bernstein}, we show that if the ellipticity of $F$ is strictly smaller than $8$, i.e. if \eqref{assumption1} holds, then isolated singularities are removable. This holds in general for $\F$-minimal surfaces that are smooth and $\F$-stable away from finitely many isolated points. We sketch the proof as follows:
\begin{enumerate}
    \item From \cite[Theorem 2.8]{DePhilippisDeRosa}, treating the singularity as a boundary point, we know that stability implies $|A_\Sigma|(x) \leq C\dist(x,p)^{-1}$. We use this bound to show that we can take blow-up limits around $p$ in the sense of laminations (as in \cite{CM}), with each leaf inheriting $\F$-minimality and $\F$-stability from the sequence.

    \item We then use the stability operator and the ellipticity bound to show, using the main result of \cite{C}, that the conformal universal cover of each leaf is either $\C$ or $\C\setminus\{0\}$.

    \item In both cases, using the result of \cite{FcS}, we show that any blow-up lamination limit is in fact flat, thereby improving the curvature estimate to $|A| = o(\dist(x,p)^{-1})$.

    \item The rest of the proof is not exactly as we state here, but intuitively the argument is as follows. We consider the ambient distance $\dist(x,p)^2$ restricted to $\Sigma$ as a Morse function and show that the improved curvature bound implies that there are no saddle critical points at sufficiently small radii. This implies bounded topology, which combined with Gauss--Bonnet, further implies an $L^2$ bound on the second fundamental form. This provides a uniform upper density bound, from which we conclude by a standard capacity argument.
\end{enumerate}

In \cref{sec:remove-singularity-separation} we prove another result of removal of singularity, which without loss of generality we assume to be at $0$, under the assumption that $\mathcal{J}_\F |x| \geq -C$. Here $\mathcal{J}_F$ denotes the anisotropic Jacobi operator and $\Psi_F$ is the ellipticity tensor defined in \eqref{eq:def-Psi_F}. We sketch the proof below:
\begin{enumerate}
    \item First, we show that if $F$ satisfies a $C^3$-pinching condition (roughly $|\nabla_{\mathbb{S}^2} \Psi_F| \leq 2\lambda_{\min}(\Psi_F)$ pointwise on the sphere), then the assumption is satisfied. For the precise statement, see \cref{def:C3-pinching}.

    \item As in \cref{sec:removable-singularity-bernstein}, we know that $|A| \lesssim |x|^{-1}$. If the origin has infinite density ratio, then, in small annuli, one sees more and more layers accumulating on one another. We then define a \textit{separation} function $\mathbf{s}$, which measures the distance between two nearby sheets, and show that it is almost a Jacobi field for $\mathcal{J}_F$.

    \item We then show in \cref{prop:log-separation-PDE-Lp} that $\log\left(\frac{|x|}{\mathbf{s}}\right)$ satisfies a weighted differential inequality and belongs to $L^p$ for all $1\leq p < \infty$.

    \item Using a general Hardy-type weighted inequality on anisotropic minimal surfaces in \cref{lem:hardy}, together with a Moser--De Giorgi-type iteration, we show that the sublevel set of the separation, $\{\mathbf{s}(x) \leq e^{-k}|x|\}$, decays double exponentially in $k$, see \cref{prop:superexponential-decay}.

    \item Finally, we use the simple but crucial \cref{lem:key-lemma} to show that this decay implies that the density blows up at most logarithmically. This is enough, using a log-log cutoff, to conclude that the origin has zero capacity, and hence is a removable singularity.
\end{enumerate}

\section{Preliminaries and main results}
\subsection{Setup}\label{sec:setup}

Let \(N\) be a complete Riemannian \(3\)-manifold satisfying the
homogeneous regularity assumptions of Meeks--Simon--Yau \cite{MSY}. More precisely,
there exist constants \(\rho_0>0\) and \(\mu<\infty\), independent of the
base point, such that for every \(x_0\in N\) the exponential map gives a
diffeomorphism from the Euclidean ball \(B_{\rho_0}(0)\subset T_{x_0}N\)
onto the geodesic ball \(B_{\rho_0}(x_0)\subset N\), and the metric
coefficients in these normal coordinates satisfy the uniform bounds
specified in \cite[(1.1)-(1.2)]{MSY}. In particular, all local estimates below are
understood to hold uniformly for every center \(x_0\in N\) and every
radius \(r\le \rho_0\). We remark that these assumptions are automatic
when \(N\) is compact; in the non-compact case they are equivalent to
completeness, a positive lower bound for the injectivity radius, and a
uniform bound for the sectional curvatures.

We denote by \(B_r(x)\) the geodesic ball of radius \(r\) centered at
\(x\). We write \(G_2(N)\) for the bundle of unoriented \(2\)-planes in
\(TN\), and \(G_2(\Omega)\) for its restriction to an open set
\(\Omega\subset N\). We refer to \cite{SimonLN} for the standard
definitions and notation concerning varifolds.

Let
\[
    F:G_2(N)\to (0,+\infty)
\]
be a positive anisotropic integrand. For a \(2\)-varifold
\(V\in\mathbf{V}_2(N)\)  and an open set \(\Omega\subset N\), we define
\[
    \F(V,\Omega)
    :=
    \int_{G_2(\Omega)} F(x,T)\,\mathrm{d}V(x,T).
\]
When \(\Omega=N\), we simply write \(\F(V)\). If
\(\Sigma\subset N\) is \(\mathcal H^2\)-rectifiable, we also
write
\[
    \mathbf F(\Sigma,\Omega)
    :=
    \int_{\Sigma\cap\Omega}
        F(x,T_x\Sigma)\,\mathrm{d}\mathcal H^2,
\]
and \(\mathbf F(\Sigma):=\mathbf F(\Sigma,N)\).

Since we are in codimension one, it is convenient to associate to \(F\)
the even one-homogeneous function
\[
    G:TN\to [0,+\infty)
\]
defined by
\[
    G(x,0):=0,
    \qquad
    G(x,\lambda\nu)
    :=
    |\lambda|\,F(x,\nu^\perp),
    \qquad
    \lambda\in\mathbb R,\quad \nu\in S_xN.
\]
Thus
\[
    G(x,-\nu)=G(x,\nu),
    \qquad
    G(x,\lambda\nu)=|\lambda|G(x,\nu),
\]
and, whenever \(\Sigma\) is two-sided with unit normal \(\nu\),
\[
    \mathbf F(\Sigma,\Omega)
    =
    \int_{\Sigma\cap\Omega} G(x,\nu)\,\mathrm{d}\mathcal H^2.
\]
The right-hand side is independent of the choice of the unit normal,
because \(G\) is even. For one-sided surfaces the same formula is
understood locally, or equivalently through the associated unoriented
tangent-plane bundle.

We shall use the following notation for the ellipticity tensor associated
with \(F\). For \((x,\nu)\in SN\), define
\[
    \Psi_F(x,\nu):\nu^\perp\to \nu^\perp
\]
by
\begin{equation}\label{eq:def-Psi_F}
    \big\langle \Psi_F(x,\nu)v,w\big\rangle
    =
    D^2_{\nu\nu}G(x,\nu)[v,w],
    \qquad v,w\in \nu^\perp .
\end{equation}
Equivalently, \(\Psi_F(x,\nu)\) is the restriction of the vertical Hessian
of the one-homogeneous extension \(G\) to the tangent space
\(\nu^\perp=T_\nu S_xN\).

Throughout the paper we assume the following uniform ellipticity and
regularity hypotheses. After decreasing \(\lambda\in(0,1]\) if necessary,
all constants below are encoded in the same parameter \(\lambda\).

\begin{enumerate}[label=(H\arabic*), ref=H\arabic*]
    \item \label{H:comparability}
    \emph{Uniform comparability.}
    For every \((x,T)\in G_2(N)\),
    \[
        \lambda \le F(x,T)\le \lambda^{-1}.
    \]
    Consequently, for every \(\mathcal H^2\)-rectifiable set
    \(\Sigma\subset N\) and every open set \(\Omega\subset N\),
    \[
        \lambda\,\mathcal H^2(\Sigma\cap\Omega)
        \le
        \mathbf F(\Sigma,\Omega)
        \le
        \lambda^{-1}\mathcal H^2(\Sigma\cap\Omega).
    \]

    \item \label{H:regularity}
    \emph{Uniform regularity bounds.}
    The function \(G\) is \(C^3\) on \(TN\setminus\{0\}\), and satisfies,
    for all \((x,\nu)\in SN\),
    \[
        \sum_{1\le i+j\le 3}
        |D_x^iD_\nu^jG(x,\nu)|
        \le
        \lambda^{-1}.
    \]

    \item \label{H:ellipticity}
    \emph{Uniform ellipticity / uniform convexity.} For every \((x,\nu)\in SN\) and every \(w\in \nu^\perp\), \begin{equation}
        \langle \Psi_F(x,\nu)w,w\rangle\ge\lambda |w|^2.\label{uniform-convexity-assumption}
    \end{equation}
\end{enumerate}

The one-homogeneity of \(G\) implies Euler's identity
\begin{equation}
    \ang{D_\nu G(x,\nu),\nu}=G(x,\nu),
    \qquad (x,\nu)\in SN.\label{integrand:homogen}
\end{equation}
In particular, by \eqref{H:comparability} and \eqref{integrand:homogen}, we deduce
\[
    |D_\nu G(x,\nu)|\ge \lambda,
    \qquad (x,\nu)\in SN.
\]
Moreover, \eqref{H:ellipticity} implies the strict support inequality
\begin{equation}
    G(x,\nu)>
    |\ang{D_\nu G(x,\bar\nu),\nu}|\label{integrand:convexity}
\end{equation}
whenever \((x,\nu),(x,\bar\nu)\in SN\) and
\(\nu\neq \pm\bar\nu\).

Let \(g\in C_c^1(N;TN)\) be a compactly supported vector field and let
\(\{\varphi_t\}_{|t|<\varepsilon}\) be its flow. The anisotropic first
variation of a varifold
\(V\in\mathbf{V}_2(N)\) evaluated at $g$ is defined by
\[
    \delta_{\F}V(g)
    :=
    \left.\frac{\mathrm{d}}{\mathrm{d}t}\right|_{t=0}
    \F((\varphi_t)_\#V).
\]
In local coordinates, if \(\operatorname{spt}g\subset\Omega\), this is
given by
\begin{equation}
    \delta_{\F}V(g)
    =
    \int_{G_2(\Omega)}
    \left[
        \ang{\mathrm{d}_xF(x,T),g(x)}
        +
        B_F(x,T):\nabla g(x)
    \right]\,\mathrm{d}V(x,T).\label{eq:1st-variation}
\end{equation}
Here \(\mathrm{d}_xF\) denotes the horizontal derivative of \(F\) in the base
variable, \(\nabla g\) is the covariant derivative of \(g\), and \(B_F(x,T)\in T_xN\otimes T_xN\) is the fiber derivative of \(F\) with
respect to the plane variable, which is explicitly computed in \cite[Lemma A.2]{DDG}. In normal coordinates one may equivalently
write \(B_F(x,T):Dg(x)\). We say that \(V\) is \textit{\(\F\)-stationary} in \(\Omega\) if $\delta_{\F}V(g)=0$
for every \(g\in C_c^1(\Omega;TN)\). 

If \(\Sigma\subset N\) is a smooth two-sided surface with unit normal
\(\nu\), we define its scalar anisotropic mean curvature \(H_F\),
with respect to \(\nu\), by
\[
    \delta_{\F}\mathbf{v}(\Sigma)(g)
    =
    \int_{\Sigma}
        H_F\,\ang{g,\nu}\,\mathrm{d}\mathcal H^2
\]
for every compactly supported vector field \(g\) whose support is
disjoint from \(\partial\Sigma\), whenever \(\partial\Sigma\neq\emptyset\). If \(\Sigma\subset N\) is a smooth
embedded surface, we say that \(\Sigma\) is \textit{\(\F\)-minimal} in \(\Omega\)
if its associated varifold is \(\F\)-stationary in
\(\Omega\).
In particular, if \(\Sigma\) is two-sided, then \(\Sigma\) is
\(\F\)-minimal in \(\Omega\) if and only if \(H_F=0\) on $\Sigma\cap\Omega$. 
We say that \(\Sigma\) is \textit{\(\F\)-stable} in \(\Omega\) if $\delta_{\F}^2\Sigma(g)\ge 0$
for every \(g\in C_c^1(\Omega;TN)\), where $\delta_{\F}^2\Sigma(g)$ is the anisotropic second variation along \(g\in C_c^1(\Omega;TN)\)
defined by
\[
    \delta_{\F}^2\Sigma(g)
    :=
    \left.
    \frac{\mathrm{d}^2}{\mathrm{d}t^2}
    \right|_{t=0}
    \mathbf F(\varphi_t(\Sigma),\Omega).
\]

Let \(U\subset N\) be an open set with \(C^2\) boundary, and let \(\eta\)
denote the outward unit normal to \(\partial U\). We say that \(U\) is
\textit{\(\F\)-convex}, or equivalently that \(\partial U\) is
\(\F\)-mean convex, if $H_F\ge 0$ on $\partial U$,
where \(H_F\) is computed with respect to \(\eta\). We say that \(U\) is
\textit{strictly \(\F\)-convex} if $H_F>0$ on $\partial U$,
again with respect to the outward normal.

We also use the standard
Simon--Smith min-max terminology for generalized families of surfaces. A
\textit{generalized family} is a one-parameter family
\(\{\Sigma_t\}_{t\in[0,1]}\) of closed subsets of \(N\), consisting of
smooth embedded closed surfaces except possibly at finitely many
parameters and finitely many points, with continuous \(\F\)-energy. A
collection \(\Lambda\) of generalized families is called
\textit{saturated} if it is closed under smooth ambient isotopies in
\(\mathrm{Diff}_0(N)\). We always assume the standard bounded-singularity
condition for the saturated families considered below.
For such a saturated family \(\Lambda\), its \textit{anisotropic width}
is
\[
    m_0(\Lambda)
    :=
    \inf_{\{\Sigma_t\}\in\Lambda}
    \max_{t\in[0,1]}\F(\Sigma_t).
\]
A sequence
\(\{\Sigma_t^j\}\subset\Lambda\) is called \textit{minimizing} if
\[
    \lim_{j\to\infty}\max_{t\in[0,1]}\F(\Sigma_t^j)
    =
    m_0(\Lambda),
\]
and a sequence \(\Sigma^j:=\Sigma_{t_j}^j\) is called a
\textit{min-max sequence} if \(\{\Sigma_t^j\}\) is minimizing and
\[
    \lim_{j\to\infty}\F(\Sigma_{t_j}^j)
    =
    m_0(\Lambda).
\]
The precise definitions are recalled in
\cref{sec:generalized-family-width}.

\subsection{Notation}
We record some notation that we will use throughout the paper:
\begin{itemize}
    \item $\mathbf{B}$ denotes the closed unit $3$-ball with center $0$ in $\mathbb{R}^3$.
    \item $\mathbf{D}$ denotes the closed unit disk with center $0$ in $\mathbb{R}^2$.
    \item $\mathcal{M}$ denotes the space of all surfaces with boundary in $N$ (or in $\mathbb{R}^3$, depending on the context) which are $C^2$ diffeomorphic to the standard disk $\mathbf{D}$.
    \item $\mathcal{C}$ denotes the collection of all connected compact (not necessarily orientable) smooth $2$-dimensional surfaces without boundary embedded in $N$.
    \item $\mathcal{C}_1$ denotes the collection of compact embedded surfaces $\Sigma$ such that each connected component of $\Sigma$ is an element of $\mathcal{C}$.
    \item Given \(\Sigma\in\mathcal{C}_1\), we let \(\mathcal{G}(\Sigma)\) denote the ambient isotopy class of \(\Sigma\). If \(N\) is non-compact, the ambient isotopies are required to be compactly supported. If \(\Sigma\) has boundary, \(\mathcal{G}(\Sigma)\) denotes the corresponding ambient isotopy class with boundary fixed.
    
    \item As in \cite[Lemma 3]{MSY}, we define:
    \[
        d_U(x) = \dist(x,U)\,, \mbox{\quad  and \quad } U(s) = \{x\in N: d_U(x) < s\} \;\;\text{ for each } s>0.
    \]
    \item $\mathbf{v}(\Sigma)$ is the canonical varifold associated to the smooth surface $\Sigma$. 
    \item $\mathbf{V}_2(N)$ denotes the space of all \(2\)-varifolds in
    \(N\), namely the space of (positive) Radon measures on \(G_2(N)\).
\end{itemize}

\subsection{Main results}\label{sec:mainresults}

Throughout the paper, unless otherwise specified, we work under the
baseline regularity assumptions that the Riemannian metric on \(N\) is
of class \(C^4\), and that the
anisotropic integrand \(F\), equivalently its one-homogeneous extension
\(G\), is of class $C^3$. Under these assumptions, the anisotropic
regularity theory produces \(\F\)-minimal surfaces of class \(C^2\).
Higher regularity of the ambient metric and of \(F\) gives the
corresponding higher regularity of the resulting \(\F\)-minimal surfaces
by elliptic bootstrapping. In what follows, unless a stronger regularity
is explicitly required, the word \textit{smooth}, when applied to an
\(\F\)-minimal surface or to a limiting surface produced by the
regularity theory, is understood in this sense. The same convention
applies to regularity up to the boundary, provided the relevant boundary
components and boundary curves have the corresponding regularity.

As explained in the introduction, our first main theorem is a strengthening of \cite[Theorem 3.4]{W-1}, where White proved that there exists at least one $\F$-minimizing sequence of smooth disks converging to an $\F$-minimizing disk. We generalize this result by showing the same conclusion for \emph{every} $\F$-minimizing sequence of smooth disks:
\begin{thm}\label{thm:white-improvement}
    Let $\textbf{B}\subset \R^3$ be the unit ball such that its boundary is $\F$-convex and consider any $\F$-minimizing sequence of smooth disks $M_k\in\mathcal{M}$ with $\de M_k\subset \de \textbf{B}$, that is, for any $P\in\mathcal{M}$ with $\de P = \de M_k$ we have $\F(M_k) \leq \F(P)+\epsilon_k$ with $\epsilon_k\to0$.
    \begin{enumerate}
        \item Then for any compact set $K\subset \textbf{B}$, up to a subsequence (not relabeled) $M_k\to \Sigma$ in $K$ in the sense of varifolds, where $\Sigma$ is a smooth $\F$-minimal surface in $K$.
        \item Moreover if $\de M_k$ converges smoothly to a simple closed loop $\Gamma\subset \de\textbf{B}$, then  $M_k\to\Sigma$ in the sense of varifolds in the whole $\overline{\textbf{B}}$, with $\partial\Sigma=\Gamma$,  and $\Sigma$ is an $\F$-minimizing disk.
        \item If there is a relatively open set $S\subset \de \textbf{B}$ such that $\de M_k \cap S = \Gamma_k$ is a sequence of simple arcs converging smoothly to a limit simple arc $\Gamma$, then the limit surface $\Sigma$ has a component that extends smoothly up to $\Gamma$.
    \end{enumerate}
\end{thm}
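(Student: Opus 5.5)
The plan follows the strategy sketched in the organization section: show that the varifold limit $V$ of $\{M_k\}$ admits \emph{replacements in annuli} at a uniform scale, in the sense of Pitts, and then run Pitts' regularity scheme to get smoothness.

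\emph{Step 1: compactness and stationarity.} By \eqref{H:comparability}, the masses $\mathcal H^2(M_k)$ are uniformly bounded; compare with a fixed disk spanning $\de M_k$, for instance a cone over $\de M_k$ or White's minimizer from \cite{W-1}. We therefore extract a varifold limit $V$ on $\overline{\textbf{B}}$. Since $M_k$ is almost minimizing among disks with the same boundary, and isotopies supported in $\textbf{B}$ preserve that class, $V$ is $\F$-stationary in $\textbf{B}$. In fact $V$ is $\F$-almost minimizing in small balls with respect to isotopies.

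\emph{Step 2: replacements in annuli.} Fix $x\in K$ and an annulus $A=B_s(x)\setminus \overline{B_r(x)}$ with $s$ below a uniform radius $r_0(K)$, chosen so that small geodesic balls are strictly $\F$-convex; in $\R^3$ this holds for all small Euclidean-type balls after perturbation. Cover $A$ by a Whitney-type family of small strictly $\F$-convex balls $\{U_j\}$, with radii comparable to the distance to the inner sphere. Put $M_k\cap \de U_j$ in general position. Then follow Meeks--Simon--Yau and Hass--Scott:
\begin{itemize}
\item perform neck-pinch and cut-and-paste surgeries so that $M_k\cap U_j$ becomes a union of disks;
\item replace each disk by an $\F$-minimizing disk with the same boundary, using White's existence theorem \cite[Theorem 3.4]{W-1}.
\end{itemize}
The resulting surface $\tilde M_k$ is still a disk with the same boundary, and its energy is $\le \F(M_k)+o(1)$. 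A tentacle estimate in the spirit of \cref{lemma:tentacles-estimate} shows that the surgeries do not change the limit inside $B_r(x)$. Passing to the limit gives a varifold $V^*$ with
\[
V^*=V \text{ outside } A, \qquad \F(V^*)=\F(V).
\]
Moreover, $V^*$ is smooth in each $U_j$ by White's regularity of minimizing disks. Hass--Scott's maximum-principle and exchange argument shows that the pieces on overlapping balls agree, so $V^*\llcorner A$ is a smooth $\F$-stable surface. Crucially, $\tilde M_k$ is again a minimizing sequence, which allows iterating this in a second annulus.

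\emph{Step 3: regularity (part (1)).} This applies Pitts' scheme as in \cite{DePhilippisDeRosa}. Take a replacement in $A_1$, then a second replacement in an overlapping annulus $A_2$. Unique continuation and the maximum principle glue the two smooth stable pieces. Because the admissible radius $r_0$ is uniform (no degeneration), letting the inner radius shrink to $0$ shows that $V$ coincides with a smooth $\F$-minimal surface in $B_{r_0}(x)\setminus\{x\}$. A second set of annuli centred at a nearby point then removes the point $x$. Curvature estimates for stable surfaces \cite[Theorem 2.8]{DePhilippisDeRosa} provide the compactness needed for these limits.

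\emph{Steps 4--5: boundary (parts (2) and (3)) and genus.} Near $\Gamma$ we use half-annuli centred on $\de\textbf{B}$, intersected with $\textbf{B}$, which are $\F$-convex. White's boundary regularity for minimizing disks with boundary on a convex body gives replacements that are smooth up to $\Gamma$. Since $\Gamma$ is a single simple curve, the multiplicity near $\Gamma$ is one, and the same gluing argument yields smoothness up to the arc; this gives part (3). For part (2), we use that the whole boundary converges, so the mass does not concentrate on $\de\textbf{B}$. The limit $\Sigma$ then has $\de\Sigma=\Gamma$, and the genus lower semicontinuity in the spirit of Meeks--Simon--Yau, obtained through the surgery description, shows that the multiplicity-one component through $\Gamma$ is a disk. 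It is $\F$-minimizing by lower semicontinuity together with the fact that any competitor disk can be approximated.

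The main obstacle is Step 2: the smooth gluing of the minimizing-disk replacements on overlapping Whitney balls, and controlling the energy lost in surgeries. I would first attempt Hass--Scott's disk-exchange argument verbatim, with the strict $\F$-convexity replacing mean convexity in the maximum principle \eqref{integrand:convexity}.
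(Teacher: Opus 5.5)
Your overall route is the paper's: a Whitney cover by small convex sets, White's $\F$-minimizing disks inside outermost curves, the tentacle estimate (\cref{lemma:tentacles-estimate}), Hass--Scott on overlaps, Pitts' scheme with a centre-independent radius, overlapping punctured balls, and boundary half-annuli. The genuine gap is in Step 3, exactly where the anisotropic setting bites: gluing two consecutive annular replacements smoothly across the intermediate sphere $\de B_t(x)$. Unique continuation and the maximum principle cannot give this. Unique continuation only propagates once the inner and outer pieces are already known to coincide on an open set, i.e.\ after the matching along $\gamma=\Sigma'\cap\de B_t(x)$ is established. The maximum principle excludes neither extra inner sheets folding onto $\gamma$ nor a bend along $\gamma$. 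The isotropic argument through tangent cones at points of $\gamma$ needs monotonicity; this is the same difficulty that forces the inductive \cref{lem:aniso-smooth-gluing} in the min--max section.

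The paper instead glues with the Hass--Scott argument you reserve for the Whitney sets. The inner replacement is a limit of $\F$-minimizing disks whose boundary arcs on $\de B_t(x)$ are the arcs cut out by the outer approximating disks. The two Hass--Scott cases exclude boundary folding and vanishing. A bend is excluded because a bent disk in a small ball is not $\F$-minimizing, which gives a cheaper competitor for the inner sequence. The maximum principle then only shows that every inner component reaches $\gamma$, and unique continuation only makes the extensions $\Sigma_{x,s}$ nested.

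Even so, shrinking the inner radius does not identify $\spt\|V\|$ with $\Sigma_x$. The paper proves $\spt\|V\|\cap(B_\rho(x)\setminus\{x\})\subset\Sigma_x$ using a dense set of points whose tangent varifold is a plane transverse to the spheres, plus the anisotropic constancy theorem for cones in a half-space. That needs $V$ rectifiable, and the final integer multiplicities need $V$ integral. The paper proves both beforehand (density lower bound via \cref{lem:aniso-filigree-lemma}, then \cref{thm:integrality-limit-varifold}); your plan neither does this nor derives it from the replacements.

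For (2)--(3) you never show that the boundary survives in the limit. Saying the multiplicity near $\Gamma$ is one, or that $\de\Sigma=\Gamma$, presupposes $\Gamma\subset\spt\|V\|$. Non-concentration of mass on $\de\mathbf B$ is a different statement, and limits of minimizing disks can be empty near parts of the boundary data (cf.\ \cite[Example~4.3]{HS}). The paper proves $\hau^2(M_k\cap B_r(x)\cap\mathbf B)\ge cr^2$ at every $x\in\Gamma$ by slicing with planes orthogonal to $\Gamma$ (\cref{lem:density-boundary-estimate}). This gives $\de\Sigma\cap\de\mathbf B=\Gamma$ and, in (3), a nonempty component reaching $\Gamma$.

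In (2) you must also exclude closed components, using the anisotropic maximum principle in the $\F$-convex ball. A genus count alone does not exclude a non-orientable limit such as a M\"obius band, so orientability needs its own argument. The paper uses the retraction of \cite[Section~8]{AS}. A modified $M_k$ lies in a tubular neighbourhood of $\Sigma$, and projection gives a map $\mathbf D\to\Sigma$ restricting to a diffeomorphism $\de\mathbf D\to\Gamma$. So $\Gamma$ is null-homotopic in $\Sigma$, which forces orientability and genus zero.

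Two smaller points in Step 2. No neck-pinch is needed: each curve of $M_k\cap\de U_j$ already bounds a disk in the disk $M_k$, and one replaces only those bounded by outermost curves. And if $M_k$ extends past the outer sphere of $A$, the Whitney cover must be adapted to both boundary spheres. Otherwise tentacles from sets touching the outer sphere can change $V$ outside $A$.
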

We use \cref{thm:white-improvement} as a building block, in combination with strategies reminiscent of \cite{AS}, to prove the anisotropic counterpart of the result of Meeks-Simon-Yau \cite[Theorem 1]{MSY}:
\begin{thm}\label{thm:Meeks-Simon-Yau-without-boundary}
    Take any compact $3$-manifold $N$ and anisotropic integrand $F$ as defined above, let $\Sigma_0\in \mathcal{C}_1$ be a reference surface and let $0 < I = \inf\{\F(\Sigma):\, \Sigma \in \mathcal{G}(\Sigma_0)\}$ be the infimum of the $\F$-area in its isotopy class. Take any minimizing sequence of surfaces $\Sigma_k$, that is $\lim_{k\to\infty} \F(\Sigma_k) = I$. Then, there are positive integers $R,n_1,\cdots,n_R$ and pairwise disjoint smooth $\F$-minimal surfaces $\Sigma^{(1)},\cdots,\Sigma^{(R)}$ such that (up to a subsequence, not relabeled):
    \begin{align}
        \Sigma_k\to n_1\Sigma^{(1)}+n_2\Sigma^{(2)}+\cdots+n_R\Sigma^{(R)}\,,\label{eq:S_k-converges}
    \end{align}
    in the sense of varifolds. Furthermore, if $g_i = \mathrm{genus}(\Sigma^{(i)})$, then the following bound holds:
    \begin{align}
        \sum_{i\in \mathcal{U}} \frac12n_i(g_i-1) + \sum_{i\in \mathcal{O}} n_ig_i \leq \mathrm{genus}(\Sigma_k)\,,\label{eq:genus-bound}
    \end{align}
    for large enough $k$. Here \(\mathcal U\) denotes the set of indices \(i\) for which \(\Sigma^{(i)}\) is one-sided, and \(\mathcal O\) denotes the set of indices \(i\) for which \(\Sigma^{(i)}\) is two-sided.
    Moreover, for every $i \in \mathcal{O}$ we have that $\Sigma^{(i)}$ is $\F$-stable. If  $\Sigma_k$ is two-sided for all $k$, then all $\Sigma^{(1)},\cdots,\Sigma^{(R)}$ are $\F$-stable. 
\end{thm}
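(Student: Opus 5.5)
We follow the scheme of Meeks--Simon--Yau, using \cref{thm:white-improvement} in place of the Almgren--Simon disk regularity. After passing to a subsequence we have $\Sigma_k\to V$ as varifolds. The plan is to show that $V$ is induced by smooth embedded $\F$-minimal surfaces with integer multiplicities, then deduce the genus bound and stability.

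\emph{Step 1: $\gamma$-reduction.} Fix a small $\delta>0$ and a scale $\rho\le\rho_0$ with $\rho$ small compared to the convexity radius. The boundary of a small geodesic ball is strictly $\F$-convex, by \eqref{H:regularity} and \eqref{H:ellipticity}, since $H_F\sim 1/\rho$ dominates the lower-order terms. Following \cite[\S 3]{MSY}, we replace $\Sigma_k$ by surfaces that are $\gamma$-irreducible. Whenever a disk $D\subset B_\rho(x)$ with $\partial D\subset\Sigma_k$ bounds a small region, we perform a neck-pinch surgery: cut along $D$ and glue in two parallel copies of $D$. This lowers the genus, or splits off a component whose $\F$-area is at least a fixed amount, or removes a sphere. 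Because of \eqref{H:comparability} and the isoperimetric inequality, only finitely many surgeries (independent of $k$) can occur. The surgeries change $\F$ by $o(1)$, since the necks can be chosen with small area by a coarea argument. The resulting surfaces $\tilde\Sigma_k$ are still minimizing in the corresponding isotopy class, have genus at most $\mathrm{genus}(\Sigma_k)$, and have the same varifold limit $V$.

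\emph{Step 2: local structure and regularity.} This is the analogue of \cite[Lemma 1]{MSY}, and we expect it to be the main obstacle. For $\gamma$-irreducible minimizers, in most balls $B_\rho(x)$ the intersection $\tilde\Sigma_k\cap B_\rho(x)$ consists of disks, after a further isotopy and the replacement result \cref{prop:replacement-thm-AS-aniso}. Applying \cref{thm:white-improvement} to each such stack of disks, inside an $\F$-convex ball, shows that each limiting disk is a smooth $\F$-minimal surface. The main difficulty is that there is no monotonicity formula. It is therefore hard to rule out accumulation of infinitely many sheets, and to exclude concentration points where the disk decomposition fails. We plan to handle the concentration points using the fact that the mass bound gives finitely many points carrying area at least $\epsilon_0$. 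Away from these points, the disk decomposition holds. At the points themselves, we use the replacement-in-annuli argument of \cref{prop:replacements-in-annuli} with uniform radius, following Pitts, to show that $V$ is smooth across them. The sheets are ordered and disjoint, since limits of embedded disks touching on one side coincide by the maximum principle. From this we obtain integer multiplicities, local finiteness, and the decomposition \eqref{eq:S_k-converges} into pairwise disjoint $\Sigma^{(i)}$.

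\emph{Step 3: genus bound and stability.} For the genus bound we argue as in \cite[\S 3]{MSY}. For $k$ large, $\tilde\Sigma_k$ restricted to a tubular neighbourhood of $\Sigma^{(i)}$ is isotopic to $n_i$ graphical copies of $\Sigma^{(i)}$ when $\Sigma^{(i)}$ is two-sided. When $\Sigma^{(i)}$ is one-sided, it is isotopic to copies of the boundary of the tubular neighbourhood, which is a double cover of genus $g_i-1$ in the nonorientable count, so these contribute $\tfrac12 n_i(g_i-1)$. The additivity of genus under the surgeries of Step 1 then gives \eqref{eq:genus-bound}. For stability of a two-sided $\Sigma^{(i)}$, suppose it were unstable. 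We could then push the outermost graphical sheet of $\tilde\Sigma_k$ in the direction of a first eigenfunction of the Jacobi operator. This is an isotopy that decreases $\F$ by a definite amount, which contradicts minimality. If all $\Sigma_k$ are two-sided, then a one-sided limit must occur with even multiplicity. In that case the sheets form the boundary of a tubular neighbourhood, which is two-sided, and the same deformation argument applied to the double cover yields stability.
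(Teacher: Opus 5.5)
Your overall scheme ($\gamma$-reduction, local disk decomposition, Almgren--Simon replacement plus \cref{thm:white-improvement}, then the topological genus count and stability) is the paper's. However, Step 2 has a genuine gap.

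You fix a scale, declare finitely many ``concentration points'' where the disk decomposition fails, and propose to treat them with \cref{prop:replacements-in-annuli}. That proposition presupposes exactly what fails at such a point: an $\eps_k$-minimizing sequence of disks filling the whole ball, with boundary on the sphere. Even if you could build Pitts-type replacements in annuli $\operatorname{An}(x;s,t)$ avoiding $x$, you would only get smoothness in a punctured ball. In the proof of \cref{thm:white-improvement} the puncture is removed (Step~4) by overlapping balls centred at nearby points with a \emph{uniform} replacement radius, and that is unavailable near a point where the disk structure breaks down. So your plan can leave isolated singularities. For general $F$ the paper removes these only under \eqref{assumption1} or \eqref{assumption2}, and this theorem assumes neither. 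Likewise, local finiteness of the sheets does not follow from ordering and the maximum principle. Without monotonicity you need a lower mass bound for every disk that reaches the centre.

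The missing observation is that there are no exceptional points once the scale is allowed to depend on the point. The hypotheses of the anisotropic MSY Theorem~2, \cref{prop:anis_thm2}, are absolute rather than scale-invariant: $E\le\gamma_0/4$, and the filling disks have total $\F$-area at most $\gamma_0/8$. The disks come from this topological statement, not from \cref{prop:replacement-thm-AS-aniso}, which already assumes disks.

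The argument at every point then runs as follows.
\begin{enumerate}
\item An $\F$-stationary $V$ has no atoms: test with the cut-off radial field $\phi(|x-x_0|/r)(x-x_0)$. Hence $\|V\|(B_\rho(x_0))\to0$ at every $x_0$.
\item The coarea formula gives $\rho_k\in(3\rho/4,\rho)$ with $\mathrm{length}(\tilde\Sigma_k\cap\partial B_{\rho_k})\le C\rho^{-1}\hau^2(\tilde\Sigma_k\cap(B_\rho\setminus B_{3\rho/4}))$.
\item Since $\F(F_\Gamma)\le C\rho\,\mathrm{length}(\Gamma)$, the total filling area drops below $\gamma_0/16$ once $\rho=\rho(x_0)$ is small. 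Then \cref{prop:anis_thm2} gives the disk decomposition, with an isotopy pushing the disks into the ball, at \emph{every} point of $\spt\|V\|$.
\item Almgren--Simon replacement makes these disks almost minimizing. The filigree lemma \cref{lem:aniso-filigree-lemma} shows that only a $k$-independent number $\ell$ of them carry mass in $B_{\rho/2}(x_0)$; this is what rules out accumulation of sheets without monotonicity.
\item \Cref{thm:white-improvement}(1), applied to these finitely many disk sequences, gives smoothness in $B_{\rho/4}(x_0)$, including at $x_0$ itself.
\end{enumerate}

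Two smaller corrections. In Step~1 the neck-pinches do not cost $o(1)$ through a coarea choice, because a compressing disk of area $<\gamma$ need not be small. The facts $\F(\tilde\Sigma_k\Delta\Sigma_k)\to0$ and $E(\tilde\Sigma_k)\to0$ come from the scale-selection argument of MSY pp.~634--635, and you need \emph{strong} $\gamma_0$-irreducibility. In Step~3, $\tilde\Sigma_k$ is not itself graphical. The parallel-sheet model $S_k^{(i)}$ requires rerunning the local decomposition in thin tubular regions. Instability is then contradicted by flowing the model surfaces along an ambient extension of $\phi\nu$, using their almost-minimality in the tubular neighbourhood.
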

We also prove a version of \cref{thm:Meeks-Simon-Yau-without-boundary} for a reference surface $\Sigma_0$ with boundary, which will be used in our construction of the anisotropic counterpart of Simon-Smith min-max theory.
\begin{thm}\label{thm:Meeks-Simon-Yau-with-boundary}
    Let $A\subset B_{\rho_0}\subset  N$ be an open set and take a smooth surface $\Sigma_0\subset A$ with boundary $\Gamma = \de\Sigma_0 \subset \de A$. Moreover let $0<I = \inf\{\F(\Sigma):\, \Sigma \in \mathcal{G}(\Sigma_0)\}$ and take any minimizing sequence $\Sigma_k$ with $\Gamma = \de\Sigma_k$ achieving the infimum $I = \lim_{k\to\infty }\F(\Sigma_k)$. Then the following holds, up to subsequences (not relabeled):
    \begin{enumerate}
        \item There exists a surface $\Sigma$ which is smooth in any compact set $K\subset A$ such that $\Sigma_k\to \Sigma$ in the sense of varifolds in $K$. Moreover $\Sigma$ is $\F$-stationary and $\F$-stable in $K$ (however the topological information might have been lost).
        
        \item If the boundary $\de A$ is $\F$-convex, then $\de\Sigma = \Gamma$ and $\Sigma$ is smooth up to the boundary and it is made up of $\F$-minimal and $\F$-stable two-sided surfaces $\Sigma^{(1)},\cdots,\Sigma^{(R)}$ such that:
        \begin{align}
           \label{gg}
           \sum_{i=1}^R \mathrm{genus}(\Sigma^{(i)}) \leq \mathrm{genus}(\Sigma_k)\,,
        \end{align}
        for large enough $k$.
        
        \item In the case that $\de A$ is made up of some $\F$-convex parts, but is not entirely $\F$-convex, then the boundary smoothness and estimates are carried all the way up to the convex parts of the boundary with uniform curvature estimate only depending on the mass, $F,\de A$ and the ambient manifold $N$ (the genus bounds \eqref{gg} may not hold, since the genus might not be well defined in the limit).
    \end{enumerate}
\end{thm}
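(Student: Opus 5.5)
We follow the scheme used for \cref{thm:Meeks-Simon-Yau-without-boundary}, the anisotropic analogue of \cite[Theorem 2]{MSY} and \cite{AS}. The argument is localized so that the prescribed boundary $\Gamma$ only enters through the convexity of $\de A$.

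\emph{Interior regularity, item (1).} Fix a compact $K\subset A$ and a radius $0<\rho<\min\{\rho_0,\dist(K,\de A)\}/2$ on which small geodesic balls are strictly $\F$-convex. Up to a subsequence, $\Sigma_k\to V$ as varifolds in $A$, and $V$ is $\F$-stationary in $A$. Stationarity holds because any ambient isotopy supported away from $\de A$ preserves both $\Gamma$ and the isotopy class, so minimality of $I$ forbids a first-order decrease of $\F$. For $x\in K$ we then argue as in the proof of \cref{thm:Meeks-Simon-Yau-without-boundary}. We apply \cref{prop:replacement-thm-AS-aniso} together with finitely many neck-pinch surgeries inside $B_\rho(x)$, changing the energy by at most $o(1)$. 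This reduces $\Sigma_k\cap B_\rho(x)$ to a family of stacked $\F$-minimizing disks with boundaries on $\de B_\rho(x)$. By \cref{prop:replacements-in-annuli}, the limit of these disks admits replacements in every small annulus, and \cref{thm:white-improvement}(1) makes each such limit smooth. Pitts' regularity scheme then shows that $V\llcorner B_{\rho/2}(x)$ is a smooth $\F$-minimal surface with integer multiplicities. Covering $K$ gives the smooth limit $\Sigma$. Stability in $K$ follows from minimality: a negative second variation along a normal field supported on a sheet can be transplanted to $\Sigma_k$ through the graphical convergence, producing an isotopic competitor with energy strictly below $I$ for $k$ large.

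\emph{Boundary regularity, items (2) and (3).} Near a point $y\in\Gamma$ lying on an $\F$-convex part of $\de A$, we choose $\F$-convex domains $U\ni y$ whose boundaries are built from $\de A$ and small spheres. On such a domain, $\de(\Sigma_k\cap U)\cap\de A$ is a fixed arc of $\Gamma$. The same surgery reduction yields $\F$-minimizing disks whose boundary arcs converge smoothly to that fixed arc, and \cref{thm:white-improvement}(3) gives smooth extension of a component up to $\Gamma$. The convexity of $\de A$, via the maximum principle for $\F$-stationary varifolds \cite{W-1}, prevents mass from concentrating on $\de A\setminus\Gamma$. The uniform curvature estimates needed in (3) come from stability together with \cite[Theorem 2.8]{DePhilippisDeRosa}, applied up to the convex boundary. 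The constants depend only on the mass bound, $F$, $\de A$ and $N$. When $\de A$ is entirely $\F$-convex, these arguments combine to give a limit that is smooth up to the boundary with $\de\Sigma=\Gamma$, and multiplicity one along $\Gamma$ since $\Gamma$ is embedded. Two-sidedness of each component $\Sigma^{(i)}$ holds because $A\subset B_{\rho_0}$ is a ball, so every embedded surface in it is two-sided.

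\emph{Genus bound.} As in \cite{MSY} and \cref{thm:Meeks-Simon-Yau-without-boundary}, for $k$ large we perform $\gamma$-reductions, i.e.\ neck-pinch surgeries, which do not increase genus. After them, $\Sigma_k$ lies in a thin tubular neighbourhood $\Sigma(s)$ of the limit and is a union of normal graphs over the pieces $\Sigma^{(i)}$, up to small disks. Each two-sided piece of multiplicity $n_i$ contributes at least $n_i g_i\ge g_i$ to the genus of $\Sigma_k$, which gives \eqref{gg}.

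\emph{Main obstacle.} I expect the hardest step to be the boundary version of the replacement and gluing argument. One must run the Hass--Scott gluing of \cref{prop:replacements-in-annuli} in half-annuli centered on $\Gamma$. The Whitney-covering balls there have to be intersected with $A$ while remaining $\F$-convex, so that White's existence theorem \cite{W-1} still applies. The tentacle estimate \cref{lemma:tentacles-estimate} must also be checked to respect the fixed boundary $\Gamma$.
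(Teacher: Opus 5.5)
Your overall route is the paper's. Interior regularity comes from rerunning the proof of \cref{thm:Meeks-Simon-Yau-without-boundary} in balls compactly inside $A$. Boundary regularity comes from rounded $\F$-convex domains centred on $\Gamma$ and \cref{thm:white-improvement}(3) applied to the distinguished boundary disk. The genus bound is the topological $\gamma$-reduction argument. Two steps, however, rest on justifications that do not work.

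\emph{Two-sidedness.} It is false that every embedded surface in a ball is two-sided: a small M\"obius band embeds in any ball. Moreover, the hypothesis only gives $A\subset B_{\rho_0}$, not that $A$ is a ball. What is true is that \emph{properly} embedded surfaces are two-sided here. A component with $\de\Sigma^{(i)}\subset\Gamma\subset\de A$ can be capped off by disjoint disks of $\de A$ pushed slightly outside $A$. This gives a closed embedded surface in $\R^3$, which separates by Jordan--Brouwer and is therefore orientable and two-sided. This is the paper's argument, applied after closed components have been excluded by the maximum principle. It uses that the components of $\Gamma$ bound disjoint disks in $\de A$. The point is not cosmetic: your count $n_ig_i\ge g_i$ holds only for two-sided components. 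A one-sided component contributes only $\tfrac12 n_i(g_i-1)$, so without two-sidedness \eqref{gg} does not follow.

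\emph{Boundary estimates in (3).} \cite[Theorem~2.8]{DePhilippisDeRosa} is an interior curvature estimate for stable surfaces, of the form $|A_\Sigma|(x)\lesssim \dist(x,\de\Sigma)^{-1}$. It gives nothing as $x\to\Gamma$, so it cannot be ``applied up to the convex boundary''. In the paper, the estimates up to the $\F$-convex part of $\de A$ come from White's boundary regularity for minimizing disks, \cref{lemma:disk-compactness}(3). These are carried through the boundary-annulus replacements of \cref{prop:replacements-in-annuli-boundary} and through \cref{thm:white-improvement}(2)--(3) applied to the distinguished disks. They also depend on the $C^{2,\alpha}$ norm of $\Gamma$.

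Two smaller points. First, multiplicity one along $\Gamma$ does not follow from $\Gamma$ being embedded. Neither does smoothness of the whole support at $\Gamma$, since \cref{thm:white-improvement}(3) only controls one component. You need that the limits of the disks whose boundaries lie on the interior cap are kept off $\de A$ by $\F$-convexity, via the maximum principle of \cite{SW-maximum-principle}. Second, in (1) graphical convergence of $\Sigma_k$ is not available. Stability should instead come from ambient isotopies supported in $K$, which preserve $\mathcal{G}(\Sigma_0)$, combined with varifold convergence. Alternatively, as in the paper, it follows from the almost-minimizing local disks.
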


We apply the previous anisotropic versions of Meeks-Simon-Yau theory to prove the following anisotropic analogue of Simon-Smith min-max theorem with at most
one possible isolated singular point.
\begin{thm}
\label{thm:aniso-simon-smith-one-singularity}
Let $N$ be a closed $3$-manifold with a Riemannian metric. For any
saturated set of generalized families of surfaces $\Lambda$, there exists
a min-max sequence obtained from $\Lambda$ which converges in the sense of
varifolds to an $\F$-stationary integral varifold $V$ with anisotropic
width
$m_0(\Lambda)$.
Moreover, there exists a set $\mathcal S\subset N$ with
$\#\mathcal S\leq 1$ such that $\operatorname{spt}V\setminus\mathcal S$
is a smooth embedded $\F$-minimal surface (multiplicity is allowed).

More precisely, there exist positive integers $n_1,\cdots,n_L$ and
pairwise disjoint connected smooth embedded $\F$-minimal surfaces $\Sigma^{(1)},\cdots,\Sigma^{(L)}\subset N\setminus\mathcal S$
which are locally $\F$-stable in $N\setminus\mathcal S$, such that
\[
    V=\sum_{i=1}^L n_i\,\mathbf v(\Sigma^{(i)}).
\]
Furthermore,
\[
    \overline{\Sigma^{(i)}}\setminus\Sigma^{(i)}\subset\mathcal S
    \qquad\text{for each } i=1,\cdots,L .
\]

\end{thm}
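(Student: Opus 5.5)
We follow the Colding--De Lellis scheme for Simon--Smith min--max, in its anisotropic form. The main modification is in the smooth gluing of replacements.

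\emph{Step 1: pull-tight and almost minimality.} We first run an anisotropic pull-tight on a minimizing sequence of families in $\Lambda$. We use a continuous map from varifolds of bounded mass to isotopies that decrease $\F$, built from the first variation formula \eqref{eq:1st-variation}. This makes every min--max limit $\F$-stationary. Next comes the combinatorial argument of Almgren--Pitts and Colding--De Lellis, which uses only the continuity of $\F$ along isotopies and the saturation of $\Lambda$. It yields a min--max sequence $\Sigma^j$ with three properties. It is $\F$-almost minimizing in small annuli: for each $x\in N$ there is $r(x)>0$ such that $\Sigma^j$ is $(\epsilon_j,\delta)$-almost minimizing in every annulus $A_{s,r}(x)$ with $r<r(x)$. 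It converges to an $\F$-stationary $V$ with $\F(V)=m_0(\Lambda)$. And $V$ is integral.

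\emph{Step 2: replacements.} In each such annulus, and more generally in any open set $U$ where $\Sigma^j$ is almost minimizing, we minimize $\F$ in the isotopy class of $\Sigma^j$ relative to $U$. We apply \cref{thm:Meeks-Simon-Yau-with-boundary} on $U$ and pass to a diagonal limit. This gives a replacement $V'$ which coincides with $V$ outside $\overline U$, satisfies $\F(V')=\F(V)$, is $\F$-stationary, and is a smooth stable $\F$-minimal surface (with multiplicity) inside $U$. Curvature estimates for stable $\F$-minimal surfaces (\cite[Theorem 2.8]{DePhilippisDeRosa}) make the class of such replacements compact.

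\emph{Step 3: regularity away from one point.} As in Pitts and \cite{DePhilippisDeRosa}, let $\mathcal S$ be the set of points $x$ at which $\Sigma^j$ fails to be almost minimizing at all small scales. A covering argument shows $\#\mathcal S\le1$. Fix $x\notin\mathcal S$. We take a first replacement $V'$ in an annulus $A_{s,t}(x)$, and then a second replacement $V''$ in an overlapping annulus $A_{s',t'}(x)$ with $s'<s<t'<t$. We must show that the smooth pieces of $V'$ and $V''$ glue smoothly across $\partial B_s(x)$. This is the \textbf{main obstacle}, since constant mean curvature tricks are unavailable in isotopy classes.

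We argue by induction on the multiplicity $k$ of a sheet of $V'$ meeting $\partial B_s$.
\begin{itemize}
\item For $k=1$ the argument of \cite{DDL} applies: unique continuation holds for the elliptic $\F$-minimal equation, and the maximum principle together with stationarity of $V''$ across the interface gives the gluing.
\item For $k\ge2$, suppose the sheets of $V''$ near the interface split. We perform further replacements in thin regions bounded by spherical caps meeting at small angles. Comparing the $\F$ of competitors then forces each split component to have multiplicity strictly less than $k$.
\item We then center a new annulus at the splitting point and replace again. By induction these lower-multiplicity sheets glue smoothly. The maximum principle and unique continuation force them to coincide with the multiplicity-$k$ sheet, which is a contradiction.
\end{itemize}
It follows that the replacements extend each other analytically, with no change of multiplicity. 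Letting $s\to0$, we obtain that $V$ is smooth and stable in punctured balls around $x$. Finally, the removable singularity for stable surfaces at non-$\mathcal S$ points follows from the replacements themselves, since $V$ coincides with a smooth replacement in a full ball around $x$ taken in a slightly larger annulus.

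\emph{Step 4: decomposition.} Decomposing $\operatorname{spt}V\setminus\mathcal S$ into connected components gives the surfaces $\Sigma^{(i)}$ with constant integer multiplicities $n_i$. These are locally $\F$-stable by the stability of the replacements, and they satisfy $\overline{\Sigma^{(i)}}\setminus\Sigma^{(i)}\subset\mathcal S$.
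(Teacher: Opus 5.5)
Your outline has the same architecture as the paper: pull-tight, almost-minimizing selection, replacements from the anisotropic Meeks--Simon--Yau theorems, the Colding--De Lellis scheme, and smooth gluing by induction on multiplicity using thin lens-shaped regions and a further replacement centred at the splitting point. The gap is in the one genuinely new step, the multiplicity bound in the inductive step. You justify it by ``comparing the $\mathbf F$ of competitors'', and that does not deliver it.

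Energy comparison for a replacement only gives the global identity $\mathbf F(V''')=\mathbf F(V'')$, with $V'''=V''$ outside the lens $C$. Without a monotonicity formula, a global mass identity cannot bound the density of $V'''$ at the single contact point $z$.

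What does the job is the stationarity of $V'''$, used on a tangent varifold $W$ of $V'''$ at $z$:
\begin{itemize}
\item On the exterior side, $W$ is $m$ copies of the half-plane $P^-=T_z\Sigma'\cap\{p_1\le 0\}$.
\item On the interior side, $W=\sum_j q_j\,\mathbf v(K_j)$, where the $K_j$ are half-planes with common edge $\ell=T_zS$, confined to a wedge of opening $O(\varepsilon)$. Interior junctions are excluded because stationary junctions for the frozen integrand $G(z,\cdot)$ need a definite angle.
\item Stationarity for the frozen integrand gives the flux balance $m\,b_0=\sum_j q_j b_j$ along $\ell$.
\item As $\varepsilon\to0$, every $K_j$ tends to $P^+$, so $b_j\to b_0$. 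Choosing $\varepsilon=\varepsilon(m,G_z)$ with $b_j>\frac{m}{m+1}b_0$ gives $\sum_j q_j\le m$.
\end{itemize}
Only this bound on the total multiplicity makes each branch have multiplicity at most $m-1$ when there are at least two. That is what lets the induction hypothesis act on the fourth replacement around $z$.

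Three smaller points. First, after excluding splitting you still need the constancy theorem to show the single branch has multiplicity exactly $m$, so that every tangent varifold of $V'''$ at $z$ is $m\,\mathbf v(T_z\Sigma')$. The graphicality argument of De Philippis--De Rosa then glues $V'''$ to $V'$ at $z$, and one propagates from $z$ back to the interface point $y$ as in De Philippis--De Rosa--Li. In your notation the interface is $\partial B_{t'}(x)$, not $\partial B_s(x)$. Second, the same thin-lens tangent-varifold mechanism already drives the base case $m=1$; maximum principle and unique continuation alone do not give the gluing there. Third, integrality of $V$ is not an output of the combinatorial selection in your Step 1. It comes from the good replacement property (via Lemma~4.15 of De Philippis--De Rosa), and you need it before reading off integer multiplicities $n_i$ in Step 4.
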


The remaining possible singularity is removed under either of the two
structural assumptions \eqref{assumption1}-\eqref{assumption2} on \(F\), by means of the following removal of singularity result, which we believe to be of independent interest.

\begin{thm}
\label{thm:removable-singularity}
Assume that $F$ satisfies either \eqref{assumption1} or
\eqref{assumption2}. Let $U\subset N$ be an open set and let
$p\in U$, and
$\Sigma\subset U\setminus\{p\}$
be a smooth properly embedded $\F$-stationary surface which is
$\F$-stable in $U\setminus\{p\}$. Suppose that
$p\in\overline{\Sigma}$. Then $p$ is a removable singularity. More
precisely, $\overline{\Sigma}\cap U$ is a smooth embedded
$\F$-minimal surface in $U$, and it is $\F$-stable in $U$.
\end{thm}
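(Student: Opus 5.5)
The plan is to reduce both cases to one statement: a finite density bound at $p$. Once we know
\[
\limsup_{r\to0}\frac{\hau^2(\Sigma\cap B_r(p))}{r^2}<\infty,
\]
or even the weaker bound $\hau^2(\Sigma\cap B_r(p))\lesssim r^2\log(1/r)$, the point $p$ has zero $2$-capacity relative to $\Sigma$. In the finite-density case this uses a standard logarithmic cutoff; in the logarithmic case it uses a log-log cutoff $\eta_r$ with $\int_\Sigma|\nabla\eta_r|^2\to0$. Testing the stability inequality with $\eta_r\phi$ and letting $r\to0$ extends $\F$-stability to $U$. The resulting stationary stable varifold $\overline{\Sigma}\cap U$ then has locally bounded mass near $p$. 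Stability together with the curvature estimate of \cite[Theorem 2.8]{DePhilippisDeRosa}, now applied in full balls centered at $p$, gives a uniform bound on $|A|$ near $p$. Combined with the density bound, this shows that near $p$ the surface is a finite union of graphs over a plane, each with $|A|$ bounded. By the strong maximum principle for $\F$-minimal graphs and embeddedness, these sheets are either disjoint or coincide, so $\overline{\Sigma}$ is smooth through $p$. In every case the first step is the pointwise bound $|A_\Sigma|(x)\le C\,\dist(x,p)^{-1}$, obtained from \cite[Theorem 2.8]{DePhilippisDeRosa} by regarding $p$ as a boundary point.

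Under \eqref{assumption1}, I would blow up at $p$. For $r_j\to0$, the rescalings $r_j^{-1}(\Sigma-p)$ have uniformly bounded curvature on compact subsets of $\R^3\setminus\{0\}$, so they subconverge to an $\F_p$-minimal lamination of $\R^3\setminus\{0\}$ whose leaves are $\F_p$-stable; here $\F_p$ is the frozen constant-coefficient integrand. On each leaf, the stability inequality combined with the ellipticity ratio $<8$ yields an operator of the form $-\Delta+cK$ with suitable constants. By the result of \cite{C}, the conformal type of each leaf is then $\C$ or $\C\setminus\{0\}$. The argument of \cite{FcS} then forces every leaf to be flat. Hence $|A|=o(\dist(x,p)^{-1})$.

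With this improved bound, the restriction of $\dist(\cdot,p)^2$ to $\Sigma$ has no critical points in $B_{r_0}(p)$ other than strict minima, and a small perturbation rules out even those. Consequently the components of $\Sigma\cap (B_{r_0}(p)\setminus\{p\})$ are annuli, and each one is a graph over a punctured plane at small scales. Gauss--Bonnet on these annuli bounds $\int|A|^2$ on each component. It remains to control how many components there are. I expect each annulus to have density at most about $\lambda^{-2}$, since each is nearly flat. The remaining concern is infinitely many nested annuli accumulating at $p$. My plan for this is to use properness in $U\setminus\{p\}$: each such annulus that reaches $p$ must cross a fixed sphere $\partial B_{r_0}(p)$. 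Properness makes $\Sigma\cap\partial B_{r_0}(p)$ compact, and it consists of finitely many curves, so only finitely many annuli occur. This gives finite density.

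Under \eqref{assumption2}, I would follow the separation method outlined in the introduction. On dyadic annuli the surface splits into graphical sheets. Let $\mathbf s$ denote the distance between adjacent sheets. Approximately, $\mathbf s$ solves $\mathcal J_F\mathbf s\approx0$. Combining this with $\mathcal J_F|x|\ge -C$ gives a weighted differential inequality for $\log(|x|/\mathbf s)$. A Hardy inequality and a De Giorgi iteration then show that $\{\mathbf s\le e^{-k}|x|\}$ decays double exponentially in $k$. Summing over sheets gives density $\lesssim\log(1/r)$, and the log-log cutoff concludes.

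The main obstacle is different in the two cases. In the first route it is the step from the local sheet structure to finitely many sheets. In the second it is making the approximate Jacobi equation for $\mathbf s$ quantitative, with errors controlled by $|x|\,|A|^2$, so that the iteration closes.
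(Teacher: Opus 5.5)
Your proposal follows the paper's proof essentially step for step. Under \eqref{assumption1} you use the curvature bound $|A_\Sigma|\le C\dist(x,p)^{-1}$, flat blow-up laminations via \cite{C} and \cite{FcS}, and the Morse argument for $|x|^2$ to get finite density. Under \eqref{assumption2} you use the separation function, the Hardy inequality, double-exponential decay of $\{\mathbf s\le e^{-k}|x|\}$, and the log-log cutoff. Both routes are closed, as in the paper, by the capacity argument and \cite[Theorem 2.8]{DePhilippisDeRosa}.

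One small correction: the nondegenerate local minima of $|x|^2$ on $\Sigma$ cannot be perturbed away. They are harmless, though. As $\rho\downarrow0$ each one only deletes a circle of $\Sigma\cap\partial B_\rho(p)$, so there are finitely many of them, and shrinking $r_0$ leaves only annuli. This is exactly the paper's circle-counting argument.
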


Combining the preceding two results with an anisotropic version of Ketover's optimal genus bound \cite{K}, we obtain the
following sharp anisotropic analogue of the Simon--Smith theorem.

\begin{thm}
\label{thm:aniso-simon-smith-ketover}
Assume that $F$ satisfies either \eqref{assumption1} or
\eqref{assumption2}. Let $N$ be a closed $3$-manifold with a Riemannian
metric. For any saturated set of generalized families of surfaces
$\Lambda$, there exists a min-max sequence $\{\Sigma_t^j\}$ obtained from $\Lambda$ that
converges in the sense of varifolds to
\[
    V=\sum_{i=1}^L n_i\,\mathbf v(\Gamma^i),
\]
where the $\Gamma^i$'s are pairwise disjoint smooth embedded
$\F$-minimal surfaces and $n_i\in \mathbb N$. 
Furthermore, denote by $\mathcal O$ and
$\mathcal U$ the set of indices \(i\) of orientable and non-orientable connected
components among the $\Gamma^i$'s, respectively.
Then
\[
    \sum_{i\in \mathcal O}
    n_i\,\operatorname{genus}(\Gamma^i)
    +
    \frac12
    \sum_{i\in \mathcal U}
    n_i\bigl(\operatorname{genus}(\Gamma^i)-1\bigr)
    \leq
    \liminf_{j\to\infty}\,\liminf_{\tau\to t_j}
    \operatorname{genus}(\Sigma_\tau^j).
\]
\end{thm}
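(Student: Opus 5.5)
The plan is to combine \cref{thm:aniso-simon-smith-one-singularity}, \cref{thm:removable-singularity}, and a Ketover-type genus argument. First I apply \cref{thm:aniso-simon-smith-one-singularity} to $\Lambda$. This gives a min-max sequence $\Sigma^j=\Sigma^j_{t_j}$ converging to $V=\sum_i n_i\mathbf v(\Sigma^{(i)})$. Each $\Sigma^{(i)}$ is smooth, $\F$-minimal and locally $\F$-stable in $N\setminus\mathcal S$, with $\#\mathcal S\le 1$ and $\overline{\Sigma^{(i)}}\setminus\Sigma^{(i)}\subset\mathcal S$.

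If $\mathcal S=\{p\}$ and $p\in\overline{\Sigma^{(i)}}$, I would apply \cref{thm:removable-singularity} in a small ball $U$ around $p$. Since $\Sigma^{(i)}$ is properly embedded in $U\setminus\{p\}$, stationary there, and stable there (after shrinking $U$ if local stability is only on small balls), the theorem shows that $\overline{\Sigma^{(i)}}$ is a smooth embedded $\F$-minimal surface through $p$. Several components may close up at $p$. Disjointness of the $\Sigma^{(i)}$ away from $p$, together with the maximum principle for $\F$-minimal surfaces (strict convexity \eqref{integrand:convexity} makes $H_F$ a quasilinear elliptic operator, and two tangent $\F$-minimal sheets touching on one side coincide), forces any two closures meeting at $p$ to coincide. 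I regroup them as one $\Gamma^i$ with the multiplicities added. This gives the representation $V=\sum n_i\mathbf v(\Gamma^i)$ with the $\Gamma^i$ pairwise disjoint, smooth, closed and embedded.

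For the genus bound I would follow \cite{K}. The min-max sequence from \cref{thm:aniso-simon-smith-one-singularity} comes with the almost-minimizing property in small annuli, and the varifold limit is built from replacements. These replacements are limits of $\F$-minimizing sequences in isotopy classes in annuli and balls, to which \cref{thm:Meeks-Simon-Yau-with-boundary} applies together with the genus lower semicontinuity \eqref{gg}. I then cover $N$ by finitely many small balls on which $\Gamma^i$ is a union of graphical disks. The key step is Ketover's lifting argument: the surfaces $\Sigma^j_\tau$, for $\tau$ near $t_j$, can be modified by neck-pinch surgeries and isotopies that do not increase genus, so that outside the small balls they converge smoothly with multiplicity to $\bigcup\Gamma^i$. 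Each orientable sheet $\Gamma^i$ covered $n_i$ times then contributes $n_i\operatorname{genus}(\Gamma^i)$ handles. A non-orientable $\Gamma^i$ is covered by the boundary of a thin tubular neighbourhood (the orientation double cover), which gives the factor $\tfrac12 n_i(\operatorname{genus}(\Gamma^i)-1)$, exactly as in \eqref{eq:genus-bound}. The $\liminf_{\tau\to t_j}$ enters because the families are generalized, so the surface at $t_j$ itself may be singular.

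I expect the genus step to be the main obstacle. Ketover's isotropic argument uses the monotonicity formula and tangent cones at the gluing points to control how handles can disappear in the limit. Here I would try to replace those tools by the uniform curvature estimates from \cref{thm:Meeks-Simon-Yau-with-boundary} for stable replacements, which give graphical decompositions at a uniform scale. I would also use the fact, coming from \cref{thm:removable-singularity}, that after removal the former singular point is a smooth point with finite density. With these, the local picture near every point is a stack of almost-flat $\F$-minimal disks, and the topological counting proceeds as in \cite{K}.
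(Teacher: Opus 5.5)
Your first half follows the paper: \cref{thm:aniso-simon-smith-one-singularity}, then \cref{thm:removable-singularity} at the possible point $p\in\mathcal S$. Two small corrections there. The stability on all of $U\setminus\{p\}$ that this theorem needs comes from the replacements in every annulus centred at $p$, since any test function compactly supported in $U\setminus\{p\}$ lives in such an annulus; shrinking $U$ does not upgrade local stability to this. Your regrouping is harmless but vacuous, because the closure of the regular part is a single embedded sheet at $p$.

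The gap is in the genus bound. Your ``key step'', that the slices can be surgered so that outside finitely many small balls they converge smoothly with multiplicity to $\bigcup_i\Gamma^i$, is not a black box from \cite{K}. It is a stronger form of the entire content of \cref{prop:optimal-curve-lifting}, and the tools you propose do not prove it. The min-max slices never converge smoothly anywhere; only replacements do, and almost minimality allows replacements only in small balls. To follow a closed curve you replace in a chain of even balls, smooth, replace again in the odd balls, and must then control the second replacement $W_j$ in the overlap wedge $B_0(1-\delta)\cap(B_1\setminus B_1(1-\delta))$. That is, you need control right next to the boundary of the ball in which $W_j$ was produced. There the interior curvature estimates for stable replacements degenerate. \cref{thm:Meeks-Simon-Yau-with-boundary} concerns a fixed boundary curve, so it gives nothing uniform in $j$ up to $\partial B_1$ when the $n$ boundary arcs $\alpha^i_j$ collapse onto one arc. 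Thin folds near $\partial B_1$ that join different sheets and vanish in the varifold limit are exactly what must be excluded. This is where Ketover uses monotonicity, to keep the surfaces off the top and bottom faces of the wedge in a Gauss--Bonnet estimate; he does not use tangent cones at gluing points. Handles disappearing in the limit are handled separately by the genus collapse lemma. The finite density of the limit at the removed point is irrelevant, since the difficulty lives on the approximating surfaces, not on the smooth limit.

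The missing idea is the anisotropic no-folding estimate (Steps~6--7 of the proof of \cref{prop:optimal-curve-lifting}).
From $|\tr_S(\Psi_F A_S)|\le C$ and uniform ellipticity one gets $|A_S|^2\le C(1-\det A_S)$, hence $|A_S|^2\le C(1+\sec_N(TS)-K_S)$ by the Gauss equation. Gauss--Bonnet on $W_j\cap\{\zeta>t\}$ then bounds $\int|A_{W_j}|^2$ by area, genus and boundary terms:
\begin{itemize}
\item The genus term vanishes, because $\Sigma^j$ is planar in small balls away from the finitely many genus-collapse points (which $\gamma$ is perturbed to avoid) and $\gamma$-reductions create no genus.
\item The terms on the radial faces are controlled by the curvature bounds on $\alpha^i_j,\beta^i_j$.
\item Monotonicity is replaced by a double angular cutoff $\zeta=\psi(\theta)\chi(\varphi)$ with $|\nabla\zeta|^2\le C\zeta$. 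After integrating in $t$ (layer cake and coarea), the level-set terms are absorbed and $\sup_j\int_{W_j}\zeta|A_{W_j}|^2<\infty$.
\end{itemize}
Compactness then gives smooth $n$-sheeted graphical convergence through the wedge away from finitely many points, which $\gamma$ is again perturbed to avoid.

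What this yields is weaker than your claim, and it is organized around curves rather than a fixed cover. One gets full-multiplicity lifts ($n$ of degree one, or $n/2$ of degree two over M\"obius bands) of a $1$-skeleton cutting each $\Gamma^i$ into a disk. One then surgers into the tubular neighbourhoods, cuts them along $\pi^{-1}$ of the curves into $3$-balls, and applies Ketover's $3$-ball surgery lemma, as in \cref{thm:aniso-genus-bound}.

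Finally, all of this is done for the almost-minimizing slice $\Sigma^j_{t_j}$, not for $\Sigma^j_\tau$ with $\tau$ near $t_j$. The $\liminf_{\tau\to t_j}$ only enters at the end, via $\operatorname{genus}(\Sigma^j_{t_j})\le\liminf_{\tau\to t_j}\operatorname{genus}(\Sigma^j_\tau)$.
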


\begin{rmk}
\label{rmk:aniso-almgren-pitts}
Although the main focus of this paper is the anisotropic Simon--Smith
construction, the removable singularity theorem (\cref{thm:removable-singularity}) also applies to the
three-dimensional anisotropic Almgren--Pitts theory developed by De Philippis-De Rosa \cite{DePhilippisDeRosa} and to its multiparameter generalization.

More precisely, assume that $F$ satisfies either \eqref{assumption1} or
\eqref{assumption2}. Let $N$ be a closed $3$-manifold with a Riemannian
metric, and let $\Pi$ be a non-trivial $m$-parameter Almgren--Pitts
homotopy class of sweepouts of $N$. Following the proof in \cite{DePhilippisDeRosa}, the anisotropic
Almgren--Pitts min--max construction produces an integral
$\F$-stationary varifold realizing the corresponding anisotropic width,
which is almost minimizing in annuli. The replacement regularity theory
then gives a smooth embedded locally $\F$-stable $\F$-minimal surface away from finitely many
isolated singularities.

Under either \eqref{assumption1} or \eqref{assumption2}, these isolated
singularities are removable by \cref{thm:removable-singularity}. Hence
the min--max varifold is induced by a smooth embedded closed
$\F$-minimal surface. While the singular point in \cite{DePhilippisDeRosa} was already removed in \cite{DDL}, the argument therein relies on a delicate construction of nested sweepouts, which does not  extend to multiparameter min–max schemes. On the contrary, our results apply to the multiparameter Almgren-Pitts anisotropic min-max, and allows to construct higher-index minimal surfaces. This also paves the way to studying the anisotropic analogue of the multiplicity one conjecture \cite{Zhou2020}, as well as its applications to Yau’s conjecture \cite{YC} for generic metrics via the Morse index conjecture \cite{MarquesNeves2016MorseIndexMultiplicity,MarquesNeves2018MorseIndex}.
\end{rmk}

\section{Anisotropic Meeks--Simon--Yau}\label{sec:aniso-MSY}
In this section, we prove that any $\F$-minimizing sequence in a fixed isotopy class of surfaces, with or without boundary, converges to a smooth limit while preserving genus bounds. In particular, we establish \cref{thm:white-improvement}, \cref{thm:Meeks-Simon-Yau-without-boundary}, and \cref{thm:Meeks-Simon-Yau-with-boundary}. The main challenge is that restricting the minimization problem to a prescribed isotopy class significantly reduces the available competitors, making regularity substantially more difficult to obtain. For the area functional, this problem was studied in detail in \cite{MSY} and \cite{AS}, where the key ingredients are the monotonicity formula and the analysis of tangent cones, building on \cite{Allard-geodesic}. However, the monotonicity formula is highly rigid and generally fails for anisotropic integrands, even for perturbations that are arbitrarily close to the area functional. Consequently, tangent varifolds are not known to be conical, making the classical approach unavailable.

To overcome these difficulties, we abandon the analysis of tangent varifolds and instead rely on the regularity theory developed by Pitts \cite{Pitts}. We first prove in \cref{thm:integrality-limit-varifold} that the limit varifold of any isotopic $\F$-minimizing sequence is an integral stationary varifold, following essentially the same strategy as in \cite{AS}. We then combine the key estimate of \cref{lemma:tentacles-estimate} with ideas from \cite{HS} and the existence of at least one $\F$-minimizing disk established in \cite{W-1} to show that the limit varifold admits replacements, in the sense of Pitts, both in the interior and up to the boundary. This allows us to implement Pitts' regularity scheme and deduce the smoothness of the limit varifold. Finally, to transfer the topological information to the limit, we employ the $\gamma$-reduction technique introduced in \cite{MSY}, which yields the desired genus control.

\subsection{Preliminary lemmas}
We first observe the following consequence of
\cite[Lemma 1]{MSY}.
\begin{lemma}\label{lem:anis-1}
There exists \(\varepsilon_0\in(0,1)\) (independent of \(N\) and \(\rho_0\)) such that if \(\Sigma\in \mathcal{C}\) satisfies
\begin{equation}\label{eq:anis-smallness}
  \mathbf F(\Sigma,B_{\rho_0}(x_0))< \varepsilon_0^2\rho_0^2
\end{equation}
for each $x_0\in N$, then there exists a unique compact set \(K_\Sigma\subset N\) with \(\partial K_\Sigma=\Sigma\) and
\begin{equation}\label{eq:vol-control}
    \operatorname{Vol}(K_\Sigma\cap B_{\rho_0}(x_0)) \leq C_1\rho_0^3,\quad\forall\,x_0\in N,
\end{equation}
and moreover
\begin{equation}\label{eq:ani-iso-ineq}
    \operatorname{Vol}(K_\Sigma) \leq C_2\bigl(\mathbf{F}(\Sigma)\bigr)^{3/2},
\end{equation}
where the constants \(C_1,C_2>0\) depend only on the geometric constants of \(N\), \(\rho_0\), and \(\lambda\).
\end{lemma}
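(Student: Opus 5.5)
The plan is to reduce everything to the isotropic statement \cite[Lemma 1]{MSY} by means of the comparability hypothesis \eqref{H:comparability}. That lemma says the following. There is a universal $\varepsilon_1\in(0,1)$ such that if $\Sigma\in\mathcal C$ satisfies $\mathcal H^2(\Sigma\cap B_{\rho_0}(x_0))<\varepsilon_1^2\rho_0^2$ for every $x_0\in N$, then there is a unique compact $K_\Sigma$ with $\partial K_\Sigma=\Sigma$. This $K_\Sigma$ satisfies $\vol(K_\Sigma\cap B_{\rho_0}(x_0))\le c_1\rho_0^3$ for all $x_0$, together with the isoperimetric bound $\vol(K_\Sigma)\le c_2\,\mathcal H^2(\Sigma)^{3/2}$, with $c_1,c_2$ depending only on the homogeneous regularity constants of $N$ (and $\rho_0$).

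The first step is to fix $\varepsilon_0$ so that the anisotropic smallness \eqref{eq:anis-smallness} implies the isotropic one. By \eqref{H:comparability},
\[
\mathcal H^2(\Sigma\cap B_{\rho_0}(x_0))\le \lambda^{-1}\F(\Sigma,B_{\rho_0}(x_0))<\lambda^{-1}\varepsilon_0^2\rho_0^2,
\]
so it suffices to take $\varepsilon_0:=\lambda^{1/2}\varepsilon_1$. This choice is independent of $N$ and $\rho_0$. It does depend on $\lambda$, which we regard as fixed throughout the paper. If one insists on literal independence from $\lambda$, one notes that the constant is only used through this relation.

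Once the hypothesis of \cite[Lemma 1]{MSY} holds, the existence and uniqueness of $K_\Sigma$ follow immediately. So does \eqref{eq:vol-control}, with $C_1=c_1$. For \eqref{eq:ani-iso-ineq} we apply comparability once more, obtaining
\[
\vol(K_\Sigma)\le c_2\,\mathcal H^2(\Sigma)^{3/2}\le c_2\lambda^{-3/2}\F(\Sigma)^{3/2},
\]
so $C_2=c_2\lambda^{-3/2}$.

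The only delicate point is bookkeeping. We must check that the form of \cite[Lemma 1]{MSY} being invoked really gives uniqueness of $K_\Sigma$ among compact sets satisfying the local volume bound. In the MSY formulation, uniqueness holds because the complementary choice violates the local volume bound at scale $\rho_0$. That property is purely metric, so it transfers unchanged. No anisotropic input beyond \eqref{H:comparability} is needed.
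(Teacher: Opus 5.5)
Your proposal is correct and is essentially the paper's proof. The paper likewise sets $\varepsilon_0^2=\lambda\,\delta_{\mathrm{MSY}}^2$ so that \eqref{H:comparability} turns the anisotropic smallness into the hypothesis of \cite[Lemma 1]{MSY}. It then takes $C_1$ directly from that lemma and applies comparability once more to obtain $C_2=c_{\mathrm{MSY}}\lambda^{-3/2}$.
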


\begin{proof}
By the uniform comparability assumption \eqref{H:comparability}, for every \(x_0\in N\),
\[
    \mathcal H^2(\Sigma\cap B_{\rho_0}(x_0))
    \le
    \lambda^{-1}\mathbf F(\Sigma,B_{\rho_0}(x_0)).
\]
Let \(\varepsilon_0^2=\lambda\delta_{\mathrm{MSY}}^2\), then by \eqref{eq:anis-smallness} we have
\[
    \mathcal H^2(\Sigma\cap B_{\rho_0}(x_0))
    <
    \delta_{\mathrm{MSY}}^2\rho_0^2
    \qquad\text{for every }x_0\in N,
\]
where $\delta_{\mathrm{MSY}}$ is exactly the same constant as in \cite[Lemma 1]{MSY}.
This is precisely the smallness hypothesis of \cite[Lemma 1]{MSY}.
Applying \cite[Lemma 1]{MSY} gives the unique compact set
\(K_\Sigma\subset N\) with \(\partial K_\Sigma=\Sigma\) and
\[
    \operatorname{Vol}(K_\Sigma\cap B_{\rho_0}(x_0))
    \le
    \delta_{\mathrm{MSY}}^2\rho_0^3
    \qquad\text{for every }x_0\in N.
\]
This proves \eqref{eq:vol-control} with
\(C_1=\delta_{\mathrm{MSY}}^2\).

The same lemma also gives
\[
    \operatorname{Vol}(K_\Sigma)
    \le
    c_{\mathrm{MSY}}
    \bigl(\mathcal H^2(\Sigma)\bigr)^{3/2}.
\]
Using \eqref{H:comparability} once more,
we deduce \eqref{eq:ani-iso-ineq} with \(C_2=c_{\mathrm{MSY}}\lambda^{-3/2}\).
\end{proof}

We modify \cite[Lemma 2]{MSY} as follows:
\begin{lemma}\label{lem:disjoint-boundaries}
    Suppose $M_1,\cdots,M_R$ are diffeomorphic to $\mathbf{D}$, suppose $M_i\setminus\partial M_i\subset A\setminus\partial A$, $\partial M_i\subset\partial A$, $i=1,2,\cdots,R$, where $A\subset N$ is diffeomorphic to $\mathbf{B}$, and suppose that $\partial M_i\cap\partial M_j=\emptyset$ and that either $M_i\cap M_j=\emptyset$ or $M_i$ intersects $M_j$ transversely for all $i\neq j$.

    Then there exist pairwise disjoint $\tilde{M}_1,\cdots,\tilde{M}_R$ with $\tilde{M}_i\setminus\partial\tilde{M}_i\subset A\setminus\partial A$, $\partial\tilde{M}_i=\partial M_i$ and $\mathbf{F}(\tilde{M}_i)\leq\mathbf{F}(M_i)$, $i=1,2,\cdots,R$.
\end{lemma}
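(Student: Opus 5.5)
The plan is to adapt the cut-and-paste argument of \cite[Lemma 2]{MSY}. The only input from the integrand is that $\F$ is an integral over the surface of a pointwise density $F(x,T_x\Sigma)$. Cutting and regluing pieces of surfaces along their intersection curves therefore preserves the total $\F$-energy exactly, up to a set of measure zero. We then discard pieces, and since $F>0$ by \eqref{H:comparability}, discarding only decreases energy. No monotonicity or other isotropic tool is needed.

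\emph{Step 1: the intersection curves.} By transversality, and because the boundaries $\partial M_i\subset\partial A$ are pairwise disjoint while the interiors lie in $A\setminus\partial A$, each set $M_i\cap M_j$ ($i\neq j$) is a finite union of disjoint smooth simple closed curves in the interior of $M_i$. There are no arcs: an arc would have endpoints on $\partial M_i\cap\partial M_j=\emptyset$. We argue by induction on the total number $\nu$ of such curves. If $\nu=0$ the surfaces are already pairwise disjoint and there is nothing to prove.

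\emph{Step 2: the surgery.} Suppose $\nu\geq1$. Let $\mathcal L=\bigcup_{i\neq j}(M_i\cap M_j)$. Among all curves of $\mathcal L$, regarded inside the disks containing them, choose $\gamma\subset M_i\cap M_j$ that is innermost in $M_i$. By Jordan--Schoenflies, $\gamma$ bounds a disk $D_i\subset M_i$ with $\mathring D_i\cap\mathcal L=\emptyset$. The curve $\gamma$ also bounds a disk $D_j\subset M_j$, which may contain further curves of $\mathcal L$.

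Without loss of generality assume $\F(D_i)\le\F(D_j)$; otherwise we first handle the symmetric situation described below. Define $M_j'=(M_j\setminus D_j)\cup D_i$. This surface is a disk with the same boundary $\partial M_j$, and $\F(M_j')\le\F(M_j)$. Then push $D_i$ slightly off $M_i$, to the side determined by $M_j\setminus D_j$ near $\gamma$, and round the corner along $\gamma$. Since $\mathring D_i$ meets no other surface, a sufficiently small push creates no new intersections. The corner rounding changes $\F$ by an arbitrarily small amount, and this is more than compensated because we have removed at least the curve $\gamma$ and every curve of $\mathcal L$ inside $\mathring D_j$.

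\emph{Step 3: the strict-inequality issue.} The corner rounding only gives $\F(M_j')\le\F(M_j)+\delta$, so a little care is needed. The main subtlety is handling equality and this perturbation error while still obtaining the sharp conclusion $\F(\tilde M_i)\le\F(M_i)$.

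The cleaner route is to avoid smoothing altogether, as in \cite{MSY}. We perform all exchanges at the level of piecewise-smooth, possibly touching surfaces, and the energy is exactly additive under these exchanges. Concretely: construct the final disks as unions of pieces of the original surfaces, chosen by an iterated innermost-disk procedure. At each stage, replace the disk of larger energy by the one of smaller energy. After finitely many steps, the surfaces intersect only in a non-transverse, touching way along curves, and they satisfy $\F(\tilde M_i)\le\F(M_i)$ exactly, since each exchange is non-increasing. Finally, separate the surfaces by a small normal perturbation away from the touching set. If we insist on smoothness and a non-strict inequality, there is an additional step: when exchanges are strict, the strict gain absorbs the smoothing error; when $\F(D_i)=\F(D_j)$, the exchange can simply be skipped by relabeling which disk keeps which piece. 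Since the statement only requires embedded disks, Lipschitz or piecewise-smooth surfaces suffice, so this bookkeeping is where the main care goes.
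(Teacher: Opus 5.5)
Your overall strategy is the one the paper uses; it simply defers to the cut-and-paste proof of \cite[Lemma 2]{MSY}. However, two steps of your execution fail.

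\emph{Which disk to move.} In Step~2 you take $\gamma$ innermost on $M_i$ and, if $\F(D_i)>\F(D_j)$, appeal to a ``symmetric situation''. It is not symmetric, because $D_j$ need not be innermost on $M_j$. Then $(M_i\setminus D_i)\cup D_j$ contains the remaining curves of $M_i\cap M_j$ lying in $\mathring D_j$, which sit in $M_i\setminus D_i$, so it is not embedded. It also picks up new intersections with every $M_k$ meeting $\mathring D_j$, so your count $\nu$ need not drop. The rule ``replace the disk of larger energy by the one of smaller energy'' in Step~3 has the same defect. The missing idea is to select, among all disks on all the $M_k$ bounded by curves of $\mathcal L$, one disk $D_*$ of least $\F$-energy. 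Since $F\ge\lambda>0$, $\F$ is strictly monotone under inclusion, so $D_*$ contains no curve of $\mathcal L$ in its interior. Its partner disk has $\F\ge\F(D_*)$ by minimality, so you always transplant the innermost disk and never the other one.

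\emph{Triple points.} Your assertion that an innermost $\gamma$ bounds a disk with $\mathring D_i\cap\mathcal L=\emptyset$ presupposes that the curves of $\mathcal L$ on $M_i$ are pairwise disjoint. Pairwise transversality allows triple points $M_i\cap M_j\cap M_k$, where curves of $M_i\cap M_j$ and $M_i\cap M_k$ cross. Such points are stable under perturbation, and when they are present no curve need bound a disk free of $\mathcal L$. One way out is to induct on $R$, resolving $M_R$ against the already disjoint $\tilde M_1,\dots,\tilde M_{R-1}$. Then all intersection curves are pairwise disjoint, and each transplant preserves the disjointness of the family.

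\emph{The sharp inequality.} Your argument does not deliver $\F(\tilde M_i)\le\F(M_i)$. Removing intersection curves gains no energy. An exchange with $\F(D_*)=\F(D_j)$ cannot be ``skipped'', since $\gamma$ still has to be eliminated. Your ``cleaner route'' also ends with a separating perturbation whose cost is never paid; moreover, after a transplant the two surfaces share the whole disk $D_*$, not just a touching curve. The argument needs ellipticity of the integrand, not only positivity of its density. Because $M_i$ and $M_j$ are transverse along $\gamma$, the piecewise smooth disk $(M_j\setminus D_j)\cup D_*$ has a crease of angle in $(0,\pi)$ along $\gamma$. By strict convexity of $G$ (cf.\ \eqref{integrand:convexity}), the induced anisotropic length in cross-sections satisfies a strict triangle inequality. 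Cutting the crease on its convex side at scale $\delta$ therefore lowers $\F$ by at least $c\,\delta\,\mathcal H^1(\gamma)$, up to $O(\delta^2)$ errors. Pushing the rest of $D_*$ off $M_i$ by $\delta'\ll\delta$, towards the side where $M_j\setminus D_j$ lies (which is the convex side), costs only $O(\delta')$. Each step thus produces a smooth embedded disk with $\F$ strictly below $\F(M_j)$, even when $\F(D_*)=\F(D_j)$. This strict gain also pays for the small perturbations needed to keep modified surfaces transverse to the remaining ones during the induction.
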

\begin{proof}
    The proof follows the same line of argument as that of Lemma 2 in \cite{MSY}, with the only modification being the replacement of the area functional by the anisotropic energy $\mathbf{F}(\cdot)$. Since the adaptation is straightforward, we omit the details here.
\end{proof}

In what follows, we modify \cite[Lemma 3]{MSY} to work in the case of anisotropic integrands.
\begin{lemma}\label{lem:lemma3-anisotropic-MSY}
    Suppose $U\subset B_{\rho_0}(x_0)$, $\overline{U}\approx \mathbf{B}$, and $U$ is convex in the strong sense that $d_U$ is a convex function on $\{x\in N: d_U(x) < \theta\rho_0 \}$ for some $\theta\in(0,\frac12)$, and let $\beta\geq1$ be some constant such that, for each $s\in(\theta\rho_0/2,\theta\rho_0)$,
    \begin{align}\label{lemma3:isoperimetric-condition}
        \min\{\F(E),\F(\de U(s)\setminus E)\} \leq \beta (\hau^1(\de E))^2\,,
    \end{align}
    whenever $E$ is a disc contained in $\de U(s)$.

    Let $\delta_1:=\min\{\lambda^{1/2} \varepsilon_0,\,c\,\lambda\beta^{-1/2}\theta\}$, where
$\varepsilon_0$ is chosen as in \cref{lem:anis-1} and $c>0$ is a universal constant.
If $M$ is any smooth disc with $\partial M\subset N\setminus U$, with $M$ intersecting $\partial U$ transversely, and if
\begin{equation}\label{lemma3:area-bound-assumption}
\mathcal H^2(\partial U)+\mathcal H^2(M)\le \delta_1^2\rho_0^2/16,
\end{equation}
    and if ${\Sigma_0}$ is any component of $M\setminus U$ with $\de M \cap {\Sigma_0} = \emptyset$. Then there is a unique $K_{\Sigma_0} \subset N\setminus U$ such that
    \begin{align}\label{lemma3:vol-bound-assumption}
        \vol(K_{\Sigma_0}) \leq c\delta_1^3\rho_0^3,\;\;\;\; \de K_{\Sigma_0} = {\Sigma_0} \cup Q\,,
    \end{align}
    where $Q \subset \de U$ is a compact (not necessarily connected) surface with $\de Q = \de {\Sigma_0}$ and
    \begin{itemize}
        \item The bound below holds:
        \begin{align}\label{lemma3:aniso-main-result}
        \F(Q) < \left[1+C(\lambda,\beta)\left(\sqrt{\hau^2(\de U) + \hau^2(M) + |K_{\Sigma_0}|}\right)\right]\F\left({\Sigma_0}\cap U(\theta\rho_0)\right)\,.
    \end{align}
    \item Moreover if any principal curvature of $\de U$ is at least $(100\,\diam(U))^{-1}$ pointwise, then:
    \begin{align}\label{lemma3:aniso-main-result-better}
        \F(Q) < \F\left({\Sigma_0} \cap U(\theta\rho_0)\right)\,.
    \end{align}
    \end{itemize}
\end{lemma}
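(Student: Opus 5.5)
We follow the proof of \cite[Lemma 3]{MSY} and keep track of where the area functional enters; only the final comparison between $\F(Q)$ and $\F(\Sigma_0\cap U(\theta\rho_0))$ needs genuinely anisotropic input.

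\emph{Step 1: construction of $K_{\Sigma_0}$ and $Q$.} This is purely topological and isoperimetric. Since $\Sigma_0$ is a component of $M\setminus U$ disjoint from $\partial M$, it is a planar domain whose boundary curves lie on $\partial U\approx S^2$. The smallness assumption \eqref{lemma3:area-bound-assumption} and the choice $\delta_1\le\lambda^{1/2}\varepsilon_0$ give the hypotheses of \cref{lem:anis-1} (with the comparison between $\mathcal H^2$ and $\F$ given by \eqref{H:comparability}). We apply it to the closed surface obtained from $\Sigma_0$ by capping with the appropriate pieces of $\partial U$. For each boundary curve of $\Sigma_0$ we choose the complementary region of $\partial U$ so that the resulting set bounds a region of small volume. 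As in \cite{MSY}, this produces a unique compact $K_{\Sigma_0}\subset N\setminus U$ with $\partial K_{\Sigma_0}=\Sigma_0\cup Q$ and the volume bound \eqref{lemma3:vol-bound-assumption}. Uniqueness follows from the same volume dichotomy, since the complementary choice has volume comparable to $\rho_0^3$.

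\emph{Step 2: slicing and a differential inequality.} Let $d=d_U$ and $s\in(0,\theta\rho_0)$. Set $K(s)=K_{\Sigma_0}\cap\{d\ge s\}$ and $Q_s=K_{\Sigma_0}\cap\partial U(s)$; then $Q_0=Q$. By convexity of $d$, the nearest-point projection $\pi\colon\partial U(s)\to\partial U$ is $1$-Lipschitz with bounded distortion. Hence
\[
\F(Q)\le\bigl(1+C\,\text{(distortion)}\bigr)\F(Q_s)\quad\text{for small }s,
\]
where the error coming from $x$-dependence of $F$ is $O(s)$ by \eqref{H:regularity}.

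Now let $f(s)=\F(\Sigma_0\cap\{d<s\})$ and $g(s)=\F(Q_s)$. By coarea, $|Q_s|$ integrates to $\operatorname{Vol}(K(0)\setminus K(s))$, and $\mathcal H^1(\partial Q_s)\le$ the slice length of $\Sigma_0$, which is at most $\lambda^{-1}f'(s)$. The isoperimetric hypothesis \eqref{lemma3:isoperimetric-condition}, applied to each disc component of $Q_s$ or of its complement in $\partial U(s)$, gives $g(s)\le\beta\lambda^{-2}f'(s)^2$. The complementary alternative is excluded by the volume smallness from Step 1, exactly as in \cite{MSY}. Integrating this ODE-type inequality over $s\in(\theta\rho_0/2,\theta\rho_0)$ shows there is an $s$ in that range with $g(s)\le f(\theta\rho_0)$ up to the multiplicative error written in \eqref{lemma3:aniso-main-result}. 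The factor $\sqrt{\mathcal H^2(\partial U)+\mathcal H^2(M)+|K_{\Sigma_0}|}$ comes from the size of the slices. We then push $Q_s$ back to $Q$ with $\pi$.

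\emph{Step 3: the anisotropic comparison, which is the main obstacle.} The isotropic argument uses $|J\pi|\le1$ to bound area. Anisotropically, projection changes normals, so $F$ is not monotone under $\pi$. Instead of comparing $Q$ with $Q_s$, we would compare $Q$ with $\Sigma_0\cap\{d<s\}$ via the divergence theorem applied to the vector field $X=D_\nu G(x,\nu_{\partial U(d(x))})$ on $K_{\Sigma_0}\cap\{d<s\}$. On the level surfaces, Euler's identity \eqref{integrand:homogen} turns the flux through them into $\F$. On $\Sigma_0$, the strict support inequality \eqref{integrand:convexity} bounds the flux by $\F(\Sigma_0)$. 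The divergence $\div X$ equals $-H_F(\partial U(d))$ plus $O(1)$ from $x$-derivatives. Its integral is therefore bounded by $C|K_{\Sigma_0}|$, which by \eqref{lemma3:vol-bound-assumption} and the isoperimetric bound is absorbed into the $C(\lambda,\beta)\sqrt{\cdot}$ error, yielding \eqref{lemma3:aniso-main-result}. Under the curvature assumption that every principal curvature of $\partial U$ is at least $(100\diam U)^{-1}$, uniform ellipticity \eqref{H:ellipticity} gives $H_F(\partial U(d))\ge c\lambda/\diam U>0$. This dominates the $O(1)$ error since $U\subset B_{\rho_0}$ is small, so $\div X<0$ and the inequality becomes strict without any error term, giving \eqref{lemma3:aniso-main-result-better}. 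The delicate points are making the sign conventions for the outward normal consistent, and checking that the $x$-dependence of $F$ is really dominated. If the latter fails, we would shrink $\rho_0$ via the normal-coordinate bounds.
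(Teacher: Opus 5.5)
Your route is the paper's. You build $K_{\Sigma_0}$ from \cref{lem:anis-1}, then apply the divergence theorem to $D_\nu G(x,\nabla d_U)$ on $K_{\Sigma_0}\cap\{t_1\le d_U\le t_2\}$, using three inputs: Euler's identity \eqref{integrand:homogen} on the level sets, the support inequality \eqref{integrand:convexity} on $\Sigma_0$, and convexity of $G$ and of $d_U$, which bounds the divergence below by $-C$ (by $0$ under the curvature hypothesis). You then close with the isoperimetric ODE. Your Step 3 identifies exactly the anisotropic substitute for the MSY projection argument that the paper uses.

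What needs fixing is how Steps 2 and 3 fit together.

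\begin{itemize}
\item \emph{The projection estimate is false.} The estimate $\F(Q)\le(1+C)\F(Q_s)$ in Step 2 fails even isotropically, because $\pi(Q_s)$ need not cover $Q$. For example, if $\Sigma_0$ is a low dome over a disc $Q$ that never reaches $\{d_U=s\}$, then $Q_s=\emptyset$. The part of $Q$ shadowed by $\Sigma_0$ is accounted for only by $E_s:=\Sigma_0\cap\{d_U<s\}$, so drop that estimate.

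\item \emph{The correct input.} The divergence theorem gives
\[
\F(Q)\le \F(Q_s)+\F(E_s)+C\,\bigl|K_{\Sigma_0}\cap\{d_U<s\}\bigr| .
\]
Absorb the volume term as $\epsilon\,\F(E_s)$, writing $|K_{\Sigma_0}\cap\{d_U<s\}|\le C\sup_t\F(Q_t)^{1/2}\int_0^s\F(Q_t)^{1/2}\,\mathrm{d}t$ and using the isoperimetric bound $\F(Q_t)^{1/2}\le C\,\tfrac{\mathrm{d}}{\mathrm{d}t}\F(E_t)$.

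\item \emph{Which function the ODE concerns.} Run it on $h(s):=\F(Q)-(1+\epsilon)\F(E_s)$, not on $\F(Q_s)$ alone. This $h$ satisfies $h\le\F(Q_s)\le\beta\lambda^{-2}(h')^2$. Hence $(\sqrt h)'\le-c\lambda\beta^{-1/2}$ wherever $h>0$.

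\item \emph{The contradiction.} If $h$ stayed positive on $(\theta\rho_0/2,\theta\rho_0)$, then $\sqrt{\F(Q)}\ge\sqrt{h(\theta\rho_0/2)}\gtrsim\lambda\beta^{-1/2}\theta\rho_0$. This contradicts \eqref{lemma3:area-bound-assumption} and the choice of $\delta_1$. So $h(\theta\rho_0)<0$, and in the curvature case, where $\epsilon=0$, you get the strict inequality \eqref{lemma3:aniso-main-result-better}.
\end{itemize}

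Your Step 2 instead concludes with an $s$ such that $\F(Q_s)$ is bounded by $\F(\Sigma_0\cap U(\theta\rho_0))$ up to a factor. That does not suffice. It loses the sharp factor $1+o(1)$ unless $\F(Q_s)$ is a \emph{small} multiple of that quantity. Even then it cannot give strictness in \eqref{lemma3:aniso-main-result-better}, since $\F(Q_s)$ may be positive.
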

\begin{proof}
    Most of the proof of \cite[Lemma 3]{MSY} follows in a similar way with some modifications. First, we find a closed set $Q_0 \subset \de U$ with $\de Q_0 = \de{\Sigma_0}$. Then \cref{lemma3:area-bound-assumption} and the quantification of $\delta_1$ implies that $$ \F(Q_0) + \F({\Sigma_0}) \leq \lambda^{-1}(\mathcal H^2(Q_0) + \mathcal H^2({\Sigma_0})) \leq \lambda^{-1}\delta_1^2\rho_0^2 \leq \epsilon_0^2\rho_0^2$$ and \cref{lem:anis-1} implies that there is a compact $W$ with
    \begin{align}\label{lemma3:temp-volume-bound}
        \vol(W) \leq c\delta_1^3\rho_0^3,\;\;\;\;\de W = Q_0\cup{\Sigma_0}\,.
    \end{align}
    Then we set $K_{\Sigma_0} = W\setminus U$. Then the assumption \cref{lemma3:vol-bound-assumption} holds with $Q= Q_0$ or $Q = \de U \setminus Q_0$. Then, like \cite{MSY}, for $t\in[0,\theta\rho_0]$ we define 
    \begin{align}
        Q_t= K_{\Sigma_0} \cap \{x: d_U(x) = t\}\,,\;\;\;\; E_t = \{x\in{\Sigma_0}: d_U(x) < t\}\,.
    \end{align}
    In what follows next we diverge from \cite{MSY} to adapt the proof to the anisotropic case. For this aim consider the vector field $D_\nu G(x,\nabla d_U)$ and note that:
    \begin{align}
        \div(D_\nu G(x,\nabla d_U(x))) = \sum_i D_{e_i}(D_\nu G)\cdot e_i + \ang{D^2_{\nu\nu} G:\nabla^2 d_U}\,.
    \end{align}
    However, by assumption $G$ is strictly convex and $d_U$ is convex in the direction of $(\nabla d_U)^{\perp}$, hence the second term above is positive and the first term is bounded by the $C^2$ norm of the integrand:
    \begin{align}
        \div(D_\nu G(x,\nabla d_U(x))) > -\frac{1}{\lambda} \quad\text{on }\{0<d_U<\theta\rho_0\}.
    \end{align}
    Note the bound above is independent of $U$. Then we use the divergence theorem on $\{x\in K_{\Sigma_0}: t_1 \leq d_U(x) \leq t_2\}$
    \begin{align}
    \begin{aligned}\label{inequality-1-lemma3}
        -\frac{1}{\lambda}\vol(\{x\in K_{\Sigma_0}: t_1 \leq d_U(x) \leq t_2\}) < \int_{\{x\in K_{\Sigma_0}: t_1 \leq d_U(x) \leq t_2\}} \div(D_\nu G(x,\nabla d_U(x)))\\
        = \int_{Q_{t_2}} \ang{D_\nu G(x,\nabla d_U),\hat{n}} - \int_{Q_{t_1}} \ang{D_\nu G(x,\nabla d_U),\hat{n}} + \int_{E_{t_2}\setminus E_{t_1}} \ang{D_\nu G(x,\nabla d_U),\hat{n}}\,.
    \end{aligned}
    \end{align}
    Note the strict inequality holds for $0\leq t_1<t_2 \leq \theta\rho_0$. Since $Q_t$ is the level sets of $d_U$, it is easy to see that $\nabla d_U$ agrees with the outward normal on $Q_t$. Now, invoking the assumptions \cref{integrand:homogen}, \cref{integrand:convexity} and coarea formula combined with \eqref{H:comparability}, (using the bound \cref{lemma3:temp-volume-bound}) we arrive at:
    \begin{align}\label{lemma3:temp-1}
        0 < \left[\F(Q_{t_2}) -  \F(Q_{t_1})\right] + \left[\F(E_{t_2}) - \F(E_{t_1})\right] + C\int_{t_1}^{t_2}\F(Q_t)\,\mathrm{d}t \,.
    \end{align}
    We follow the same strategy of \cite{MSY}. For $t_1=0$ we get for $t \in (0,\theta\rho_0]$:
    \begin{align}
        \F(Q) - \F(E_t) < \F(Q_t) + C(\lambda)\int_{0}^{t}\F(Q_s)\,\mathrm{d}s\,.
    \end{align}
    Moreover by the coarea formula and \cref{lemma3:temp-volume-bound}:
    \begin{align}
        \int_{0}^{\theta\rho_0} \F(Q_t)\,\mathrm{d}t \leq \frac{1}{\lambda}\vol(K_{\Sigma_0}\cap U(\theta\rho_0))\leq \frac{c}{\lambda}\delta_1^3\rho_0^3\,,
    \end{align}
    hence
    \begin{align}
        \F(Q_t) \leq 4\frac{c}{\lambda} \theta^{-1}\delta_1^3\rho_0^2\,,
    \end{align}
    for a subset of $(0,\theta\rho_0)$ with measure at least $3\theta\rho_0/4$. We use \cref{lemma3:temp-1}, the area bound \cref{lemma3:area-bound-assumption} on $N$ and the definition of $\delta_1$ to see that (for a possibly different $c$):
    \begin{align}
        \F(Q_t) \leq C(\lambda)\left[\hau^2(\de U) + \hau^2(M) + |K_{\Sigma_0}| \right]\leq \frac{c}{\lambda} \delta_1^2\rho_0^2\;\;\;\text{ for all } t\in[0,3\theta\rho_0/4) \,.
    \end{align}
    On the other hand that by the condition \eqref{H:comparability}, \cref{lemma3:isoperimetric-condition} and $\de E_t = \de Q_t$:
    \begin{align}
    \begin{aligned}
        \F(Q_t) &\leq \lambda^{-1} \hau^2(Q_t) \leq \lambda^{-1} \beta |\de Q_t|^2 \\&\leq \lambda^{-3} \beta \left(\int_{\de E_t} G(x,\hat{n}|_{E_t}) \,\mathrm{d}\hau^1\right)^2 \leq 
        \beta \lambda^{-3} \left(\frac{\mathrm{d}}{\mathrm{d}t} \F(E_t)\right)^2\,.
        \end{aligned}
    \end{align}
    for almost all $t\in (\theta\rho_0/2,\theta\rho_0)$. First note that for $t\in[0,3\theta\rho_0/4)$ by the calculation above:
    \begin{align}
        \begin{aligned}
        \int_{0}^t \F(Q_t)\,\mathrm{d}t &\leq \sup_{0\leq t\leq 3\theta\rho_0/4}\sqrt{\F(Q_t)}\int_{0}^t\sqrt{\F(Q_t)}\,\mathrm{d}t \\ &\leq C(\lambda,\beta)\sup_{0\leq t\leq 3\theta\rho_0/4}\sqrt{\F(Q_t)} \int_{0}^t \frac{d}{ds}\F(E_s) \,\mathrm{d}s\\
        &\leq C(\lambda,\beta)\left(\sqrt{\hau^2(\de U) + \hau^2(M) + |K_{\Sigma_0}|}\right)\F(E_t)
        \end{aligned}
    \end{align}
    Now we name $$f(t) = \F(Q) - \left[1+C(\lambda,\beta)\left(\sqrt{\hau^2(\de U) + \hau^2(M) + |K_{\Sigma_0}|}\right)\right]\F(E_t)\,,$$ with $C$ the constant in the above display, we arrive at:
    \begin{align}
        f(t) < \beta\lambda^{-2} (f'(t))^2\,.
    \end{align}
    Notice that $f'\!=-\frac{\mathrm{d}}{\mathrm{d}t} \F(E_t)\le 0$, so $f(t)$ is a non-increasing function, hence by naming $h(t) = \max\{0,f(t)\}$, we see that $-h'(t)\ge (\lambda/\sqrt\beta)\sqrt{h(t)}$, i.e.
    \begin{align}
        \frac{\mathrm{d}}{\mathrm{d}t}\sqrt{h(t)} \le -\frac{\lambda}{2\sqrt\beta}\;\;\;\text{ for all } t\in(\theta\rho_0/2,3\theta\rho_0/4)\,,
    \end{align}
    provided that (by contradiction) $f(3\theta\rho_0/4) \geq 0$. Integrating this, we see that:
    \begin{align}
        \sqrt{f(\theta\rho_0/2)} > \lambda\beta^{-1/2}\theta\rho_0/8\,.
    \end{align}
    However $\sqrt{\F(Q)} < \delta_1\rho_0$, hence:
    \begin{align}\label{lemma3:last-inequality}
        \delta_1 \geq \lambda\beta^{-1/2}\theta/8\,.
    \end{align}
    On the other hand, we can take $\delta_1\leq c\lambda\beta^{-1/2}\theta$ for $c>0$ small enough so that \cref{lemma3:last-inequality} is not possible. This means that $f(\theta\rho_0) \leq f(3\theta\rho_0/4) < 0$, which is indeed our claim.

    In the case where $\de U$ has all its principal curvature at least $1/(100\,\diam(U))$ is simple, in fact we can calculate that:
    \begin{align}
        \div(D_\nu G(x,\nabla d_U(x))) \geq 0\,,
    \end{align}
    if $c$ is taken small enough. The rest of the proof will remain unchanged, except the volume error terms will be omitted. With this we conclude the proof.
\end{proof}

The next lemma will be crucial in the proof later on:
\begin{lemma}[Tentacles estimate]\label{lemma:tentacles-estimate}
    There exists $\rho_1(N,\lambda)>0$ small enough with the following property: Let $U\subset N$ be a smooth open convex set with $\diam(U)<\rho_1$ bi-Lipschitz to $\B_{\diam(U)/2}$ with constant at most $10$ and such that all principal curvatures of $\de U$ are at least $(100\,\diam(U))^{-1}$. Moreover, take $M\in \mathcal{M}$ with $\de M \subset N\setminus U$ and with $M$ intersecting $\de U$ transversely. Moreover for any $P\in\mathcal{M}$ with $\de P = \de M$ we have:
    \begin{align}
        \F(M) \leq \F(P) + \epsilon\,.
    \end{align}
    Now take ${\Sigma_0}$ to be the component of $M\setminus U$ with ${\Sigma_0} \cap \de M = \emptyset$ (not necessarily connected). Then we have:
    \begin{align}
        \hau^{2}\left(\left\{x\in {\Sigma_0}:\, \dist(x,U) \geq \diam(U)/8\right\}\right) \leq C(N,\lambda)\epsilon\,,
    \end{align}
    for some constant $C(N,\lambda)>0$.
\end{lemma}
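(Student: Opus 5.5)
The plan is a cut-and-paste comparison argument that reduces the estimate to the strong form \eqref{lemma3:aniso-main-result-better} of \cref{lem:lemma3-anisotropic-MSY}. We apply that lemma at the scale of $U$. Set $r:=\diam(U)$, replace $\rho_0$ by a multiple $\rho\simeq r$, and choose $\theta\in(0,\tfrac12)$ so that $\theta\rho=r/8$. Choosing $\rho_1(N,\lambda)$ small makes the metric in normal coordinates $C^2$-close to Euclidean on $U(r)$. Then the following hold.
\begin{enumerate}[label=(\roman*)]
\item $d_U$ is convex on $U(\theta\rho)$; this follows from the principal curvature bound $(100\,r)^{-1}$ and the bi-Lipschitz comparison with $\B_{r/2}$.
\item The parallel surfaces $\partial U(s)$, $s\in(\theta\rho/2,\theta\rho)$, are uniformly bi-Lipschitz to round spheres of radius $\simeq r$. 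Hence they satisfy the isoperimetric condition \eqref{lemma3:isoperimetric-condition} with a constant $\beta=\beta(\lambda)$.
\item The vector field $D_\nu G(x,\nabla d_U)$ has nonnegative divergence on $U(\theta\rho)$. The curvature term $\ang{D^2_{\nu\nu}G:\nabla^2 d_U}\gtrsim \lambda/r$ dominates the $x$-derivative term, which is $\le \lambda^{-1}$, once $r<\rho_1$ is small.
\end{enumerate}
This is exactly the situation in which \eqref{lemma3:aniso-main-result-better} holds, without volume error terms.

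Next I produce a competitor. Let $\Sigma_0^{1},\dots,\Sigma_0^{m}$ be the components of $M\setminus U$ disjoint from $\partial M$. Since $M$ is a disk, each boundary curve of $\Sigma_0^j$ bounds a disk in $M$. For each $j$, \cref{lem:lemma3-anisotropic-MSY} gives $Q^j\subset\partial U$ with $\partial Q^j=\partial\Sigma_0^j$ and
\[
\F(Q^j)<\F(\Sigma_0^j\cap U(\theta\rho)).
\]
Replace each $\Sigma_0^j$ by $Q^j$, pushed slightly into $U$ to restore embeddedness and transversality at arbitrarily small cost. Then perform cut-and-paste along curves as in \cite[Lemma 3]{MSY} and \cref{lem:disjoint-boundaries}. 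Keep the component containing $\partial M$ and discard closed components, which only lowers $\F$. This yields a disk $P\in\mathcal M$ with $\partial P=\partial M$ and
\[
\F(P)\le \F(M)-\sum_j\big(\F(\Sigma_0^j)-\F(Q^j)\big)+o(1)\le \F(M)-\F\big(\Sigma_0\setminus U(r/8)\big)+o(1).
\]
Combining this with $\F(M)\le\F(P)+\epsilon$ and letting the error tend to zero gives $\F(\Sigma_0\setminus U(r/8))\le\epsilon$. Then \eqref{H:comparability} gives the claim with $C=\lambda^{-1}$.

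The main obstacle is the smallness hypothesis \eqref{lemma3:area-bound-assumption} of \cref{lem:lemma3-anisotropic-MSY}. Since $M$ may have large area, I would not apply the lemma to $M$ globally. Instead I would use only the calibration part of its proof, restricted to the collar $U(\theta\rho)$. For each $\Sigma_0^j$, the closed surface $\Sigma_0^j\cup Q_0^j$ (with $Q_0^j$ either complementary piece of $\partial U$) bounds a region $W^j$ in the simply connected ball $B_{\rho}(x_0)\supset U(r)$, which is near-Euclidean. Choose $Q^j$ so that $K^j=W^j\setminus U$ is the bounded side. Integrating $\div D_\nu G(x,\nabla d_U)\ge 0$ over $K^j\cap\{d_U\le t\}$ and using \eqref{integrand:homogen} and \eqref{integrand:convexity} gives $\F(Q^j)\le \F(\Sigma_0^j\cap\{d_U<t\})+\F(Q^j_t)$, where $Q^j_t:=K^j\cap\{d_U=t\}$. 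The remaining issue is to make $\F(Q^j_t)$ negligible for some $t\le r/8$. This is where the ODE argument with the isoperimetric constant $\beta$ is used, exactly as in \cref{lem:lemma3-anisotropic-MSY}. If the slice term cannot be killed before $t=r/8$, the ODE forces $\F(\Sigma_0^j\cap U(r/8))\gtrsim r^2$. I expect to rule this out, or absorb it, by one further comparison with a cap in $\partial U(t)$, whose area is at most $Cr^2$. This step, ensuring the correct choice of side and the absence of area-smallness on $M$, is where I expect most care to be needed.
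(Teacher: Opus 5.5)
The inequality your whole argument rests on, $\F(Q^j)<\F(\Sigma_0^j\cap U(r/8))$, is not available. The obstruction is not the area of $M$, so this is a genuine gap, although your competitor $(M\setminus\Sigma_0)\cup Q$ and the closing comparison match the paper.

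\textbf{Lemma 3 cannot apply at this scale.} In \cref{lem:lemma3-anisotropic-MSY} one has $\delta_1\le c\lambda\beta^{-1/2}\theta$ with $c<1/8$; this is exactly what makes \eqref{lemma3:last-inequality} contradictory. With $\theta\rho=r/8$, the hypothesis \eqref{lemma3:area-bound-assumption} would require $\hau^2(\de U)\le\delta_1^2\rho^2/16<2^{-16}r^2$. But $\hau^2(\de U)\gtrsim r^2/100$, because $U$ is convex and contains a ball of radius about $r/20$. The ODE step of that lemma only closes on collars much wider than $U$. Accordingly, the paper works on $U(C\rho)$, with $\rho=\diam(U)/2$ and $C>10\delta_1^{-1}$.

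\textbf{The inequality is false for general surfaces.} No argument using only the geometry of $U$ can give it. Take $U$ a Euclidean ball of radius $r/2$ and $\Sigma_0^j$ the vertical cylinder over the equator, capped at height $5r/8$. Either choice of $Q^j$ is a hemisphere of area $\pi r^2/2$. But $\Sigma_0^j\cap U(r/8)$ is the band $\{0<z<3r/8\}$, of area $3\pi r^2/8$.

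\textbf{Your fallback does not repair this.} Ruling out the bad branch of the ODE dichotomy, i.e.\ $\F(Q)\ge\F(\Sigma_0^j\cap U(r/8))$, is the same as proving the inequality. Comparing $\Sigma_0\setminus U(t)$ with a cap in $\de U(t)$ gives, for $g(t)=\F(\Sigma_0\setminus U(t))$, only $g\le C(\beta,\lambda)(g')^2+\epsilon$. This guarantees $g\le 2\epsilon$ only for $t\gtrsim C(\beta,\lambda)^{1/2}\sqrt{g(0)}$, and $g(0)$ can be of order $r^2$. That is a scale $C'(\beta,\lambda)\,r$, not $r/8$.

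\textbf{The missing idea} is to use the calibration quantitatively rather than only through its sign, and to invoke almost-minimality once, for the whole of $\Sigma_0$.
\begin{itemize}
\item On the wide collar $U(C\rho)$, the field $D_\nu G(x,\nabla d_U)$ has divergence at least $c/\rho$, not merely nonnegative.
\item Uniform convexity gives $G(x,n)-\ang{D_\nu G(x,\eta),n}\ge c|n-\eta|^2$.
\item Arguing on $K_{\Sigma_0}\cap U(C\rho)$ as in \cref{lem:lemma3-anisotropic-MSY}, one gets $\F(Q)+\frac{c}{\rho}|K_{\Sigma_0}\cap U(C\rho)|+c\int_{\Sigma_0\cap U(C\rho)}|n-\eta|^2\le\F(\Sigma_0\cap U(C\rho))$.
\item The single comparison with $(M\setminus\Sigma_0)\cup Q$ then bounds three quantities by $C\epsilon$: the enclosed volume divided by $\rho$, the tilt-excess, and $\F(\Sigma_0\setminus U(C\rho))$.
\end{itemize}
These gain terms are what detect the cylinder above: it encloses volume of order $r^3$, and its vertical walls have $n\perp\eta$.

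One further step, absent from your plan, converts small volume and small tilt-excess into small area of $\Sigma_0\cap\{d_U\ge\diam(U)/8\}$. Approximate $\Sigma_0\cap U(C\rho)$ by a Lipschitz normal graph over $\de U$, off a set of area $\lesssim\epsilon$, by thresholding the maximal function of the tilt-excess. Control the $L^1$ norm of the height by the enclosed volume, and conclude with Chebyshev.
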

\begin{proof}
    Name $\rho = \diam(U)/2$, and take $d_U(x) = \dist(x,U)$. By the assumptions, we see that the two biggest eigenvalues of $\nabla^2 d_U$ are at least $C(N)/\rho$ in $U(C\rho)$ where $C > 10\delta_1^{-1}$ in \cref{lem:lemma3-anisotropic-MSY}. By the uniform \(C^3\)-bounds in \eqref{H:regularity}, after choosing \(\rho_1=\rho_1(N,\lambda)>0\) sufficiently small, we have
    \[
        \div\bigl(D_\nu G(x,\nabla d_U)\bigr)\ge\frac{C(N,\lambda)}{\rho}.
    \]
    Now define $Q\in \de U$ similarly and $K_{\Sigma_0}$ such that $\de K_{\Sigma_0} = Q + {\Sigma_0}$. Now note that the strict convexity of the integrand $F$, tells us for any two unit vectors $\nu,\bar{\nu}$:
    \begin{align}
        G(x,\nu) - C(\lambda)|\overline{\nu}-\nu|^2 \geq \ang{D_\nu G(x,\overline{\nu}),\nu}\,.
    \end{align}
   Then following the proof of \cref{lem:lemma3-anisotropic-MSY} and inserting the above display into the last term of \cref{inequality-1-lemma3} and collecting the error terms coming from the divergence term, we see that:
    \begin{align}
        \F(Q) + \frac{C(\lambda,N)}{\rho}|K_{\Sigma_0}\cap U(C\rho)| + C(\lambda)\int_{{\Sigma_0}\cap U(C\rho)} |n - \eta|^2 \,\mathrm{d}\hau^2 \leq \F({\Sigma_0}\cap U(C\rho))\,.
    \end{align}
    Here $n$ is the outward unit normal to ${\Sigma_0}$. Then we use the almost minimizing property of $M$ and compare with $P = \overline{Q}\cup(M\setminus {\Sigma_0})$, we see that:
    \begin{align}
        \F(M) \leq \F(P) + \epsilon = \F(M\setminus {\Sigma_0}) + \F(Q) + \epsilon\,.
    \end{align}
    Hence:
    \begin{align}\label{estimate:remainder-area-diff}
        \frac{1}{\rho}|K_{\Sigma_0}\cap U(C\rho)| + \F({\Sigma_0} \setminus U(C\rho)) + \int_{{\Sigma_0}\cap U(C\rho)} |n - \eta|^2 \,\mathrm{d}\hau^2 \leq C(N,\lambda)\epsilon\,.
    \end{align}
    For the sake of notation, name $K_{\Sigma_0} \cap U(C\rho) = S$ and $\de S\setminus \de U = \Sigma$. The above display (since on $\de U(C\rho)$ we have $n = \eta$) reads:
    \begin{align}
        \frac{1}{\rho}|S| + \int_{\Sigma} |n-\eta|^2\,\mathrm{d}\hau^2 \leq C\epsilon\,.
    \end{align}
    Now we approximate $\Sigma$ as a Lipschitz normal graph over $\de U$ using standard theory of currents and functions of bounded variation (Note that $U$ is bi-Lipschitz equivalent to $\textbf{B}_{\rho}$ with constant $10$). For $\ell>0$ small, there exists a function $h:\de U \to \R$ such that:
    \begin{align}\label{estimate:bad-set-esitmate}
        \hau^2(\Sigma \setminus \{x + n(x) h(x) :\, x\in \de U\}) \leq C\epsilon/\ell\,,
    \end{align}
    with $\Lip(h)\leq C\ell$, by thresholding the maximal function the the tilt-excess. By the estimate of $|S|$ we also estimate that:
    \begin{align}
        \int_{\de U} |h| \,\mathrm{d}\hau^2 \leq C\rho \epsilon/\ell\,.
    \end{align}
    Hence we can see that:
    \begin{align}
        \hau^2(\{x\in\de U: h(x) \geq \rho/20 \}) \leq C\epsilon/\ell\,.
    \end{align}
    Moreover since $h$ is $C\ell$-Lipschitz, we gather that:
    \begin{align}\label{estimate:lipszhitz-upper-estimate}
        \hau^2(\{x+h(x)n(x): h(x) \geq \rho/20 \}) \leq C\epsilon/\ell\,.
    \end{align}
    Now we can see that combining \cref{estimate:lipszhitz-upper-estimate} and \cref{estimate:bad-set-esitmate}:
    \begin{align}
    \begin{aligned}
        \hau^2(\{x\in \Sigma:\, \dist(x,U)\geq \rho/4\}) \leq &\hau^2(\Sigma\setminus \{x+h(x)n(x):\, x\in \de U\}) \\ &+ \hau^2(\{x+h(x)n(x):\, h(x) \geq \rho/20\})
        \leq C\epsilon/\ell\,.
    \end{aligned}
    \end{align}
    Putting this together with \cref{estimate:remainder-area-diff} (fixing some $\ell_0>0$) we gather that:
    \begin{align}
        \hau^{2}\left(\left\{x\in {\Sigma_0}:\, \dist(x,U) \geq \diam(U)/8\right\}\right) \leq C(N,\lambda)\epsilon\,.
    \end{align}
\end{proof}
We also recall a compactness theorem for minimizing disks, which is essentially due to White \cite{W-1}:
\begin{lemma}[Compactness of minimizing disks \cite{W-1}]\label{lemma:disk-compactness}
    Let $A\subset N$ be a smooth open convex set with $\diam(A) \leq \rho_0$ for $\rho_0$ small enough. Let $M_k\in\mathcal{M}$ be a sequence of smooth embedded disks $M_k\subset A$ with simple closed curves as boundaries $\de M_k \subset \de A$ that minimize $\F$-area. Then the following is true (up to subsequences):
    \begin{enumerate}
        \item There exists a limit $\F$-minimal surface $M$ (possibly empty), smooth in the interior, such that for any compact set $K\subset A$ the disks $M_k\cap K$ smoothly converges (sub-sequentially) to $M$. In other words we have $\limsup_{k\to\infty} \|M_k\|_{C^{2,\alpha}(K)} < C$ where $C$ depends only on $K,A$ and $N$ and $\F$.
        \item If $M_k$ is made up of a collection of least $\F$-area disks (with $\de M_k$ being a collection of disjoint closed loops on $\de A$) with a uniform area bound along the sequence i.e. $\limsup_{k\to\infty} \hau^2(M_k) < \infty$, then the same conclusions hold.
        
        \item Assume moreover that there is an open portion of the boundary $S\subset \de A$ such that $$\limsup_{k\to\infty}\|\partial M_k\cap S\|_{C^{2,\alpha}_{\mathrm{loc}}} < \infty\,,$$ then, the limit $M$ is $C^{2,\beta}$ for any compact set $K\subset A\cup S$ and $M_k$ converges in $C^{2,\beta}$ to $M$ on $K$ for any $\beta<\alpha$ (though this limit may be empty in the set $S$ by \cite[example 4.3]{HS}). In other words we have $\limsup_{k\to\infty} \|M_k\|_{C^{2,\alpha}(K)} < C$ where $C$ depends only on $K,A$ and $N$ and $\F$ and $\limsup_{k\to\infty}\|\partial M_k\cap S\|_{C^{2,\alpha}_{\mathrm{loc}}}$.
        
        \item Take $S$ to be a connected open portion of the boundary $\de A$ and let $M_k$ be a single $\F$-minimizing disk. Moreover assume that there is a simple arc $\Gamma\subset S $ such that $\de M_k \cap S = \Gamma_k$ are a sequence of simple arcs with uniform $C^{2,\alpha}$ norms that converge smoothly to $\Gamma$. Then $M$ is $C^{2,\beta}$ up to its boundary in $S$ for any $\beta<\alpha$ and a component of $M$ extends smoothly up to $\Gamma$.
    \end{enumerate}
    Here $\alpha$ is equal or less than the regularity of the integrand $F$.
\end{lemma}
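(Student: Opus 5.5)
The argument is essentially a transcription of White's compactness theory for area-minimizing disks in \cite{W-1}, relying on the structure of $\F$-minimizing disks in a convex set. The plan is to get local curvature estimates for embedded $\F$-minimizing disks and then pass to the limit via Arzel\`a--Ascoli.

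\textbf{Step 1: area and stability bounds.} Since $A$ is convex and $\diam(A)\le\rho_0$ is small, $\partial A$ is $\F$-convex, so each $M_k$ stays inside $\overline A$. Comparing with the portion of $\partial A$ bounded by $\partial M_k$ and using \eqref{H:comparability} gives $\hau^2(M_k)\le\lambda^{-2}\hau^2(\partial A)$; in case (2) this bound is simply assumed. Each disk is $\F$-minimizing among disks with the same boundary, so it is $\F$-stationary and $\F$-stable in $A$. Every compactly supported normal variation of an embedded disk stays in $\mathcal M$, so these properties are genuinely available.

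\textbf{Step 2: interior curvature estimates.} For stable $\F$-minimal surfaces in $3$-manifolds, \cite[Theorem 2.8]{DePhilippisDeRosa} (equivalently White's estimate in \cite{W-1}) gives
\[
|A_{M_k}|(x)\le C/\dist(x,\partial A),
\]
with $C$ depending on $N$ and $F$. This estimate needs no density bound, which matters here because monotonicity is unavailable. Combined with the area bound, it shows that on each compact $K\subset A$ the $M_k\cap K$ are unions of boundedly many graphical sheets with uniform $C^{1,1}$ bounds. Writing the anisotropic mean curvature equation as a uniformly elliptic quasilinear equation, Schauder theory upgrades these to $C^{2,\alpha}$ bounds, where $\alpha$ is limited by the regularity of $F$. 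Arzel\`a--Ascoli and a diagonal argument over an exhaustion of $A$ then yield subsequential $C^{2,\beta}$ convergence to an $\F$-minimal $M$, possibly empty.

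Embeddedness of $M$ away from tangential touching follows from the maximum principle for $\F$-minimal surfaces. Limit sheets that touch must coincide, so they merge into a single sheet carrying multiplicity.

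\textbf{Step 3: boundary regularity.} For (3), I would use boundary curvature estimates for stable $\F$-minimal surfaces with $C^{2,\alpha}$ boundary on an $\F$-convex portion of $\partial A$, as White does in \cite{W-1}. The $\F$-convexity of $\partial A$ and the maximum principle give a barrier, which forces the sheets near $S$ to be graphs meeting $\partial A$ transversally. Boundary Schauder estimates then apply.

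For (4), the disk has a single boundary arc $\Gamma_k$ converging to $\Gamma$. Using barriers built from $\partial A$ near $\Gamma$ together with the boundary estimates, one sheet must attach to $\Gamma$, and it extends $C^{2,\beta}$ up to $\Gamma$. The possible emptiness of the limit elsewhere in $S$ is consistent with \cite[Example 4.3]{HS}.

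\textbf{Main obstacle.} The hard step is the boundary estimate in (3)--(4): obtaining uniform curvature bounds up to $S$ without monotonicity. My first approach would be to follow White's blow-up argument at boundary points, classifying $\F$-stable half-planes using the anisotropic Bernstein theorem for stable surfaces. If that fails, I would fall back on reflection-free compactness of laminations.
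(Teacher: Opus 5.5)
The genuine gap is in (4), and it is exactly the point the paper's proof addresses. Once (3) is granted, the only new content of (4) is that the limit does not vanish along $\Gamma$, i.e.\ $\Gamma\subset\de M$. The statement itself warns, via \cite[Example 4.3]{HS}, that boundary regularity alone allows the limit to be empty in $S$.

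Your step (``barriers built from $\de A$ together with the boundary estimates, one sheet must attach to $\Gamma$'') only gives the easy inclusion $\de M\cap S\subset\Gamma$. The $\F$-convexity of $\de A$ stops the limit from touching $S$ away from $\Gamma$. It does not stop the part of $M_k$ attached to $\Gamma_k$ from losing all its mass as $k\to\infty$, which is a loss of area, not a touching phenomenon. The boundary estimates would help only if they held on a neighbourhood of $\Gamma_k$ of size independent of $k$, and you have not shown that.

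What is missing is a uniform lower mass bound at points of $\Gamma$. The paper obtains it from \cref{lem:density-boundary-estimate} by a purely topological slicing argument, so no monotonicity is needed. Take $x\in\Gamma$ with unit tangent $\tau$ and $|t|\le 3r/4$. For large $k$, the plane $x+t\tau+\tau^\perp$ meets $\Gamma_k\cap B_r(x)$ in exactly one point; this is where the single-arc hypothesis of (4) enters. The arc of the slice of $M_k$ starting at that point therefore cannot end on $\de M_k$ inside $B_r(x)$, so it reaches $\de B_r(x)$ and has length at least $cr$. The coarea formula then gives $\hau^2(M_k\cap B_r(x)\cap A)\ge Cr^2$ uniformly in $k$. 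Hence every $x\in\Gamma$ lies in the support of the limit, and together with (3) and the easy inclusion this yields $\de M\cap S=\Gamma$.

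Apart from this, your route differs from the paper's mainly in being more explicit. The paper proves (1)--(3) by invoking White's argument in \cite[Section 2]{W-1}. You derive (1)--(2) from stable curvature estimates, Schauder theory and Arzel\`a--Ascoli, which is a reasonable self-contained substitute. You should, however, drop the claim that the curvature estimate needs no density bound. A scale-invariant bound $|A|(x)\le C/\dist(x,\de A)$ with $C=C(N,F)$ alone, applied on balls of radius $R\to\infty$, would give the stable Bernstein theorem for every elliptic integrand in $\R^3$, which is not known in general; the paper's \cref{thm:anisotropic-bernstein} needs the ratio bound $<8$. What you actually use, and have from Step 1, is the area bound at a fixed scale, which is why the constant in (1) depends on $K$ and $A$. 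For the same reason, your proposed boundary blow-up classifying stable half-planes through an anisotropic Bernstein theorem is unavailable for general $F$. Barriers by themselves also give no graphicality up to $S$. For (3) you should rely on White's boundary argument, as the paper does.
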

\begin{proof}
    The proof of the first three items is essentially contained in the proof of \cite[Section 2]{W-1} in the last two paragraphs of page $422$ and $423$. To prove the last point, we only need to check that $\Gamma$ is indeed part of the boundary in the limit. First it is evident that $\de M \subset \Gamma$, it remains to show the other inclusion. Indeed take any point $x\in\Gamma$ and apply \cref{lem:density-boundary-estimate} to see that for large enough $k$ we have:
    \begin{align}
        \frac{\hau^2(B_\epsilon(x)\cap M_k)}{\epsilon^2} \geq C\,,
    \end{align}
    for small enough $\epsilon(x,\Gamma,N,A)$. This means that $M\cap B_{\epsilon}(x_0)$ is non empty, hence $\Gamma = \de M \cap S$.

\end{proof}
\begin{rmk}
    Note that the limit might not be a disk in general, as demonstrated by \cite[example 3.4]{HS}.
\end{rmk}

\subsection{Replacement in balls and the filigree lemma}
In this subsection, we recall a local replacement result. Roughly speaking, \cref{prop:replacement-thm-AS-aniso} produces a replacement in a strongly convex ball which decomposes into stacked disks which are individually almost $\F$-minimizing. 
The result is an anisotropic analogue of the Almgren--Simon replacement theorem \cite[Theorem 1]{AS}, and should not be confused with the varifold replacement used in the Almgren--Pitts min--max theory. In the following, given an open set 
\(U\) and \(\theta>0\), we will use the notation
\begin{align}
    U_\theta := \{x\in U:\, \dist(x,\de U)\geq \theta\}\,.
\end{align}
\begin{proposition}[Almgren-Simon \cite{AS} replacement]\label{prop:replacement-thm-AS-aniso}
    Suppose $\theta>0$ is given and
    \begin{enumerate}
        \item $U\subset B_{\rho_0/2}$, $U\approx \mathbf{B}_1$ is a $C^2$ strongly convex open set in $N$, satisfying all the assumptions of \cref{lem:lemma3-anisotropic-MSY}.
        \item $M\in\mathcal{M}$, $\de M \setminus \de U$ is not contained in any $K_{\Sigma_0}$, where ${\Sigma_0}$ is any component of $M\setminus U$ with $\de M\cap{\Sigma_0} = \emptyset$ and $K_{\Sigma_0}$ is defined as in \cref{lem:lemma3-anisotropic-MSY}.
        \item $M$ intersects $\de U$ transversely; in case $\de M \cap \de U \not=\emptyset$, we will always take this to mean there is a $C^2$ (open) surface $\Sigma$ with $M\subset \Sigma$, with $(\Sigma\setminus M)\cap  U = \emptyset$ and with $\Sigma$ intersecting $\de U$ transversely.
        \item The bound \cref{lemma3:area-bound-assumption} in \cref{lem:lemma3-anisotropic-MSY} holds.
    \end{enumerate}
    Then there is $\tilde{M} \in \mathcal{M}$ such that
    \begin{enumerate}
        \setcounter{enumi}{4}
        \item $\de \tilde{M} = \de M$, $\tilde{M}\setminus U \subset M\setminus U$, $\tilde{M}\cap U_\theta \subset M \cap U_\theta$;
        \item $\tilde{M}$ intersects $\de U$ transversely, in the same sense as above.
        \item $\F(\tilde{M}) + \left(1-C(\lambda,\beta)\sqrt{\hau^2(\de U) + \hau^2(M) + |K_{\Sigma_0}|}\right)\F((M\setminus\tilde{M})\cap U_\theta) \leq \F(M)$.
        \item $\tilde{M}\cap \overline{U}$ is a disjoint union $\bigcup_{j=1}^k \Sigma_j$ of elements $\Sigma_j\in \mathcal{M}$ (and consequently $\tilde{M} \subset \overline{U}$ in case $\de M \subset \de U)$.
    \end{enumerate}
    If in addition to the hypotheses (1)-(4) we have
    \begin{enumerate}
        \setcounter{enumi}{8}
        \item $\F(M) \leq \F(P) + \theta$ for every $P\in\mathcal{M}$ with $\de P = \de M$, then there are non-negative numbers $\theta_1,\cdots,\theta_k$ with $\sum_{i=1}^k \theta_i \leq \theta$ and
        \item $\left(1-C(\lambda,\beta)\sqrt{\hau^2(\de U) + \hau^2(M) + |K_{\Sigma_0}|}\right) \F(\Sigma_j) \leq \F(P) + \theta_j$ for every $P\in\mathcal{M}$ with $\de P = \de \Sigma_j$ for $j=1,\cdots,k$.
    \end{enumerate}
    Moreover, if the principal curvature of $\de U$ is at least $(100\,\diam(U))^{-1}$ and $\rho_0>0$ is chosen small enough, we may remove the multiplier in point $(7)$ and $(10)$. In fact in this case the following is true:
    \begin{enumerate}
        \setcounter{enumi}{10}
        \item $\F(\tilde{M}) + \F((M\setminus\tilde{M})\cap U_\theta) \leq \F(M)$.
        \item $\F(\Sigma_j) \leq \F(P) + \theta_j$ for every $P\in\mathcal{M}$ with $\de P = \de \Sigma_j$ for $j=1,\cdots,k$.
    \end{enumerate}
    \begin{proof}
        The proof of \cite[Theorem 1]{AS} follows line by line, with the main modification that \cite[(3.1)]{AS} is replaced by inequality \cref{lemma3:aniso-main-result} in \cref{lem:lemma3-anisotropic-MSY}. In the rest of the proof, we may replace the isotropic $\hau^2$ by the anisotropic area $\F$.
    \end{proof}
\end{proposition}
In parallel with \cite[Remark 3.12]{AS}, we also record the following consequence.

\begin{rmk}\label{rmk:3-12AS}
    One can modify the sequence $\{M_k\}$ in \cref{thm:white-improvement} to a sequence $\{\tilde{M}_k\}$ so that
    $\tilde{M}_k \subset \mathbf{B}\cup \de M_k$,
    without changing the varifold limit.
\end{rmk}

\begin{rmk}
    In applications, we only use conclusions (11) and (12) of
    \cref{prop:replacement-thm-AS-aniso}, choosing the balls so that the principal curvatures of their boundaries are sufficiently large. 
    Moreover, the result also applies to balls intersecting the boundary of the ambient manifold. 
    In view of \cref{rmk:3-12AS}, no replacement is needed along the boundary of the ambient manifold, and the same conclusions remain valid.
\end{rmk}

We also adapt the \textit{filigree lemma} of \cite[Lemma 3]{AS} to the anisotropic setting. The main tool for the proof is the replacement theorem proved in \cref{prop:replacement-thm-AS-aniso}.

\begin{lemma}[Anisotropic filigree lemma]\label{lem:aniso-filigree-lemma}
    Suppose $\{Y_t\}_{t\in[0,1]}$ is an increasing collection of strongly convex sets, satisfying the hypothesis of \cref{lem:lemma3-anisotropic-MSY} (with constants $\beta,\theta$ independent of $t$), with $Y_t=\{x \in N: f(x) < t\}$, $t>0$ where $f$ is a non-negative function on some neighborhood $B_{\rho_0}\subset N$ which is $C^2$ on $B_{\rho_0} \setminus \overline{Y}_0$, $Df\not=0$ on $Y_1\setminus \overline{Y}_0$ and $\sup_{\overline{Y}_1\setminus \overline{Y}_0} |Df|\leq c_1$ for some constant $c_1>0$. Assume $M\in\mathcal{M}$ and $\epsilon>0$ are such that  for all $t\in(0,1)$ we have that $\de M \setminus \de Y_t$ is not contained in any $K_{\Sigma_0}$, where ${\Sigma_0}$ is any component of $M\setminus Y_t$ with $\de M \cap {\Sigma_0} = \emptyset$ and $K_{\Sigma_0}$ is as in \cref{lem:lemma3-anisotropic-MSY}, and
    \begin{align}
        \F(M) \leq \F(N) + \epsilon\,, \;\;\; \forall N\in\mathcal{M} \text{ with }\de N = \de M\,.
    \end{align}
    Then
    \begin{align}
        \F(M\cap Y_t)\leq 2\epsilon \mbox{\quad  whenever \quad } t\leq 1- c_1c_{\lambda}\sqrt{\beta}\sqrt{\F(M\cap Y_1)}\,.
    \end{align}
    Here $c_{\lambda}>0$ is a constant depending on $\lambda$.
\end{lemma}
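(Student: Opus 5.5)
We follow the proof of the filigree lemma in \cite[Lemma 3]{AS}. Everything isotropic is replaced by \(\F\), and the isoperimetric-type ODE argument is the one already used in \cref{lem:lemma3-anisotropic-MSY}. The idea is to compare \(M\) with the replacement \(\tilde M_t\) produced by \cref{prop:replacement-thm-AS-aniso} in \(U=Y_t\). The almost minimality of \(M\) forces \(\F(M\cap Y_t)\) to be either tiny or to decay at a definite rate in \(t\). Since the decay cannot persist over the whole interval \([t,1]\), the quantity must be tiny.

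\textbf{Step 1 (replacement inequality at each level).} For each \(t\in(0,1)\) with \(M\) transverse to \(\de Y_t\) (a.e. \(t\) by Sard), we apply \cref{prop:replacement-thm-AS-aniso} in \(U=Y_t\) with a small \(\theta'\). The hypotheses (1)--(4) hold by assumption. This yields \(\tilde M_t\in\mathcal M\) with \(\de\tilde M_t=\de M\). Inside \(Y_t\), \(\tilde M_t\) is a union of disks \(\Sigma_j\) with boundaries on \(M\cap\de Y_t\). Following the proof of \cref{lem:lemma3-anisotropic-MSY}, each disk \(\Sigma_j\) can be taken with \(\F\)-area at most that of a piece \(Q\subset\de Y_t\), up to the multiplier \(1+C(\lambda,\beta)\sqrt{\cdot}\). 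We then use the isoperimetric hypothesis \eqref{lemma3:isoperimetric-condition} on \(\de Y_t\) together with \eqref{H:comparability}. This gives
\[
\F(\tilde M_t\cap Y_t)\le C(\lambda)\beta\,\bigl(\hau^1(M\cap\de Y_t)\bigr)^2 .
\]
By almost minimality, \(\F(M)\le\F(\tilde M_t)+\epsilon\). Since \(\tilde M_t\setminus Y_t\subset M\setminus Y_t\), we obtain
\[
\F(M\cap Y_t)\le C(\lambda)\beta\,\bigl(\hau^1(M\cap\de Y_t)\bigr)^2+\epsilon .
\]

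\textbf{Step 2 (coarea and ODE).} Set \(h(t)=\F(M\cap Y_t)\), which is nondecreasing. By the coarea formula with \(|Df|\le c_1\) and \eqref{H:comparability},
\[
\hau^1(M\cap\de Y_t)\le c_1\,\frac{\mathrm{d}}{\mathrm{d}t}\hau^2(M\cap Y_t)\le c_1\lambda^{-1}h'(t)
\]
for a.e. \(t\), with \(h'\) the absolutely continuous part. Hence \(h(t)-\epsilon\le C\beta c_1^2 h'(t)^2\). Put \(g=\max\{h-\epsilon,0\}\). Wherever \(g>0\) we get
\[
\frac{\mathrm{d}}{\mathrm{d}t}\sqrt{g}\ge \bigl(c_\lambda c_1\sqrt\beta\bigr)^{-1}.
\]
Integrating from \(t\) to \(1\), if \(g(t)>0\) then
\[
\sqrt{\F(M\cap Y_1)}\ge\sqrt{g(1)}>(1-t)\bigl(c_\lambda c_1\sqrt\beta\bigr)^{-1}.
\]
This contradicts \(t\le 1-c_1c_\lambda\sqrt\beta\sqrt{\F(M\cap Y_1)}\). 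So \(g(t)=0\), i.e. \(h(t)\le\epsilon\le2\epsilon\). The monotonicity of \(h\) handles the singular part of its distributional derivative favorably, since jumps only help the inequality. The slack from \(\epsilon\) to \(2\epsilon\) absorbs the small \(\theta'\) and the non-transverse levels.

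\textbf{Main obstacle.} The delicate point is Step 1, namely bounding \(\F(\tilde M_t\cap Y_t)\) purely by \((\hau^1(M\cap\de Y_t))^2\). Two issues arise. First, the disks produced by the replacement must be genuine competitors, realized as caps on \(\de Y_t\) pushed slightly inside. Second, the multiplicative error \(C(\lambda,\beta)\sqrt{\cdot}\) from \eqref{lemma3:aniso-main-result} must be harmless. We will address both by invoking the strongly convex version (conclusions (11)--(12), using \eqref{lemma3:aniso-main-result-better}), which removes the multiplier. We will also use the non-containment hypothesis on \(K_{\Sigma_0}\) to choose, for each boundary loop, the cap \(E\) or \(\de Y_t\setminus E\) of smaller \(\F\)-area, exactly as in \cite{AS}. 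Since the loops are nested disjoint curves on a sphere, the sum of squares is bounded by the square of the total length.
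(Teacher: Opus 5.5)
Your route is the one the paper intends. Its proof simply runs \cite[Lemma 3]{AS} verbatim with \cref{prop:replacement-thm-AS-aniso} in place of \cite[Theorem 1]{AS}. Your two steps reconstruct exactly that argument: replacement in $Y_t$, comparing each inner disk with the smaller cap of $\de Y_t$ via \eqref{lemma3:isoperimetric-condition}, then coarea and the ODE for $\sqrt{g}$.

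One step fails as written, however. You cannot apply \cref{prop:replacement-thm-AS-aniso} with a \emph{small} $\theta'$ and still use conclusion (12). Conclusion (12) requires hypothesis (9), namely $\F(M)\le\F(P)+\theta'$ for all $P\in\mathcal M$ with $\de P=\de M$, and you only know this for $\theta'\ge\epsilon$. So you must take $\theta=\epsilon$. Testing (12) against the smaller cap then gives $\F(\Sigma_j)\le\beta\,\hau^1(\de\Sigma_j)^2+\theta_j$ with $\sum_j\theta_j\le\epsilon$, which is an error of the same size as $\epsilon$, not a negligible one. Since $\de\Sigma_j\subset\tilde M_t\cap\de Y_t\subset M\cap\de Y_t$ and $\sum_j L_j^2\le(\sum_j L_j)^2$, the correct output of Step 1 is
\[
\F(M\cap Y_t)\le\beta\bigl(\hau^1(M\cap\de Y_t)\bigr)^2+2\epsilon .
\]
Step 2 must then be run with $g=(h-2\epsilon)_+$, and it yields exactly $h(t)\le 2\epsilon$ with $c_\lambda=2\lambda^{-1}$. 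So the $2\epsilon$ in the statement is precisely $\epsilon+\sum_j\theta_j$, and no slack is left over. None is needed for the non-transverse levels: the differential inequality is only used for a.e.\ $s\in[t,1]$, and monotonicity of $h$ then gives the conclusion at every $t$.

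A few smaller corrections follow. The bound on the inner disks does not come from the proof of \cref{lem:lemma3-anisotropic-MSY}, which controls the outer pieces $\Sigma_0$ of $M\setminus U$. It comes solely from (12), and no factor $C(\lambda)$ is needed because \eqref{lemma3:isoperimetric-condition} is already stated for $\F$. The non-containment hypothesis on $K_{\Sigma_0}$ is what makes \cref{prop:replacement-thm-AS-aniso} applicable (its hypothesis (2)); choosing the smaller cap needs no hypothesis at all. Finally, the multiplier-free conclusions (11)--(12) require the principal curvatures of $\de Y_t$ to be at least $(100\,\diam(Y_t))^{-1}$. This is not among the stated hypotheses, but it does hold for the geodesic balls $Y_t=B_{t\rho}(x_1)$ used in \cref{thm:integrality-limit-varifold}. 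If you only have (10), the multiplier $1-C(\lambda,\beta)\sqrt{\cdot}$ inflates both $\beta$ and the additive $2\epsilon$ by factors close to one.
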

\begin{proof}
    The proof of \cite[Lemma 3]{AS} follows verbatim, replacing the use of \cite[Theorem 1]{AS} with \cref{prop:replacement-thm-AS-aniso}. We point out also that the constants are different depending on $\beta,\lambda>0$.
\end{proof}

\subsection{Integrality of the limit varifold}
In this section we prove an anisotropic analogue of \cite[Theorem 2]{AS}. More precisely in \cref{thm:integrality-limit-varifold} we show that the limit varifold of any sequence of $\F$-minimizing disks, is in fact an $\F$-stationary integral varifold.

\begin{thm}[Integrality of limit varifold]\label{thm:integrality-limit-varifold}
    Suppose $A\subset B_{\rho_0}\subset N$ is open, $A\approx \mathbf{B}$, and $\{M_k\}$ is a sequence of surfaces in $\mathcal{M}$ with $\de M_k \subset \de A$, $(M_k\setminus\de M_k )\subset A$, and
    \begin{align}
        \F(M_k) \leq \F(M) + \epsilon_k,\;\;\; \forall M\in \mathcal{M} \text{ with } \de M = \de M_k\,,
    \end{align}
    where $\lim_{k\to\infty}\epsilon_k\to 0$. Suppose further that the varifold limit $W = \lim_{k\to\infty} \mathbf{v}(M_k)$ exists in $\mathbf{V}_2(N)$. Then $W$ is an $\F$-stationary integral varifold in $A$ with respect to anisotropic area $\delta_{\F}W|_{A}=0$. Moreover $W$ has the property that if $x_0\in \spt\|W\|$ and $W$ has a tangent varifold $C$ at $x_0$ with $\spt\|C\|\subset H$ for some plane $H$, then there is a $\rho>0$ such that
    \begin{align}
        W\lfloor_{G_2(B_{\rho}(x_0))} = n \mathbf{v}(M)\,,
    \end{align}
    where $M$ is a smooth oriented connected surface and $n>0$ is an integer.
\end{thm}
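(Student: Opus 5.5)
The plan has three parts: stationarity, integrality, and the local structure near points with a planar tangent varifold.

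\emph{Stationarity.} Let $g\in C^1_c(A;TN)$ with flow $\varphi_t$. Then $\varphi_t(M_k)\in\mathcal M$ and $\partial\varphi_t(M_k)=\partial M_k$, because $\operatorname{spt} g\Subset A$ and $\partial M_k\subset\partial A$. Almost-minimality gives
\[
\F(M_k)\le \F(\varphi_t(M_k))+\epsilon_k
\quad\text{for every } t .
\]
By continuity of $\F$ under varifold convergence, $\F(W,A)\le \F((\varphi_t)_\#W,A)$ for all $|t|$ small. Hence $t\mapsto \F((\varphi_t)_\#W)$ has an interior minimum at $t=0$, and $\delta_\F W(g)=0$ by \eqref{eq:1st-variation}.

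\emph{Integrality.} Since monotonicity is unavailable, I will produce two-sided density bounds by comparison arguments.
\begin{itemize}
\item \emph{Upper bound.} For a convex ball $U=B_r(x)\Subset A$ with $r$ small, apply \cref{prop:replacement-thm-AS-aniso} (conclusions (11)--(12)) and then \cref{lem:lemma3-anisotropic-MSY}, in the form \eqref{lemma3:aniso-main-result-better}, to each component. Arguing as in \cite{AS}, I expect to obtain $\F(M_k\cap B_{r/2}(x))\le C\,\F(\partial B_r)+\epsilon_k$, which gives $\|W\|(B_{r/2}(x))\le Cr^2$.
\item \emph{Lower bound.} Apply \cref{lem:aniso-filigree-lemma} to the family $Y_t=B_{tr}(x)$. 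If $\F(M_k\cap B_r(x))$ were $\le \eta r^2$ with $\eta$ small, the filigree lemma would force $\F(M_k\cap B_{r/2}(x))\le 2\epsilon_k\to0$. This contradicts $x\in\spt\|W\|$. Hence $\Theta_*(\|W\|,x)\ge c>0$ everywhere on the support.
\end{itemize}
With a positive lower density and $\F$-stationarity, I will invoke the rectifiability theorem for anisotropic stationary varifolds (De Philippis--De Rosa--Ghiraldin \cite{DDG}; $F$ is uniformly elliptic, so the atomic condition holds) to conclude that $W$ is rectifiable.

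To get integer density, I take a point $x_0$ where $W$ has an approximate tangent plane $H$ with multiplicity $\theta$. In a small strongly convex ball around $x_0$, I replace $M_k$ via \cref{prop:replacement-thm-AS-aniso} by stacked disks $\Sigma^k_1,\dots,\Sigma^k_{m_k}$, each almost $\F$-minimizing. \cref{lemma:tentacles-estimate} ensures that this changes neither the varifold limit nor the mass inside. The lower density bound, applied to each disk separately, shows that every disk reaching $B_{r/2}$ carries mass $\ge cr^2$, so $m_k$ is bounded. By \cref{lemma:disk-compactness}, each disk converges smoothly in the interior to an $\F$-minimal surface. Near $x_0$ these limits are smooth sheets that are $C^1$-close to $H$ (small tilt). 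Their number, counted with multiplicity, gives $\theta\in\mathbb N$.

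\emph{The planar tangent statement.} The same construction is run at an arbitrary $x_0$ at which some tangent varifold $C$ has $\spt\|C\|\subset H$. At a scale $\rho$ where $W$ is close to $C$, the limits of the stacked disks are smooth, graphical over $H$, and pairwise disjoint or touching, since the disks are disjoint. By the strong maximum principle for the quasilinear $\F$-minimal surface equation, ordered graphs that touch must coincide. Hence all sheets passing through $B_\rho(x_0)$ are a single smooth connected surface $M$ counted $n$ times, i.e. $W\lfloor_{G_2(B_\rho(x_0))}=n\,\mathbf v(M)$. Orientability of $M$ is automatic, since $M$ is a graph.

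\emph{Main obstacle.} I expect the hardest step to be the density bounds and, in particular, ruling out infinitely many thin disks (``filigree'') accumulating without monotonicity. This is where the filigree lemma must be applied uniformly in the centre and radius. The hypothesis on the $K_{\Sigma_0}$ in \cref{lem:aniso-filigree-lemma} must also be verified, which I plan to do using the smallness of $\F(M_k\cap B_r)$ together with \cref{lem:anis-1}.
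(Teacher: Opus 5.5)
Your stationarity argument, the lower density bound via \cref{lem:aniso-filigree-lemma}, rectifiability from \cite{DDG}, the replacement by stacked disks, and the final maximum-principle step all follow the paper's proof. There is, however, a genuine gap at the central step. You write that, by \cref{lemma:disk-compactness}, each stacked disk converges smoothly in the interior to an $\F$-minimal surface, and both your integrality count and your planar-tangent argument rest on this.

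\textbf{Why the cited step fails.} \cref{lemma:disk-compactness} concerns disks that \emph{exactly} minimize $\F$, which is what White's construction \cite{W-1} provides. The disks produced by \cref{prop:replacement-thm-AS-aniso} are only $\theta_j$-almost minimizing (conclusion (12)).
\begin{itemize}
\item They satisfy no Euler--Lagrange equation and have no curvature estimates.
\item Smooth convergence of such disks is false in general: one can add wrinkles of area at most $\theta_j$.
\item Even the weaker statement, that their varifold limit is smooth, is \cref{thm:white-improvement}, whose proof uses the theorem you are proving. So that route is circular.
\item Replacing each disk by an exact minimizer with the same boundary does give smooth limits, but it may change the varifold limit inside the ball. \cref{lemma:tentacles-estimate} only controls pieces outside the convex set, and two disks with the same boundary and almost equal energy need not be varifold-close.
\end{itemize}
Consequently, identifying $\theta$ with a count of smooth sheets, and the graphicality over $H$ you feed into the maximum principle, are both unsupported.

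\textbf{What is needed instead.} You need a direct, PDE-free argument for almost-minimizing sheets, which the paper takes from \cite{AS}.
\begin{enumerate}
\item Blow up at $x_0$ until the rescaled support lies in a thin slab around $H$. This uses the lower density bound to turn varifold closeness to $C$ into Hausdorff closeness of supports.
\item By coarea, choose a flat cylinder $D_{\rho_k}\times(-\sigma_k,\sigma_k)$ whose top and bottom rims miss $M_k$, and apply \cref{prop:replacement-thm-AS-aniso} there.
\item Using the filigree lemma, discard the disks whose boundary lies on the top or bottom faces, or is null-homotopic in the lateral annulus.
\item For each remaining disk, prove three bounds:
\begin{itemize}
\item a projection lower bound $\hau^2(P\cap(D_\rho\times\R))\ge\pi\rho^2$;
\item an upper bound $\pi\rho_k^2+c(\sigma_0+\omega_F)$, by comparison with the flat top disk plus a lateral strip;
\item a tilt-excess bound, from the divergence theorem applied to $D_\nu G(x,\eta)$ together with the uniform convexity inequality.
\end{itemize}
\end{enumerate}
Each sheet's varifold limit then has mass ratio close to one and small excess, and it is individually $\F$-stationary because each disk is almost minimizing for its own boundary. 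Allard's regularity theorem \cite{A-constancy} then makes each sheet a $C^{1,\alpha}$ multiplicity-one graph. Only after this can you count sheets and invoke the maximum principle.

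\textbf{A smaller point.} Your sketched upper density bound $\|W\|(B_{r/2}(x))\le Cr^2$ does not follow as written. Summing the per-disk comparison produces a factor equal to the number of disks, which is not controlled a priori. It is also unnecessary: local finiteness of $\|W\|$ already bounds the number of heavy sheets at a fixed point and scale.
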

\begin{proof}
     We observe that $W\big|_{G_2(A)}$ is stationary. Indeed, for any open subset $U\subset A$ and any diffeomorphism leaving $\de U$ fixed, by almost minimality we know that $\F(M_k) \leq \F(h^{\#}M_k) + \epsilon_k$. Taking the limit as $k\to\infty$, we deduce that $\F(h^{\#} W|_{U}) \geq \F(W|_U)$.

    We now aim to show that the multiplicity is lower bounded by some universal constant
    \begin{align}
        \Theta_{*}^2(\|W\|,x_1) \geq c\,,
    \end{align}
    for any $x_1\in\spt(\|W\|)$. Let $c_2$ be the isoperimetric constant on $S^2$, then by the filigree \cref{lem:aniso-filigree-lemma}, (with $f(x) = \dist(x_1,x)/\rho$ and $c_1 = \frac{1}{\rho}$) we know that if $\hau^2(M_k\cap B_{\rho}(x_1))\leq \eta\rho^2$, for small enough $\eta$ (possibly depending on $\lambda$) then we have $\hau^2(M_k\cap B_{\rho/2}(x_1))\leq 2\epsilon_k$. Hence if for a subsequence $\{k'\}\subset \{k\}$ we have $\hau^2(M_{k'}\cap B_{\rho}(x_1))\leq \eta\rho^2$, then we would have $\spt(\|W\|)\cap B_{\rho/2}(x_1) = \emptyset$, which is a contradiction with $x_1\in\spt(\|W\|)$. This shows for all sufficiently large $k$ we have $\hau^2(M_k\cap B_{\rho}(x_1)) \geq \eta \rho^2$ which implies:
    \begin{equation}\label{eq:lb-density}
        \|W\|(G_2(B_{\rho}(x_1))) \geq c \rho^2\,.
    \end{equation}
    This concludes the density lower bound, which combined with \cite[Theorem 1.2]{DDG} and \eqref{H:ellipticity} asserts that $\spt(\|W\|)$ is rectifiable.

    In order to prove integrality, take $x_0,C,H$ as in the statement. Moreover let $\phi:B_{\rho_0}(x_0)\to \mathbf{B}_2\subset \R^3$ be a local chart generated by the exponential map as in \cite{MSY} (after a possible rotation in the euclidean target, identifying $H$ with $\R^2\times\{0\}$). Moreover we name $\tilde{W} = \phi^{\#}\left(W\cap G_2(B_{\rho_0}(x_0))\right)$, and we define $K_{s,t}:= D_s\times (-t,t)$. By the multiplicity lower bound, it is easy to see that given any $\sigma_0>0$ small there is a radius $r$ such that 
    \begin{align}
        \boldsymbol\mu_r(\spt\|\tilde W\|) \cap K_{1,1} \subset K_{1,\sigma_0/2}.
    \end{align}
    Here $\boldsymbol{\mu}_r(x)= \frac{x}{r}$. Take such $r$; now our goal is to basically reprove \cite[Theorem 2]{AS} with slights modifications. Define $\tilde{M}_k:=\phi(M_k\cap B_{\rho_0}(x_0))$, we have that
    \begin{align}
        \tilde\F(\boldsymbol\mu_r(\tilde{M}_k)) \leq \tilde\F(M) + \epsilon_k/r^2, \text{ for all } M\in\mathcal{M} \text{ with } \de M = \de \boldsymbol{\mu}_r(\tilde{M}_k)\,.
    \end{align}
    Here $\tilde{\F}$ is the push-forward of the anisotropic energy $\F$ via $\phi$:
    \begin{align}
        \tilde{F}\left(\phi(x),D\phi(x)[T]\right) = F(x,T)\,.
    \end{align}
    Note that $\tilde F$ satisfies  \eqref{H:ellipticity} with $\lambda/2$ in place of $\lambda$, as $\phi$ is $C^2$-close to the identity. Since $\tilde{M}_k$ converges to $\phi^{\#}W$ in the sense of varifolds, by the co-area formula we have that for almost all $\sigma\in(\sigma_0,1)$:
    \begin{align}
        \hau^{1}(\boldsymbol{\mu}_r(\tilde{M}_k) \cap (D_1\times\{-\sigma,\sigma\})) \to 0\,.
    \end{align}
    Hence for any small $\eta>0$, for large enough $k$ we can find $\sigma_k\in(\frac34\sigma_0,\sigma_0)$:
    \begin{align}
        \hau^{1}(\boldsymbol{\mu}_r(\tilde{M}_k) \cap (D_1\times\{-\sigma_k,\sigma_k\})) < \eta\,.
    \end{align}
    This means that, we can find $\rho_k \in (\frac34,1)$ such that:
    \begin{align}
        \boldsymbol{\mu}_r(\tilde{M}_k) \cap (\de D_{\rho_k}\times\{-\sigma_k,\sigma_k\}) = \emptyset\,.
    \end{align}
    Following the assertion above, we apply the replacement \cref{prop:replacement-thm-AS-aniso} to $\boldsymbol\mu_r(\tilde{M}_k)$ with $U=K_{1,\sigma_k}$ to find a family of disks $P_{k}^{1},\cdots,P_{k}^{R_k'},P_{k}^{R_k'+1},\cdots,P_{k}^{R_k}\subset \mathcal{M}$ for $R_k,R_k'$ integers depending on $k$. Moreover\footnote{We point out the typo in the equation right before \cite[(5.11)]{AS}, the definitions are swapped.}
    \begin{align}
        \de P_{k}^{1},\cdots,\de P_{k}^{R_k'} &\subset \de D_{\rho_k}\times(-\sigma_k,\sigma_k)\,,\\
        \de P_{k}^{R_k'+1},\cdots,\de P_{k}^{R_k} &\subset D_{\rho_k}\times\{-\sigma_k,\sigma_k\}\,,
    \end{align}
    such that:
    \begin{align}
        (1-C(\lambda)r)\tilde\F_r(P_k^{i}) \leq \tilde\F_r(P) + \epsilon_{k,i}, \forall P\in\mathcal{M}\text{ with } \de P = \de P_{k}^i\,,
    \end{align}
    with $\sum_{i}\epsilon_{k,i} \leq \epsilon_k/r^2$. Here $\tilde{F}_r(x,T) = \tilde{F}(rx,T)$ is the push-forward of $\tilde\F$ under zooming. By the virtue of (5),(7) in \cref{prop:replacement-thm-AS-aniso}, we have
    \begin{align}
        \boldsymbol{\mu}_r^{\#}(\tilde{W})\lfloor_{G_2(K_{1,1/2})} = \lim_{k\to\infty} \sum_{i=1}^{R_k} \mathbf{v}(P_k^i)\,.
    \end{align}
    Similar to \cite{AS}, we know that $P_{k}^{R_k'+1},\cdots, P_{k}^{R_k}$ can be discarded without changing the varifold limit. This is exactly the same argument, while replacing the use of filigree lemma by \cref{lem:aniso-filigree-lemma}. With the same exact argument, we can also discard elements of $P_{k}^{1},\cdots, P_{k}^{R_k'}$ whose boundary $\de P_{k}^{i}$ is null-homotopic in $\de D_{\rho_k}\times(-\sigma_k,\sigma_k)$, hence:
    \begin{align}
        \boldsymbol{\mu}_r^{\#}(\tilde{W})\lfloor_{G_2(K_{1,1/2})} = \lim_{k\to\infty} \sum_{i\in \Im_k} \mathbf{v}(P_k^i)\,,
    \end{align}
    where $\Im_k\subset \{1,\cdots,R_k'\}$ is the collection of those $P_k^i$s, $i\in\{1,\cdots,R_k'\}$, which $\de P_k^i$ is not null-homotopic in $\de D_{\rho_k}\times(-\sigma_k,\sigma_k)$. Here we depart slightly from the proof of \cite{AS}. Note that the uniform convexity assumption \eqref{H:ellipticity} means that:
    \begin{align}\label{updated-uniform-convexity}
    \tilde{G}_r(x,\nu) - \frac\lambda4|\overline{\nu}-\nu|^2 \geq \ang{D_\nu \tilde{G}_r(x,\overline{\nu}),\nu}\,.
    \end{align}
    Here $\tilde{G}_r(x,\lambda \nu) = |\lambda|\tilde{F}_r(x,\nu)$.
    
    Now fix $i\in\Im_k$ and take $A_k^i\subset \de D_{\rho_k}\times(-\sigma_k,\sigma_k)$ to be the component of $\de D_{\rho_k}\times [-\sigma_k,\sigma_k] \setminus P_{k}^i$ containing $\de D_{\rho_k}\times \{\sigma_k\}$, and define $\Omega_k^i$ to be the component of $K_{\rho_k,\sigma_k}\setminus P_k^i$ such that $\de\Omega_k^i = A_k^i \cup (D_{\rho_k}\times\{\sigma_k\})\cup P_i^k$. Then take the vector field $\eta = (0,0,-1)$, extension of the outward normal to $D_{\rho_k}\times\{\sigma_k\}$ and apply the divergence theorem to $D_\nu \tilde{G}_r(x,\eta)$, we see that:
    \begin{align}
    \begin{aligned}\label{pre-cacciopoli}
        -c\lambda\sigma_k\rho_k &\leq \int_{\Omega_k^i} \div(D_\nu \tilde{G}_r(x,\eta)) \\ &=  -\tilde{\F}_r(D_{\rho_k}\times\{\sigma_k\}) + \int_{A_k^i} \ang{D_\nu \tilde{G}_r(x,\eta),\nu} + \int_{P_k^i} \ang{D_\nu \tilde{G}_r(x,\eta),\nu}\\
        &\leq \tilde\F_r(P_k^i) -\tilde{\F}_r(D_{\rho_k}\times\{\sigma_k\}) -c_\lambda \int_{P_{k}^i}|\eta-\nu|^2 \,\mathrm{d}\hau^2+ c_\lambda\sigma_k\rho_k\,.
        \end{aligned}
    \end{align}
    Since $D_{\rho_k}\times\{\sigma_k\}\cup A_k^i$ is a competitor for $P_k^i$, we have
    \begin{align}\label{comparison-1}
        \tilde\F_r(P_k^i) \leq \tilde\F_r(D_{\rho_k}\times\{\sigma_k\}) + \tilde\F_r(A_k^i) + \epsilon_{k,i} + C(\lambda)r\,.
    \end{align}
    Using the above and \cref{pre-cacciopoli}, we arrive at
    \begin{align}\label{excess-bound}
        \int_{P_{k}^i}|\eta-\nu|^2 \,\mathrm{d}\hau^2 \leq \epsilon_{k,i} + c_\lambda \sigma_0 + C(\lambda) r\,.
    \end{align}
    This is basically an excess bound. First note that since for all $i\in\Im_k$, all $\de P_k^i$ are not null-homotopic in $\de D_{\rho_0}\times [-\sigma_k,\sigma_k]$, we have:
    \begin{align}\label{intergral-lower-bound}
        \hau^2(P_k^i\cap K_{\rho,1}) \geq \pi\rho^2,\;\;\;\text{ for all }\rho\in(0,\rho_k]\,.
    \end{align}
    This serves as the multiplicity lower bound. Moreover in the view of this lower bound and the multiplicity upper bound we get that $\sup_{k}|\Im_k| < \infty$ and we can take a further subsequence (not relabeled) such that $|\Im_k|$ is constant.
    
    Now, to get the upper bound, we realize that
    \begin{align}
        \begin{aligned}
        \left|\tilde{\F}_r(P_k^i) - \tilde{F}_r(0,\eta)\hau^2(P_k^i)\right| &\leq \int_{P_k^i} |\tilde{F}_r(x,\nu_{P_k^i}) - \tilde{F}_r(0,\eta)| \,\mathrm{d}\hau^2 \\
        &\leq c_\lambda \int_{P_k^i}|\nu-\eta|\,\mathrm{d}\hau^2 + \omega_{F}(r,x_0)\hau^2(P_k^i)\\
        &\leq c_\lambda \left(\sqrt{\epsilon_{k,i} + \sigma_0} + \omega_F(r,x_0)\right)\,.
        \end{aligned}
    \end{align}
    Here $\omega_F(r,x_0)$ is the modulus of continuity of $D_xF$ on $x_0$ at radius $r$. Since $F\in C^1$ by \eqref{H:regularity}, we know that $\lim_{r\to0} \omega_F(r,x_0) = 0$. With a similar argument we also assert that:
    \begin{align}
        \left|\tilde\F_r(D_{\rho_k}\times\{\sigma_k\}) - \tilde{F}_r(x_0,\eta)\pi\rho_k^2\right| \leq c_\lambda\omega_F(r,x_0)\,.
    \end{align}
    Combining the last two displays with \cref{comparison-1} we get that:
    \begin{align}\label{integral-upper-bound}
        \hau^2(P_k^i) \leq \pi\rho_k^2 + c_\lambda(\sigma_0 + \omega_F(r,x_0))
    \end{align}
    for large enough $k$. Then take $n=|\Im_k|$ and a further subsequence (if necessary) such that $\rho_k\to\rho_0\in[\frac34,1]$ for $i=1,\cdots,n$:
    \begin{align}
        \mathbf{v}(\boldsymbol{\mu}_{\rho_k}(P_k^i)) \to \tilde{W}_i\,,
    \end{align}
    with $\boldsymbol\mu_{\rho_0}^{\#}\tilde{W}\lfloor_{K_{1/2,1}} = \sum_{i=1}^n \tilde{W}_i\lfloor_{K_{1/2,1}}$ and from \cref{intergral-lower-bound,integral-upper-bound} we get that:
    \begin{align}
        \forall \rho\in(0,1):\;\|\tilde{W}_i\|(K_{\rho,1})\geq \pi \rho^2,\;\; \|\tilde{W}_i\|(K_{1,1}) \leq \pi + c_\lambda(\sigma_0+\omega_F(r,x_0))\,.
    \end{align}
    Moreover we have that $\spt\|\tilde{W}_i\|\subset K_{1,\sigma_0}$, hence:
    \begin{align}
        \|\tilde{W}_i\|(\B_1) \geq \|\tilde{W}_i\|(K_{1-\sigma_0,\sigma_0}) \geq \pi(1-\sigma_0)^2\,.
    \end{align}
    and
    \begin{align}
        \|\tilde{W}_i\|(\B_1) \leq \|\tilde{W}_i\|(K_{1,1}) \leq \pi + c_\lambda(\sigma_0+\omega_F(r,x_0))  
    \end{align}
    Since $r,\sigma_0$ is arbitrary and $\lim_{r\to0}\omega_F(r,x_0)=0$ since $F\in C^1$, we get that $\tilde{W}_i$ is integral, hence $\tilde{W}$ and consequently $W$ is integral. The $C^{1,\alpha}$ regularity of each $\tilde{W}_i$ then follows from the classical regularity theorem of Allard in \cite[Section 3 \& 3.6]{A-constancy} (since their multiplicity is close to one and they they are limits of almost minimizing disks, hence they are individually stationary). Their higher regularity follows by Schauder estimates and standard PDE arguments. Take $M^i$ to be such that $\tilde{W}_i = \mathbf{v}(M^i)$. Now, similar to the arguments of the paragraph right after \cite[(6.8)]{AS}, since $P_{k}^i$s are disjoint (for fixed $k$), the limit surfaces $M^i$s cannot be transversal, hence if they touch, by the maximum principle of \cite{SW-maximum-principle}, they must be the same. This concludes the proof.
\end{proof}

\subsection{Proof of \cref{thm:white-improvement}: regularity à la Pitts}\label{sec:regularity}

In this section we prove the anisotropic analogues of the regularity results of \cite[Section 5 \& 6]{AS}. However, rather than relying on the methods of \cite{AS}, we use Pitts' regularity scheme. First we use \cite{W-1} combined with \cite{HS} to show that this limit varifold, which is integral by \cref{thm:integrality-limit-varifold}, admits replacements in all annuli small enough, centered at both interior and boundary points. Differently from what happens in the min-max regularity theory, \emph{the smallness of the annuli is independent of their center point}. This is crucial, as it allows us to conclude that any limit varifold is in fact a smooth $\F$-stationary and $\F$-stable surface, bypassing the issue of removal of isolated singularities. 

\begin{proposition}[Replacements in annuli]\label{prop:replacements-in-annuli}
    Let $B_{\rho} \subset N$ be an open ball with $\rho \leq \rho_1$ for small enough $\rho_1(N)>0$ and $\{M_k\}\subset \mathcal{M}$ a sequence of surfaces (topological disks) with $\de M_k\subset \de B_{\rho}$ and $M_k\setminus \de M_k \subset B_\rho$ and for every $M\in\mathcal{M}$ with $\de M = \de M_k$ we have:
    \begin{align}
        \F(M_k) \leq \F(M) + \epsilon_k,\quad\text{with } \lim_{k\to\infty}\epsilon_k = 0\,,
    \end{align}
    Suppose further that the varifold limit $W = \lim_{k\to\infty} \mathbf{v}(M_k)$ exists in $\mathbf{V}_2(N)$. Then there exists a modified sequence $M_k'$ with the following properties:
    \begin{enumerate}
        \item We have $\de M_k' = \de M_k$.
        \item $\{M_k'\}$ is another minimizing sequence, meaning for every $M\in \mathcal{M}$ with $\de M = \de M_k'$:
        \begin{align}
            \F(M_k') \leq \F(M) + \epsilon_k\,.
        \end{align}
        \item The modified sequence has a varifold limit $W' = \lim_{k\to\infty} \mathbf{v}(M_k')$ (up to a subsequence) such that:
        \begin{align}
            W'|_{B_{\rho/2}} = W|_{B_{\rho/2}}\,.
        \end{align}
        \item We have that $W'|_{B_{\rho}\setminus B_{\rho/2}} = \sum_{i}\mathbf{v}(\Sigma_i)$, where $\Sigma_i$ is a smooth $\F$-stationary and $\F$-stable surface in any compact set $K\subset B_{\rho}\setminus \overline{B_{\rho/2}}$ with uniform curvature estimates depending only on $K,\F,N$.
        
        \item If $\limsup_{k\to\infty}\|\de M_k\|_{C^{2,\alpha}} <\infty$, then $W'|_{B_{\rho}\setminus B_{\rho/2}} = \sum_{i}\mathbf{v}(\Sigma_i)$, where $\Sigma_i$s are separately regular up to the outer boundary and uniform curvature estimates up to the outer boundary $\de B_\rho$ depending on $K,\F,N,\limsup_{k\to\infty}\|\de M_k\|_{C^{2,\alpha}}$.
    \end{enumerate}
\end{proposition}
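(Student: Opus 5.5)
We follow the strategy announced in the introduction: we cover the annulus $B_\rho\setminus \overline{B_{\rho/2}}$ by a Whitney-type family of small strongly convex balls and perform successive $\F$-minimizing disk replacements à la White--Hass--Scott. Concretely, we fix a finite (depending on $k$ only through a compact exhaustion) sequence of balls $U_1,U_2,\dots$ contained in $B_\rho\setminus\overline{B_{\rho/2}}$, with radii comparable to the distance to $\de B_{\rho/2}$, and with boundaries whose principal curvatures are at least $(100\,\diam U_j)^{-1}$. Since $B_\rho$ is $\F$-convex for $\rho\le\rho_1$ small, we can work inside it.

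For each ball $U_j$ in turn, we first apply \cref{prop:replacement-thm-AS-aniso}, in its improved form (11)--(12), to the current disk. This splits the part inside $\overline{U_j}$ into stacked disks $\Sigma_j^1,\dots,\Sigma_j^{m}$, each $\theta_j^l$-almost minimizing with $\sum_l\theta_j^l\le\epsilon_k$. We then replace each $\Sigma_j^l$ by a genuinely $\F$-minimizing disk with the same boundary, whose existence is given by White \cite{W-1}. To keep the replaced disks disjoint and embedded, we use \cref{lem:disjoint-boundaries}, which does not increase $\F$.

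The point of the construction is that the energy never increases. Hence the new surface $M_k'$ satisfies (1) and (2) with the same $\epsilon_k$: it is still a disk, since replacing disk pieces by disks with the same boundaries inside balls preserves the disk topology, and $\F(M_k')\le\F(M_k)$. The boundary is untouched, because all $U_j$ stay away from $\de B_\rho$ in the interior case; for (5) we also admit balls meeting $\de B_\rho$ and use \cref{rmk:3-12AS}.

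To prove (3), we must show that the modifications do not alter the limit inside $B_{\rho/2}$. The replacements change $M_k$ only inside the $U_j$'s, but the new pieces could a priori create or destroy ``tentacles'' reaching into $B_{\rho/2}$. This is exactly what \cref{lemma:tentacles-estimate} controls: applied with $U=B_{\rho/2}$ (or slightly larger concentric balls), it bounds by $C\epsilon_k$ the area of the pieces of $M_k$ and of $M'_k$ outside $U$ that are not attached to $\de M_k$ and travel far. Together with (11), which gives $\F$-monotonicity, this yields that $\F(M_k\cap B_{\rho/2})$ and $\F(M_k'\cap B_{\rho/2})$ differ by $o(1)$. We then compare the two sequences as varifolds on $B_{\rho/2}$ using that the replacements were not performed there (item (5) of \cref{prop:replacement-thm-AS-aniso}, with $\theta$ small). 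Passing to a diagonal subsequence then gives $W'|_{B_{\rho/2}}=W|_{B_{\rho/2}}$.

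For (4), on each $U_j$ the final pieces are $\F$-minimizing disks (or collections of them). By \cref{lemma:disk-compactness} they converge smoothly on compact subsets of $U_j$, with $C^{2,\alpha}$ bounds depending only on $K,\F,N$, to $\F$-minimal (hence stable) surfaces.

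The main obstacle is the gluing across overlaps $U_j\cap U_{j'}$. A later replacement in $U_{j'}$ destroys the minimizing-disk structure in $U_j\cap U_{j'}$, so we need the limits from consecutive balls to agree and to form a single smooth surface. Here we follow Hass--Scott \cite{HS}. Each limit sheet obtained in $U_j$ is smooth and stable in $U_j$. After the replacement in $U_{j'}$, the new limit coincides with the old one in $U_j\setminus \overline{U_{j'}}$ and is smooth in $U_{j'}$. The two agree on an open set, so by unique continuation and the maximum principle of \cite{SW-maximum-principle}, sheets that touch tangentially must coincide, and the union is smooth across $\de U_{j'}$. Crucially, because the balls' sizes are fixed and do not shrink with $k$, the minimizing radius does not degenerate. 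The curvature estimates of \cref{lemma:disk-compactness} therefore hold uniformly on each overlap, which prevents loss of smoothness at the interface.

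Iterating over the finite cover of each compact $K$, and exhausting $B_\rho\setminus\overline{B_{\rho/2}}$ by a diagonal argument, gives (4). Finally, (5) follows identically, using items (3)--(4) of \cref{lemma:disk-compactness} for balls touching $\de B_\rho$, with estimates depending also on $\limsup_k\|\de M_k\|_{C^{2,\alpha}}$.
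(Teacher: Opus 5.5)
Your overall scheme (Whitney cover by small convex balls, energy-nonincreasing disk replacements, interior smoothness from \cref{lemma:disk-compactness}) is the paper's, but two steps fail as written.

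\emph{Item (3).} You assume the replacement in $U_j$ acts only inside $U_j$, but it does not. To keep a disk you must discard the whole subdisk of $M_k$ bounded by each outermost curve of $M_k\cap\partial U_j$, i.e.\ the components $\Sigma_0$ of $M_k\setminus U_j$ not meeting $\partial M_k$. These pieces can reach into $B_{\rho/2}$. Item (5) of \cref{prop:replacement-thm-AS-aniso}, $\tilde M\setminus U\subset M\setminus U$, says only that outside $U$ material is removed and never added; item (11) controls only what is removed inside $U_\theta$. Applying \cref{lemma:tentacles-estimate} with $U=B_{\rho/2}$ controls pieces lying \emph{outside} $B_{\rho/2}$ and says nothing about $\hau^2\bigl((M_k\setminus M_k')\cap B_{\rho/2}\bigr)$. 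Your $o(1)$ mass comparison in $B_{\rho/2}$ is therefore unsupported. The lemma must be applied with $U=U_j$. By the Whitney property, $B_{\rho/2}$ lies at distance at least $\diam(U_j)/8$ from $U_j$, so the deleted area inside $B_{\rho/2}$ is at most $C\epsilon_k$. Since no step increases $\F$, every intermediate surface is still $\epsilon_k$-minimizing. This lets you iterate over finitely many balls and then diagonalize.

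\emph{The gluing.} After the replacement in $U_{j'}$, the new limit is known on $U_{j'}$ and, unchanged, on $U_j\setminus\overline{U_{j'}}$. These are disjoint open sets meeting only along $\partial U_{j'}\cap U_j$, so there is no open set on which the two descriptions agree. Unique continuation and \cite{SW-maximum-principle} cannot be used until you know the union is $C^1$ across the interface, which is exactly what must be proved. Nor do curvature estimates ``hold uniformly on each overlap'': those from $U_j$ no longer apply to the new surface, and those from \cref{lemma:disk-compactness}(1) in $U_{j'}$ are interior and degenerate at $\partial U_{j'}$. The missing ingredient is the Hass--Scott analysis, in three steps:
\begin{enumerate}
\item The boundary arcs $\Gamma_k\subset\partial U_{j'}\cap U_j$ of the new disks $F_k$ inherit uniform $C^{2,\alpha}$ bounds from the smooth convergence in $U_j$. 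If no other boundary arcs of $F_k$ accumulate at a point of the limit arc, \cref{lemma:disk-compactness}(4) gives regularity up to the arc.
\item If other boundary arcs do accumulate (folding), you must rule out vanishingly short paths on $F_k$ joining them to $\Gamma_k$ by the cut-and-paste argument of \cite[Theorem 4.1]{HS}. This yields a continuous surface with at most a bend along the arc.
\item The bend is excluded because, by ellipticity of $F$, a bent disk admits a competitor with the same boundary and strictly smaller $\F$. Inserting it into the sequence contradicts almost-minimality.
\end{enumerate}
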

\begin{rmk}
    Note that the interior curvature estimates do not depend on the mass or the multiplicity of the limiting varifold.
\end{rmk}
\begin{proof}
    We take inspiration from the proof of \cite[Theorem 4.1]{HS} and the tools developed in \cite{W-1} to build a replacement in the annulus.

    We cover the annulus with a countable collection of smooth open convex sets $\{Q_i\}_{i=1}^\infty$ with the following properties:
    \begin{itemize}
        \item $Q_i\subset B_{\rho}\setminus \overline{B_{\rho/2}}$ and $B_\rho \setminus \overline{B_{\rho/2}} = \bigcup_{i=1}^\infty Q_i$.
        \item $\diam(Q_i) \sim \dist(Q_i,B_{\rho/2})$ and for any two intersecting sets $Q_i\cap Q_j\not = \emptyset$ we have that $\frac 14\leq \frac{\diam(Q_i)}{\diam(Q_j)} \leq 4$, meaning that $Q_i$ is a Whitney covering with respect to the inner boundary $\de B_{\rho/2}$.
        \item For some universal number $n_0$, we have that $\de Q_i\cap \de B_\rho \not = \emptyset$ if and only if $1\leq i \leq n_0$. More importantly we also have $\de B_{\rho} \subset \bigcup_{i=1}^{n_0}\de Q_i$.
        \item Up to a slight enlarging the sets, we also assume that $M_k$ is transverse to each $\de Q_j$.
        \item The sets $\{Q_j\}$ and the ball $B_\rho$ are small and convex enough such that the conditions of \cref{lemma:tentacles-estimate} apply.
    \end{itemize}
    Now take $Q_1$ and consider the intersection $\de Q_1\cap M_k$. Moreover take the map diffeomorphism $\chi: D_1 \to M_k$ with $\cup_{j}\chi(\gamma_j) = \de Q_1\cap M_k$ where $\gamma_j\subset D_1$ is a simple closed curve. Take the connected components of $\cup_{j=1}^{n_1} \mathrm{int}(\gamma_j)$ to be $\Omega_i$ and take the outermost curves of each component to be $\gamma_i$ for $i\in I$. Now by the result of \cite{W-1}, there exists a surface $\Sigma_i\subset Q_1$ that minimizes the $\F$-area with the boundary $\chi(\gamma_i)$. Now we replace $M_k$ with $M_k^{(1)}$ defined as follows:
    \begin{align}
        M_k^{(1)} = \left(M_k \setminus \bigcup_{i\in I} \chi(\Omega_i)\right) \cup \bigcup_{i\in I }\Sigma_i\,.
    \end{align}
    This tells us that:
    \begin{align}
        \F(M_k^{(1)}) \leq \F(M_k)\,,
    \end{align}
    and by construction:
    \begin{align}
        \de M_k = \de M_k^{(1)}\,.
    \end{align}
    Hence $M_k^{(1)}$ is $\epsilon_k$-minimizing with respect to the original boundary. 

    We now apply the compactness \cref{lemma:disk-compactness} choosing \(A=Q_1\). Namely,
    \[
        M_k^{(1)}\cap Q_1=
    \bigcup_{i\in I}\Sigma_i,
    \]
    where each \(\Sigma_i\subset Q_1\) is a least \(\F\)-area disk with boundary \(\chi(\gamma_i)\subset\partial Q_1\). Thus \(M_k^{(1)}\cap Q_1\) is a finite collection of least \(\F\)-area disks. Moreover, by \eqref{H:comparability} and by the almost-minimizing property of \(M_k\),
    \[
        \begin{aligned}
        \mathcal H^2(M_k^{(1)}\cap Q_1)
        &\le
        C(\lambda)\F(M_k^{(1)}\cap Q_1)        \le
        C(\lambda)\F(M_k^{(1)})                \le
        C(\lambda)\F(M_k)                       \\&\le C(\lambda)\bigl(\F(S_k)+\epsilon_k\bigr)\le C,
        \end{aligned}
    \]
    where \(S_k\subset\partial B_\rho\) is a disk in \(\partial B_\rho\) with \(\partial S_k=\partial M_k\). Therefore, \cref{lemma:disk-compactness}(2) gives a subsequence such that \(M_k^{(1)}\cap Q_1\) converges smoothly on compact subsets of \(Q_1\) to a smooth \(\F\)-minimal surface.
    
    Now take ${\Sigma_0}\subset M_k\setminus Q_1$ to be the component (not necessarily connected) of $M_k$ such that ${\Sigma_0} \cap \de M_{k} = \emptyset$. Then as a direct consequence of \cref{lemma:tentacles-estimate} and by the definition and convexity of $Q_1$ we have that $(M_{k}\Delta M_k^{(1)})\cap B_{\rho/2} \subset {\Sigma_0}$. Since $\dist(Q_1,B_{\rho/2}) \sim \diam(Q_j)$, then the tentacles estimate \cref{lemma:tentacles-estimate} implies:
   \begin{align}
        \hau^2({\Sigma_0}\cap B_{\rho/2}) \leq C\epsilon_k\,.
   \end{align}
    This means that the varifold limit of $\{M_k^{(1)}\}$ up to a subsequence (not relabeled) $W^1 = \lim_{k\to \infty} \textbf{v}(M_k^{(1)})$ is identical to $W$ in $B_{\rho/2}$ and smooth in $Q_1$.
    
    From this we build the sequence $\{M_k^{(2)}\}$ by repeating the same argument for $Q_2$ and taking a further subsequence with a smooth limit in the interior of $Q_2$. Moreover with the same argument and the use of \cref{lemma:tentacles-estimate} (choosing now $A=Q_2$), we can also ensure that any varifold limit $M_k^{(2)}$, namely $W_2$ is again identical to $W$ in the interior of $B_{\rho/2}$ and smooth in the interior of $Q_2$ and $Q_1\setminus Q_2$. What remains, is to show that in the overlap region $\de Q_2\cap Q_1$, no bending occurs in the limit and the limit $W_2$ is smooth inside $Q_1\cup Q_2$. For this we use the strategy of \cite[Theorem 4.1]{HS}, almost verbatim, but with different tools suited to the anisotropic setting.
    
    First if $Q_1\cap Q_2 = \emptyset$ we have nothing to prove, which leaves $Q_1\cap Q_2 \not= \emptyset$. Now take $\lim_{k\to\infty} M_k^{(2)}\cap Q_2 = T_2$ and $\lim_{k\to\infty} M_k^{(2)} \cap(Q_1\setminus \overline{Q_2}) = T_1$, we aim to show that $T_1\cup T_2$ is smooth in the interior of $B_\rho\setminus \overline{B_{\rho/2}}$.
    
    As the convergence of $M_k^{(1)}$ was smooth inside $Q_1$ and $M_k^{(2)}$ is made up of a sub-collection of components of $M_k^{(1)}$, we assert that $T_1$ meets $\de Q_2\cap Q_1$ is a (possibly collection of) smooth arc(s). If this collection is empty, then it is clear that $T_1\cup T_2$ is smooth, so we consider a component $E$ of $T_1$ such that this component intersects $\de Q_2 \cap Q_1$ and we consider an arc $\Gamma$ in this intersection $E\cap \de Q_2\cap Q_1$. By construction, it is a limit of a series of arcs $\Gamma_k$ as the intersection of (a sub-collection of) least $\F$-area disks in $M_k^{(1)}\cap Q_1$ with $\de Q_2 \cap Q_1$, and they are also part of the boundary of a least $\F$-area disk $F_k$ in $M_k^{(2)}\cap Q_2$. Now let $A_k = \de F_k \setminus \Gamma_k$ and take a point $x\in \Gamma \setminus \de B_\rho$ away from the boundary of the original annulus with $\dist(x,\de B_\rho) > 0$.
    
    We will show that $T_1\cup T_2$ is a smooth surface in the neighborhood of $x$. First since $\Gamma_k$ was inside the intersection of $\de Q_2$ with $M_k^{(1)}\cap Q_1$ and since $M_k^{(1)}$ was smooth in the interior, we can assert that $\Gamma_k$ has uniform $C^k$ estimate in its interior, which passes to the limit $\Gamma$. Like in \cite{HS} first, we show that in a neighborhood of $x$, $T_1\cup T_2$ is a continuous surface, smooth except for possibly a bend along $\Gamma$. Then we would like to show that no bending can occur.

    \textit{Case 1.} There is a constant $\epsilon>0$ such that $B_\epsilon(x) \cap A_k=\emptyset$ for all large enough $k>0$, then the fourth item in \cref{lemma:disk-compactness} guarantees that $F_k$ converges in $Q_2$ to a surface which extends continuously up to the boundary $\de Q_2$.

    \textit{Case 2.} The sequence $A_k$ intersects any neighborhood of $x$ (meaning that some folding happens on the boundary which disappears in the limit).

    Let $C_k$ be a component of $A_k\cap Q_1$ which contains a point $y_k$ where $\lim_{k\to\infty}y_k = x$. Since $C_k$ and $\Gamma_k$ are both components of $M_k^{(2)}\cap Q_1\cap \de Q_2$ and $M_k^{(2)}$ is embedded, hence both must converge to $\Gamma$. This means that $\Gamma$ is at least with multiplicity two and since $E$ is part of the interior of $Q_2$, then $M_k^{(2)}$ must've converged with multiplicity at least two to $E$ in $Q_1\setminus Q_2$. In this case we have to consider how the disks $F_k$ converge in $Q_2$. It is clear that $F_k$ intersects $B_\epsilon(x)$ in disjoint disks.

    \textit{Subcase 2(i).} There is an $\epsilon$ such that $F_k$ only intersects $\de Q_2\cap Q_1$ in $\Gamma_k$, which in this case, we follow the strategy of Case 1.

    \textit{Subcase 2(ii).} There is a sequence of points $x_k$ on $\Gamma_k$ with $\lim_{k\to\infty}x_k = x$ and a sequence of point $y_k$ on $C_k$ also converging to $x$ such that there exist a sequence of paths $d_k$ connecting $x_k$ to $y_k$ on $F_k$ with $\lim_{k\to\infty} \hau^1(d_k) = 0$. To show that this is impossible, we use the same exact cut and paste argument as in \cite[Theorem 4.1]{HS}. Note that because of \eqref{H:comparability} in the anisotropic case the $\F$-area of \cite[Figure 4c]{HS} is still greater than the $\F$-area of \cite[Figure 4b]{HS} for small enough $\hau^1(d_k)$. The rest of the argument follows exactly as in \cite{HS}, which we will omit.

    Since the argument above can be applied to any point $x\in T_1\cap \de Q_2$ with $\dist(x,\de B_\rho)>0$, we conclude that $T_1\cup T_2$ is continuous and smooth in the interior of $B_{\rho}\setminus B_{\rho/2}$ except for a possible bend along the arc $T_1\cap \de Q_2$. Suppose that $T_1\cup T_2$ fails to be smooth along an arc $\Gamma$ of $T_1\cap \de Q_2$ at a point $x\in \Gamma \setminus \de B_{\rho}$. Choose a small ball $B\subset B_{\rho}\setminus \overline{B_{\rho/2}}$ centered at $x$ such that the component of $T_1\cup T_2$ which contains $\Gamma$ intersects $B$ in a 2-disk $H$. Since $H$ has a bend along $\Gamma$ and $\F$ is a convex integrand, for small enough $B$ depending on $\lambda$, $H$ cannot be a $\F$-minimizing disk, hence there exists another 2-disk $H'$ with the same boundary as $H$ in $B$ with less area. We can use this as a competitor in the sequence exactly as in \cite[Theorem 4.1]{HS} the last paragraph of page 104 and reach a contradiction. This means that $T_1\cup T_2$ is smooth in the interior of $B_{\rho}\setminus \overline{B_{\rho/2}}$.

    Now in regarding the (5)th conclusion, if we have uniform $C^{2,\alpha}$ estimates on the boundary of $M_k$, we can use the third conclusion of \cref{lemma:disk-compactness} (as an analogue to \cite[Lemma 6.9]{HS}) and deduce similarly that for those $Q_k$s where the intersect the boundary $\de B_\rho$, the limit near the boundary will be made up of component that are regular up to the boundary with uniform curvature estimates depending on $\F,N$ and the regularity of the boundary data.

    Here we obtain a varifold $W_2$ such that $W_2|_{B_{\rho/2}} = W$ which is also smooth inside $Q_1\cup Q_2$. We may repeat this process and obtain a sequence of stationary integer rectifiable varifolds $W_k$ with $W_k|_{B_{\rho/2}} = W|_{B_{\rho/2}}$, such that $W_k$ is smooth in $\bigcup_{j=1}^{k}Q_k$ with uniform curvature estimates. Taking a limit of these varifolds (up to a subsequence) we can find a varifold $W'$ which is smooth in $B_{\rho}\setminus \overline{B_{\rho/2}}$. Moreover in the case the assumptions of (5) hold, the $W'$ is smooth in $B_\rho\setminus \overline{B_{\rho/2}}$ up to the outer boundary $\de B_\rho$. Then using a diagonal argument and a slight smoothing we can choose a sequence $M_{\pi(k)}^{(k)}\in \mathcal{M}$ which is $\epsilon_{\pi(k)}$ minimizing with the limit being $W'$.
\end{proof}

Almost identical to \cref{prop:replacements-in-annuli}, we also state and sketch the proof of the boundary version. First we define smoothed balls with centers on the boundary as follows. Imagine a point $x\in\de A$ and consider the half ball $B_\rho(x)\cap A$ with sharp edges. Then one can smoothen the edges slightly to produce $\tilde{B}_\rho(x)$ with smooth and $\F$-convex boundary, if the boundary of $A$ is also $\F$-convex and $\rho$ is chosen small enough. This is a technical modification, which makes boundary regularity issues simpler to deal with.

\begin{proposition}[Replacement in boundary annuli]\label{prop:replacements-in-annuli-boundary}
    Take a set $A\subset N$ with $\F$-convex boundary and take $\tilde{B}_\rho(x)$ defined as above with $x\in\de A$ with $\rho \leq \rho_1$ and take a sequence $\{M_k\}\subset\mathcal{M}$ as in \cref{prop:replacements-in-annuli} with $\de M_k\subset \tilde{B}_{\rho}$. Then there exists a modified sequence $M_k'$ with the following properties:
    \begin{enumerate}
        \item We have $\de M_k' = \de M_k$.
        \item $\{M_k'\}$ is another minimizing sequence, meaning for every $M\in \mathcal{M}$ with $\de M = \de M_k'$:
        \begin{align}
            \F(M_k') \leq \F(M) + \epsilon_k\,.
        \end{align}
        \item The modified sequence has a varifold limit $W' = \lim_{k\to\infty} \mathbf{v}(M_k')$ (up to a subsequence) such that:
        \begin{align}
            W'|_{B_{\rho/2}(x)\cap A} = W|_{B_{\rho/2}(x)\cap A}\,.
        \end{align}
        \item We have that $W'|_{(\tilde{B}_{\rho}\setminus B_{\rho/2})\cap A} = \sum_{i}\mathbf{v}(\Sigma_i)$, where $\Sigma_i$ is a smooth $\F$-stationary and $\F$-stable surface in any compact set $K\subset (\tilde{B}_{\rho}\setminus \overline{B_{\rho/2}})\cap A$ with uniform curvature estimates depending only on $K,\F,N$.
        
        \item If $\limsup_{k\to\infty}\|\de M_k\cap \de A \|_{C^{2,\alpha}} <\infty$, then $W'|_{(\tilde{B}_{\rho}\setminus B_{\rho/2})\cap A} = \sum_{i}\mathbf{v}(\Sigma_i)$, where $\Sigma_i$s are separately regular up to the outer boundary $\de A \cap \tilde{B}_{\rho}\setminus \overline{B_{\rho/2}}$ with uniform curvature estimates depending on $K,\F,N,\limsup_{k\to\infty}\|\de M_k\cap\de A\|_{C^{2,\alpha}}$.

        \item If moreover $\limsup_{k\to\infty}\|\de M_k\cap \de A \|_{C^{2,\alpha}} + \|\de M_k\cap \de \tilde{B}_\rho\cap A \|_{C^{2,\alpha}} <\infty$ then the curvature estimates are uniform up to $\de(\tilde{B}_\rho\cap A)\setminus \overline{B_{\rho/2}}$.
    \end{enumerate}
\end{proposition}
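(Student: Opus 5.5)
The plan is to repeat the proof of \cref{prop:replacements-in-annuli} essentially verbatim, on the half-annulus $(\tilde B_\rho(x)\setminus\overline{B_{\rho/2}(x)})\cap A$ instead of the full annulus. Only two things change: the Whitney-type covering near $\de A$, and the compactness statement used near the boundary.

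\emph{Covering.} We cover the region by a countable family of smooth open convex sets $\{Q_i\}$ with $\diam(Q_i)\sim\dist(Q_i,B_{\rho/2}(x))$ and comparable diameters on overlaps, as before. We additionally require that the sets meeting $\de A$ are themselves smoothed half-balls centred on $\de A$ (in the sense of the definition of $\tilde B_\rho$). Since $\de A$ is $\F$-convex and $\rho\le\rho_1$ is small, their boundaries are $\F$-convex. This is what we need in order to:
\begin{enumerate}
\item apply White's existence result \cite{W-1};
\item apply \cref{lemma:disk-compactness} inside each $Q_i$;
\item use the barrier argument of \cref{lemma:tentacles-estimate}, which only needs $\div(D_\nu G(x,\nabla d_{Q_i}))\gtrsim 1/\diam(Q_i)$ on the side facing $B_{\rho/2}(x)$.
\end{enumerate}
By \cref{rmk:3-12AS} we may assume $M_k\subset \overline A$, so no replacement is needed across $\de A$.

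\emph{Successive replacements.} We then perform the replacements $Q_1,Q_2,\dots$ exactly as in \cref{prop:replacements-in-annuli}. Inside each $Q_i$ we replace the innermost disks cut out by $\de Q_i\cap M_k$ with least $\F$-area disks. This does not increase $\F$ and fixes $\de M_k$, which gives (1) and (2). The tentacles estimate (\cref{lemma:tentacles-estimate}) shows that the modified pieces inside $B_{\rho/2}(x)\cap A$ have area $O(\epsilon_k)$, which gives (3). The interior smoothness in (4) follows from \cref{lemma:disk-compactness}(1)--(2), together with the Hass--Scott gluing argument on the overlaps $\de Q_j\cap Q_i$. That argument is local and away from $\de A$, so it transfers unchanged. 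The curvature estimates are independent of mass and multiplicity.

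\emph{Boundary regularity.} For (5), when $\de M_k\cap\de A$ is uniformly $C^{2,\alpha}$, we use \cref{lemma:disk-compactness}(3)--(4) in the sets $Q_i$ touching $\de A$. This gives regularity up to $\de A$ with uniform estimates, and shows that a component of the limit actually attains the boundary arc. For (6) we apply the same lemma also to the $Q_i$ meeting $\de\tilde B_\rho\cap A$, as in part (5) of \cref{prop:replacements-in-annuli}.

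\emph{Main obstacle.} I expect the hardest step to be the gluing on overlaps $Q_i\cap Q_j$ near the corner where $\de Q_j$ meets $\de A$. There the arcs $\Gamma_k$ and the folds $A_k$ of the Hass--Scott argument may run into the fixed boundary. My first attempt would be to treat the boundary arcs as fixed data: since $\de M_k\cap\de A$ converges in $C^{2,\alpha}$, the boundary version of \cref{lemma:disk-compactness} excludes bending there. I would restrict the cut-and-paste argument to points at positive distance from $\de A$, and then extend across $\de A$ using the uniform boundary estimates.

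Finally, a diagonal argument and slight smoothing produce $M'_k$ with limit $W'$.
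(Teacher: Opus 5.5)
Your proposal is correct and follows the paper's own argument, which is only a short sketch. The paper takes a Whitney covering of $(B_\rho\setminus\overline{B_{\rho/2}})\cap A$ relative to the inner boundary $\partial B_{\rho/2}\cap A$, reruns the successive replacements and Hass--Scott gluing of \cref{prop:replacements-in-annuli}, and applies \cref{lemma:disk-compactness}(3) in each set touching the boundary to get the uniform estimates. The points you flag go beyond anything the paper spells out, namely adapting the barrier of \cref{lemma:tentacles-estimate} to sets whose boundary partly lies on $\partial A$ (what is needed is the barrier on the whole portion of $\partial Q_i$ inside $A$) and the gluing near the corners where $\partial Q_j$ meets $\partial A$.
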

\begin{proof}
    The proof goes almost identically as in \cref{prop:replacements-in-annuli}, with the Whitney covering done in $A\cap B_{\rho}\setminus \overline{B_{\rho/2}}$ with respect to the inner boundary $\de B_{\rho/2}\cap A$. The boundary uniform estimates comes from applying (3) in \cref{lemma:disk-compactness} in each ball and then gluing exactly as in \cref{prop:replacements-in-annuli} inspired from \cite[Theorem 4.1]{HS}.
\end{proof}

We are finally able to prove \cref{thm:white-improvement}, by using Pitts' regularity theory for min-max constructions.

\begin{proof}[Proof of \cref{thm:white-improvement}]
    (1) Let $\{M_k\}\subset \mathcal M$ be a sequence of smooth embedded disks such that
    \[
        \partial M_k\subset \partial \mathbf B,\qquad\mathbf F(M_k)\leq \mathbf F(P)+\varepsilon_k
    \]
    for every $P\in\mathcal M$ with $\partial P=\partial M_k$, where $\varepsilon_k\to 0$ and in view of \cref{rmk:3-12AS} we have $M_k\subset \textbf{B}\cup \de M_k$. Hence,  the $\mathbf F$-energies of $M_k$ are uniformly bounded. By \eqref{H:comparability}, the $\mathcal H^2$--areas of $M_k$ are uniformly bounded as well. Hence, after passing to a subsequence, we know that there exists a varifold $W\in \mathbf V_2(\mathbf{B})$ such that
    \[
        \lim_{k\to\infty}\mathbf v(M_k)=W.
    \]
    By \cref{thm:integrality-limit-varifold}, the varifold $W$ is integral and $\mathbf F$--stationary in $\mathbf B$.

    Fix now an open set $A$, satisfying $ K\subset A\subset\subset \mathbf B$. Then, it suffices to show that every point of $\spt\|W\|\cap A$ is a regular point. Indeed, once this is shown, it follows that the tangent varifold at any point is a subset of a plane, hence by \cref{thm:integrality-limit-varifold} $\spt\|W\|\cap A$ is a smooth embedded $\mathbf F$-stationary surface, say $\Sigma$, and since $W$ is integral there exists an integer-valued multiplicity function $\theta$ on $\Sigma$ such that 
    \[
        W\llcorner A=\theta\,\mathbf v(\Sigma)\llcorner A.
    \]
    Restricting to $K$ then gives the conclusion.

    Fix $x\in \spt\|W\|\cap A$ and set
    \[
        \rho=\rho(x):=\frac18\min\{\rho_1,\dist(x,\mathbb R^3\setminus A)\},
    \]
    where $\rho_1$ is given in \cref{prop:replacements-in-annuli}. Then
    \[
        \overline{B_{4\rho}(x)}\subset A\quad\text{and}\quad 4\rho<\rho_1.
    \]
    Without loss of generality, we can shrink $\rho_1$ if necessary such that every sphere $\partial B_r(x)$ with $0<r\le 2\rho$ is strictly $\mathbf F$-mean convex. Denote the annulus as 
    \[
        \operatorname{An}(x;s,t):=B_t(x)\setminus \overline{B_s(x)}.
    \]

    In the following, we will first construct a smooth embedded $\mathbf F$-stationary surface 
    \[
        \Sigma_x\subset B_{2\rho}(x)\setminus\{x\}
    \]
    such that
    \[
        \spt\|W\|\cap \bigl(B_\rho(x)\setminus\{x\}\bigr)\subset \Sigma_x,
    \]
    and then remove the puncture by overlapping punctured balls. To make the proof clearly, we divide it into four steps.

    \medskip
    \noindent\textbf{Step 1: set up.}
    
    Choose a smooth strongly convex domain $U_x\subset\subset B_{4\rho}(x)$ such that $\overline{B_{2\rho}(x)}\subset U_x$, and such that the hypotheses of \cref{prop:replacement-thm-AS-aniso} are satisfied in $U_x$ (after choosing $U_x$ sufficiently small and with boundary curvature in the regime needed there, so that the multiplier can be removed). Choosing $U_x$ generically and passing to a subsequence if necessary, we may assume that $M_k$ intersects $\partial U_x$ transversely for every $k$.

    Since $U_x\subset\subset \mathbf B$, we know that every admissible modification of $M_k$ inside $U_x$, keeping the outside part fixed, gives a global competitor with the same boundary $\partial M_k$. Hence, the sequence $\{M_k\}$ is almost minimizing in $U_x$ in the sense required by \cref{prop:replacement-thm-AS-aniso}. Applying that \cref{prop:replacement-thm-AS-aniso} in $U_x$, we obtain a modified sequence $\{\tilde M_k\}$ such that
    $$
        \partial \tilde M_k=\partial M_k,\qquad\tilde M_k\setminus U_x\subset M_k\setminus U_x,
    $$
    and
    \[
        \tilde M_k\cap \overline{U_x}=\bigcup_{j=1}^{\ell_k} D_{k,j},
    \]
    where the $D_{k,j}$ are pairwise disjoint embedded disks. Moreover, each $D_{k,j}$ is itself almost minimizing among embedded disks with the same boundary, and the total error tends to zero as $k\to\infty$.

    Now fix any annulus
    \[
        \operatorname{An}(x;s,t)\subset B_{2\rho}(x),\qquad 0<s<t\leq 2\rho.
    \]
    Since $\operatorname{An}(x;s,t)\subset\subset U_x$, the stacked disks above provide precisely the local input used in the proof of \cref{prop:replacements-in-annuli}. Therefore, arguing exactly as in \cref{prop:replacements-in-annuli}, we obtain a replacement of $W$ in $\operatorname{An}(x;s,t)$. 
    
    Hence we know that there exists a replacement of $W$ in $\operatorname{An}(x;\rho,2\rho)$, denoting by $W'$. Write
    \[
        W'\llcorner G_2(\operatorname{An}(x;\rho,2\rho))=\sum_{\alpha=1}^{N'} m'_\alpha\,\mathbf v(\Sigma'_\alpha),
    \]
    where the $\Sigma'_\alpha$ are pairwise disjoint connected smooth embedded $\mathbf F$--stationary and $\F$-stable surfaces. Set
    \[
        S':=\bigcup_{\alpha=1}^{N'}\Sigma'_\alpha.
    \]
    Choose $t\in(\rho,2\rho)$ so that every $\Sigma'_\alpha$ intersects $\partial B_t(x)$ transversely, and set
    \[
        \gamma:=S'\cap \partial B_t(x).
    \]
    Then $\gamma$ is a finite disjoint union of smooth embedded closed curves.

    For any $s\in(0,\rho)$, choosing a replacement $W''$ of $W'$ in the annulus $\operatorname{An}(x;s,t)$. Write
    \[
        W''\llcorner G_2(\operatorname{An}(x;s,t))=\sum_{\beta=1}^{N''} m''_\beta\,\mathbf v(\Sigma''_\beta),
    \]
    where the $\Sigma''_\beta\subset \operatorname{An}(x;s,t)$ are pairwise disjoint connected smooth embedded $\mathbf F$--stationary and $\F$-stable surfaces, smooth up to the outer sphere $\partial B_t(x)$. Set
    \[
        S'':=\bigcup_{\beta=1}^{N''}\Sigma''_\beta.
    \]

    \medskip
    \noindent\textbf{Step 2: smooth gluing across $\partial B_t(x)$.}

    Fix a point $y\in\gamma$. Since $S'$ is smooth near $y$ and transverse to $\partial B_t(x)$ at $y$, after shrinking $r>0$ if necessary we may assume that $S'\cap B_r(y)$ consists of a single embedded disk, denoted by $\Delta'_y$, and that
    \[
        \gamma\cap B_r(y)=\Delta'_y\cap \partial B_t(x)\cap B_r(y)
    \]
    is a smooth embedded arc.

    We now apply the same local gluing analysis as in the proof of \cref{prop:replacements-in-annuli}; compare also Step~1 in the proof of \cite[Proposition~4.14]{DePhilippisDeRosa}. Here only the regular case arises, since $\Delta'_y$ is smooth and $t$ has been chosen so that the intersection with $\partial B_t(x)$ is transverse. By construction of the outer replacement, near $y$ the disk $\Delta'_y$ is the smooth limit of a sequence of embedded $\mathbf F$-minimizing disks whose boundary arcs on $\partial B_t(x)\cap B_r(y)$ we denote by $\Gamma_j$. By construction of the inner replacement, one can choose a minimizing sequence of embedded disks $F_j\subset \operatorname{An}(x;s,t)\cap B_r(y)$ converging to the relevant local part of $S''$ such that the boundary portion of $F_j$ on $\partial B_t(x)\cap B_r(y)$ is precisely $\Gamma_j$. Since $\partial B_t(x)$ is strictly $\F$-mean convex at this scale, the boundary regularity and boundary curvature estimates apply to the inner minimizing sequence near $\Gamma_j$; see the proof of \cref{prop:replacements-in-annuli}. Running the same Case~1 and Case~2 Hass-Scott analysis as in the proof of \cref{prop:replacements-in-annuli}, one excludes both boundary folding and boundary vanishing. Hence, after shrinking $r$ if necessary, the connected component of $(S'\cup S'')\cap B_r(y)$ containing $\gamma\cap B_r(y)$ is a continuous embedded topological disk, smooth away from $\gamma\cap B_r(y)$, with at most a possible bend along $\gamma\cap B_r(y)$.

    We next exclude the bend exactly as in the last paragraph of the smooth gluing arguments in the proof of \cref{prop:replacements-in-annuli}. If a genuine bend occurred, then after shrinking $r$ once more the relevant component of $(S'\cup S'')\cap B_r(y)$ would be a topological disk $H$ with a bend along an arc of $\gamma$. Since $F$ is a convex elliptic integrand, such a bent disk cannot be $\mathbf F$--minimizing in $B_r(y)$ among disks with the same boundary. Hence there exists another embedded disk $H'\subset B_r(y)$ with
    \[
        \partial H'=\partial H\quad\text{and}\quad\mathbf F(H')<\mathbf F(H).
    \]
    Using $H'$ as a competitor in the minimizing sequence defining the inner replacement, exactly as in the proof of \cref{prop:replacements-in-annuli}, yields a contradiction. Therefore no bend occurs.
    
    We conclude that $S'$ and $S''$ glue smoothly across $\gamma$ near $y$. Since $y\in\gamma$ was arbitrary, the gluing is smooth all along $\gamma$.

    \medskip
    \noindent\textbf{Step 3: regularity in the punctured ball.}

    This step follows \cite[Proof of Proposition~6.3, Step~3]{ColdingDeLellisMinMax}, with the isotropic inputs replaced by their anisotropic counterparts; compare also \cite[Proof of Proposition~4.14, Step~2]{DePhilippisDeRosa}.

    We first show that every connected component $S$ of $S''$ meets $\gamma$:
    \begin{equation}\label{eq:component-meets-gamma}
        S\cap \gamma\neq\emptyset.
    \end{equation}
    Indeed, assume \eqref{eq:component-meets-gamma} fails for some component $S$. Since $\partial B_t(x)$ is strictly $\F$-mean convex and $S\subset B_t(x)\setminus \overline{B_s(x)}$ is $\mathbf F$-stationary, the anisotropic maximum principle implies that $S$ cannot be compactly contained in $B_t(x)\setminus \overline{B_s(x)}$. Hence, $S\cap \partial B_t(x)\neq\emptyset$. Fix $y\in S\cap \partial B_t(x)$. If \eqref{eq:component-meets-gamma} failed, then for some $\sigma>0$ one would have
    \[
        y\in \spt\|W''\|\cap \partial B_t(x),\qquad\bigl(B_\sigma(y)\cap \spt\|W''\|\bigr)\subset B_t(x),
    \]
    that is, $W''$ would touch $\partial B_t(x)$ from the inside at $y$, contradicting again the strict $\F$--mean convexity of $\partial B_t(x)$; for details we can refer to the arguments of the proof of equation~(4.48) in \cite[Proof of Proposition~4.14]{DePhilippisDeRosa}. This proves \eqref{eq:component-meets-gamma}.

    By Step~2 and \eqref{eq:component-meets-gamma}, for every $s<\rho$ the outer support $S'$ extends through $\partial B_t(x)$ to a smooth embedded $\mathbf F$-stationary surface
    \[
        \Sigma_{x,s}\subset \operatorname{An}(x;s,2\rho).
    \]
    Moreover, if $0<s_1<s_2<\rho$, then $\Sigma_{x,s_1}$ and $\Sigma_{x,s_2}$ coincide on the nonempty open annulus $\operatorname{An}(x;t,2\rho)$. Hence, by unique continuation for the anisotropic minimal surface equation,
    \[
        \Sigma_{x,s_1}=\Sigma_{x,s_2}\qquad\text{on }\operatorname{An}(x;s_2,2\rho).
    \]
    Therefore, the family is nested, and we may define
    \[
        \Sigma_x:=\bigcup_{0<s<\rho}\Sigma_{x,s}\subset B_{2\rho}(x)\setminus\{x\}.
    \]
    Then $\Sigma_x$ is a smooth embedded $\mathbf F$-stationary surface in the punctured ball.

    We next prove
    \begin{equation}\label{eq:punctured-support-inclusion}
        \spt\|W\|\cap \bigl(B_\rho(x)\setminus\{x\}\bigr)\subset \Sigma_x.
    \end{equation}
    For $y\in \spt\|W\|\cap B_\rho(x)$, we set $r(y):=|x-y|$, and let $Q_x$ be the set of points $y\in \spt\|W\|\cap B_\rho(x)$ such that
    \[
        TV(y,W)=\{\theta\,|\pi|\}\qquad\text{for some }\theta>0,
    \]
    where $\pi$ is a plane transversal to $\partial B_{r(y)}(x)$. By \cite[Lemma~5.6]{DePhilippisDeRosa}, the set $Q_x$ is dense in $\spt\|W\|\cap B_\rho(x)$.

    We claim that every $y\in Q_x$ belongs to $\Sigma_x$. Fix such a point, set $r:=r(y)$, and let $\widetilde W$ be a replacement of $W'$ in $\operatorname{An}(x;r,t)$. Since $r<\rho$ and $W'=W$ in $B_\rho(x)$, we have
    \[
        \widetilde W=W\qquad\text{on }B_r(x).
    \]
    By the first part of this step, the support of $\widetilde W$ in $\operatorname{An}(x;r,2\rho)$ is smooth, and by nestedness it agrees there with $\Sigma_x$.

    Assume by contradiction that $y\notin \Sigma_x$. Then there exists $a>0$ such that
    \begin{equation}\label{eq:no-outer-support}
        \bigl((\spt\|\widetilde W\|)\setminus B_r(x)\bigr)\cap B_a(y)=\emptyset.
    \end{equation}
    Flatten $\partial B_r(x)$ at $y$, so that in local coordinates
    \[
        \partial B_r(x)=\{p_1=0\},\qquad B_r(x)=\{p_1\le 0\}.
    \]
    Let $C\in TV(y,\widetilde W)$ be any tangent cone of $\widetilde W$ at $y$. By \eqref{eq:no-outer-support}, the support of $C$ is contained in the closed half-space $\{p_1\le 0\}$. On the other hand,
    \[
        \widetilde W\llcorner B_r(x)=W\llcorner B_r(x),
    \]
    and by the choice $y\in Q_x$ we have $TV(y,W)=\{\theta\,|\pi|\}$ for a plane $\pi$ transverse to $\{p_1=0\}$. Hence, the restriction of $C$ to $\{p_1<0\}$ agrees with the restriction of $\theta\,|\pi|$ there, and therefore necessarily, up to a rotation,
    \[
        C=\theta\,|\pi\cap\{p_1\le 0\}|.
    \] 
    In particular, $C\neq 0$. Since $\delta_F\widetilde W=0$, the cone $C$ is stationary for the frozen anisotropic integrand at $y$. This gives a nonzero stationary cone for the frozen integrand supported in a proper half-space, contradicting the anisotropic constancy theorem; see, for instance, \cite[Proposition~5]{DePhilippisDeRosaHirsch}. Therefore, $y\in\Sigma_x$ for every $y\in Q_x$. By density of $Q_x$ we obtain \eqref{eq:punctured-support-inclusion}.

    \medskip
    \noindent\textbf{Step 4: removal of the puncture by overlapping punctured balls.}

    The previous three steps were performed for one fixed point $x\in \spt\|W\|\cap A$. Since $x$ was arbitrary, we may repeat the construction for every point $y\in \spt\|W\|\cap A$. Therefore, for each such $y$ there exist $\rho(y)>0$ and a smooth embedded $\mathbf F$-stationary surface
    \[
        \Sigma_y\subset B_{2\rho(y)}(y)\setminus\{y\}
    \]
    such that
    \[
        \spt\|W\|\cap \bigl(B_{\rho(y)}(y)\setminus\{y\}\bigr)\subset \Sigma_y.
    \]
    We now prove that every point of $\spt\|W\|\cap A$ is regular.

    Fix $p\in \spt\|W\|\cap A$. Since $W$ is a $2$--dimensional integral varifold, the Radon measure $\|W\|$ has no atoms. Hence no point of $\spt\|W\|$ can be isolated. Set
    \[
        d_0:=\dist(p,\mathbb R^3\setminus A)>0,\qquad a:=\min\{\rho_1,d_0\}.
    \]
    Choose
    \[
        q\in \spt\|W\|\cap A\qquad\text{such that}\qquad 0<|p-q|<\frac{a}{32}.
    \]
    Then
    \[
        \dist(q,\mathbb R^3\setminus A)\ge d_0-|p-q|\ge \frac{d_0}{2},
    \]
    and therefore
    \[
        \rho(q):=\frac18\min\{\rho_1,\dist(q,\mathbb R^3\setminus A)\}\ge \frac{a}{16}.
    \]
    In particular,
    \[
        |p-q|<\frac{a}{32}<\rho(q),
    \]
    so $p\in B_{\rho(q)}(q)\setminus\{q\}.$
    Applying Steps~1--3 with center $q$, we obtain a smooth embedded $\mathbf F$--stationary surface $\Sigma_q$ in $B_{2\rho(q)}(q)\setminus\{q\}$ such that
    \[
        \spt\|W\|\cap \bigl(B_{\rho(q)}(q)\setminus\{q\}\bigr)\subset \Sigma_q.
    \]
    Since $p\in B_{\rho(q)}(q)\setminus\{q\}$,
    it follows that $p$ has a neighborhood on which $\spt\|W\|$ is a smooth embedded $\mathbf F$--stationary surface.

    Since $p\in \spt\|W\|\cap A$ was arbitrary, every point of $\spt\|W\|\cap A$ is regular. Hence $\spt\|W\|\cap A$ is a smooth embedded $\mathbf F$--stationary surface, say $\Sigma$. Because $W$ is integral, there exists an integer-valued multiplicity function $\theta$ on $\Sigma$ such that
    \[
        W\llcorner A=\theta\,\mathbf v(\Sigma)\llcorner A.
    \]
    Restricting to $K$ gives the conclusion of \cref{thm:white-improvement}\textup{(1)}.

    (2) Assume in addition that $\Gamma_k:=\partial M_k\subset \partial \mathbf B$ converges smoothly in $\partial \mathbf B$ to a simple closed curve $\Gamma$.

    Exactly as in the proof of \cref{thm:white-improvement}\textup{(1)}, after passing to a subsequence, we have
    \[
        \mathbf v(M_k)\to W\in \mathbf V_2(\mathbb R^3\setminus \Gamma),
    \]
    where $W$ is an integral $\mathbf F$--stationary varifold in $\operatorname{int}(\R^3\setminus \Gamma)$. By \cref{thm:white-improvement}\textup{(1)}, in particular $W$ is regular in every closed neighborhood in $\R^3\setminus \Gamma$. Thus, it remains to analyze the behavior near $\partial \Gamma$. (This in particular treats every point outside of $\Gamma$ and in the support of $W$ as an interior point)

    Fix $x\in \Gamma$. Choose $\rho=\rho(x)>0$ so small that $\Gamma\cap B_{4\rho}(x)$ is a single smooth embedded arc, every sphere $\partial B_r(x)$ with $0<r\le 2\rho$ is strictly $\mathbf F$-mean convex, and the hypotheses of \cref{prop:replacements-in-annuli-boundary} are satisfied in every boundary annulus
    \[
        \operatorname{An}_{\partial}(x;s,t):=\mathbf B\cap\bigl(\tilde{B}_t(x)\setminus \overline{B_s(x)}\bigr),\qquad 0<s<t\leq 2\rho.
    \]
    Here we use the smoothed ball defined right before \cref{prop:replacements-in-annuli-boundary}. Arguing exactly as in Steps~1--3 of the proof of \cref{thm:white-improvement}\textup{(1)}, with ordinary annuli replaced by boundary annuli and \cref{prop:replacements-in-annuli} replaced by \cref{prop:replacements-in-annuli-boundary}, we obtain that for every $s\in(0,\rho)$ there exists a smooth embedded $\mathbf F$--stationary surface-with-boundary
    \[
        \Sigma_{x,s}\subset \operatorname{An}_{\partial}(x;s,2\rho),
    \]
    smooth up to $\partial \mathbf B\cap(\tilde{B}_{2\rho}(x)\setminus \overline{B_s(x)})$, such that
    \[
        \partial \Sigma_{x,s}\cap \partial \mathbf B\subset\Gamma\cap \bigl(\tilde{B}_{2\rho}(x)\setminus \overline{B_s(x)}\bigr),
    \]
    To show that equality in the above display, we only need to assert the reverse inclusion. However note that $\Gamma_k$ converges smoothly to $\Gamma$ , hence the conditions of \cref{lem:density-boundary-estimate} are satisfied and the density lower bound guarantees the reverse inclusion:
    \[
        \partial \Sigma_{x,s}\cap \partial \mathbf B= \Gamma\cap \bigl(\tilde{B}_{2\rho}(x)\setminus \overline{B_s(x)}\bigr),
    \]
    and the family is nested: if $0<s_1<s_2<\rho$, then $\Sigma_{x,s_1}=\Sigma_{x,s_2}$ on $\operatorname{An}_{\partial}(x;s_2,2\rho)$. Here the only additional point, compared with the interior proof, is that if $S$ is a connected component of the inner replacement meeting $\partial \mathbf B$, then $\partial S\cap \partial \mathbf B$ is a union of connected components of $\Gamma\cap \bigl(\tilde{B}_t(x)\setminus \overline{B_s(x)}\bigr)$, so $S$ must also meet the gluing interface $\gamma$ in this case.

    We may therefore set
    \[
        \Sigma_x:=\bigcup_{0<s<\rho}\Sigma_{x,s}\subset (\mathbf B\cap \tilde{B}_{2\rho}(x))\setminus\{x\}.
    \]
    Then $\Sigma_x$ is a smooth embedded $\mathbf F$--stationary surface in the punctured half-ball, smooth up to $\partial \mathbf B\cap \tilde{B}_{2\rho}(x)\setminus\{x\}$, and
    \begin{equation}\label{eq:boundary-local-inclusion-final}
        \Bigl(\spt\|W\|\cap \operatorname{int}(\mathbf B)\cap B_\rho(x)\Bigr)\cup\Bigl(\Gamma\cap B_\rho(x)\setminus\{x\}\Bigr)\subset \Sigma_x .
    \end{equation}
    For points of $\Gamma\setminus\{x\}$, this is immediate from the boundary identity above. For points of $\spt\|W\|\cap \operatorname{int}(\mathbf B)\cap B_\rho(x)$ it is exactly the same half-space argument as in the last part of Step~3 of the proof of \cref{thm:white-improvement}\textup{(1)}, applied to a replacement in $\operatorname{An}_{\partial}(x;r,t)$ with $r=|x-y|$.


    Moreover, by the property of the replacements in \cref{prop:replacements-in-annuli-boundary}, we have 
    $$\mathbf F\bigl(\Sigma_x\cap (\mathbf{B}\cap B_\rho(x))\bigr)=\mathbf F\bigl(W\llcorner (\mathbf{B}\cap B_\rho(x))\bigr).$$
    Hence, we know that $\spt\|W\|=\Sigma_x$ on $ \mathbf B\cap B_\rho(x)\setminus\{x\}$.

    Now fix $p\in \Gamma$, choose $q\in \Gamma$, $q\neq p$, so close that $p\in B_{\rho(q)}(q)\setminus\{q\}$, and apply the previous construction with center $q$. Hence, we have $\spt\|W\|=\Sigma_q$ on $ \mathbf B\cap B_{\rho(q)}(q)\setminus\{q\}$. Choose a neighborhood $U$ of $p$ so small that $\overline{\Sigma_q}\cap \partial \mathbf B\cap U=\Gamma\cap U$. Since $\spt\|W\|\cap \partial \mathbf B=\Gamma$, it follows that $\spt\|W\|\cap U=\Sigma_q\cap U$. Therefore, $p$ is a regular boundary point of $\spt\|W\|$. As $p\in \Gamma$ was arbitrary, every point in $\Gamma$ is regular.

    We have thus shown that $\Sigma := \spt\|W\|$ is a compact, smooth, embedded surface-with-boundary in $\mathbf{B}$ and that $\partial\Sigma = \Gamma$. The boundary convergence is with multiplicity one, so at most one component of \(\Sigma\) can contain the prescribed boundary. If another component were present, it would be a compact \(\F\)-minimal surface without boundary compactly contained in $\mathbf{B}$. This is impossible by the anisotropic maximum principle, then \(\Sigma\) is connected; in particular, it is a single smooth embedded surface-with-boundary in $\mathbf{B}$ and $W=\mathbf{v}(\Sigma)$. It remains to show that $\Sigma$ is a disk, which follows from an argument very similar to that in \cite[Section~8]{AS}. For completeness, we sketch it in the following.

    Since $\Sigma$ is a compact $C^2$ embedded surface-with-boundary, there exist $\delta>0$ and a $C^2$ tubular retract
    \[
        \mu:\{z\in \mathbb R^3:\dist(z,\Sigma)\le \delta\}\to\Sigma.
    \]
    We now argue exactly as in \cite[Section~8]{AS}, with \cref{prop:replacement-thm-AS-aniso} in place of the local isotropic replacement theorem. Using a finite cubical decomposition of a tubular neighborhood of $M$, together with the local anisotropic replacement procedure in small uniformly convex sets, one modifies some $M_k$ finitely many times, for $k$ sufficiently large, to an embedded disk $N$ such that
    \[
        \partial N=\Gamma\quad\text{and}\quad N\subset \{z\in\mathbb R^3:\dist(z,\Sigma)\le \delta\}.
    \]
    Here, if necessary, we first compose $M_k$ with an ambient diffeomorphism of $\mathbf B$ sending $\Gamma_k$ to $\Gamma$ and converging to the identity in $C^2$.
    
    Let $\chi_1:\mathbf D\to N$ be a $C^2$ diffeomorphism. Then
    \[
        \chi:=\mu\circ \chi_1:\mathbf D\to \Sigma
    \]
    is a $C^2$ map, and $\chi|_{\partial\mathbf D}$ is a diffeomorphism onto $\Gamma$. Thus, $\Gamma$ is null-homotopic in $\Sigma$. Exactly as in \cite[Section~8]{AS}, the existence of such a map implies that $\Sigma$ is orientable. By the classification of compact connected orientable surfaces with one boundary component, either $\Sigma$ is a disk or else it is a disk with $h\ge 1$ handles. The latter is impossible, because in that case the unique boundary curve is not null-homotopic. Therefore, $\Sigma$ is a disk and hence $\Sigma\in\mathcal M$.

    Finally, if $P\in \mathcal M$ satisfies $\partial P=\Gamma$, then choose ambient diffeomorphisms $\Phi_k:\mathbf B\to \mathbf B$ such that $\Phi_k(\Gamma)=\Gamma_k$ and $\Phi_k\to \mathrm{Id}$ in $C^2$, and set $P_k:=\Phi_k(P)$. Then we have $\partial P_k=\Gamma_k=\partial M_k$ and $\mathbf F(P_k)\to \mathbf F(P)$, so
    \[
        \mathbf F(M_k)\le \mathbf F(P_k)+\varepsilon_k.
    \]
    Passing to the limit gives
    \[
        \mathbf F(\Sigma)\le \mathbf F(P).
    \]
    Hence $\Sigma$ minimizes $\mathbf F$ among all smooth embedded disks spanning $\Gamma$. This proves \cref{thm:white-improvement}\textup{(2)}.

    (3) Assume that $S\subset \partial \mathbf B$ is relatively open and $\Gamma_k\cap S$ converges smoothly to a simple arc $\Gamma$, for any point $x\in\Gamma$, after choosing $\rho=\rho(x)>0$ sufficiently small we may assume that $\Gamma\cap B_{4\rho}(x)$ is a single smooth arc contained in $S$. Then the boundary-annulus replacement argument used in the proof of \cref{thm:white-improvement}\textup{(2)} applies verbatim in $\mathbf B\cap B_{2\rho}(x)$, and yields a smooth embedded $\mathbf F$--stationary surface $\Sigma_x\subset (\mathbf B\cap B_{2\rho}(x))\setminus\{x\}$, smooth up to $\partial \mathbf B\cap B_{2\rho}(x)\setminus\{x\}$
    such that
    \[
        W=\mathbf v(\Sigma_x)\qquad\text{on }(\mathbf B\cap B_\rho(x))\setminus\{x\}.
    \]
    Similarly, \cref{lem:density-boundary-estimate} is used to rule out the cancellation of the boundary in the limit varifold.
    Using the same covering argument as in \cref{thm:white-improvement}\textup{(2)}, we can remove the puncture and concludes that every point of $\Gamma$ is a regular boundary point of $\spt\|W\|$. Therefore, $\spt\|W\|$ has a component that extends smoothly up to the smooth part $\Gamma$.
\end{proof}


\subsection{Proof of \cref{thm:Meeks-Simon-Yau-without-boundary} and \cref{thm:Meeks-Simon-Yau-with-boundary}: $\gamma$-reduction}\label{sec:anis-gamma-reduction}

In this subsection, we introduce the anisotropic analogue of the $\gamma$-reduction machinery in \cite[Section 3]{MSY}. Fix $\varepsilon_0>0$ so that \cref{lem:anis-1} holds and let
\[
0<\gamma<\varepsilon_0^2/9.
\]
By abuse of notation, we continue to write $\mathbf F(\cdot)$ for the anisotropic area of compact embedded surfaces with boundary, and more generally of compact countably $2$-rectifiable sets. For compact sets $E,F\subset N$, we write
\[
E\Delta F:=(E\setminus F)\cup(F\setminus E).
\]

\begin{definition}[Anisotropic $\gamma$-reduction]
\label{def:anis_gamma_reduction}
Let $\Sigma_1,\Sigma_2\in\mathcal C_1$. We say that $\Sigma_2$ is an anisotropic $\gamma$-reduction of $\Sigma_1$, and write $\Sigma_2\ll_{\gamma,F}\Sigma_1$, if the following conditions are satisfied:
\begin{enumerate}
    \item There exists a closed annulus $A \subset \Sigma_1$ (diffeomorphic to $\{x \in \mathbb{R}^2 : 1/2 \leq |x| \leq 1\}$) and two disjoint discs $D_1, D_2$ each diffeomorphic to $\textbf{D}$ such that:
    \[
    \overline{\Sigma_1\setminus \Sigma_2}=A,\qquad
    \overline{\Sigma_2\setminus \Sigma_1}=D_1\cup D_2,
    \qquad
    \partial A=\partial D_1\cup\partial D_2.
    \]
    \item The surgery region has small anisotropic area:
    \[
    \mathbf F(A)+\mathbf F(D_1)+\mathbf F(D_2)<2\gamma.
    \]
    \item There exists a compact set $\Upsilon\subset N$ such that
    \[
    A\cup D_1\cup D_2=\partial\Upsilon, \quad \Upsilon\text{ homeomorphic to }\mathbf{B}\text{ and } (\Upsilon\setminus\partial\Upsilon)\cap(\Sigma_1\cup\Sigma_2)=\emptyset.
    \]
    \item Let $\Sigma_1^\ast$ be the connected component of $\Sigma_1$ containing $A$. If $\Sigma_1^\ast\setminus A$ is disconnected, then each connected component of $\Sigma_1^\ast\setminus A$ is either not simply connected or else has anisotropic area at least $\varepsilon_0^2/2$.
\end{enumerate}
\end{definition}

\begin{rmk}
\label{rmk:anis_gamma_basic}
The arguments in \cite[(3.1)--(3.5)]{MSY} are purely topological and carry over verbatim to the present setting. In particular, if $\Sigma_2\ll_{\gamma,F}\Sigma_1$, then every two-sided component of $\Sigma_1$ gives rise to two-sided components of $\Sigma_2$, and
\[
\operatorname{genus}(\Sigma_2)\le \operatorname{genus}(\Sigma_1).
\]
Moreover, for every $G,\Lambda>0$ there exists
$c=c(\varepsilon_0,G,\Lambda/\varepsilon_0^2)$
such that if
\[
\Sigma_k\ll_{\gamma,F}\Sigma_{k-1}\ll_{\gamma,F}\cdots\ll_{\gamma,F}\Sigma_1,
\qquad
\operatorname{genus}(\Sigma_1)\le G,\quad \mathbf F(\Sigma_1)\le\Lambda,
\]
then
\[
k\le c
\quad\text{and}\quad
\mathbf F(\Sigma_k\Delta\Sigma_1)\le 2c\gamma.
\]
\end{rmk}

A surface $\Sigma\in\mathcal C_1$ is called anisotropic $\gamma$-irreducible if there does not exist any $\widetilde\Sigma\in\mathcal C_1$ such that
$\widetilde\Sigma\ll_{\gamma,F}\Sigma$.
Then the anisotropic analogue of \cite[Remark 3.6]{MSY} also holds: 
\begin{rmk}
\label{rmk:anis_gamma_disc_criterion}
$\Sigma\in\mathcal C_1$ is anisotropic $\gamma$-irreducible if and only if whenever $\Delta\subset N$ is an embedded disc satisfying
$\partial\Delta=\Delta\cap\Sigma$ and
$\mathbf F(\Delta)<\gamma$,
there exists a disc $\widetilde{\Delta}\subset\Sigma$ such that
$\partial \widetilde{\Delta}=\partial\Delta
$ and $
\mathbf F(\widetilde{\Delta})<\varepsilon_0^2/2$.
The rough idea of the proof is that one replaces a thin annular neighborhood of $\partial\Delta$ in $\Sigma$ by two nearby push-offs of $\Delta$. The only additional point is that, since the anisotropic integrand is smooth, the anisotropic area of the thin annulus can be made arbitrarily small and the anisotropic areas of the push-offs can be made arbitrarily close to $\mathbf F(\Delta)$.
\end{rmk}

\begin{definition}[Weak anisotropic $\gamma$-reduction]
We say $\Sigma_2$ is a weak anisotropic $\gamma$-reduction of $\Sigma_1$, denoted $\Sigma_2 <_{\gamma, F} \Sigma_1$, if there exists $\widetilde{\Sigma}_1 \in \mathcal{G}(\Sigma_1)$ such that:
\[
\mathbf F(\widetilde\Sigma_1\Delta\Sigma_1)<\gamma
\quad\text{and}\quad
\Sigma_2\ll_{\gamma,F}\widetilde\Sigma_1.
\]
\end{definition}

\begin{rmk}
\label{rmk:weak_anis_gamma}
The anisotropic analogue of \cite[(3.8)--(3.10)]{MSY} holds as well. Namely, for every $G,\Lambda>0$ there exists
$c=c(\varepsilon_0,G,\Lambda/\varepsilon_0^2)$
such that if
\[
\Sigma_k<_{\gamma,F}\Sigma_{k-1}<_{\gamma,F}\cdots<_{\gamma,F}\Sigma_1,
\qquad
\operatorname{genus}(\Sigma_1)\le G,\quad \mathbf F(\Sigma_1)\le\Lambda,
\]
then
\[
k\le c
\quad\text{and}\quad
\mathbf F(\Sigma_k\Delta\Sigma_1)\le 3c\gamma.
\]
Indeed, each weak $\gamma$-reduction consists of an isotopic perturbation of anisotropic area $<\gamma$ followed by a genuine anisotropic $\gamma$-reduction.
\end{rmk}

\begin{definition}[Strongly anisotropic $\gamma$-irreducible]
A surface $\Sigma \in \mathcal{C}_1$ is strongly anisotropic $\gamma$-irreducible if there does not exist any $\widetilde{\Sigma} \in \mathcal{C}_1$ such that $\widetilde{\Sigma} <_{\gamma, F} \Sigma$.
\end{definition}
\begin{rmk}
\label{rmk:strongly_anis_gamma}
If $\Sigma$ is strongly anisotropic $\gamma$-irreducible, then $\Sigma$ is anisotropic $\gamma$-irreducible. Moreover, exactly as in \cite[Remark 3.11]{MSY}, if $\Sigma_1\in\mathcal G(\Sigma)$ and
$\mathbf F(\Sigma_1\Delta\Sigma)<\theta<\gamma$,
then strong anisotropic $\gamma$-irreducibility of $\Sigma$ implies strong anisotropic $(\gamma-\theta)$-irreducibility of $\Sigma_1$. Finally, by \cref{rmk:weak_anis_gamma}, every $\Sigma\in\mathcal C_1$ admits, after finitely many weak anisotropic $\gamma$-reductions, a strongly anisotropic $\gamma$-irreducible representative.
\end{rmk}

These definitions and remarks are direct analogues of those in \cite[pp.~628--630]{MSY}, with the isotropic area replaced by $\mathbf{F}$. 
For further details, we refer the reader to \cite[pp.~628--630]{MSY}.

With these notions in place, we are able to derive an anisotropic version of Theorem~2 in \cite{MSY}.
\begin{proposition}[Anisotropic analogue of {\cite[Theorem 2]{MSY}}]
\label{prop:anis_thm2}
Suppose $A\subset N$ is diffeomorphic to\/ $\mathbf B$, suppose $\Sigma\in\mathcal C_1$,
$E(\Sigma) := \mathbf{F}(\Sigma) - \inf_{\tilde{\Sigma} \in \mathcal{G}(\Sigma)} \mathbf{F}(\tilde{\Sigma}) \leq \gamma/4$,
$\Sigma$ is strongly anisotropic $\gamma$-irreducible, $\Sigma$ intersects $\partial A$ transversely, and for each component $\Gamma$ of $\Sigma\cap\partial A$, let $F_\Gamma\subset\partial A$ be one of the two discs bounded by $\Gamma$, chosen so that $\mathbf F(F_\Gamma)$ is minimal among the two choices. Suppose furthermore that if
$\Gamma_1,\cdots,\Gamma_q$
denote the components of $\Sigma\cap\partial A$, and if we write $F_j:=F_{\Gamma_j}$, then
$\sum_{j=1}^q \mathbf F(F_j)\le \gamma/8.$
Let $\Sigma_0$ denote the union of all components $\Lambda$ of $\Sigma$ such that
$\Lambda\subset K_\Lambda$ and $\partial K\cap\Sigma=\emptyset$
for some compact $K_\Lambda\subset N$ diffeomorphic to\/ $\mathbf B$.

Then $\mathbf F(\Sigma_0)\le E(\Sigma)$,
and there exist pairwise disjoint closed discs $D_1,\cdots,D_p\subset \Sigma\setminus\Sigma_0$ with $\partial D_i\subset\partial A$ such that
\[
\left(\bigcup_{i=1}^p D_i\right)\cap (A\setminus\partial A)
=
(\Sigma\setminus\Sigma_0)\cap (A\setminus\partial A),
\qquad
\sum_{i=1}^p \mathbf F(D_i)\le \sum_{j=1}^q \mathbf F(F_j)+E(\Sigma),
\]
and with
\[
\bigcup_{i=1}^{p}\left(\varphi_1(D_i\setminus\partial D_i)\right)\subset A\setminus\partial A.
\]
for some isotopy $\varphi=\{\varphi_t\}_{0\le t\le 1}$ of $N$ such that $\varphi_t(x)=x$ for all $(x,t)\in W\times[0,1]$,
where $W$ is a neighborhood of $(\Sigma\setminus\Sigma_0)\setminus \bigcup_{i=1}^p (D_i\setminus\partial D_i)$.
\end{proposition}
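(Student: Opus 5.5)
The plan is to follow the proof of \cite[Theorem 2]{MSY} step by step, replacing $\mathcal H^2$ by $\F$ throughout. The isoperimetric and volume inputs now come from \cref{lem:anis-1}, and the disjointness construction for discs comes from \cref{lem:disjoint-boundaries}. The topological steps carry over unchanged. The only places where care is needed are the comparison inequalities, which now involve $\F$.

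\emph{Step 1: the components in $\Sigma_0$.} Let $\Lambda$ be a component of $\Sigma_0$. It lies in a ball $K_\Lambda$ whose boundary does not meet $\Sigma$. An isotopy supported near $K_\Lambda$ can shrink $\Lambda$ into an arbitrarily small ball, so $\Lambda$ can be made to have arbitrarily small $\F$-area while the rest of $\Sigma$ stays fixed. Therefore
\[
\inf_{\mathcal G(\Sigma)}\F\le \F(\Sigma)-\F(\Sigma_0),
\]
which gives $\F(\Sigma_0)\le E(\Sigma)$.

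\emph{Step 2: cut and paste along $\partial A$.} Order the curves $\Gamma_j$ so that each $F_j$ is innermost, meaning it contains no other $F_i$ in its interior; this is the same induction on nesting as in MSY. For each $j$, push a copy of $F_j$ slightly off $\partial A$ and do surgery on $\Sigma$ along it. The surgery region has $\F$-area at most $2\F(F_j)+o(1)\le\gamma/4+o(1)<\gamma$. Now use strong $\gamma$-irreducibility in the form of \cref{rmk:anis_gamma_disc_criterion} and \cref{rmk:strongly_anis_gamma}. These imply that $\Gamma_j$ already bounds a disc $\widetilde\Delta_j\subset\Sigma$ with $\F(\widetilde\Delta_j)<\varepsilon_0^2/2$. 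Moreover, the surgery produces no new component except a sphere, and that sphere lies in a ball, so it belongs to $\Sigma_0$ or gets discarded.

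Next compare $\Sigma$ with the surface $\Sigma'$ obtained by replacing each such disc by the corresponding pushed-off copy of $F_j$. The surface $\Sigma'$ is isotopic to $\Sigma$: the disc $\widetilde\Delta_j$ together with $F_j$ bounds a ball, by \cref{lem:anis-1} and the smallness of $\F$, and the isotopy sweeps across that ball. Hence
\[
\F(\Sigma)\le\F(\Sigma')+E(\Sigma),
\]
which yields the area bound
\[
\sum_i\F(D_i)\le\sum_j\F(F_j)+E(\Sigma).
\]
Here the $D_i$ are the discs of $\Sigma\setminus\Sigma_0$ that are bounded by the curves $\Gamma_j$ and lie on the $A$-side.

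\emph{Step 3: the identity and the isotopy.} To show that the discs $D_i$ exhaust $(\Sigma\setminus\Sigma_0)\cap(A\setminus\partial A)$, argue as in MSY. A component of $\Sigma\cap A$ that is not a disc, or a closed component inside $A$, would violate either irreducibility or the definition of $\Sigma_0$. The isotopy $\varphi$ pushing the interiors of the $D_i$ into $A$ is the one constructed in MSY from the balls found in Step 2. It fixes a neighbourhood of the rest of $\Sigma\setminus\Sigma_0$, so this step is purely topological.

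\emph{Main obstacle.} The hard point is Step 2: deciding which of the two discs bounded by $\Gamma_j$ in $\Sigma$ is the small one, and proving that $\widetilde\Delta_j\cup F_j$ bounds a ball lying on the correct side. This requires the smallness $\gamma<\varepsilon_0^2/9$ together with the uniqueness statement for $K_\Sigma$ in \cref{lem:anis-1}. Via \eqref{H:comparability}, the $\F$-thresholds must be converted into the area thresholds $\delta_{\mathrm{MSY}}$. The first approach I would try is to choose $\varepsilon_0$ exactly as in the proof of \cref{lem:anis-1}, so that every application of \cite[Lemma 1]{MSY} stays valid, and then repeat MSY's argument literally.
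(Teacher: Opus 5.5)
Your local ingredients are correct. For an \emph{innermost} curve $\Gamma_q$ (i.e.\ $F_q\cap\Sigma=\Gamma_q$), the disc criterion gives $D\subset\Sigma$ with $\partial D=\Gamma_q$ and $\mathbf F(D)<\varepsilon_0^2/2$, and $D\cup F_q$ bounds a ball $U$ by \cref{lem:anis-1}. Swapping $D$ for a push-off of $F_q$ stays in $\mathcal G(\Sigma)$, which forces $\mathbf F(D)\le\mathbf F(F_q)+E(\Sigma)$. Your Step~1 is also fine.

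The global assembly in Steps~2--3, however, does not work. The disc criterion applies only to discs $\Delta$ with $\partial\Delta=\Delta\cap\Sigma$, hence only to innermost $F_j$. To treat the next curve you must pass to the modified surface $\Sigma^\ast_\varepsilon$, and then you need strong irreducibility and a small excess for $\Sigma^\ast_\varepsilon$. Both degrade at each step: $\mathbf F(\Sigma^\ast_\varepsilon\Delta\Sigma)<\mathbf F(D)+\mathbf F(F_q)+\varepsilon$ and $E(\Sigma^\ast_\varepsilon)<E(\Sigma)+\mathbf F(F_q)-\mathbf F(D)+\varepsilon$. This is why the paper inducts on $q$ under the strengthened hypotheses $E(\Sigma)\le\gamma/2-2\sum_j\mathbf F(F_j)$ and strong $\tilde\gamma$-irreducibility, with $\tilde\gamma=\gamma/4+4\sum_j\mathbf F(F_j)+E(\Sigma)$. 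It then checks, using the bound on $\mathbf F(D)$, that $\Sigma^\ast_\varepsilon$ satisfies these hypotheses with $q-1$ curves.

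Your one-shot comparison ``replace each such disc by the pushed-off $F_j$'' skips this bookkeeping, and it is ill-defined when the discs $\widetilde\Delta_j$ are nested. Moreover, the sweeping isotopy exists only when $(\Sigma\setminus D)\cap U=\emptyset$. The paper gets this from innermostness plus the reduction to $\Sigma_0=\emptyset$: if $\Lambda\setminus D\subset U$, then $\Lambda\subset\Sigma_0$. For a non-innermost $\Gamma_j$, other sheets of $\Sigma$ cross $F_j$ and enter the ball. So the point you flag as the main obstacle is actually the easy part.

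Your identification of the $D_i$ and your Step~3 are also incorrect, because components of $\Sigma\cap A$ need not be discs. Take a small cap of $\Sigma$ inside $A$ from which a thin finger of $\Sigma$ exits through $\partial A$ and is capped off outside. This gives an annular component of $\Sigma\cap A$, and it is compatible with small excess and with strong irreducibility: the neck-pinch along the finger only splits off a small simply connected piece, so it is not a $\gamma$-reduction.

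The statement only asserts that discs $D_i\subset\Sigma$ with $\partial D_i\subset\partial A$ \emph{cover} $\Sigma\cap(A\setminus\partial A)$. They may leave $A$ through other curves $\Gamma_k$, which is exactly why the isotopy $\varphi$ appears. In the paper these discs are produced in the inductive step from the discs $\Delta_i\subset\Sigma^\ast=(\Sigma\setminus D)\cup F_q$ given by the induction hypothesis:
\begin{itemize}
\item if $F_q\subset\Delta_{i_0}$ for some $i_0$, one sets $D_{i_0}=(\Delta_{i_0}\setminus F_q)\cup D$;
\item if $F_q\not\subset\bigcup_i\Delta_i$, one adds $D$ as a new disc.
\end{itemize}
The area bound then follows from the inductive bound on $\sum_i\mathbf F(\Delta_i)$. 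The isotopy $\varphi$ is obtained by composing the inductive isotopy with an isotopy $\beta$ that moves $D$ onto $F_q$ inside $U$. This induction on $q$ with the two-case reconstruction is the idea missing from your proposal.
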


\begin{proof}
    The proof follows closely that of \cite[Proof of Theorem~2]{MSY}, with the isotropic area replaced by the anisotropic energy. For completeness, we provide the details here.
    
    As in \cite[Remark 3.14]{MSY}, it suffices to prove the theorem in the special case $\Sigma_0=\emptyset$. The general case then follows by the same topological argument. Thus we shall assume, in the proof, that 
    \begin{equation}\label{eq:emptySigma_0}
        \Sigma_0=\emptyset.
    \end{equation}
    We proceed by induction on $q$, under the stronger hypotheses 
    \begin{equation}\label{eq:strogerH}
        \sum_{j=1}^q \mathbf F(F_j)\le \gamma/8,\qquad E(\Sigma)\le \gamma/2-2\sum_{j=1}^q \mathbf F(F_j),
    \end{equation}
    and, writing $\tilde{\gamma}:=\gamma/4+4\sum_{j=1}^q \mathbf F(F_j)+E(\Sigma)$, we assume moreover that $\Sigma$ is strongly anisotropic $\tilde{\gamma}$-irreducible.

    Notice that the hypotheses of the theorem imply \eqref{eq:strogerH}, and since
    \[
        \tilde{\gamma}=\gamma/4+4\sum_{j=1}^q \mathbf F(F_j)+E(\Sigma)\le \gamma,
    \]
    strong anisotropic $\gamma$-irreducibility implies strong anisotropic $\tilde{\gamma}$-irreducibility.

    If $q=0$, then $\Sigma\cap\partial A=\emptyset$. Since $A$ is diffeomorphic to $\mathbf B$ and \eqref{eq:emptySigma_0} holds, no component of $\Sigma$ can be contained in $A$. Hence $\Sigma\cap A=\emptyset$, and the conclusion is trivial.

    Assume that the theorem has been proved with $q-1$ in place of $q$. Relabeling if necessary, we may assume that $\Gamma_q$ is innermost on $\partial A$, so that $F_q\cap\Sigma=\Gamma_q$. Since $\Sigma$ is strongly anisotropic $\tilde{\gamma}$-irreducible and $\mathbf F(F_q)\le \tilde{\gamma}$, \cref{rmk:anis_gamma_disc_criterion} gives a disk $D\subset\Sigma$ such that $\partial D=\Gamma_q$ and $\mathbf F(D)<\varepsilon_0^2/2$. Also, $\mathbf F(F_q)\le\gamma/8<\varepsilon_0^2/72<\varepsilon_0^2/2$, hence $\mathbf F(D\cup F_q)<\varepsilon_0^2$. Therefore, by \cref{lem:anis-1}, the sphere $D\cup F_q$ bounds a region $U$ homeomorphic to $\mathbf B$. Let $\Lambda$ be the component of $\Sigma$ containing $D$, and consider the possibility that $\Lambda\setminus D\subset U$. Since $\overline{U}\cong\mathbf B$, this would imply $\Lambda\subset\Sigma_0$, contradicting \eqref{eq:emptySigma_0}. Hence we must have
    \begin{equation}\label{eq:anis-319}
        \Sigma\setminus D\cap U=\emptyset.
    \end{equation}

    Write $\Sigma^\ast:=(\Sigma\setminus D)\cup F_q$, and $F_{q,\varepsilon}:=\{x\in N:\dist(x,F_q)<\varepsilon\}$. For each $\varepsilon>0$, exactly as in \cite[Theorem 2]{MSY}, using \eqref{eq:anis-319}, we can select a continuous isotopy $y=\{y_t\}_{0\le t\le 1}$ such that
    \[
        y_t(F_{q,\varepsilon})\subset F_{q,\varepsilon},\qquad y_t(x)=x\ \text{for }x\notin F_{q,\varepsilon},
    \]
    \[
    y_1(\Sigma^\ast\cap F_{q,\varepsilon})\cap \partial A=\emptyset,\qquad \Sigma_\varepsilon^\ast:=y_1(\Sigma^\ast)\in\mathcal C_1.
    \]
    For $\varepsilon$ small enough, we may also arrange that
    \begin{align}\label{eq:anis-thm2-i}
        &\Sigma_\varepsilon^\ast\in\mathcal G(\Sigma),
        \qquad
        \Sigma_\varepsilon^\ast\cap\partial A=\Gamma_1\cup\cdots\cup\Gamma_{q-1},\\\label{eq:anis-thm2-ii}
        &\mathbf F\big((\Sigma_\varepsilon^\ast\setminus\Sigma)\cup(\Sigma\setminus\Sigma_\varepsilon^\ast)\big)<\mathbf F(D)+\mathbf F(F_q)+\varepsilon,\\\label{eq:anis-thm2-iii}
        &\mathbf F(\Sigma_\varepsilon^\ast)<\mathbf F(\Sigma)+\mathbf F(F_q)-\mathbf F(D)+\varepsilon.
    \end{align}
    Notice that \eqref{eq:anis-thm2-iii} is equivalent to
    \begin{equation}\label{eq:anis-thm2-iii-prime}
        E(\Sigma_\varepsilon^\ast)<E(\Sigma)+\mathbf F(F_q)-\mathbf F(D)+\varepsilon.
    \end{equation}
    Taking $\varepsilon\le \mathbf F(F_q)$, we deduce from \eqref{eq:anis-thm2-i}, \eqref{eq:anis-thm2-ii}, and \cref{rmk:strongly_anis_gamma} that $\Sigma_\varepsilon^\ast$ is strongly anisotropic $\gamma^\ast$-irreducible, where
    \[
        \gamma^\ast=\gamma/4+E(\Sigma)+4\sum_{j=1}^{q-1}\mathbf F(F_j)+2\mathbf F(F_q)-\mathbf F(D).
    \]
    By \eqref{eq:anis-thm2-iii-prime}, we thus have $\gamma^\ast>\gamma/4+E(\Sigma_\varepsilon^\ast)+4\sum_{j=1}^{q-1}\mathbf F(F_j)$. Furthermore, using \eqref{eq:anis-thm2-iii-prime}, \eqref{eq:strogerH}, and $\varepsilon\le \mathbf F(F_q)$,
    \[
        E(\Sigma_\varepsilon^\ast)<E(\Sigma)+2\mathbf F(F_q)\le\gamma/2-2\sum_{j=1}^{q}\mathbf F(F_j)+2\mathbf F(F_q)=\gamma/2-2\sum_{j=1}^{q-1}\mathbf F(F_j).
    \]
    Thus $\Sigma_\varepsilon^\ast$ satisfies the inductive hypotheses with $q-1$ in place of $q$. Hence there exist pairwise disjoint closed disks $\widetilde{\Delta}_1,\cdots,\widetilde{\Delta}_p\subset \Sigma_\varepsilon^\ast$ with $\partial \widetilde{\Delta}_i\subset\partial A$ such that
    \[
        \left(\bigcup_{i=1}^p \widetilde{\Delta}_i\right)\cap(A\setminus\partial A)=\Sigma_\varepsilon^\ast\cap(A\setminus\partial A),
   \qquad
        \sum_{i=1}^p \mathbf F(\widetilde{\Delta}_i)\le\sum_{j=1}^{q-1}\mathbf F(F_j)+E(\Sigma_\varepsilon^\ast),
    \]
    and $\widetilde\varphi_1(\widetilde{\Delta}_i\setminus\partial \widetilde{\Delta}_i)\subset A\setminus\partial A$
    for some isotopy $\widetilde\varphi=\{\widetilde\varphi_t\}_{0\le t\le 1}$ which leaves a neighborhood of $\Sigma_\varepsilon^\ast\setminus\bigcup_{i=1}^p (\widetilde{\Delta}_i\setminus\partial \widetilde{\Delta}_i)$ fixed. It follows that there are pairwise disjoint closed disks $\Delta_1,\cdots,\Delta_p\subset \Sigma^\ast=(\Sigma\setminus D)\cup F_q$ such that
    \[
        \left(\bigcup_{i=1}^p \Delta_i\right)\cap(A\setminus\partial A)=\Sigma^\ast\cap(A\setminus\partial A),\qquad \partial\Delta_i=\partial\widetilde{\Delta}_i,
    \]
    \begin{equation}\label{eq:anis-321}
        \sum_{i=1}^p \mathbf F(\Delta_i)\le\sum_{j=1}^{q-1}\mathbf F(F_j)+E(\Sigma)+\mathbf F(F_q)-\mathbf F(D)+\varepsilon,
    \end{equation}
    and such that $\psi_1(\Delta_i\setminus\partial \Delta_i)\subset A\setminus\partial A$ for some isotopy $\psi=\{\psi_t\}_{0\le t\le 1}$ which leaves a neighborhood of $\Sigma^\ast\setminus\bigcup_{i=1}^p (\Delta_i\setminus\partial \Delta_i)-F_q$ fixed. In fact, up to an arbitrarily small $C^1$-perturbation, one may take $A_i=y_1^{-1}(A_i)$, $\psi=\widetilde\varphi \circ y$.

    Now let $\beta=\{\beta_t\}_{0\le t\le 1}$ be a continuous isotopy such that
    \[
        \beta_t(U)\subset U,\qquad \beta_t|_{\Sigma\setminus D}=\mathrm{id}_{\Sigma\setminus D},\qquad \beta_1(D)=F_q.
    \]
    Such an isotopy exists because $U\cong\mathbf B$ and \eqref{eq:anis-319} hold. As in \cite[Theorem 2]{MSY}, we consider two cases.

    \emph{Case (i):} \(F_q\subset \bigcup_{i=1}^p \Delta_i\).
    
    Then there is a unique index \(i_0\) such that \(F_q\subset \Delta_{i_0}\). We define
    \[
        D_{i_0}:=(\Delta_{i_0}\setminus F_q)\cup D,\qquad D_i:=\Delta_i\quad (i\neq i_0).
    \]
    Then the \(D_i\) are pairwise disjoint closed discs in \(\Sigma\), and by \eqref{eq:anis-321},
    \[
        \sum_{i=1}^p \mathbf F(D_i)=\sum_{i=1}^p \mathbf F(\Delta_i)-\mathbf F(F_q)+\mathbf F(D)\le\sum_{j=1}^{q}\mathbf F(F_j)+E(\Sigma)+\varepsilon.
    \]
    We define a continuous isotopy \(\varphi\) by \(\varphi=\psi \circ \beta\); smoothing as in \cite[Theorem 2]{MSY}, we obtain the required isotopy.

    \emph{Case (ii):} \(F_q\not\subset \bigcup_{i=1}^p \Delta_i\).
    
    We define
    \[
        D_i:=\Delta_i\quad (i=1,\cdots,p),\qquad D_{p+1}:=D.
    \]
    Then the \(D_i\) are pairwise disjoint closed disks in \(\Sigma\), and by \eqref{eq:anis-321},
    \[
        \sum_{i=1}^{p+1}\mathbf F(D_i)=\sum_{i=1}^{p}\mathbf F(\Delta_i)+\mathbf F(D)\le\sum_{j=1}^{q}\mathbf F(F_j)+E(\Sigma)+\varepsilon.
    \]
    Exactly as in \cite[Theorem 2]{MSY}, there is a neighborhood \(W\) of \(\partial D(=\Gamma_q)\) such that $W\cap D\subset A$. Otherwise, we would have \(W\supset \partial D\) with $W\cap(\Sigma\setminus D)\subset A\setminus\partial A$,
    which, by the covering property above, would imply \(F_q\subset \bigcup_{i=1}^p \Delta_i\), contrary to the assumption of Case (ii). Hence one defines a continuous isotopy \(\varphi\) by $\varphi=\beta \circ (\psi|_{N\setminus W})$, and smoothing as in \cite[Theorem 2]{MSY}, one obtains the required isotopy.

    In either case, since \(\varepsilon>0\) was arbitrary, the required area estimate follows, and the proof by induction is complete.
\end{proof}

After this point, one argues exactly as on \cite[pp.~634--635]{MSY}, replacing $\gamma$-reduction, weak $\gamma$-reduction, and strong $\gamma$-irreducibility by their anisotropic counterparts. Starting from a minimizing sequence $\{\Sigma_k\}$ for $\mathbf F$, one extracts, as in \cite[pp.~634--635]{MSY}, a number $\gamma_0\in (0,\varepsilon_0^2/9)$, a subsequence $\{\Sigma_{k'}\}$, and surfaces $\widetilde\Sigma_{k'}\in\mathcal C_1$ such that each $\widetilde\Sigma_{k'}$ is strongly anisotropic $\gamma_0$-irreducible,
\[
\mathbf F(\widetilde\Sigma_{k'}\Delta \Sigma_{k'})\to 0,
\]
the associated varifolds have the same limit,
\[
\mathbf v(\widetilde\Sigma_{k'})-\mathbf v(\Sigma_{k'})\to 0
\quad\text{weakly},
\]
and
\[
E(\widetilde\Sigma_{k'})\to 0.
\]
Thus, after passing to a subsequence and relabeling, we may assume that the minimizing sequence itself is strongly anisotropic $\gamma_0$-irreducible and has the same varifold limit as the original one. This is the sequence to which we apply the anisotropic analogue of \cite[Theorem~2]{MSY}, i.e. \cref{prop:anis_thm2}.

In the following, we give the proof of \cref{thm:Meeks-Simon-Yau-without-boundary}.

\begin{proof}[Proof of \cref{thm:Meeks-Simon-Yau-without-boundary}]
    By the discussion immediately following \cref{prop:anis_thm2}, after passing to a subsequence and relabeling, we may assume that there exists $\gamma_0\in (0,\varepsilon_0^2/9)$ such that $\{\Sigma_k\}\subset \mathcal C_1$ is a strongly anisotropic $\gamma_0$-irreducible minimizing sequence,
    \[
        E(\Sigma_k):=\mathbf F(\Sigma_k)-\inf_{\widetilde\Sigma\in\mathcal G(\Sigma_k)}\mathbf F(\widetilde\Sigma)\to 0,
    \]
    and $\mathbf v(\Sigma_k)\to V$ for some $\F$-stationary varifold $V$. Moreover, this is the same varifold limit as that of the original minimizing sequence. Exactly as in \cite[Remark 3.14]{MSY}, let $\widetilde{\Sigma}_k$ be obtained from $\Sigma_k$ by deleting all connected components $\Lambda$ such that $\Lambda\subset K_\Lambda$ and $\partial K_\Lambda\cap \Sigma_k=\emptyset$ for some compact $K_\Lambda\subset N$ diffeomorphic to $\mathbf B$. Then $E(\widetilde{\Sigma}_k)\to 0$ and $v(\widetilde{\Sigma}_k)\to V$. Furthermore, we know that, for all sufficiently large $k$, $\widetilde{\Sigma}_k$ is strongly anisotropic $(3\gamma_0/4)$-irreducible. In particular, after discarding finitely many terms, we may assume that each $\widetilde{\Sigma}_k$ is strongly anisotropic $(\gamma_0/2)$-irreducible.

    We first prove that
    \begin{align}
        V=\sum_{i=1}^R n_i\,\mathbf v(\Sigma^{(i)})
    \end{align}
    for some positive integers $R,n_1,\cdots,n_R$ and pairwise disjoint connected smooth embedded $\F$-minimal surfaces $\Sigma^{(1)},\cdots,\Sigma^{(R)}$.

    Let $x_0\in \spt\|V\|$. Using the anisotropic first variation formula together with the usual cutoff-radial vector field argument, exactly as in the isotropic case one obtains $\|V\|(\{x_0\})=0$. Choose $\rho_0>0$ so small that every geodesic sphere $\partial B_\rho(x_0)$ with $0<\rho\le \rho_0$ is uniformly $\F$-convex. Since $\|V\|(\{x_0\})=0$, we may choose $0<\rho<\rho_0$ so that $\|V\|\bigl(B_\rho(x_0)\setminus B_{3\rho/4}(x_0)\bigr)$ is as small as we want and $\|V\|(\partial B_\rho(x_0))=\|V\|(\partial B_{3\rho/4}(x_0))=0$. Hence
    \[
        \mathcal H^2\!\left(\widetilde{\Sigma}_k\cap \bigl(B_\rho(x_0)\setminus B_{3\rho/4}(x_0)\bigr)\right)\to\|V\|\bigl(B_\rho(x_0)\setminus B_{3\rho/4}(x_0)\bigr).
    \]

    Similar to \cite[(5.1)-(5.3)]{MSY}, we apply the coarea formula to the distance function from $x_0$ on the smooth surface $\widetilde{\Sigma}_k$, and we can choose $\rho_k\in (3\rho/4,\rho)$ such that $\widetilde{\Sigma}_k$ intersects $\partial B_{\rho_k}(x_0)$ transversely satisfying
    \begin{equation}\label{eq:anis-5-3}
        \operatorname{length}(\widetilde{\Sigma}_k\cap \partial B_{\rho_k}(x_0))\le\frac{C}{\rho}\,\mathcal H^2\!\left(\widetilde{\Sigma}_k\cap \bigl(B_\rho(x_0)\setminus B_{3\rho/4}(x_0)\bigr)\right).
    \end{equation}
    For each component $\Gamma$ of $\widetilde{\Sigma}_k\cap \partial B_{\rho_k}(x_0)$, let $F_\Gamma\subset \partial B_{\rho_k}(x_0)$ be the smaller of the two discs bounded by $\Gamma$. Since $\partial B_{\rho_k}(x_0)$ is a small geodesic sphere, the two-dimensional isoperimetric inequality on $\partial B_{\rho_k}(x_0)$, together with the uniform ellipticity of $F$, yields
    \[
        \sum_\Gamma \mathbf F(F_\Gamma)\le C\,\rho_k\,\operatorname{length}(\widetilde{\Sigma}_k\cap \partial B_{\rho_k}(x_0))
    \]
    for a constant $C$ independent of $k$. Hence, by \eqref{eq:anis-5-3},
    \[
        \sum_\Gamma \mathbf F(F_\Gamma)\le C\,\mathcal H^2\!\left(\widetilde{\Sigma}_k\cap \bigl(B_\rho(x_0)\setminus B_{3\rho/4}(x_0)\bigr)\right).
    \]
    Choosing $\rho$ sufficiently small and then $k$ sufficiently large, we may therefore assume that
    \[
        \sum_\Gamma \mathbf F(F_\Gamma)\le \gamma_0/16.
    \]
    Since also $E(\widetilde{\Sigma}_k)\to 0$, we may further assume that
    \[
        E(\widetilde{\Sigma}_k)\le \gamma_0/8.
    \]
    Therefore \cref{prop:anis_thm2} applies to $\widetilde{\Sigma}_k$ in the ball $B_{\rho_k}(x_0)$ with $\gamma_0/2$ in place of $\gamma$. Thus there exist pairwise disjoint disks $D_k^{(1)},\cdots,D_k^{(q_k)}\subset \widetilde{\Sigma}_k$ and isotopies $\varphi^{(k)}=\{\varphi_t^{(k)}\}_{0\le t\le 1}$ of $N$ such that
    \begin{equation}\label{eq:anis-5-4}
        \left\{\begin{aligned}
            &\partial D_k^{(i)}\subset \partial B_{\rho_k}(x_0),\quad \widetilde{\Sigma}_k\cap B_{\rho_k}(x_0)=\left(\bigcup_{i=1}^{q_k}D_k^{(i)}\right)\cap B_{\rho_k}(x_0),\\&\varphi_1^{(k)}\bigl(D_k^{(i)}\setminus \partial D_k^{(i)}\bigr)\subset B_{\rho_k}(x_0)\setminus \partial B_{\rho_k}(x_0),\quad \sum_{i=1}^{q_k}\mathbf F(D_k^{(i)})\le\sum_\Gamma \mathbf F(F_\Gamma)+E(\widetilde{\Sigma}_k).
        \end{aligned}\right.
    \end{equation}
    Hence, by shrinking $\rho$ and choosing $k$ sufficiently large if necessary, we may assume that \cref{lemma3:area-bound-assumption} holds for $B_{\rho_k}(x_0)$, $D_k^{(i)}$, $i=1,\cdots,q_k$. Then, exactly as in \cite[(5.5)--(5.7)]{MSY}, and using \cref{prop:replacement-thm-AS-aniso} in place of the isotropic replacement theorem, we obtain
    \[
        \partial \widetilde{D}_k^{(i)}=\partial D_k^{(i)},\qquad \widetilde{D}_k^{(i)}\setminus\partial \widetilde{D}_k^{(i)}\subset B_{\rho_k}(x_0), \qquad\mathbf F(\widetilde{D}_k^{(i)})\le \mathbf F(D_k^{(i)}),
    \]
    and
    \begin{equation}\label{eq:anis-5-7}
        \mathbf F(D_k^{(i)})\leq\mathbf F(\widetilde{D}_k^{(i)})+\varepsilon_{k,i}\leq\inf\{\mathbf F(\Delta): \Delta\in \mathcal D_{i,k}\}+2\varepsilon_{k,i},\quad i=1,\cdots,q_k,
    \end{equation}
    where $\mathcal D_{i,k}$ denotes the set of all discs in $N$ with boundary $\partial D_k^{(i)}$ and where $\sum_{i=1}^{q_k}\varepsilon_{k,i}\to 0$ as $k\to\infty$.

    By \eqref{eq:anis-5-4} and \eqref{eq:anis-5-7}, \cref{lem:aniso-filigree-lemma} applies exactly as in \cite[(5.7)--(5.9)]{MSY}. Hence there exists an integer $\ell$, independent of $k$, such that for all sufficiently large $k$,
    \[
        \mathbf F(D_k^{(i)}\cap B_{\rho/2}(x_0))>4\varepsilon_{k,i}
    \]
    for at most $\ell$ integers $i\in\{1,\cdots,q_k\}$. Thus, relabeling if necessary, we have
    \begin{align}
        \left\{
        \begin{aligned}
            &\left(\lim_{k\to\infty}\mathbf{v}\left(\sum_{i=1}^{\ell}D_k^{(i)}\right)\right)\llcorner B_{\rho/2}(x_0)=V\llcorner B_{\rho/2}(x_0),\\
            &\left(\lim_{k\to\infty}\mathbf{v}\left(\sum_{i=\ell+1}^{q_k}D_k^{(i)}\right)\right)\llcorner B_{\rho/2}(x_0)=0.
        \end{aligned}\right.
    \end{align}
    Fix $i\in\{1,\cdots,\ell\}$. Since $\{D_k^{(i)}\}$ is a sequence of embedded discs, almost minimizing among discs with the same boundary, and since for $\rho$ sufficiently small the geodesic sphere $\partial B_{\rho_k}(x_0)$ is uniformly $\F$-convex, \cref{thm:white-improvement}(1) applies in local coordinates. Therefore, after passing to a subsequence, $D_k^{(i)}$ converges in the varifold sense on compact subsets of $B_{\rho/4}(x_0)$ to a properly embedded stable $\F$-minimal surface $M^{(i)}\subset B_{\rho/4}(x_0)$. Grouping together the coincident limits, we obtain
    \[
        V\llcorner B_{\rho/4}(x_0)=\sum_{j=1}^{\ell_0}m_j\,\mathbf v(M^{(j)})
    \]
    for some positive integers $\ell_0$ and $m_j$.

    Since $x_0\in \spt\|V\|$ was arbitrary, and since the local limit surfaces are embedded and satisfy the maximum principle for $\F$-minimal surfaces (see \cite{SW-maximum-principle}), the same patching argument as in \cite[p.~641]{MSY} shows that there exist pairwise disjoint connected smooth embedded $\F$-minimal surfaces $\Sigma^{(1)},\cdots,\Sigma^{(R)}$ and positive integers $n_1,\cdots,n_R$ such that
    \begin{equation}\label{eq:V-decom}
        V=\sum_{i=1}^R n_i\,\mathbf v(\Sigma^{(i)}).
    \end{equation}
    In particular,
    \[
        \mathbf v({\Sigma}_k)\to \sum_{i=1}^R n_i\,\mathbf v(\Sigma^{(i)}),
    \]
    i.e. \eqref{eq:S_k-converges} holds. It remains to prove the genus estimate \eqref{eq:genus-bound} and the stability assertion. This follows closely the remaining part of \cite[Section~5]{MSY}, as the argument is largely topological. For the reader's convenience, we provide a sketch below.

    By \eqref{eq:V-decom}, there exists a sequence $h_k\downarrow 0$ such that $\widetilde{\Sigma}_k$ intersects each level set $\{x\in N:\dist(x,\Sigma^{(i)})=h_k\}$ transversely and such that
    \begin{equation}\label{eq:anis-5-12}
        \begin{aligned}
            \operatorname{length}&\left(\widetilde{\Sigma}_k\cap \left\{x:\dist\left(x,\bigcup_{i=1}^{R}\Sigma^{(i)}\right)=h_k\right\}\right)\\&\qquad+\mathcal H^2\left(\widetilde{\Sigma}_k\cap \left\{x:h_{k}^{-1}>\dist\left(x,\bigcup_{i=1}^{R}\Sigma^{(i)}\right)>h_k\right\}\right)\to 0
        \end{aligned}
    \end{equation}
    as $k\to\infty$. Fix $x_0\in \Sigma^{(i)}$ and choose $\rho>0$ sufficiently small so that $B_\rho(x_0)\cap \Sigma^{(j)}=\emptyset$ $(j\neq i)$, and $\Sigma^{(i)}\cap B_\rho(x_0)$ is a single embedded disc. At this stage, since \eqref{eq:V-decom} is already established and $\Sigma^{(i)}$ is smooth, the local area bounds used in \cite[(5.1)--(5.3)]{MSY} are automatic. Repeating the preceding argument, but now in the thin tubular region
    \[
        B_{\rho_k}(x_0)\cap \left\{x:\dist\left(x,\bigcup_{i=1}^{R}\Sigma^{(i)}\right)<h_k\right\},
    \]
    and using \eqref{eq:anis-5-12} in place of \cite[(5.12)--(5.13)]{MSY}, one obtains exactly as in \cite[(5.14)--(5.19)]{MSY} discs $D_k^{(1)},\cdots,D_k^{(n_i)}$ such that, for all sufficiently large $k$,
    \[
        \partial D_k^{(r)}\subset \partial B_{\rho_k}(x_0),\quad\partial D_k^{(r)} \text{ is not null-homotopic in }\partial B_{\rho_k}(x_0)\cap \{x:\dist(x,\Sigma^{(i)})<h_k\},
    \]
    \[
    \mathbf v(D_k^{(r)})\to \mathbf v(\Sigma^{(i)}\cap B_\rho(x_0))\qquad (r=1,\cdots,n_i),
    \]
    and the remaining part of $\widetilde{\Sigma}_k\cap B_{\rho_k}(x_0)$ has vanishing boundary length and vanishing area. Here one again uses \cref{thm:white-improvement}(1) in place of the isotropic interior regularity theory from \cite[Sections~5--6]{AS}.

    Once this local description is available, the argument on \cite[pp.~643--644]{MSY} is purely topological and carries over verbatim. Namely, one covers $\bigcup_{i=1}^R\Sigma^{(i)}$ by finitely many small balls $B_{\sigma/2}(y_1),\cdots,B_{\sigma/2}(y_P)$, chooses auxiliary vertex balls around the finitely many intersection points of the spherical boundaries, and applies the above local construction first in the vertex balls and then in the balls centered at the $y_m$. This yields, after a further isotopic modification of $\widetilde{\Sigma}_k$, that for all sufficiently large $k$,
    \begin{equation}\label{eq:anis-5-23}
        \widetilde{\Sigma}_k\in \mathcal G\!\left(S_k^{(0)}\cup \bigcup_{i=1}^R S_k^{(i)}\right),
    \end{equation}
    where
    \[
        \mathcal H^2(S_k^{(0)}\cap K)\to 0\qquad\text{for every compact }K\subset N,
    \]
    and
    \[
        S_k^{(i)}=
        \begin{cases}
            \displaystyle \bigcup_{r=1}^{m_i}\left\{x\in N:\dist(x,\Sigma^{(i)})=\frac{r}{k}\right\},& n_i=2m_i,\\[1.2em]\displaystyle \Sigma^{(i)}\cup\bigcup_{r=1}^{m_i}\left\{x\in N:\dist(x,\Sigma^{(i)})=\frac{r}{k}\right\},& n_i=2m_i+1.
        \end{cases}
    \]

    The genus estimate \eqref{eq:genus-bound} now follows exactly as in \cite[p.~644]{MSY}. Indeed, \eqref{eq:anis-5-23} expresses $\widetilde{\Sigma}_k$ as a surface isotopic to a union of parallel sheets around the limit surfaces, together with a remainder of vanishing area. Counting the contributions of the sheets, one obtains
    \[
        \sum_{i\in \mathcal U}\frac12 n_i(g_i-1)+\sum_{i\in \mathcal O}n_i g_i\le \operatorname{genus}(\widetilde{\Sigma}_k),
    \]
    where $\mathcal U$ denotes the one-sided limit surfaces and $\mathcal O$ the two-sided ones. Since $\operatorname{genus}(\widetilde{\Sigma}_k)\le \operatorname{genus}(\Sigma_k)$, this gives \eqref{eq:genus-bound}.

    Finally, the stability assertion is proved exactly as on \cite[pp.~644--645]{MSY}, replacing isotropic area by $\mathbf F$. Let
    \[
        d:=\inf\{\dist(x,y): x\in \Sigma^{(i)},\ y\in \Sigma^{(j)},\ i\neq j\},
    \]
    with the convention $d=\infty$ if $R=1$. Then \eqref{eq:anis-5-23} and the almost minimizing property imply that each family $\{S_k^{(i)}\}$ satisfies
    \[
        \mathbf F(S_k^{(i)})\le\inf\{\mathbf F(\Sigma): \Sigma\in \mathcal G(S_k^{(i)}),\ \Sigma\subset \{x:\dist(x,\Sigma^{(i)})<d/2\}\}+o(1).
    \]
    If $\Sigma^{(i)}$ is two-sided, then this almost minimizing property yields the corresponding second variation inequality, and hence $\Sigma^{(i)}$ is $\F$-stable.
    If, in addition, every $\Sigma_k$ is two-sided and $\Sigma^{(i)}$ is one-sided, then every component of $S_k^{(i)}$ is two-sided, hence $n_i$ must be even. Choosing any connected component $T_k^{(i)}$ of $S_k^{(i)}$, we obtain a two-sided double cover of $\Sigma^{(i)}$ which is still almost minimizing in the tubular neighborhood $\{x:\dist(x,\Sigma^{(i)})<d/2\}$. Therefore the orientable double cover of $\Sigma^{(i)}$ is $\F$-stable, i.e. $\Sigma^{(i)}$ is $\F$-stable in the usual one-sided sense. This proves the final assertion.
\end{proof}

The remainder of this subsection is devoted to the proof of \cref{thm:Meeks-Simon-Yau-with-boundary}. 
Many of the arguments are similar to those used in the proofs of \cref{thm:white-improvement} and \cref{thm:Meeks-Simon-Yau-without-boundary}, with appropriate modifications to the setting of \cref{thm:Meeks-Simon-Yau-with-boundary}.
\begin{proof}[Proof of \cref{thm:Meeks-Simon-Yau-with-boundary}]
    (1) The proof is essentially identical to that of \cref{thm:Meeks-Simon-Yau-without-boundary}, with the modification that isotopies are now taken in the class $\mathcal G(\Sigma_0)$ relative to the fixed boundary $\Gamma$. After passing to a subsequence and relabeling, we may assume that there exists $\gamma_0\in (0,\varepsilon_0^2/9)$ such that $\{\Sigma_k\}$ is a strongly anisotropic $\gamma_0$-irreducible minimizing sequence and $\mathbf{v}(\Sigma_k)\to V$ for some $\mathbf{F}$-stationary varifold $V$. Moreover, this coincides with the varifold limit of the original minimizing sequence.

    Exactly as in the proof of \cref{thm:Meeks-Simon-Yau-without-boundary}, let $\widetilde\Sigma_k$ be obtained from $\Sigma_k$ by deleting all connected components $\Lambda$ such that $\Lambda\subset K_\Lambda$ and $\partial K_\Lambda\cap \Sigma_k=\emptyset$ for some compact $K_\Lambda\subset N$ diffeomorphic to $\mathbf B$. Then $\{\widetilde{\Sigma}_k\}$ is still a minimizing sequence, and $\mathbf{v}(\widetilde\Sigma_k)\to V$ as $k\to\infty$, and, after discarding finitely many terms, we may assume that each $\widetilde\Sigma_k$ is strongly anisotropic $(\gamma_0/2)$-irreducible.

    Fix now a compact set $K\Subset A$. We claim that $V\llcorner K$ is represented by a smooth (possibly disconnected, and with integer multiplicities) $\F$-minimal and $\F$-stable surface in $K$. 
    
    Let $x_0\in \spt\|V\|\cap K$. Since $K\Subset A$, we can choose $\rho_0>0$ so small that $B_{\rho_0}(x_0)\Subset A$, and every geodesic sphere $\partial B_\rho(x_0)$, $0<\rho\leq \rho_0$, is uniformly $\F$-convex. Since $B_\rho(x_0)\Subset A$, the fixed boundary $\Gamma=\partial \Sigma_k$ does not meet $B_\rho(x_0)$. Therefore the local argument in the proof of \cref{thm:Meeks-Simon-Yau-without-boundary} applies verbatim inside $B_\rho(x_0)$, with all isotopies supported away from $\Gamma$. More precisely, repeating exactly the argument from \eqref{eq:anis-5-3} to \eqref{eq:V-decom}, using \cref{prop:anis_thm2}, \cref{prop:replacement-thm-AS-aniso}, \cref{lem:aniso-filigree-lemma}, and \cref{thm:white-improvement}(1), we obtain that
    \[
        V\llcorner B_{\rho/4}(x_0)=\sum_{j=1}^{\ell_0} m_j\, \mathbf{v}(M^{(j)})
    \]
    for some positive integers $m_1,\cdots,m_{\ell_0}$, where $M^{(1)},\cdots,M^{(\ell_0)}\subset B_{\rho/4}(x_0)$ are pairwise disjoint connected smooth embedded $\F$-minimal surfaces. Moreover, each $M^{(j)}$ is $\F$-stable, since it arises as the local varifold limit of embedded discs which are almost minimizing among discs with the same boundary.

    Since $x_0\in \spt\|V\|\cap K$ was arbitrary, a finite covering of $K$ by such balls, together with the same patching argument as in the proof of \cref{thm:Meeks-Simon-Yau-without-boundary} (see also \cite[p.~641]{MSY}), shows that there exist pairwise disjoint connected smooth embedded $\F$-minimal surfaces $\Sigma^{(1)},\cdots,\Sigma^{(R)}$ defined in a neighborhood of $K$, and positive integers $n_1,\cdots,n_R$, such that
    \[
        V\llcorner K=\sum_{i=1}^R n_i\, \mathbf{v}(\Sigma^{(i)})\llcorner K.
    \]
    If we denote by
    \[
        \Sigma:=\sum_{i=1}^R n_i\,\Sigma^{(i)}
    \]
    this weighted union, then $\Sigma$ is smooth in $K$ (possibly with several connected components and integer multiplicities), and $\Sigma_k\to \Sigma$ in the sense of varifolds on $K$. Since each $\Sigma^{(i)}$ is $\F$-minimal and $\F$-stable, it follows that $\Sigma$ is $\F$-minimal and $\F$-stable in $K$. As $K\Subset A$ was arbitrary, the proof of (1) is complete.

    (2) Similar to the proof of \cref{thm:Meeks-Simon-Yau-with-boundary}(1), after passing to a subsequence and relabeling, we may assume that $\{\Sigma_k\}$ is a strongly anisotropic $\gamma_0$-irreducible minimizing sequence in the relative isotopy class $\mathcal G(\Sigma_0)$ fixing boundary $\Gamma$, and $\mathbf{v}(\Sigma_k)\rightharpoonup V$ for some $\F$-stationary varifold $V$ in $A$. Moreover, by \cref{thm:Meeks-Simon-Yau-with-boundary}(1), for every compact set $K\Subset A$, the restriction $V\llcorner K$ is represented by a smooth embedded $\F$-minimal and $\F$-stable surface, possibly disconnected and with integer multiplicities. Thus, only the behavior at the boundary remains to be analyzed.

    First, it's easy to see that $\Gamma\subset \spt\|V\|$. Indeed, if $p\in \Gamma$ and $r>0$ is sufficiently small, then \cref{lem:density-boundary-estimate} gives $\mathcal H^2(\Sigma_k\cap A\cap B_r(p))\ge c r^2$     for all sufficiently large $k$, where $c>0$ is independent of $k$. Passing to the limit yields $\|V\|(A\cap B_r(p))\ge c r^2>0$, hence $p\in \spt\|V\|$.

    We next show that every point of $\Gamma$ is a regular boundary point of $\spt\|V\|$. Fix $p\in \Gamma$. Choose $\rho>0$ so small that $\Gamma\cap B_{4\rho}(p)$ is a single smooth embedded arc, $\partial A$ is strictly $\F$-convex in this neighborhood, and boundary normal coordinates are defined in $B_{4\rho}(p)$. Smoothing the corners of the sets $A\cap B_t(p)$ in these coordinates, we obtain a smooth one-parameter family of rounded domains $U_t$, $t\in(0,2\rho]$ such that each $U_t$ is diffeomorphic to the unit ball, $\partial U_t$ is a smooth closed surface uniformly $\F$-convex for $t\in (0,2\rho]$, $S_t:=\partial U_t\cap \partial A$ is a topological disk, $\beta_t:=\Gamma\cap U_t$ is a single smooth arc contained in $S_t$, and $C_t:=\partial U_t\setminus S_t$ is a smooth cap contained in the interior of $A$.

    For $k$ sufficiently large and for almost every $t\in(0,2\rho)$, the surface $\Sigma_k$ meets $\partial U_t$ transversely. Since $\Sigma_k\subset A$ and $\partial\Sigma_k=\Gamma$, there is exactly one connected component $\alpha_{k}$ of $\Sigma_k\cap \partial U_t$ such that $\alpha_{k}\cap \operatorname{int}(S_t)=\beta_t$ while every other connected component of $\Sigma_k\cap \partial U_t$ is contained in $C_t$. Using coarea in the shell $U_{2\rho}\setminus U_\rho$, exactly as in the proof of \cref{thm:Meeks-Simon-Yau-without-boundary}, we choose $t_k\in(\rho,2\rho)$ such that
    \[
        \operatorname{length}(\Sigma_k\cap \partial U_{t_k})\le C\,\mathcal H^2\bigl(\Sigma_k\cap (U_{2\rho}\setminus U_\rho)\bigr),
    \]
    and therefore, after choosing $\rho$ sufficiently small, the total $\F$-area of the filling disks in $\partial U_{t_k}$ for the boundary components of $\Sigma_k\cap \partial U_{t_k}$ is smaller than $\gamma_0/8$. Since the rounded domains $U_{t_k}$ satisfy the same geometric hypotheses as in \cref{prop:anis_thm2}, the argument of \cref{prop:anis_thm2} applies to $\Sigma_k\cap U_{t_k}$, thus we have that there are pairwise disjoint embedded disks $B_k,\ D_k^{(1)},\cdots,D_k^{(m_k)}\subset \Sigma_k$ such that $\partial B_k=\alpha_{k}$, each $\partial D_k^{(j)}\subset C_{t_k}$, and after discarding components contained in small balls disjoint from $\Gamma$, the interiors of these disks cover $\Sigma_k\cap (U_{t_k}\setminus \partial U_{t_k})$.
    Exactly as in the proof of \cref{thm:Meeks-Simon-Yau-without-boundary}, using \cref{prop:replacement-thm-AS-aniso}, we may replace these disks by disks with the same boundaries which are almost minimizing among disks with the same boundary. For the disks $D_k^{(j)}$, whose boundaries lie in $C_{t_k}$, \cref{thm:white-improvement}(1) applies. For the distinguished boundary disk $B_k$, \cref{thm:white-improvement}(3) applies and shows that the limit support has a component extending smoothly up to the boundary arc, showing that $p$ is a regular boundary point of $\spt\|V\|$. Since $p\in \Gamma$ was arbitrary, every point in $\Gamma$ is a regular boundary point in $\spt\|V\|$.

    Using similar arguments as the proof of \cref{thm:white-improvement}(2), we can show that $\spt\|V\|\cap (\partial A\setminus \Gamma)=\emptyset$. Hence, combining with $\Gamma\subset \spt\|V\|$ we know $\spt\|V\|\cap \partial A=\Gamma$. Therefore, we obtain that $\Sigma:=\spt\|V\|$ is a smooth embedded surface-with-boundary in $A$ with $\partial\Sigma=\Gamma$. Write $\Sigma=\Sigma^{(1)}\cup\cdots\cup \Sigma^{(R)}$ for its connected components. Arguing as in the proof of \cite[Corollary~8.2]{Delellis-Pellandini}, we show that each $\Sigma^{(i)}$ appears in $V$ with multiplicity one and that each $\Sigma^{(i)}$ meets $\Gamma$. Indeed, either $\partial\Sigma^{(i)}=\emptyset$ or $\partial\Sigma^{(i)}$ is the union of some connected components of $\Gamma$. In the latter case, the multiplicity of the boundary forces the multiplicity of $\Sigma^{(i)}$ to be $1$. On the other hand, the case $\partial\Sigma^{(i)}=\emptyset$ cannot occur, otherwise $\Sigma^{(i)}$ would be a smooth embedded $\mathbf{F}$-minimal surface without boundary contained in an $\mathbf{F}$-convex domain of a Riemannian manifold, contradicting the anisotropic maximum principle.

    Since every connected component of $\Sigma$ has nonempty boundary and occurs with multiplicity one, then by attaching some spherical caps to the boundary of each $\Sigma^{(i)}$ and a diffeomorphism we can turn them into a smooth embedded surface in $\R^3$. Then by the Jordan-Brouwer separation theorem \cite{Jordan-Brouwer-thm}, this resulting surface separates $\R^3$ into an inside and an outside, and then by \cite[Theorem 4.5]{Hirsch-DT} we conclude that $\Sigma^{(i)}$ is orientable and two-sided. Moreover, exactly as at the end of the proof of \cref{thm:Meeks-Simon-Yau-without-boundary}, the local almost minimizing property in a tubular neighborhood of each $\Sigma^{(i)}$ yields the corresponding second variation inequality, and therefore each $\Sigma^{(i)}$ is $\F$-stable.

    Once the boundary regularity, the multiplicity-one property and the two-sidedness have been established, the rest of the argument is purely topological and follows the last part of the proof of \cref{thm:Meeks-Simon-Yau-without-boundary}, then we conclude that for all sufficiently large $k$, there is
    \[
        \sum_{i=1}^R \operatorname{genus}(\Sigma^{(i)})\le \operatorname{genus}(\Sigma_k).
    \]
    This completes the proof of (2).

    (3) This is a purely local version of \cref{thm:Meeks-Simon-Yau-with-boundary}(2), so we just sketch it below. Let $S\subset \partial A$ be the relatively open subset where $\partial A$ is strictly $\F$-convex. Fix $p\in \Gamma\cap S$. Repeating exactly the local argument in the proof of \cref{thm:Meeks-Simon-Yau-with-boundary}(2), we choose rounded boundary balls $U_t$ centered at $p$, perform the same local reduction in $U_t$, and obtain a distinguished boundary disk $B_k$ whose boundary contains $\Gamma\cap U_t$. \cref{thm:white-improvement}(3) applies to the corresponding sequence of boundary disks and shows that the limit support extends smoothly up to $\Gamma$ at $p$. The same density lower bound estimate and the $\F$-convexity of $S$ give us that $\spt\|V\|\cap S=\Gamma\cap S$, hence the limit varifold is smooth up to the convex part of the boundary.

    The uniform curvature estimate follows from the proof of \cref{thm:white-improvement}(2) and \cref{thm:white-improvement}(3) in local coordinates, applied to the distinguished boundary disks constructed above. Since (3) is only a local boundary regularity statement, no genus estimate is asserted.
\end{proof}


\section{Anisotropic Simon-Smith min-max theory with genus bounds}\label{sec:min-max}

In this section, we prove the anisotropic Simon--Smith--Ketover theorem stated in \cref{thm:aniso-simon-smith-ketover}. More precisely, we first formulate the Simon--Smith min--max construction for the anisotropic energy $\mathbf F$ and then extend Ketover's optimal genus bound to the anisotropic setting.

The local ingredients required for the min--max construction have already been established in the preceding sections. In particular, the necessary anisotropic replacement theory is provided by
\cref{thm:white-improvement}, \cref{thm:Meeks-Simon-Yau-without-boundary}, and \cref{thm:Meeks-Simon-Yau-with-boundary},
proved in \cref{sec:aniso-MSY}. These results constitute the anisotropic analogue of the Meeks--Simon--Yau theory underlying the classical Simon--Smith construction.

The global min--max argument follows the same general scheme as in the isotropic theories of Simon--Smith \cite{smith} and Colding--De Lellis \cite{ColdingDeLellisMinMax}, together with the anisotropic adaptations introduced in \cite{DePhilippisDeRosa}. The present setting is somewhat simpler than the constant anisotropic mean curvature setting considered in \cite{DePhilippisDeRosa}, since we work with the pure anisotropic energy functional, without an enclosed-volume constraint and hence with zero prescribed anisotropic mean curvature.

Having established the anisotropic Simon--Smith min--max theorem, we then turn to the topology of the limiting surface. Using the same replacement and surgery framework, together with an anisotropic version of Ketover's lifting argument, we prove the optimal genus bound for the min--max limit. The topological part of Ketover's proof carries over verbatim. The only new analytic ingredient is an anisotropic no-folding estimate in the overlap region between consecutive replacements, where the minimal surface equation $H=0$ is replaced by the quasilinear elliptic equation satisfied by $\mathbf F$-stationary surfaces.

We record only those statements needed for the proof of \cref{thm:aniso-simon-smith-ketover}. Whenever an argument is identical to its isotropic counterpart after replacing the area functional by $\mathbf F$, we simply indicate the relevant reference and point out where the anisotropic input enters.

\subsection{Generalized family and width}\label{sec:generalized-family-width}

We begin by recalling the notions of generalized smooth families and width used in the Simon--Smith theory. We keep the formulation close to \cite{ColdingDeLellisMinMax,Delellis-Pellandini}.

\begin{definition}[Generalized family]
A \emph{generalized family of surfaces} in $N$ is a one-parameter family $\{\Sigma_t\}_{t\in[0,1]}$ of closed subsets of $N$ for which there exists a finite set $T\subset [0,1]$ such that:
\begin{enumerate}
    \item if $t\notin T$, then $\Sigma_t$ is a smooth embedded closed surface;
    \item if $t\in T$, then there exists a finite set $P_t\subset N$ such that $\Sigma_t\setminus P_t$ is a smooth embedded surface;
    \item the map $t\mapsto \F(\Sigma_t)$ is continuous and $\Sigma_t\to \Sigma_{t_0}$ in the Hausdorff topology whenever $t\to t_0$.
\end{enumerate}
\end{definition}

\begin{definition}[Saturated family and width]
Let $\Lambda$ be a collection of generalized families. We say that $\Lambda$ is \emph{saturated} if for every $\{\Sigma_t\}\in \Lambda$ and every smooth map
\[
\Psi \in C^{\infty}([0,1]\times N,N),
\qquad
\Psi(t,\cdot)\in \mathrm{Diff}_0(N)\quad \forall\, t\in[0,1],
\]
the deformed family $\{\Psi(t,\Sigma_t)\}_{t\in[0,1]}$ still belongs to $\Lambda$. 

We will also tacitly assume the standard bounded-singularity condition from the Simon--Smith theory: for every saturated family $\Lambda$ under consideration there exists $C(\Lambda)<\infty$ such that, for every sweepout in $\Lambda$, each singular set $P_t$ contains at most $C(\Lambda)$ points.

For every $\{\Sigma_t\}\in \Lambda$ we define
\[
\mathcal{F}(\{\Sigma_t\}) := \max_{t\in[0,1]} \F(\Sigma_t),
\]
and the \emph{anisotropic width} of $\Lambda$ is
\[
m_0(\Lambda):=\inf_{\Lambda}\, \mathcal{F}=\inf_{\{\Sigma_t\}\in \Lambda}\left[\max_{t\in[0,1]} \F(\Sigma_t)\right].
\]
\end{definition}
A sequence $\{\Sigma_t^j\}\subset \Lambda$ is called \emph{minimizing} if
\[
\lim_{j\to\infty}\max_{t\in[0,1]}\F(\Sigma_t^j)=m_0(\Lambda),
\]
and a sequence $\Sigma^j:=\Sigma_{t_j}^j$ is called a \emph{min-max sequence} if $\{\Sigma_t^j\}$ is minimizing and
\[
\lim_{j\to\infty}\F(\Sigma_{t_j}^j)=m_0(\Lambda).
\]

\begin{proposition}[Positivity of the width]\label{prop:positive-width-aniso}
Let $N$ be a closed 3-manifold with Riemannian metric and let $\{\Sigma_t\}$ be the level-set family of a Morse function. Then the smallest saturated family containing $\{\Sigma_t\}$ has $m_0(\Lambda)>0$.
\end{proposition}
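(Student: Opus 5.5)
We reduce to the isotropic statement of Colding--De Lellis \cite{ColdingDeLellisMinMax}, via the uniform comparability \eqref{H:comparability}. The classical proof of positivity of the width goes through an isoperimetric argument: every sweepout in the saturation of a Morse level-set family contains a slice $\Sigma_{t}$ which divides $N$ into two regions of volume exactly $\vol(N)/2$. Together with the isoperimetric inequality on $N$, this gives a lower bound on $\hau^2(\Sigma_t)$. Since \eqref{H:comparability} gives $\F(\Sigma)\ge\lambda\,\hau^2(\Sigma)$ for every rectifiable set, the anisotropic width dominates $\lambda$ times the isotropic width, and the latter is positive. The saturated families are the same in both settings: saturation is defined through ambient isotopies and does not depend on the functional. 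The standing assumptions on generalized families (continuity of $\F$, Hausdorff continuity of the slices) are used only to make sense of enclosed volumes.

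Concretely, the steps are as follows. First, for $\{\Sigma_t\}$ the level sets $f^{-1}(t)$ of a Morse function $f$ (normalized to $[0,1]$), set $\Omega_t=\{f<t\}$. Then $\Sigma_t=\partial\Omega_t$ up to finitely many points, $t\mapsto\vol(\Omega_t)$ is continuous, $\vol(\Omega_0)=0$ and $\vol(\Omega_1)=\vol(N)$. Second, for any $\Psi$ as in the definition of saturation, the deformed family $\Psi(t,\Sigma_t)$ bounds $\Psi(t,\Omega_t)$. Since $\Psi(t,\cdot)$ is a diffeomorphism continuous in $t$, the volume $t\mapsto\vol(\Psi(t,\Omega_t))$ is still continuous and runs from $0$ to $\vol(N)$. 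By the intermediate value theorem there is $t_0$ with $\vol(\Psi(t_0,\Omega_{t_0}))=\vol(N)/2$. Third, by the isoperimetric inequality on the closed manifold $N$ (for instance in the form of \cref{lem:anis-1}, or the standard relative isoperimetric inequality), $\hau^2(\Psi(t_0,\Sigma_{t_0}))\ge c_N>0$, where $c_N$ depends only on $N$. Hence
\[
\max_t \F(\Psi(t,\Sigma_t))\ \ge\ \lambda\,\hau^2(\Psi(t_0,\Sigma_{t_0}))\ \ge\ \lambda c_N .
\]
Taking the infimum over the saturation gives $m_0(\Lambda)\ge\lambda c_N>0$.

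The only point requiring some care is the second step. The smallest saturated family containing $\{\Sigma_t\}$ consists exactly of the families $\{\Psi(t,\Sigma_t)\}$. So one must check that the enclosed regions are well defined and vary continuously in volume, even at the finitely many singular parameters where $\Sigma_t$ fails to be a surface at isolated points. This is handled by working with the sets $\Psi(t,\Omega_t)$ directly, whose symmetric differences have volume going to zero by continuity of $\Psi$ and of $f$, rather than with the slices themselves. Everything is purely isotropic and topological, and the anisotropy enters only through the single inequality \eqref{H:comparability}.
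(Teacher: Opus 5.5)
Your proposal is correct and follows the paper's proof. The paper likewise reduces to the isotropic positivity result \cite[Proposition~1.4]{ColdingDeLellisMinMax} via $\F\ge\lambda\,\hau^2$ from \eqref{H:comparability}, and your volume-halving plus isoperimetric sketch is exactly the Colding--De Lellis argument it cites (for that step, the standard isoperimetric inequality for sets of finite perimeter is the cleaner reference, since \cref{lem:anis-1} is stated only for connected surfaces).
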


\begin{proof}
This is exactly the argument of \cite[Proposition~1.4]{ColdingDeLellisMinMax} or \cite[Proposition 3.1]{DePhilippisDeRosa}. Since $\F$ and $\mathcal H^2$ are uniformly comparable by \eqref{H:comparability}, positivity of the isotropic width immediately implies positivity of the anisotropic width.
\end{proof}

\subsection{Pull-tight and almost minimizing min-max sequences}

The pull-tight construction is the same as in \cite[Section~4 and Errata]{ColdingDeLellisMinMax} and \cite[Section~4.1]{DePhilippisDeRosa}: one deforms non-stationary slices by a pseudo-gradient for the first variation of the functional. In the present setting the functional is simply $\F$, so the argument is formally simpler than in \cite[Section~4.1]{DePhilippisDeRosa}, where one must also keep track of the volume term.

\begin{proposition}[Pull-tight]\label{prop:aniso-pull-tight}
For every saturated family $\Lambda$ there exists a minimizing sequence $\{\Sigma_t^j\}\subset \Lambda$ such that every associated min-max sequence $\Sigma_{t_j}^j$ converges in the varifold sense, up to subsequences, to a $\F$-stationary varifold $V\in \mathbf{V}_2(N)$.
\end{proposition}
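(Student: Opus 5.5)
We follow the isotropic pull-tight of \cite[Section~4]{ColdingDeLellisMinMax} and the anisotropic version in \cite[Section~4.1]{DePhilippisDeRosa}, with the volume term simply dropped. Fix $C>0$ larger than $\sup_j\max_t\F(\Sigma^j_t)$ for some minimizing sequence, and consider the compact metrizable space
\[
X:=\{V\in\mathbf V_2(N):\ \|V\|(N)\le C/\lambda\}
\]
with a metric $\mathbf d$ inducing the weak topology. Let $V_\infty\subset X$ be the closed set of $\F$-stationary varifolds. Decompose $X\setminus V_\infty$ into the dyadic annuli
\[
A_k:=\{V:\ 2^{-k}\le \mathbf d(V,V_\infty)\le 2^{-k+1}\}.
\]
By compactness of $A_k$ and continuity of $V\mapsto\delta_{\F}V(g)$ for fixed $g\in C^1$ (immediate from \eqref{eq:1st-variation}), there is for each $V\in A_k$ a field $g$ with $\delta_\F V(g)\le -c_k<0$. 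By a finite cover and a partition of unity we then build a map $V\mapsto g_V$, continuous into $C^\infty(N;TN)$ with uniformly bounded norms, vanishing on $V_\infty$, and with $\delta_\F V(g_V)\le -c(\mathbf d(V,V_\infty))$ for a continuous increasing $c$ with $c(0)=0$.

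Next we convert these fields into an isotopy. For $V\in X$, let $\Phi_V(s,\cdot)$ be the flow of $g_V$. We choose a continuous time $T(V)$, positive off $V_\infty$ and zero on it, so short that $\F((\Phi_V(s))_\#V)$ decreases along $s\in[0,T(V)]$ by at least a quantity $L(\mathbf d(V,V_\infty))>0$. This again uses uniform bounds on the second derivative in $s$ of $\F$ along flows, which come from \eqref{H:regularity}. Given a family $\{\Sigma_t\}\in\Lambda$, we define
\[
\Psi(t,\cdot):=\Phi_{\mathbf v(\Sigma_t)}\bigl(T(\mathbf v(\Sigma_t)),\cdot\bigr).
\]
This is a smooth-in-$x$ family of diffeomorphisms isotopic to the identity, continuous in $t$. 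Since $t\mapsto\mathbf v(\Sigma_t)$ is continuous, including at the finitely many singular times (by Hausdorff convergence and continuity of $\F$, exactly as in \cite{ColdingDeLellisMinMax}), we get $\{\Psi(t,\Sigma_t)\}\in\Lambda$ after a standard smoothing in $t$. This uses saturation.

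Apply this to a minimizing sequence $\{\Sigma^j_t\}$ to obtain $\{\Gamma^j_t:=\Psi^j(t,\Sigma^j_t)\}$. The energy is nonincreasing, so the new sequence is still minimizing. Suppose $\F(\Gamma^j_{t_j})\to m_0(\Lambda)$ but $\mathbf v(\Gamma^j_{t_j})\to V\notin V_\infty$. Then $\F(\Sigma^j_{t_j})\to m_0$ as well. Pass to a subsequence with $\mathbf v(\Sigma^j_{t_j})\to W$. If $W\notin V_\infty$, continuity of $L$ gives
\[
\F(\Gamma^j_{t_j})\le \F(\Sigma^j_{t_j})-L/2,
\]
which contradicts the convergence to $m_0$. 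If $W\in V_\infty$, the displacement fields tend to zero by continuity of $V\mapsto(g_V,T(V))$, so $\mathbf v(\Gamma^j_{t_j})\to W\in V_\infty$, again a contradiction.

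The main obstacle is making the map $V\mapsto \Psi$ continuous while keeping a uniform energy decrease on each $A_k$. The anisotropic input here is the joint continuity of $(V,g)\mapsto\delta_\F V(g)$ and the uniform second-variation bound. Both follow from the $C^3$ hypotheses \eqref{H:regularity} together with the mass bound from \eqref{H:comparability}.
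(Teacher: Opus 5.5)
Your proposal is correct and is essentially the paper's argument. The paper proves the statement by invoking the Colding--De Lellis pull-tight \cite[Proposition~4.1]{ColdingDeLellisMinMax} with the isotropic first variation replaced by $\delta_{\F}$, and you have written that scheme out, with the anisotropic inputs being the weak continuity of $V\mapsto\delta_{\F}V(g)$ from \eqref{eq:1st-variation} and the $C^2$ control of $\F$ along flows. One point to tighten: the varifold continuity of $t\mapsto\mathbf v(\Sigma_t)$ at the singular parameters does not follow from Hausdorff convergence and continuity of $\F$ alone. It comes from the smooth convergence of $\Sigma_t$ away from the finite singular set, which is part of the standard Colding--De Lellis / De Lellis--Pellandini definition of a generalized family.
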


\begin{proof}
The proof is the same as in \cite[Proposition~4.1 and Errata]{ColdingDeLellisMinMax}; see also \cite[Proposition~4.1]{DePhilippisDeRosa} for the anisotropic variant with prescribed mean curvature. The only modification is that the isotropic first variation is replaced everywhere by the anisotropic first variation $\delta_{\F}$.
\end{proof}

As above, $\Lambda$ is a fixed saturated set of $1$–parameter families $\{\Sigma_{t}\}$ in $N$. In \cref{prop:aniso-pull-tight} we showed that there exists a family
$\{\Sigma_{t}\}$ such that every min-max sequence clusters towards $\F$-stationary varifolds, but they are not necessarily regular. Hence, in order to prove regularity we introduce the notion of almost minimizing varifolds as \cite[Definition 3.2]{ColdingDeLellisMinMax}.

\begin{definition}[Almost minimizing]
Let $U\subset N$ be open and let $\eps>0$. A surface $\Sigma\subset N$ is said to be \emph{$\eps$-almost minimizing} in $U$ if there \emph{does not} exist any isotopy $\psi\in C^{\infty}([0,1]\times N,N)$ supported in $U$ such that
\begin{align}
\F(\psi(t,\Sigma))&\leq \F(\Sigma)+\eps/8, \quad \forall\, t\in[0,1],\\
\F(\psi(1,\Sigma))&\leq \F(\Sigma)-\eps.
\end{align}
A sequence $\{\Sigma^j\}$ is called \emph{almost minimizing} in $U$ if each $\Sigma^j$ is $\eps_j$-almost minimizing in $U$ for some $\eps_j\downarrow 0$.
\end{definition}

For $x\in N$ and $0<s<t$ we set
\[
\operatorname{An}(x;s,t):=B_t(x)\setminus \overline{B_s(x)},
\qquad
\mathcal{AN}_r(x):=\bigl\{\operatorname{An}(x;s,t):0<s<t<r\bigr\}.
\]
The next proposition is the anisotropic analogue of the almost minimizing selection principle in the Simon--Smith theory, with the strengthened trichotomy from \cite[Proposition~4.3]{DePhilippisDeRosa}.

\begin{proposition}[Almost minimizing min-max sequence]\label{prop:aniso-am-minmax}
There exist a min-max sequence $\Sigma^j=\Sigma_{t_j}^j$ as in \cref{prop:aniso-pull-tight}, converging to a $\F$-stationary varifold $V$, and a function $r:N\to(0,\infty]$ such that for every $x\in N$ and every annulus $\operatorname{An}(x;s,t)\in \mathcal{AN}_{r(x)}(x)$ there exists a non-relabeled subsequence $\Sigma^j$ which is almost minimizing in $\operatorname{An}(x;s,t)$. Moreover, one of the following alternatives holds:
\begin{enumerate}
    \item there exists $R\in (0,\operatorname{Inj}(N)/18)$ such that $r\equiv R$ on $N$, and there exists $y\in N$ such that the sequence is almost minimizing in $N\setminus B_{18R}(y)$;
    \item $r\equiv \operatorname{Inj}(N)/18$ on $N$;
    \item there exists $p\in N$ such that $r(x)=\dist(x,p)$ for every $x\neq p$ and $r(p)=\infty$.
\end{enumerate}
\end{proposition}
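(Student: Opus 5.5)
This follows the Colding--De Lellis almost minimizing selection, in the refined form of De Philippis--De Rosa \cite[Proposition~4.3]{DePhilippisDeRosa}, with the area replaced by $\F$ throughout. The input is the pulled-tight minimizing sequence $\{\Sigma^j_t\}$ of \cref{prop:aniso-pull-tight}, whose min-max sequences all accumulate on $\F$-stationary varifolds.

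\textbf{Step 1: combinatorial selection principle.} The first step is the anisotropic version of \cite[Proposition~5.1]{ColdingDeLellisMinMax}. For a suitable min-max sequence $\Sigma^j=\Sigma^j_{t_j}$, the following holds: for every pair of open sets $U_1,U_2$ with $\dist(U_1,U_2)>0$, $\Sigma^j$ is $\eps_j$-almost minimizing in at least one of $U_1,U_2$, along a subsequence. I would argue by contradiction.
\begin{itemize}
\item Suppose every slice with energy close to $m_0(\Lambda)$ fails to be almost minimizing in both $U_1$ and $U_2$.
\item Then near each such $t$ one has two isotopies, supported in $U_1$ and $U_2$ respectively, that lower the energy by $\eps_j$ while never raising it above $\F(\Sigma^j_t)+\eps_j/8$.
\item Since the supports are disjoint, the two isotopies commute. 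By continuity of $t\mapsto\F(\Sigma^j_t)$ they can be interpolated in $t$, as in \cite{ColdingDeLellisMinMax}, using only one of them on each subinterval of a fine covering of the parameter interval.
\item This yields a competitor family in $\Lambda$ with $\max_t\F<m_0(\Lambda)$, which contradicts the definition of the width.
\end{itemize}
This argument uses only the additivity of $\F$ over disjoint regions and the continuity of $\F$ along isotopies. Both hold verbatim for the anisotropic energy, thanks to \eqref{H:comparability} and \eqref{H:regularity}.

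\textbf{Step 2: from pairs of sets to the trichotomy.} I would apply Step~1 to pairs of the form $U_1=\operatorname{An}(x;s,t)$ and $U_2=N\setminus\overline{B_{\tau}(x)}$, together with pairs of balls and their exteriors, following \cite[Proposition~4.3]{DePhilippisDeRosa}.
\begin{itemize}
\item For each $j$, let $r_j$ be the infimum of radii $R$ for which there is a point $y_j$ such that $\Sigma^j$ is \emph{not} almost minimizing in $N\setminus B_{18R}(y_j)$.
\item If $\liminf r_j>0$, choose $R$ below this limit. Then every annulus of outer radius less than $R$ is either far from some ball of radius $R$ (so that Step~1 applies), or is contained in the region where almost minimality holds. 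This gives alternative~(1). If moreover $r_j$ stays above $\operatorname{Inj}(N)/18$, we obtain alternative~(2).
\item If $r_j\to0$, pass to a subsequence with $y_j\to p$. For $x\neq p$ and any annulus in $\mathcal{AN}_{\dist(x,p)}(x)$, the annulus is eventually disjoint from small balls around $y_j$. Step~1 then gives almost minimality in the annulus along a further subsequence, which is alternative~(3).
\end{itemize}
Finally, a diagonal argument over a countable dense set of centers and rational radii makes the subsequence choice consistent with the statement, where the subsequence may depend on the annulus.

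\textbf{Main obstacle.} I expect the main difficulty to lie in the bookkeeping of Step~1. The combined deformations must stay inside $\Lambda$, which requires the bounded-singularity condition and the saturation property. The energy bookkeeping must also use only the $\eps/8$ and $\eps$ margins. No monotonicity formula is needed at any point, so the anisotropy does not enter essentially. I would therefore verify carefully that the isotropic inequalities used in \cite{ColdingDeLellisMinMax} are purely additive, and that none of them relies on the area-specific density bounds.
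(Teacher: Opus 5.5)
Your plan uses the same references as the paper's proof: the combinatorial argument of \cite[Section~5]{ColdingDeLellisMinMax} and the covering alternative of \cite[Proposition~4.3]{DePhilippisDeRosa}. You are also right that the anisotropy enters only through additivity and continuity of $\F$. However, Step~1 is misformulated, and your sketch does not prove what is needed.

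The required statement is uniform: a single min-max sequence that is almost minimizing in at least one member of \emph{every} pair of an admissible class. Negating it gives, for large $j$, a bad pair $(U_1^t,U_2^t)$ for each slice $\Sigma^j_t$ with $\F(\Sigma^j_t)$ close to $m_0(\Lambda)$, and this pair depends on $t$. Your contradiction argument fixes one pair $(U_1,U_2)$ for all slices. At best, it shows that for each fixed pair some min-max sequence works. These sequences use different slices $t_j$, so a diagonal argument cannot merge them.

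To glue the local isotopies over a chain of overlapping parameter intervals carrying $t$-dependent pairs, you need the following: for any two pairs, some member of one is at positive distance from some member of the other. Inside one interval you may then switch between the two disjoint members of its own pair, and cross to the next interval through a disjoint couple. The Almgren--Pitts class $\mathcal{CO}=\{(U_1,U_2):\dist(U_1,U_2)\ge 4\min\{\diam U_1,\diam U_2\}\}$ gives exactly this, since the smallest of the four sets cannot touch both members of the other pair. Positive distance alone does not. Cut $N$ by a thin slab into left and right halves, and by another thin slab into top and bottom halves. Every member of the first pair meets every member of the second, and the gluing step breaks down.

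Step~2 must then be rebuilt on the $\mathcal{CO}$ dichotomy for the pairs $(B_R(x),N\setminus\overline{B_{9R}(x)})$; this is where the factors $9$ and $18$ come from. As written, your $r_j$ is the infimum of a set that is closed downwards (failure of almost minimality passes to larger sets), so it equals $0$. Also, failing to be almost minimizing in the complement of a large ball is not the useful information. Instead:
\begin{itemize}
\item Let $R_j$ be the supremum of those $R\le\operatorname{Inj}(N)/18$ for which $\Sigma^j$ is almost minimizing in every ball $B_R(x)$.
\item If $R_j=\operatorname{Inj}(N)/18$ infinitely often, you get~(2).
\item Otherwise, pick $y_j$ and $R_j'\in(R_j,R_j+1/j)$ such that $\Sigma^j$ is not almost minimizing in $B_{R_j'}(y_j)$. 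By the dichotomy, $\Sigma^j$ is then almost minimizing in $N\setminus\overline{B_{9R_j'}(y_j)}$.
\item If $R_j\to R_\infty>0$ and $y_j\to y$, any $R\in(R_\infty/2,R_\infty)$ gives~(1), with $18R$ absorbing $9R_j'$ and the drift of $y_j$.
\item If $R_j\to0$ and $y_j\to p$, then every annulus in $\mathcal{AN}_{\dist(x,p)}(x)$ with $x\neq p$, and every annulus centered at $p$, is eventually contained in $N\setminus\overline{B_{9R_j'}(y_j)}$. This gives~(3).
\end{itemize}
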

\begin{proof}
The proof is the same combinatorial argument as in \cite[Section~5]{ColdingDeLellisMinMax}, combined with the strengthened covering alternative in \cite[Proposition~4.3]{DePhilippisDeRosa}. In the present setting one works with generalized families of embedded surfaces rather than boundaries of Caccioppoli sets, and the interpolation step is exactly the one in \cite[Section~5]{ColdingDeLellisMinMax}. Since no volume constraint is present, the anisotropic changes are, in fact, slightly simpler than in \cite{DePhilippisDeRosa}.
\end{proof}

\subsection{Replacements and regularity: smooth gluing via induction}

Next, we construct replacements for the $\F$-stationary limit $V$. We use the same definition as in \cite[Definition~6.1]{ColdingDeLellisMinMax}.

\begin{definition}[Replacement]
Let $V\in \mathbf{V}_2(N)$ be 
$\F$-stationary, and let $U\subset N$ be an open set. A $\F$-stationary varifold $V'\in \mathbf{V}_2(N)$ is called a \emph{replacement} for $V$ in $U$ if
\[
V'=V \quad \text{on } G_2(N\setminus \overline{U}),
\quad
\F(V')=\F(V),
\]
and $V'\llcorner G_2(U)$ is induced by a smooth embedded $\F$-stationary and $\F$-stable surface $\Sigma$ with $\overline{\Sigma}\setminus\Sigma\subset\partial U$.
\end{definition}

\begin{proposition}[Existence of replacements]\label{prop:aniso-minmax-replacements}
Let $V$ be the $\F$-stationary limit given by \cref{prop:aniso-am-minmax}. Then for every $x\in N$ and every annulus $\operatorname{An}(x;s,t)\in \mathcal{AN}_{r(x)}(x)$, the varifold $V$ admits a replacement in $\operatorname{An}(x;s,t)$. Moreover, the same conclusion holds for every further replacement, so that $V$ has the good replacement property in the sense of \cite[Definition~6.2]{ColdingDeLellisMinMax}.
\end{proposition}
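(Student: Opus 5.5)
The plan is to follow the scheme of \cite[Proposition~6.3 and Section~7]{ColdingDeLellisMinMax}, with the local minimization step supplied by \cref{thm:Meeks-Simon-Yau-with-boundary}. The proof has four steps.

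\textbf{Step 1 (local minimizing sequences).} Fix $x\in N$ and $\operatorname{An}=\operatorname{An}(x;s,t)\in\mathcal{AN}_{r(x)}(x)$. By \cref{prop:aniso-am-minmax} we may pass to a subsequence $\Sigma^j$ which is $\eps_j$-almost minimizing in $\operatorname{An}$. For each $j$ we consider the class $\mathcal I_j$ of surfaces $\psi(1,\Sigma^j)$, where $\psi$ ranges over isotopies supported in $\operatorname{An}$ satisfying $\F(\psi(\tau,\Sigma^j))\le \F(\Sigma^j)+\eps_j/8$ for all $\tau$. We pick $\Sigma^{j,k}\in\mathcal I_j$ minimizing $\F$ in this constrained class.

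\textbf{Step 2 (local regularity via the anisotropic Meeks--Simon--Yau theory).} As in \cite[Lemma~7.6]{ColdingDeLellisMinMax}, we show that for each $y\in\operatorname{An}$ there is a small ball $B_{r}(y)\subset\operatorname{An}$ with $\F$-convex boundary in which $\Sigma^{j,k}$ is, for $k$ large, a genuinely minimizing sequence in its isotopy class relative to $\partial B_r(y)$. The reason is that small isotopies in $B_r(y)$ increase $\F$ by less than $\eps_j/8$, using \eqref{H:comparability} and the fact that the mass in small balls is controlled. \cref{thm:Meeks-Simon-Yau-with-boundary}(1)--(2) then gives $\Sigma^{j,k}\to V^j$ with $V^j$ smooth, $\F$-stationary and $\F$-stable in $\operatorname{An}$. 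Combining \cref{thm:Meeks-Simon-Yau-with-boundary} with \cref{prop:replacements-in-annuli} yields curvature estimates that are independent of $j$ and of the multiplicity.

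\textbf{Step 3 (passage to the limit and identification).} Letting $j\to\infty$, the limit $V'$ of $V^j$ is, by the uniform curvature estimates and the stable compactness, induced in $\operatorname{An}$ by a smooth embedded $\F$-stable, $\F$-minimal surface with integer multiplicity. Outside $\overline{\operatorname{An}}$ we have $V'=V$, since $\Sigma^{j,k}$ agrees with $\Sigma^j$ there. The mass identity $\F(V')=\F(V)$ holds because $\F(\Sigma^{j,k})\le \F(\Sigma^j)$ together with the min-max property: a strict drop would contradict the minimality of the sweepout, exactly as in \cite[Lemma~7.2]{ColdingDeLellisMinMax}. Stationarity of $V'$ in all of $N$ follows from the same argument as in \cref{prop:aniso-pull-tight}, because the sequence $\Sigma^{j,k_j}$ is still a min-max sequence.

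\textbf{Step 4 (iterated replacements).} For the good replacement property, we note that $\Sigma^{j,k_j}$ is again a min-max sequence and remains almost minimizing in every annulus of $\mathcal{AN}_{r(x)}(x)$, by a diagonal argument as in \cite[Proposition~6.3]{ColdingDeLellisMinMax}. Hence Steps 1--3 can be repeated to produce further replacements.

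\textbf{Main obstacle.} The hardest step is Step 2, namely checking that the constrained minimization ``$\F\le\F(\Sigma^j)+\eps_j/8$ along the isotopy'' becomes an unconstrained isotopy minimization locally, so that \cref{thm:Meeks-Simon-Yau-with-boundary} applies. I would first try the squeezing lemma argument of \cite[Lemma~7.4]{ColdingDeLellisMinMax}, replacing area by $\F$ via \eqref{H:comparability}. A second concern is that the curvature estimates must survive $j\to\infty$, where multiplicity may blow up. This is where the remark after \cref{prop:replacements-in-annuli}, that the estimates do not depend on mass or multiplicity, is essential.
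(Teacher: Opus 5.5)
Your skeleton matches the paper's: constrained isotopy minimization in the annulus as in \cite[Section~7]{ColdingDeLellisMinMax}, with \cref{thm:Meeks-Simon-Yau-with-boundary} in place of the isotropic Meeks--Simon--Yau theorem, then $j\to\infty$, then iteration. The paper defers the limit step to \cite[Section~7.2]{ColdingDeLellisMinMax} and \cite[Section~4.4]{DePhilippisDeRosa}, and your justification of that step does not work.

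The balls $B_r(y)$ in which $\Sigma^{j,k}$ becomes an unconstrained isotopy-minimizing sequence must carry $\F$-mass small compared with $\eps_j$, because that is what the squeezing argument needs. Your stated reason, that isotopies supported in $B_r(y)$ raise $\F$ by less than $\eps_j/8$, is false: an isotopy in a tiny ball can raise $\F$ arbitrarily along the way, which is why the squeezing lemma is needed. Hence $r=r_j\to 0$. \cref{thm:Meeks-Simon-Yau-with-boundary} then gives $\F$-stability of $V^j$ only inside each $B_{r_j}(y)$, not on $\operatorname{An}$. Stability does not pass from a cover by small balls to the union. The estimates of \cref{prop:replacements-in-annuli} are scale invariant, of the form $|A|\lesssim r_j^{-1}$, so they degenerate as $j\to\infty$; their independence of mass and multiplicity does not help.

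What is missing is a direct proof that $V^j$ is $\F$-stable on all of $\operatorname{An}$, which comes from the constrained minimality. Since $\lim_k\F(\Sigma^{j,k})\le\F(\Sigma^j)$, there is a margin of $\eps_j/8$. Following the isotopy producing $\Sigma^{j,k}$ by a short flow of a vector field $X$ supported in $\operatorname{An}$ with $\delta^2_\F V^j(X)<0$ stays admissible and ends strictly below the infimum, a contradiction. With stability on $\operatorname{An}$ and the uniform area bound from $\F(\Sigma^j)\to m_0(\Lambda)$ and \eqref{H:comparability}, the curvature estimates for $\F$-stable surfaces \cite[Theorem~2.8]{DePhilippisDeRosa} give the compactness of $\{V^j\}$ in $\operatorname{An}$.

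Two further claims in Step~3 rest on the wrong mechanism.

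\emph{Stationarity.} \cref{prop:aniso-pull-tight} only controls slices of the specific pulled-tight families. It says nothing about $\Sigma^{j,k_j}=\psi(1,\Sigma^j_{t_j})$, which is not such a slice. Instead, choose $\operatorname{An}'\in\mathcal{AN}_{r(x)}(x)$ with $\overline{\operatorname{An}}\subset\operatorname{An}'$ and use the subsequence that is almost minimizing in $\operatorname{An}'$, hence also in $\operatorname{An}$. Arrange $\F(\Sigma^{j,k})\le\F(\Sigma^j)$. Concatenating isotopies shows $\Sigma^{j,k}$ is still $\eps_j$-almost minimizing in $\operatorname{An}'$, so $V'$ is stationary there. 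Together with $V'=V$ on $N\setminus\overline{\operatorname{An}}$, this gives stationarity on $N$.

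\emph{Energy identity.} $\F(V')=\F(V)$ is not a consequence of minimality of the sweepout, since lowering a single slice contradicts nothing. It follows from the almost minimizing inequality $\F(\Sigma^{j,k})>\F(\Sigma^j)-\eps_j$ together with $\lim_k\F(\Sigma^{j,k})\le\F(\Sigma^j)$.

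\emph{Step~4.} The same concatenation is what this step needs. The good replacement property asks for replacements of $V'$ in small annuli about every point $y$, not only about $x$. You must check almost minimality of $\Sigma^{j,k}$ in three kinds of annuli about $y$: those inside $\operatorname{An}$, those outside $\overline{\operatorname{An}}$, and, for $y\in\partial\operatorname{An}$, those lying together with $\operatorname{An}$ in a larger annulus about $x$.
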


\begin{proof}
The construction of replacements follows closely the squeezing lemma in \cite[Section~7.2]{ColdingDeLellisMinMax}; see also \cite[Section~4.4]{DePhilippisDeRosa} for the corresponding anisotropic CMC argument. For the reader's convenience, we sketch the main ideas below.

Fix an annulus $\operatorname{An}\in \mathcal{AN}_{r(x)}(x)$
and choose the almost minimizing subsequence given by \cref{prop:aniso-am-minmax}. As in \cite[Section~7.2]{ColdingDeLellisMinMax}, one considers the local isotopy-minimization problem inside $\operatorname{An}$: namely, among all isotopies supported in that domain whose intermediate slices do not increase the $\F$-energy by more than the allowed almost minimizing threshold, one minimizes the final value of $\F$. Up to this point the argument is purely formal and is identical to the isotropic one. The only place where \cite{ColdingDeLellisMinMax} uses the isotropic Meeks--Simon--Yau theory is in upgrading such a local isotopy-minimizing sequence to a smooth embedded stable replacement. In the present setting this step is replaced by the anisotropic Meeks--Simon--Yau theorem proved in \cref{sec:aniso-MSY}, namely \cref{thm:Meeks-Simon-Yau-without-boundary} or 
\cref{thm:Meeks-Simon-Yau-with-boundary}. These results produce, from the local isotopy-minimizing sequence, a smooth embedded $\F$-stationary and $\F$-stable surface which agrees with the original sequence outside the chosen domain and therefore yields the required replacement.

Passing to the limit exactly as in \cite[Section~7.2]{ColdingDeLellisMinMax} and \cite[Section~4.4]{DePhilippisDeRosa}, one obtains a replacement of $V$ in $\operatorname{An}$. The argument for further replacements is the same: one repeats the same local isotopy-minimization procedure for the first replacement and again invokes \cref{thm:Meeks-Simon-Yau-without-boundary} or 
\cref{thm:Meeks-Simon-Yau-with-boundary} in place of the isotropic Meeks--Simon--Yau theorem. Hence $V$ has the good replacement property.
\end{proof}

In the following, we use \cref{prop:aniso-minmax-replacements} to obtain \cref{thm:aniso-simon-smith-one-singularity}.
\begin{thm}[Regularity away from at most one point]
\label{thm:aniso-regularity}
Let $V$ be as in \cref{prop:aniso-am-minmax} and, consequently, as in
\cref{prop:aniso-minmax-replacements}. Then there exists a set
$\mathcal S\subset N$ with $\#\mathcal S\leq 1$ such that
$\operatorname{spt}V\setminus \mathcal S$ is a smooth embedded
$\F$-minimal surface. More precisely, there exist positive integers
$n_1,\cdots,n_L$ and pairwise disjoint connected smooth embedded
$\F$-minimal surfaces
\[
    \Sigma^{(1)},\cdots,\Sigma^{(L)} \subset N\setminus \mathcal S
\]
which are locally $\F$-stable in $N\setminus\mathcal S$, such that
\[
    V=\sum_{i=1}^L n_i\,\mathbf{v}(\Sigma^{(i)}).
\]
Moreover,
\[
    \overline{\Sigma^{(i)}}\setminus \Sigma^{(i)} \subset \mathcal S
    \qquad\text{for each } i=1,\cdots,L .
\]
\end{thm}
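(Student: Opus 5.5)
The plan follows the Pitts / Colding--De Lellis regularity scheme, using the good replacement property from \cref{prop:aniso-minmax-replacements}. The singular set $\mathcal S$ is read off from the trichotomy in \cref{prop:aniso-am-minmax}. In case (3) we set $\mathcal S=\{p\}$. In cases (1) and (2), $r(x)$ is bounded below uniformly, and we set $\mathcal S=\emptyset$. In case (1) the point $y$ needs care: the sequence is almost minimizing in every small annulus centered at any point, since $r\equiv R$. So regularity at $y$ is proved like everywhere else.

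\textbf{Step 1 (punctured regularity at each point).} Fix $x\notin\mathcal S$ and $\rho<r(x)$ small enough that the spheres $\partial B_s(x)$, $s\le 2\rho$, are strictly $\F$-convex. Take a replacement $V'$ of $V$ in $\operatorname{An}(x;\rho,2\rho)$. Choose $t\in(\rho,2\rho)$ with $\spt V'$ transverse to $\partial B_t(x)$. Then, for every $s<\rho$, take a further replacement $V''$ of $V'$ in $\operatorname{An}(x;s,t)$. The core is to show that $\spt V'$ and $\spt V''$ glue smoothly across $\gamma=\spt V'\cap\partial B_t(x)$. Given this, we argue exactly as in Steps 3--4 of the proof of \cref{thm:white-improvement}(1):
\begin{itemize}
\item the maximum principle forces every component of $\spt V''$ to meet $\gamma$;
\item unique continuation makes the surfaces $\Sigma_{x,s}$ nested, so they define $\Sigma_x\subset B_{2\rho}(x)\setminus\{x\}$;
\item the half-space constancy argument using the dense set $Q_x$ of \cite[Lemma~5.6]{DePhilippisDeRosa} gives $\spt V\cap B_\rho(x)\setminus\{x\}\subset\Sigma_x$.
\end{itemize}
Next we remove the puncture at $x\notin\mathcal S$ by recentering at a nearby support point $q\notin\mathcal S$, which is admissible since $r(q)$ is comparable to $r(x)$. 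The multiplicities are integers by integrality of $V$, they are locally constant by the constancy theorem on each connected component, and local $\F$-stability is inherited from the replacements. Finally, $\overline{\Sigma^{(i)}}\setminus\Sigma^{(i)}\subset\mathcal S$ because every other point is regular.

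\textbf{Step 2 (the main obstacle: smooth gluing at arbitrary multiplicity).} Near $y\in\gamma$, the surface $\spt V'$ is a disk $\Delta$ carrying an integer multiplicity $k$. Unlike in \cite{DDL}, we cannot reduce to multiplicity one via CMC approximations. We therefore argue by induction on $k$.

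For $k=1$, we follow \cite{DDL}. The density argument shows that $V''$ near $y$ is a single sheet attached along $\gamma$. The Hass--Scott type no-fold and no-bend argument from \cref{prop:replacements-in-annuli} then gives $C^1$ matching, and elliptic regularity for the $\F$-minimal surface equation gives smoothness.

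For $k\ge2$, assume gluing holds for all multiplicities $<k$. Take additional replacements in thin annuli bounded by spherical caps that meet $\partial B_t(x)$ at small angles near $y$, as in \cite{DDL}. This forces the following dichotomy: either the inner sheets continue $\Delta$ with full multiplicity $k$, or they split into at least two sheets, each of multiplicity strictly less than $k$.

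In the splitting case, we center a small annulus at a splitting point of the interface and replace once more. By the inductive hypothesis, each lower-multiplicity sheet glues smoothly to the third replacement. The sheets of $V''$ therefore continue smoothly into the interior of the new replacement, where they meet along a common spine. The maximum principle \cite{SW-maximum-principle} for touching $\F$-minimal sheets, combined with unique continuation, then forces them to coincide with each other and with $\Delta$. This contradicts the splitting.

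In the non-splitting case, $V''$ near $y$ is a single multiplicity-$k$ sheet. The one-sheet argument above then applies verbatim.

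I expect the delicate points to be two. The first is the choice of the angled-cap annuli, which must be compatible with $r(x)$ and preserve $\F$-convexity. The second is verifying that the induction is well founded, since $k$ is bounded by the density of $V$ at $y$.
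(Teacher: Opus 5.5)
Your proposal follows essentially the same route as the paper. It uses the Colding--De Lellis replacement scheme, with $\mathcal S$ read off from the trichotomy: empty in cases (1)--(2), $\{p\}$ in case (3). Smooth gluing of consecutive replacements is proved by induction on multiplicity, via a DDL thin-cap replacement, a further replacement centred at the splitting point, the induction hypothesis for the lower-multiplicity branches, and the strong maximum principle.

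The one step you only assert is the multiplicity bound behind your dichotomy. The paper proves it as Claim~1 of \cref{lem:aniso-smooth-gluing}, by anisotropic flux balance along the edge of the thin wedge at an auxiliary point $z\in\Sigma'\setminus\operatorname{An}(x;s,t)$, and then propagates the conclusion back to $y$. As the opening angle shrinks, each sector's flux satisfies $b_j\to b_0$, so $k\,b_0=\sum_j q_j b_j$ forces $\sum_j q_j\le k$. Also, no density argument is available in your base case without monotonicity, which is why the paper uses DDL's one-sided wedge lemma there instead.
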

\begin{proof}
We follow the proof of \cite[Proposition~6.3]{ColdingDeLellisMinMax};
see also \cite[Proposition~4.14]{DePhilippisDeRosa} for comparison.
Once \cref{prop:aniso-minmax-replacements} is established, all steps
proceed as in those references, except for the analogue of Step~2 in the
proof of \cite[Proposition~6.3]{ColdingDeLellisMinMax}, namely the
smooth gluing of two consecutive replacements across the intermediate
sphere in the anisotropic setting.

Roughly speaking, our approach to the smooth gluing combines the thin
convex-annulus construction from
\cite[Proof of Theorem~5.1, Step~1, Case~1]{DDL} with an induction on
the multiplicity of the first replacement. The key point is that, after
inserting a third replacement in a convex annulus, one can adapt the
one-sided wedge argument in \cite[Lemmas~5.2--5.3]{DDL} to show that the
multiplicity at the auxiliary point $z$ cannot exceed that of the first
replacement. Once this is established, a nested fourth-replacement
argument around $z$, together with the induction hypothesis and the
interior regularity of the replacements, forces the third replacement to
have only one branch near $z$. The remainder of the argument then follows
the same propagation mechanism as in
\cite[Proof of Theorem~5.1, Step~1, Case~1]{DDL}. We refer to
\cref{lem:aniso-smooth-gluing} for the details.

Accordingly, we only sketch the proof of this theorem. Given any two
consecutive replacements as in \cref{lem:aniso-smooth-gluing}, the lemma
shows that they glue smoothly across the intermediate sphere. This gives
the anisotropic analogue of Step~2 in
\cite[Proof of Proposition~6.3]{ColdingDeLellisMinMax}. The rest of the
proof then proceeds as in \cite[Section~6]{ColdingDeLellisMinMax}: by
\cref{prop:aniso-minmax-replacements}, the varifold $V$ admits good
replacements in all admissible annuli, and the regularity theory together
with \cite[Lemma~4.15]{DePhilippisDeRosa} implies that $V$ is integral
and smooth in the corresponding punctured balls.

The trichotomy in \cref{prop:aniso-am-minmax} allows one to absorb all
punctures by overlapping balls, except possibly for a single point.
Therefore there exists a set $\mathcal S\subset N$ with
$\#\mathcal S\leq 1$ such that $\operatorname{spt}V\setminus\mathcal S$
is a smooth embedded $\F$-minimal surface. Since the local pieces are
obtained from stable replacements, this regular part is locally
$\F$-stable in $N\setminus\mathcal S$.

Finally, decomposing $\operatorname{spt}V\setminus\mathcal S$ into its
connected components gives pairwise disjoint connected smooth embedded
$\F$-minimal surfaces
\[
    \Sigma^{(1)},\cdots,\Sigma^{(L)}\subset N\setminus\mathcal S .
\]
The integrality of $V$ gives positive integer multiplicities
$n_1,\cdots,n_L$, and hence
\[
    V=\sum_{i=1}^L n_i\,\mathbf v(\Sigma^{(i)}).
\]
Moreover, the only possible failure of smooth extension occurs at points
of $\mathcal S$, so
\[
    \overline{\Sigma^{(i)}}\setminus\Sigma^{(i)}\subset \mathcal S
    \qquad\text{for each } i=1,\cdots,L .
\]
This proves the theorem.
\end{proof}

\begin{lemma}[Smooth gluing across the intermediate sphere]
\label{lem:aniso-smooth-gluing}
Fix $x\in \spt\|V\|$, where $V$ is as in \cref{prop:aniso-am-minmax}. Choose $2\rho\le r(x)$ so that the replacement
construction of \cref{prop:aniso-minmax-replacements} is available in all
admissible annuli centered at $x$. Let $V'$ be a replacement of $V$ in
$\operatorname{An}(x;\rho,2\rho)$, and let $\Sigma'$ be the smooth stable $\F$-minimal surface
induced by $V'$ there. Choose $t\in (\rho,2\rho)$ such that $\Sigma'$ intersects
$\partial B_t(x)$ transversely. For $s<\rho$, let $V''$ be a replacement of $V'$ in
$\operatorname{An}(x;s,t)$, and let $\Sigma''$ be the corresponding smooth stable $\F$-minimal
surface in this annulus. Then for every $y\in \Sigma'\cap \partial B_t(x)$ there
exists $r>0$ such that
\[
\Sigma''\cap B_t(x)\cap B_r(y)=\Sigma'\cap B_t(x)\cap B_r(y).
\]
\end{lemma}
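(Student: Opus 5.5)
We argue by induction on the multiplicity $m$ of the sheet of $V'$ through $y$. Since $\Sigma'$ is smooth near $y$ and transverse to $\partial B_t(x)$, a small ball $B_r(y)$ meets $\spt\|V'\|$ in a single embedded disk $\Delta'$ carrying multiplicity $m$. Moreover $V''=V'$ outside $\overline{\operatorname{An}(x;s,t)}$, so $V''$ coincides with $m\,\mathbf v(\Delta')$ on $B_r(y)\setminus \overline{B_t(x)}$.

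We first show that $\Sigma''$ has a boundary trace on $\partial B_t(x)\cap B_r(y)$ contained in $\Delta'\cap\partial B_t(x)$. Because $\partial B_t(x)$ is strictly $\F$-convex and $\Sigma''$ is a stable replacement, the boundary curvature estimates of \cref{thm:Meeks-Simon-Yau-with-boundary}(3) give uniform control of $\Sigma''$ up to $\partial B_t(x)$. The $\F$-stationarity of $V''$ across the sphere, together with the constancy argument of Step~3 in the proof of \cref{thm:white-improvement} (no stationary cone for the frozen integrand lies in a half-space), then forces every sheet of $\Sigma''$ reaching $\partial B_t(x)\cap B_r(y)$ to end on the arc $\gamma=\Delta'\cap\partial B_t(x)$. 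It also forces the multiplicities of these sheets to add up to $m$.

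For $m=1$ we argue as in \cite[Theorem~5.1, Step~1, Case~1]{DDL}. A single sheet of $\Sigma''$ meets $\Delta'$ along $\gamma$, so the union is a $C^{1}$ $\F$-stationary varifold of density one. It is therefore regular by Allard-type regularity for anisotropic integrands, hence smooth, and unique continuation gives $\Sigma''=\Sigma'$ inside $B_t(x)\cap B_r(y)$.

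For $m\ge2$ we assume the statement for all multiplicities $<m$. If $\Sigma''$ near $y$ is a single sheet of multiplicity $m$ meeting $\gamma$, we exclude a bend exactly as in the last paragraph of Step~2 of the proof of \cref{thm:white-improvement}: a bent disk is not $\F$-minimizing, which contradicts the isotopy-minimization defining the replacement. We then conclude by unique continuation. Otherwise $\Sigma''$ splits into at least two sheets, each of multiplicity $<m$, that meet $\gamma$ with different tangent planes, and we argue by contradiction as follows.
\begin{enumerate}
\item We take a third replacement $V'''$ of $V''$ in a thin convex annulus bounded by spherical caps meeting at a small angle along an arc through a point $z\in\gamma$ near $y$. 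This is the wedge construction of \cite[Lemmas~5.2--5.3]{DDL}.
\item A maximum principle comparison with the caps shows that the multiplicity of $V'''$ at $z$ on each side is at most $m$. The splitting assumption then gives branches at $z$ whose multiplicities are strictly smaller than $m$.
\item We center a small annulus at $z$ and take a fourth replacement. Applying the induction hypothesis to each lower-multiplicity branch, each branch glues smoothly with the third replacement.
\item Unique continuation and the maximum principle of \cite{SW-maximum-principle} then show that the distinct sheets of $\Sigma''$ continue smoothly into the third replacement. There they must either coincide or cross transversally, and crossing contradicts the embeddedness of replacements. So only one branch remains, which contradicts the assumed splitting.
\end{enumerate}

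The main obstacle is step~2, the multiplicity control at $z$. Without monotonicity, density cannot be compared through cones, so the bound must come from a purely geometric barrier argument: the caps, being strictly $\F$-convex at small angle, trap the sheets, and one counts the sheets crossing a transverse curve. I would first try to adapt the \cite{DDL} wedge argument sheet by sheet, using the uniform curvature estimates of \cref{prop:replacements-in-annuli} to keep each sheet graphical.
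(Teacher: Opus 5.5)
Your overall architecture matches the paper's: induction on $m$, a third replacement in a thin convex annulus bounded by two caps at small angle, a fourth replacement centred at $z$ with the induction hypothesis applied to lower-multiplicity branches, and a maximum-principle contradiction. However, there is a genuine gap exactly at the step you flag as the main obstacle.

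Comparison with the caps only confines supports. For $W\in TV(z,V''')$ it gives $W\llcorner\{p_1\le 0\}=m\,\mathbf v(P)\llcorner\{p_1\le 0\}$ and $\spt\|W\|\cap\{p_1\ge 0\}\subset\{|p_3|\le\tan(3\varepsilon)\,p_1\}$. It says nothing about how much mass sits inside the wedge, and without monotonicity there is no density with which to count sheets. The missing ingredient is the first variation of the tangent cone. Since the wedge is thinner than the minimal junction angle of the frozen integrand $G_z$, one has $W\llcorner\{p_1>0\}=\sum_j q_j\,\mathbf v(K_j)$ with sectors $K_j$ bounded by $\ell$. Then $G_z$-stationarity gives the flux balance $m\,\beta_{G_z}(P^-)+\sum_j q_j\,\beta_{G_z}(K_j)=0$ along $\ell$. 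Every $K_j$ is forced close to $P^+$, so the $e_1$-components satisfy $b_j>\frac{m}{m+1}b_0$ once $\varepsilon=\varepsilon(m,G_z)$ is small. The identity $m\,b_0=\sum_j q_j b_j$ then yields $\sum_j q_j\le m$.

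This one-sided picture also needs $z$ to lie on $\Sigma'$ where $V''=V'$, so that $V'''=m\,\mathbf v(\Sigma')$ near $z$ outside $C$. With your choice $z\in\gamma$, the part of a neighbourhood of $z$ inside $B_t(x)$ but outside the thin wedge carries the unknown $V''$. In the splitting scenario these are precisely the sheets leaving $\gamma$ at other angles, so the argument does not start. Note also that the object shown to have a single branch is $V'''$ at $z$, not $\Sigma''$ at $y$. One gets a single branch of $V'''$ at $z$, constancy gives it multiplicity $m$, and graphicality glues $V'''$ to $V'$ at $z$. Only then is this propagated back to $y$ to obtain $TV(y,V'')=\{m\,\mathbf v(T_y\Sigma')\}$.

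Your preliminary step is also unavailable, and it is essentially the content of the lemma. The boundary estimates of \cref{thm:Meeks-Simon-Yau-with-boundary}(3) and \cref{lemma:disk-compactness} hold for a fixed boundary, or for a single boundary arc converging smoothly. Near $y$, the approximating surfaces have $m$ boundary arcs collapsing onto $\gamma$, which is exactly where folding or splitting can occur. So you have no uniform control of $\Sigma''$ up to $\gamma$.

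The half-space argument of Step~3 in the proof of \cref{thm:white-improvement} gives only $\spt\|V''\|\cap\partial B_t(x)\cap B_r(y)\subset\gamma$. It does not make the interior multiplicities add up to $m$. When interior sheets meet $\gamma$ at angles that are not small, the flux balance admits junction-type configurations, already for $m=1$. Consequently your base case, which starts from a single sheet of $\Sigma''$ meeting $\Delta'$ along $\gamma$ with a $C^1$ union, assumes what has to be proved. In the paper even $m=1$ goes through the thin-wedge argument of \cite{DDL}. There the one-sided analysis at an auxiliary point $z$ identifies the blow-up of $V'''$ as the multiplicity-one plane $T_z\Sigma'$, followed by graphicality and propagation back to $y$.
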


\begin{proof}
Fix $y\in \Sigma'\cap \partial B_t(x)$. After passing to exponential coordinates and
using the local nature of the statement, we may work in $\mathbb R^3$ exactly as in
\cite[Proof of Theorem~5.1, Step~1]{DDL}. Let $m\ge 1$ be the multiplicity of the
first replacement near $y$, so that locally
\[
V'=m\,\mathbf{v}(\Sigma').
\]
We prove the conclusion by induction on $m$.

\smallskip
\noindent\textbf{Base case: $m=1$.}
This is precisely the multiplicity-one situation treated in
\cite[Proof of Theorem~5.1, Step~1, Case~1]{DDL}. One chooses an auxiliary point
$z\in \Sigma'\setminus \operatorname{An}(x;s,t)$ close to $y$, constructs a thin convex annulus
$C$ tangent to $\Sigma'$ at $z$, and takes a third replacement $V'''$ in $C$. The
one-sided wedge argument in \cite[Lemmas~5.2--5.3]{DDL} shows that a blow-up of
$V'''$ at $z$ is the plane $T_z\Sigma'$ with multiplicity one. The graphicality
argument in \cite[Proof of Proposition~4.14, Step~1, Case~2]{DePhilippisDeRosa}
then implies that $V'''$ and $V'$ glue smoothly at $z$. Propagating this
information from $z$ back to $y$ as in
\cite[Proof of Theorem~5.1, Step~1, Case~1]{DDL}, one gets $TV(y,V'')=\{\mathbf{v}(T_y\Sigma')\}$,
and the same graphicality argument yields smooth gluing of $V''$ and $V'$ across
$\partial B_t(x)$ at $y$.

\smallskip
\noindent\textbf{Induction step.}
Assume the statement has been proved whenever the multiplicity of the first
replacement is at most $m-1$, and suppose that near $y$ the first replacement has
multiplicity exactly $m$.

Fix $0<\varepsilon\ll 1$. Since $y$ is a regular point of $\Sigma'$, after
shrinking the neighborhood we may assume that
\[
\dist(T_z\Sigma',T_y\Sigma')<\varepsilon
\qquad\text{for every }z\in \Sigma'\cap B_r(y).
\]
Choose
\[
z\in \Sigma'\cap B_r(y)\setminus \operatorname{An}(x;s,t).
\]
Following \cite[Proof of Theorem~5.1, Step~1, Case~1]{DDL}, let $\widetilde C$ be
the convex domain bounded by two spherical caps with common boundary circle $S$,
chosen so that $z\in S$, $T_zS=T_z(\Sigma'\cap\partial\widetilde C)$, and the two
caps meet at angle $3\varepsilon$. Then
\[
\Sigma'\cap B_r(y)\setminus \operatorname{An}(x;s,t)\subset \widetilde C,
\]
and $T_z\widetilde C$ is a wedge of opening angle $3\varepsilon$. Set
\[
C:=\Big((\widetilde C-y)\setminus \frac12(\widetilde C-y)\Big)+y,
\]
and let $V'''$ be a replacement of $V''$ in $C$.

We first bound the multiplicity of $V'''$ at $z$.

\smallskip
\noindent\emph{Claim 1.}
If the opening of $C$ is sufficiently small, then every tangent varifold of $V'''$
at $z$ has total multiplicity at most $m$.

\smallskip
\noindent\emph{Proof of Claim 1.}
Since $z\notin \operatorname{An}(x;s,t)$, we have
\[
V'''=V''=V'=m\,\mathbf{v}(\Sigma')
\qquad\text{near }z\text{ outside }C.
\]
After translating $z$ to the origin and rotating coordinates, we may assume
\[
P:=T_z\Sigma'=\{p_3=0\},\qquad
\ell:=T_zS=\{p_1=p_3=0\}=\operatorname{span}(e_2),
\]
and
\[
T_zC=\{|p_3|\le \tan(3\varepsilon)\,p_1\}.
\]
Exactly as in \cite[Lemma~5.2]{DDL}, any tangent varifold $W\in TV(z,V''')$
satisfies
\[
W\llcorner \{p_1\le 0\}=m\,\mathbf{v}(P)\llcorner \{p_1\le 0\},
\qquad
\spt(W)\cap \{p_1\ge 0\}\subset \{|p_3|\le \tan(3\varepsilon)\,p_1\}.
\]

Because $V'''$ is induced by a smooth $\F$-minimal surface in $C$,
$\spt(W)\cap\{p_1\ge 0\}$ is a union of half-planes. Moreover, since
$G_z(\nu):=G(z,\nu)$ is smooth and uniformly elliptic, there exists
$\theta_0=\theta_0(G_z)>0$ such that any nontrivial stationary junction for the
induced one-dimensional anisotropic length on $\partial B_1$ contains two incident
directions making an angle at least $\theta_0$. Hence, for $\varepsilon$ small
enough, no interior junction can occur in $\partial B_1$, and therefore
\[
W\llcorner \{p_1>0\}=\sum_{j=1}^N q_j\,\mathbf{v}(K_j),
\]
where $q_j\in\mathbb N$ and each $K_j$ is a $G_z$-stationary cone sector with
boundary line $\ell$.

For each \(K_j\), let \(\beta_{G_z}(K_j)\in \ell^\perp\) denote the
boundary-flux vector along \(\ell\), defined with respect to the inward
conormal of \(K_j\) along \(\ell\), so that
\[
\delta_{G_z}\mathbf{v}(K_j)(X)
=
-\int_\ell X\cdot \beta_{G_z}(K_j)\,\mathrm{d}\mathcal H^1
\]
for every compactly supported vector field \(X\); compare with
\cite[(10)]{DePhilippisDeRosaHirsch}. Since $W$ is $G_z$-stationary, the boundary
contributions along $\ell$ cancel:
\[
m\,\beta_{G_z}(P^-)+\sum_{j=1}^N q_j\,\beta_{G_z}(K_j)=0,
\qquad
P^-:=P\cap\{p_1\le 0\}.
\]
Taking scalar product with $e_1$, and writing
\[
b_0:=-\beta_{G_z}(P^-)\cdot e_1>0,
\qquad
b_j:=\beta_{G_z}(K_j)\cdot e_1,
\]
we obtain
\[
m\,b_0=\sum_{j=1}^N q_j\,b_j.
\]

As $\varepsilon\to 0$, every admissible sector $K_j$ converges, after passing to a
subsequence, to the flat half-plane
\[
P^+:=P\cap\{p_1\ge 0\},
\]
and the corresponding boundary-flux vectors converge as well; in particular,
$b_j\to b_0$ uniformly in $j$. Since $m$ is fixed, we may therefore choose
$\varepsilon=\varepsilon(m,G_z)>0$ so small that
\[
b_j>\frac{m}{m+1}\,b_0
\qquad\text{for every }j.
\]
Substituting this into the flux identity yields
\[
m\,b_0=\sum_{j=1}^N q_j\,b_j
>
\frac{m}{m+1}\,b_0\sum_{j=1}^N q_j,
\]
hence
\[
\sum_{j=1}^N q_j<m+1.
\]
Since the left-hand side is an integer, it follows that
\[
\sum_{j=1}^N q_j\le m.
\]
This proves Claim~1.
\qed

\smallskip
We next show that only one branch of $V'''$ enters the convex annulus $C$.

\smallskip
\noindent\emph{Claim 2.}
Near $z$, the varifold $V'''$ has a single smooth branch inside $C$.

\smallskip
\noindent\emph{Proof of Claim 2.}
In the above coordinates, the exterior side of $C$ carries the multiplicity-$m$
sheet
\[
V'''=V''=V'=m\,\mathbf{v}(\Sigma')
\qquad\text{near }z\text{ on }\{p_1<0\}.
\]
After shrinking the neighborhood if necessary, we may write
\[
V'''\llcorner (B_r(z)\cap C)=\sum_{i=1}^q m_i\,\mathbf{v}(\Gamma_i),
\]
where the $\Gamma_i$ are pairwise disjoint connected smooth embedded $\F$-minimal
surfaces and, by Claim~1,
\[
\sum_{i=1}^q m_i\le m.
\]
We claim that $q=1$.
Assume instead that $q\ge 2$. Then necessarily $m_i\le m-1$ for every $i$. Set
\[
\Gamma^-:=\Sigma'\cap B_r(z)\setminus C,
\qquad
\gamma:=\overline{\Gamma^-}\cap \overline C.
\]
Thus $\Gamma^-$ is the exterior multiplicity-$m$ sheet and $\gamma$ is its trace on
the interface with $C$.

Fix $\rho\in(0,r)$ sufficiently small so that the annulus $\operatorname{An}(z;\rho/2,\rho)$ is
admissible for replacements, and let $V^{(4)}_\rho$ be a replacement of $V'''$ in
$\operatorname{An}(z;\rho/2,\rho)$. We may also assume that each $\Gamma_i$ meets
$\partial B_\rho(z)\cap C$ transversely. Define
\[
\gamma_\rho:=\Gamma^-\cap \partial B_\rho(z),
\qquad
\Lambda_\rho:=\bigl(\partial B_\rho(z)\cap C\bigr)\setminus \gamma_\rho.
\]

Fix $x\in \Lambda_\rho\cap \spt\|V'''\|$. Then $x$ lies on one of the branches
$\Gamma_i$, whose multiplicity satisfies $m_i\le m-1$. Since $x\notin \gamma_\rho$,
the only local sheet of $V'''$ meeting $\partial B_\rho(z)$ at $x$ is
$m_i\,\mathbf{v}(\Gamma_i)$. The induction hypothesis therefore applies to this
branch and the corresponding local piece of $V^{(4)}_\rho$, so there exist an open
neighborhood $U_x\subset \Lambda_\rho$ of $x$ and a unique connected smooth embedded
$\F$-minimal surface $M_x\subset \spt\|V^{(4)}_\rho\|$ such that $M_x$ and
$\Gamma_i$ glue smoothly across $x$.

Since this holds for every $x\in \Lambda_\rho\cap \spt\|V'''\|$ and every
sufficiently small $\rho$, we obtain, for each small $\eta>0$, a smooth
$\F$-minimal surface $\Sigma_\eta$ which coincides with $V'''$ in
\[
\Omega_\eta:=B_r(z)\setminus \mathcal N_\eta(\Gamma^-\cup\gamma),
\]
where $\mathcal N_\eta(\Gamma^-\cup\gamma)$ denotes the connected component of the
$\eta$-tubular neighborhood of $\Gamma^-\cup\gamma$ containing $\Gamma^-\cup\gamma$.
The domains $\Omega_\eta$ exhaust $B_r(z)\setminus(\Gamma^-\cup\gamma)$ as
$\eta\to 0$, and each $\Sigma_\eta$ is smooth in the interior by regularity of
replacements. Letting $\eta\to 0$, we conclude that every branch $\Gamma_i$
extends smoothly up to $\gamma\setminus\{z\}$.

This forces two distinct smooth branches to touch along a nontrivial segment of
$\gamma$ away from $z$, contradicting the strong maximum principle for anisotropic
minimal surfaces. Hence $q=1$, proving Claim~2.
\qed

\smallskip
By Claim~2, there exist a single smooth branch $\Sigma^{'''}$ near $z$ and an
integer $m_*\le m$ such that
\[
V'''=m_*\,\mathbf{v}(\Sigma^{'''})
\qquad\text{near }z.
\]
On the exterior side of $C$ we already have
\[
V'''=V''=V'=m\,\mathbf{v}(\Sigma').
\]
Therefore the connected surface $\Sigma^{'''}$ contains an open subset of $\Sigma'$
outside $C$. By the constancy theorem, the multiplicity along this connected smooth
branch is constant, hence $m_*=m$ and
\[
\Sigma^{'''}=\Sigma'
\qquad\text{near the exterior side of }C.
\]
In particular, every tangent varifold of $V'''$ at $z$ is exactly
\[
m\,\mathbf{v}(T_z\Sigma').
\]
The graphicality argument in
\cite[Proof of Proposition~4.14, Step~1, Case~2]{DePhilippisDeRosa} then shows that
$V'''$ and $V'$ glue smoothly at $z$.

Finally, one propagates this information from $z$ back to $y$ exactly as in
\cite[Proof of Theorem~5.1, Step~1, Case~1]{DDL}, obtaining
\[
TV(y,V'')=\{m\,\mathbf{v}(T_y\Sigma')\}.
\]
Once the tangent varifold of $V''$ at $y$ is known to be the multiplicity-$m$ plane
$T_y\Sigma'$, the same graphicality argument yields smooth gluing of $V''$ and
$V'$ across $\partial B_t(x)$ at $y$. This completes the induction and the proof.
\end{proof}

Combining
\cref{prop:aniso-pull-tight}, \cref{prop:aniso-am-minmax}, \cref{prop:aniso-minmax-replacements}, and \cref{thm:aniso-regularity},
we first obtain the anisotropic Simon--Smith min-max theorem with at most
one possible singular point. The removable singularity results proved in
\cref{sec:removable-singularity-bernstein} or \cref{sec:remove-singularity-separation}
then allow us to remove this possible singularity under either
\eqref{assumption1} or \eqref{assumption2}. Consequently, we obtain the
following full regularity statement.

\begin{thm}[Anisotropic Simon--Smith min-max]
\label{thm:aniso-simon-smith}
Assume that $F$ satisfies either \eqref{assumption1} or
\eqref{assumption2}. Let $N$ be a closed $3$-manifold with a Riemannian
metric. For any saturated set of generalized families of surfaces
$\Lambda$, there exists a min-max sequence obtained from $\Lambda$ that
converges in the sense of varifolds to a smooth embedded
$\F$-minimal surface with anisotropic width $m_0(\Lambda)$
(multiplicity is allowed).
\end{thm}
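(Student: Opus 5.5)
The plan is to combine the min--max regularity already established with the removable singularity theorem. First, apply \cref{prop:aniso-pull-tight} to obtain a pulled-tight minimizing sequence $\{\Sigma_t^j\}\subset\Lambda$, and then \cref{prop:aniso-am-minmax} to extract a min--max sequence $\Sigma^j=\Sigma^j_{t_j}$. This sequence converges to an $\F$-stationary varifold $V$ with $\F(V)=m_0(\Lambda)$ and is almost minimizing in small annuli. By \cref{prop:positive-width-aniso} (or by the general positivity argument) $m_0(\Lambda)>0$, so $V\neq 0$. By \cref{prop:aniso-minmax-replacements} the varifold $V$ has the good replacement property. \cref{thm:aniso-regularity} (equivalently \cref{thm:aniso-simon-smith-one-singularity}) then gives a set $\mathcal S$ with $\#\mathcal S\le 1$ and a decomposition
\[
V=\sum_{i=1}^L n_i\,\mathbf v(\Sigma^{(i)})
\]
into connected smooth embedded $\F$-minimal surfaces that are locally $\F$-stable in $N\setminus\mathcal S$ and satisfy $\overline{\Sigma^{(i)}}\setminus\Sigma^{(i)}\subset\mathcal S$.

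If $\mathcal S=\emptyset$, the proof is complete. Otherwise, let $\mathcal S=\{p\}$ and apply \cref{thm:removable-singularity} to each $\Sigma^{(i)}$ with $p\in\overline{\Sigma^{(i)}}$. Take $U$ to be a small geodesic ball $B_r(p)$ and $\Sigma:=\Sigma^{(i)}\cap (B_r(p)\setminus\{p\})$. This set is properly embedded in $B_r(p)\setminus\{p\}$, because its closure only adds $p$. It is $\F$-stationary, and it is $\F$-stable there once $r$ is small. Local stability near every point away from $p$ upgrades to stability on $B_r(p)\setminus\{p\}$, since replacements in the annuli $\operatorname{An}(p;s,t)$ with $t<r(p)$ are stable. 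In alternative (3) of \cref{prop:aniso-am-minmax} this holds for every annulus centered at $p$; in the other alternatives $p$ is handled by overlapping balls and no singularity arises.

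\cref{thm:removable-singularity}, under \eqref{assumption1} or \eqref{assumption2}, then shows that $\overline{\Sigma^{(i)}}\cap B_r(p)$ is a smooth embedded $\F$-minimal surface through $p$. Several components may pass through $p$. Each of them is smooth there, and by the anisotropic strong maximum principle \cite{SW-maximum-principle} two distinct components touching at $p$ must coincide, because they are disjoint away from $p$ and touch on one side at $p$. So the closures are pairwise disjoint closed smooth surfaces, possibly after merging components that coincide and summing their multiplicities. Hence $V$ is induced by a smooth embedded closed $\F$-minimal surface with integer multiplicities and $\F(V)=m_0(\Lambda)$.

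The main obstacle I expect is verifying that the hypotheses of \cref{thm:removable-singularity} really hold at $p$, namely $\F$-stability on a whole punctured neighborhood rather than only local stability, and properness. I would first extract these from the structure of the annular replacements around the exceptional point $p$ in alternative (3). The maximum-principle step that ensures the closures of different sheets do not cross at $p$ is secondary.
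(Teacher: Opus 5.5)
Your proposal is correct and takes the same route as the paper. The paper proves the statement by combining \cref{prop:aniso-pull-tight}, \cref{prop:aniso-am-minmax}, \cref{prop:aniso-minmax-replacements} and \cref{thm:aniso-regularity}, then removing the possible point of $\mathcal S$ with \cref{thm:removable-singularity}; your checks of stability on a full punctured ball (via the stable replacements in annuli centred at $p$ in alternative (3)) and your maximum-principle argument for sheets through $p$ make explicit what the paper leaves implicit, though your claim $m_0(\Lambda)>0$ is not justified in general, since \cref{prop:positive-width-aniso} only covers the saturated family generated by a Morse-function sweepout, and $V\neq 0$ is not needed for the conclusion.
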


\subsection{Genus bound}

Throughout this subsection, we assume that $F$ satisfies either
\eqref{assumption1} or \eqref{assumption2}. Hence, by
\cref{thm:aniso-simon-smith-one-singularity} together with the removable
singularity results in
\cref{sec:removable-singularity-bernstein,sec:remove-singularity-separation},
the possible isolated singular point is removable. In particular, the
min--max varifold obtained from the anisotropic Simon--Smith construction
is induced by a smooth embedded closed $\F$-minimal surface, possibly with
integer multiplicities.

Here we prove the optimal genus bound for the limiting surfaces produced
by this fully regular anisotropic Simon--Smith construction. We follow
Ketover's proof of the improved lifting lemma
\cite[Proposition~2.2 and Section~4]{K}. The topological part of the
argument is nearly unchanged. The only analytic point which requires
modification is the no-folding estimate in the overlap wedge between two
consecutive replacements. In the isotropic case this estimate uses the
identity \(H=0\); in the anisotropic case it is replaced by the elliptic
relation coming from \(\F\)-stationarity.

For a smooth embedded surface \(\Sigma\), we denote by \(T_\epsilon(\Sigma)\) its tubular
\(\epsilon\)-neighborhood, and by
\[
    \pi:T_\epsilon(\Sigma)\to \Sigma
\]
the nearest point projection, whenever \(\epsilon>0\) is sufficiently small.
The projection in item (3) is
always understood with respect to the tubular neighborhood of the limiting component
\(\Sigma^{(\ell)}\).

\begin{proposition}[Optimal lifting lemma]\label{prop:optimal-curve-lifting}
    Let $\Sigma_j$ be a $1/j$-almost $\F$-minimizing min-max sequence of surfaces in $N$ arising from \cref{thm:aniso-simon-smith} such that $\Sigma_j\to \sum_{k=1}^{m} n_k\Sigma^{(k)}$, where $\Sigma^{(k)}$ are embedded disjoint minimal surfaces. Let $\{\gamma_i\}_{i=1}^{n}$ be a collection of simple closed curves on $\Sigma:= \cup_{k=1}^{m} \Sigma^{(k)}$ such that each $\Sigma^{(k)}$ containing at least one curve, also contains a point $p_k$ where any two curves $\gamma_r,\gamma_s\in\Sigma^{(k)}$ share exactly the point $p_k$, namely $\gamma_r\cap \gamma_s = \{p_k\}$. Then there exists $\epsilon_0(\Sigma)>0$ such that for all $\epsilon < \epsilon_0$ there exists curves $\{\tilde{\gamma}_i\}$ in $\Sigma$ and a subsequence $\Sigma_j$ (not relabeled) and surfaces $\tilde{\Sigma}_j$ given by finitely many $\gamma$-reductions (neck pinch surgeries) from $\Sigma_j$ with the following properties:
    \begin{enumerate}
        \item Each $\tilde\gamma_i$ is homotopic to $\gamma_i$ and lies in $T_\epsilon(\gamma_i)$.
        \item $\tilde{\Sigma}_j\to \sum_{k=1}^{m}n_k\Sigma^{(k)}$ as varifolds.
        \item For each $1\leq i\leq n$, if $\tilde{\gamma}_i \subset\Sigma^{(\ell)}$ we have $\pi^{-1}(\tilde{\gamma}_i)\cap T_\epsilon(\Sigma^{(\ell)})\cap \tilde\Sigma_j$ is a union of $n_\ell$ closed curves, each projecting onto $\tilde\gamma_i$ with degree one, or it is a union of $n_\ell/2$ closed curves, each projection onto $\tilde\gamma_i$ with degree two, in which case $\Sigma^{(\ell)}$ is non-orientable and $\pi^{-1}(\tilde{\gamma}_i)$ is a M\"obius band.
    \end{enumerate}
\end{proposition}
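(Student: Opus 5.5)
Our plan follows Ketover's proof of the improved lifting lemma \cite[Proposition~2.2 and Section~4]{K}. The one analytic change is the no-folding estimate in the overlap wedge. Fix $\epsilon<\epsilon_0(\Sigma)$, where $\epsilon_0$ is chosen so that the nearest point projection $\pi$ is defined on $T_{\epsilon_0}(\Sigma^{(k)})$ for every $k$. Choose $\epsilon_0$ also smaller than the radius $r(x)$ of \cref{prop:aniso-am-minmax} at all points of the curves. This is possible because, under \eqref{assumption1} or \eqref{assumption2}, the limit is smooth everywhere, so the alternatives of \cref{prop:aniso-am-minmax} give a uniform lower bound on $r$ away from at most one point. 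That point can be avoided by perturbing the $\gamma_i$ slightly, keeping them homotopic and inside $T_\epsilon(\gamma_i)$.

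First, we cover each $\gamma_i$ by finitely many small balls $B_{\rho}(x_\alpha)$ centered on $\gamma_i$, including one ball at the common point $p_k$. In each ball the sequence $\Sigma_j$ is almost minimizing in the admissible annuli. We then apply the replacement procedure of \cref{prop:aniso-minmax-replacements} consecutively in overlapping annuli and balls. At the level of surfaces, this produces, after finitely many $\gamma$-reductions (the neck pinch surgeries of \cref{sec:anis-gamma-reduction}) and isotopies, surfaces $\tilde\Sigma_j$ that differ from $\Sigma_j$ only by vanishing $\F$-area. Conclusion (2) then follows from \cref{rmk:weak_anis_gamma}.

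Inside each ball, \cref{thm:Meeks-Simon-Yau-with-boundary} together with the local description from the proof of \cref{thm:Meeks-Simon-Yau-without-boundary} identifies $\tilde\Sigma_j\cap B_\rho(x_\alpha)$, up to a remainder of small area, with $n_\ell$ almost-minimizing disks. These disks converge smoothly to the disk $\Sigma^{(\ell)}\cap B_\rho(x_\alpha)$, so each is a graph over it. The curve $\tilde\gamma_i$ is chosen generically inside $T_\epsilon(\gamma_i)$ so that it avoids the small-area remainder; the coarea formula and the length bound \eqref{eq:anis-5-12} give this. Then $\pi^{-1}(\tilde\gamma_i)\cap\tilde\Sigma_j$ is a union of graphical arcs over $\tilde\gamma_i\cap B_\rho(x_\alpha)$, exactly $n_\ell$ of them.

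Next, we glue the arcs across consecutive balls. This requires that the graphical sheets from one replacement match one-to-one with those of the next on the overlap region, with no folding, meaning no sheet bends back and re-enters the wedge. This is the main obstacle. In \cite{K} it is handled with $H=0$ and a maximum principle for sheets in a wedge. Here we replace it with the quasilinear elliptic equation satisfied by $\F$-stationary graphs together with the anisotropic strong maximum principle \cite{SW-maximum-principle}, and with the curvature estimates of \cref{prop:replacements-in-annuli} and \cref{lem:aniso-smooth-gluing}. These give uniform $C^{2}$ control of the sheets up to the interface, so a fold would produce a point of vertical tangent plane, which the estimates exclude for $\rho$ small. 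The vertex ball at $p_k$ is treated first, as in \cite{K}, so that all curves through $p_k$ use the same sheet decomposition there.

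Finally, following each of the $n_\ell$ arcs once around $\tilde\gamma_i$ defines a permutation of the sheets, namely the monodromy of the $n_\ell$-sheeted cover of $\tilde\gamma_i$ inside $\tilde\Sigma_j$. The covering is locally trivial and embedded, and its cyclic structure is determined by whether the normal bundle of $\Sigma^{(\ell)}$ along $\tilde\gamma_i$ is trivial or a M\"obius band. Ketover's topological argument applies verbatim: extra surgeries along short necks, permitted by strong $\gamma$-irreducibility (\cref{rmk:anis_gamma_disc_criterion}), untangle longer cycles. This leaves either $n_\ell$ curves each projecting to $\tilde\gamma_i$ with degree one, or $n_\ell/2$ curves each projecting with degree two when $\pi^{-1}(\tilde\gamma_i)$ is a M\"obius band, which gives (3).
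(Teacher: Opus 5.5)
Your plan has a genuine gap at the step you yourself flag as the main obstacle: getting the sheets through the overlap between consecutive replacements.

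\textbf{The missing uniform control.} You claim uniform $C^2$ control of the sheets up to the interface, citing \cref{prop:replacements-in-annuli} and \cref{lem:aniso-smooth-gluing}. Both results are statements about limit varifolds (after $j\to\infty$). They give no estimate uniform in $j$ on the surfaces themselves.

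The earlier claim that the almost-minimizing disks converge smoothly, and hence are graphs, is also false. \cref{thm:white-improvement} and \cref{thm:Meeks-Simon-Yau-with-boundary} give only varifold convergence to a smooth limit. A graphical $n$-sheet structure is available only for the fixed-$j$ stable replacements: $V_j$ in the even balls and $W_j$ in the odd balls. Even there it holds only on compact subsets, because the stable curvature bound for $W_j$ degenerates like $\dist(\cdot,\partial B_1)^{-1}$.

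In the wedge $B_0(1-\delta)\cap(B_1\setminus B_1(1-\delta))$, the boundary of $W_j$ on $\partial B_1$ consists of $n$ arcs $\alpha^i_j$ converging to a single arc. So no boundary estimate is uniform in $j$ either. Sheets can fold and reconnect at scales shrinking with $j$, and neither a pointwise bound nor the strong maximum principle excludes this. Note also that Ketover's no-folding step is not a maximum-principle argument: $H=0$ enters through Gauss--Bonnet.

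\textbf{How the paper closes it.} The paper uses an integral estimate instead.
\begin{itemize}
\item From $\tr_S(\Psi_F A_S)+E_F=0$ it derives $|A_S|^2\le C(1-\det A_S)$. The Gauss equation then gives $|A_S|^2\le C(1+\sec_N(TS)-K_S)$.
\item It applies Gauss--Bonnet on $W_j\cap\{\zeta>t\}$. Here $\zeta$ is an angular cutoff with $|\nabla\zeta|^2\le C\zeta$, which replaces Ketover's use of monotonicity.
\item The boundary terms are controlled by the curvature bounds on $\alpha^i_j,\beta^i_j$. The genus term vanishes because the surfaces are planar in the large balls.
\item Integrating in $t$ and absorbing gives $\sup_j\int|A_{W_j}|^2<\infty$ in the wedge. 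This yields smooth $n$-sheeted convergence away from finitely many points, which $\gamma$ is then perturbed to avoid.
\end{itemize}
This route needs three ingredients your plan lacks:
\begin{itemize}
\item almost minimizing in balls, not just in annuli, away from finitely many points;
\item the genus collapse lemma, to get planarity;
\item the anisotropic version of Ketover's smoothing lemma, so that the smoothed first replacement stays almost minimizing in the odd large balls.
\end{itemize}

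\textbf{Smaller issues.}
\begin{itemize}
\item Conclusion (2) does not follow from \cref{rmk:weak_anis_gamma}. That bound is $3c\gamma$, which does not tend to zero, and the replacement isotopies need not move small area. In the paper, (2) holds because $\tilde\Sigma_j$ is chosen varifold-close to $W_j\to n\Gamma$.
\item In the last step there are no longer cycles to untangle. Disjoint graphs are ordered by height, so the monodromy is trivial over an annulus and order-reversing over a M\"obius band. What actually needs proof is that $n_\ell$ is even in the M\"obius case. The paper obtains this from orientability: an odd $n_\ell$ leaves a lift isotopic to the core, which forces a non-orientable component.
\item A generic choice of $\tilde\gamma_i$ cannot avoid a small-area remainder. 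The coarea formula only makes its intersection with $\pi^{-1}(\tilde\gamma_i)$ short, not empty.
\end{itemize}
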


The proof of \cref{prop:optimal-curve-lifting} is a modification of Ketover's proof of \cite[Proposition~2.2]{K}. Since the
argument is rather long, the full proof is given in \cref{app:optimal-lifting-proof}. 

Once \cref{prop:optimal-curve-lifting} is established, the optimal genus bound for the min--max $\mathbf{F}$-minimal surface arising from \cref{thm:aniso-simon-smith} follows immediately.

\begin{thm}[Optimal genus bound]\label{thm:aniso-genus-bound}
Assume that $F$ satisfies either \eqref{assumption1} or
\eqref{assumption2}.
Let $\{\Sigma_t^j\}\subset \Lambda$ be the minimizing sequence given by
\cref{thm:aniso-simon-smith}, and let
\(\Sigma^j:=\Sigma_{t_j}^j\) be a min-max sequence converging to
\[
    V=\sum_{i=1}^L n_i\,\mathbf{v}(\Gamma^i),
\]
where the \(\Gamma^i\)'s are pairwise disjoint smooth embedded \(\F\)-minimal surfaces.
Denote $\mathcal O$ and
$\mathcal U$ the set of indices \(i\) of orientable and non-orientable connected
components among the $\Gamma^i$'s, respectively. Then
\[
    \sum_{i\in \mathcal O}
    n_i\,\operatorname{genus}(\Gamma^i)
    +
    \frac12
    \sum_{i\in \mathcal U}
    n_i\bigl(\operatorname{genus}(\Gamma^i)-1\bigr)
    \leq
    \liminf_{j\to\infty}\,\liminf_{\tau\to t_j}
    \operatorname{genus}(\Sigma_\tau^j).
\]
\end{thm}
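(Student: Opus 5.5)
The plan is to follow Ketover's derivation of the genus bound from the lifting lemma \cite[Section~4]{K}, using \cref{prop:optimal-curve-lifting} as the only analytic input. The rest is topological.

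First, reduce to fixed slices. By the definition of the genus of a generalized family near a singular parameter, for each $j$ pick $\tau_j$ close to $t_j$ with the following properties:
\begin{itemize}
\item $\Sigma^j_{\tau_j}$ is smooth;
\item $\operatorname{genus}(\Sigma^j_{\tau_j})=\liminf_{\tau\to t_j}\operatorname{genus}(\Sigma^j_\tau)$;
\item $\Sigma^j_{\tau_j}$ is still $1/j$-almost minimizing in the relevant annuli;
\item $\Sigma^j_{\tau_j}$ has the same varifold limit $V$.
\end{itemize}
This uses continuity of $\F$ and Hausdorff continuity in $\tau$. It then suffices to prove the bound with $\operatorname{genus}(\Sigma^j)$ on the right-hand side, along a subsequence realizing the $\liminf$ in $j$.

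Next, on each component $\Gamma^i$ choose a standard system of simple closed curves through a common base point $p_i$. If $\Gamma^i$ is orientable of genus $g_i$, take $2g_i$ curves $a_1,b_1,\dots,a_{g_i},b_{g_i}$ pairwise meeting only at $p_i$, whose intersection form is the standard symplectic form. If $\Gamma^i$ is non-orientable, take the analogous system of one-sided and two-sided curves generating $H_1(\Gamma^i;\mathbb Z_2)$, again meeting only at $p_i$. These curves satisfy the hypotheses of \cref{prop:optimal-curve-lifting}. Applying it with $\epsilon$ small gives the following:
\begin{itemize}
\item surfaces $\tilde\Sigma_j$, obtained from $\Sigma_j$ by finitely many $\gamma$-reductions, so that $\operatorname{genus}(\tilde\Sigma_j)\le\operatorname{genus}(\Sigma_j)$ by \cref{rmk:anis_gamma_basic};
\item curves $\tilde\gamma$ homotopic to the chosen ones;
\item for each $\tilde\gamma$ on $\Gamma^i$, a preimage under $\pi$ in $\tilde\Sigma_j$ consisting of $n_i$ degree-one lifts, or $n_i/2$ degree-two lifts in the Möbius-band case.
\end{itemize}

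Then count genus via intersection numbers in $H_1(\tilde\Sigma_j;\mathbb Z_2)$, as in \cite[Section~4]{K}. The lifts of $a_r$ and $b_r$ in the $k$-th sheet intersect essentially once mod $2$, transversality near $p_i$ being arranged by the lifting. Lifts in different sheets, or of different pairs, intersect trivially. So on an orientable $\Gamma^i$ one gets $n_i g_i$ disjoint pairs with nondegenerate intersection form, hence a symplectic subspace of dimension $2n_ig_i$ in $H_1(\tilde\Sigma_j;\mathbb Z_2)$.

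For a non-orientable $\Gamma^i$ the lifts of one-sided curves have degree two. Combining this with the Euler characteristic count for the double cover gives the contribution $\frac12 n_i(\operatorname{genus}(\Gamma^i)-1)$, where genus is taken in the convention of \cref{thm:Meeks-Simon-Yau-without-boundary}.

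Lifts attached to different components $\Gamma^i$ lie in disjoint tubular neighborhoods, so their intersection forms are orthogonal. Since $\tilde\Sigma_j$ is orientable or two-sided in the relevant neighborhood, the dimension of a nondegenerate subspace bounds $2\operatorname{genus}(\tilde\Sigma_j)$. Summing over components yields the claimed inequality.

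The main obstacle is making the intersection count rigorous. One must show that the lifted curves actually realize the required pairwise intersection numbers in $\tilde\Sigma_j$ rather than being homologically trivial after the surgeries. This is exactly the role of the degree-one or degree-two statement (3) of \cref{prop:optimal-curve-lifting} together with the common-point condition. The subtle case is the non-orientable one, where I would follow Ketover's treatment of Möbius-band neighborhoods verbatim.
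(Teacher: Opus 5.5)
There is a genuine gap in the non-orientable case, which your route handles differently from the paper and which does not work as written. Lifts of one-sided curves from \cref{prop:optimal-curve-lifting}(3) are degree-two curves, each passing through two of the $n_i$ sheets over $p_i$. Lifts of distinct standard one-sided generators $c_r,c_s$ can meet only over $p_i$. There $c_r$ and $c_s$ touch without crossing, since their mod-$2$ intersection on $\Gamma^i$ is $0$, so each meeting contributes zero. Distinct lifts of the same $c_r$ are disjoint. Every curve in the orientable surface $\tilde\Sigma_j$ has zero mod-$2$ self-intersection. In the model picture these lifts are transfer classes, and $\operatorname{tr}(x)\cdot\operatorname{tr}(y)=2\,(x\cdot y)=0$.

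So the Gram matrix of these lifts vanishes identically, and the intersection count gives no lower bound. Your ``Euler characteristic count for the double cover'' presupposes that $\tilde\Sigma_j$ near $\Gamma^i$ consists of $n_i/2$ copies of the orientable double cover, which is exactly what must be proved. Ketover's argument, which the paper follows, proves it by cutting and regluing rather than by an intersection count:
\begin{enumerate}
\item surger $\tilde\Sigma_j$ into thin tubular neighborhoods $T_{\sigma_{i,j}}(\Gamma^i)$ via the coarea formula;
\item cut along $\pi_i^{-1}(\bigcup_\ell\tilde\alpha^i_\ell)$ to obtain a $3$-ball $P_i\times[-\sigma_{i,j},\sigma_{i,j}]$, whose boundary curves are, by the full-multiplicity lifts, $n_i$ parallel copies of $\partial P_i\times\{0\}$;
\item apply the $3$-ball surgery lemma \cite[Lemma~5.2]{K} to get $n_i$ disks, and reglue.
\end{enumerate}
This yields $n_i$ copies of $\Gamma^i$, or $n_i/2$ copies of its double cover. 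To stay homological you would need a different curve system. For $g_i$ odd, take $g_i-1$ two-sided curves in symplectic position. For $g_i$ even, take $g_i-2$ such curves, the two-sided characteristic curve $w$ dual to $w_1$, and a one-sided $c$ with $c\cdot w=1$ that is orthogonal to the rest. You would then have to check that the lifts span a nondegenerate subspace of dimension $n_i(g_i-1)$.

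In the orientable case, statement (3) alone is also insufficient. Degree one only makes each row and column of the block $(a_r^{(k)}\cdot b_r^{(\ell)})_{k,\ell}$ have odd sum, which does not force nondegeneracy; the all-ones $3\times 3$ matrix is a counterexample. You need the $n_i$-sheet graphical structure of $\tilde\Sigma_j$ near $p_i$ from Step~8 of the proof in \cref{app:optimal-lifting-proof}, which makes this block a permutation matrix. With that input your orientable count is a valid alternative, and it avoids the preliminary surgery into tubular neighborhoods and the $3$-ball lemma.

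Your slice reduction is not justified either. Almost minimality of $\Sigma^j_{\tau_j}$ does not follow from continuity of $\F$ and Hausdorff continuity. Concatenating with an isotopy from $\Sigma^j_{t_j}$ to $\Sigma^j_{\tau_j}$ does not preserve the $\eps/8$ versus $\eps$ thresholds, and no such isotopy exists if $\Sigma^j_{t_j}$ is singular. The paper avoids this by applying the lifting lemma to $\Sigma^j_{t_j}$ itself. It passes to nearby slices only at the end, via $\operatorname{genus}(\Sigma^j)\le\liminf_{\tau\to t_j}\operatorname{genus}(\Sigma^j_\tau)$.
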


\begin{proof}
The proof is topological once \cref{prop:optimal-curve-lifting} is available.
It follows Ketover's proof of \cite[Theorem~1.2]{K}.

We will apply \cref{prop:optimal-curve-lifting} to the almost minimizing min-max sequence
\(\Sigma^j=\Sigma^j_{t_j}\). For each component \(\Gamma^i\) which is not homeomorphic
to \(S^2\), choose a finite collection of simple closed curves
$\alpha^i_1,\ldots,\alpha^i_{q_i}\subset \Gamma^i$
such that all curves meet only at one prescribed point \(p_i\), and
\[
    \Gamma^i\setminus \bigcup_{\ell=1}^{q_i}\alpha^i_\ell
\]
is a disk. If \(\Gamma^i\) is orientable of genus \(g_i\), we choose the standard \(2g_i\)
generators based at \(p_i\). If \(\Gamma^i\) is non-orientable of genus \(g_i\), we choose
the standard \(g_i\) one-sided generators based at \(p_i\); see \cite[Lemma~5.1]{K}.
For each component \(\Gamma^i\simeq S^2\), choose one auxiliary simple closed curve
\(\alpha^i_1\subset \Gamma^i\). This curve is only used to apply
\cref{prop:optimal-curve-lifting}; the corresponding component contributes zero to the
genus bound.

Choose \(\epsilon>0\) sufficiently small so that \(\epsilon/4<\epsilon_0\), where $\epsilon_0$ is the constant in
\cref{prop:optimal-curve-lifting}, the tubular neighborhoods \(T_\epsilon(\Gamma^i)\)
are pairwise disjoint, and the nearest point projections
$\pi_i:T_\epsilon(\Gamma^i)\to\Gamma^i$
are well-defined. We apply \cref{prop:optimal-curve-lifting} with scale \(\epsilon/4\).
After passing to a subsequence, we obtain curves
\(\tilde\alpha^i_\ell\subset \Gamma^i\), homotopic to \(\alpha^i_\ell\), and surfaces
\(\tilde\Sigma_j\) obtained from \(\Sigma^j\) by finitely many \(\gamma\)-reductions, such
that
\[
    \tilde\Sigma_j
    \to
    \sum_{i=1}^L n_i\,\mathbf v(\Gamma^i)
\]
as varifolds. Moreover, the lifts of every \(\tilde\alpha^i_\ell\) occur with full
multiplicity, which is in the annular case there are \(n_i\) degree-one lifts, and in the
M\"obius case there are \(n_i/2\) degree-two lifts. In particular, \(n_i\) is even for
non-orientable components.

We next perform surgeries on \(\tilde\Sigma_j\) so that the resulting surfaces are
contained in the tubular neighborhoods, without changing the lifted curves. Since the
lifting was obtained at scale \(\epsilon/4\), all lifted curves lie in the inner
neighborhoods \(T_{\epsilon/4}(\Gamma^i)\). Set
$\Lambda_i:=T_\epsilon(\Gamma^i)\setminus T_{\epsilon/2}(\Gamma^i).$
By varifold convergence,
\[
    \mathcal H^2(\tilde\Sigma_j\cap \Lambda_i)\to 0.
\]
Hence, by the coarea formula, for each \(i\) and all large \(j\) one can choose
\(\sigma_{i,j}\in(\epsilon/2,\epsilon)\) such that \(\tilde\Sigma_j\) intersects
\(\partial T_{\sigma_{i,j}}(\Gamma^i)\) transversely and
\[
    \mathcal H^1\bigl(\tilde\Sigma_j\cap \partial T_{\sigma_{i,j}}(\Gamma^i)\bigr)
\]
is as small as desired. Taking this length smaller than the relevant systolic scale of
\(\partial T_{\sigma_{i,j}}(\Gamma^i)\), every component of the intersection is a short
circle bounding a small disk in \(\partial T_{\sigma_{i,j}}(\Gamma^i)\).

As in Ketover's proof, we surger along these circles, starting with the innermost ones,
by gluing in the corresponding small disks and removing the cylindrical pieces of
\(\tilde\Sigma_j\) between nearby level surfaces. We then discard all connected components
of the resulting surface which are not contained in
\[
    \bigcup_i T_{\sigma_{i,j}}(\Gamma^i).
\]
These operations do not increase genus. They are supported in
\(T_\epsilon(\Gamma^i)\setminus T_{\epsilon/2}(\Gamma^i)\), and therefore do not affect
the lifted curves contained in \(T_{\epsilon/4}(\Gamma^i)\). We do not relabel the
surgered surfaces; thus
\[
    \tilde\Sigma_j\subset \bigcup_i T_{\sigma_{i,j}}(\Gamma^i).
\]

Fix first a non-spherical component \(\Gamma^i\). Cut \(T_{\sigma_{i,j}}(\Gamma^i)\) open
along
\[
    \pi_i^{-1}\left(\bigcup_{\ell=1}^{q_i}\tilde\alpha^i_\ell\right).
\]
Since the curves \(\tilde\alpha^i_\ell\) cut \(\Gamma^i\) into a disk, the resulting
three-manifold \(N_i\) is homeomorphic to a 3-ball. Equivalently, \(N_i\) may be written
as \(P_i\times[-\sigma_{i,j},\sigma_{i,j}]\), where \(P_i\) is a polygon, and
\(T_{\sigma_{i,j}}(\Gamma^i)\) is recovered from \(N_i\) by identifying pairs of vertical
faces of \(\partial P_i\times[-\sigma_{i,j},\sigma_{i,j}]\).

By the full lifting property, after cutting open, the boundary of
\(\tilde\Sigma_j\cap T_{\sigma_{i,j}}(\Gamma^i)\) in \(N_i\) consists, up to isotopy in
\(\partial N_i\), of the same parallel boundary curves as the corresponding stacked
model. In particular, one obtains \(n_i\) boundary curves isotopic to
\(\partial P_i\times\{0\}\). In the non-orientable case, the \(n_i/2\) degree-two lifts
split into \(n_i\) boundary curves after the cut. Applying the surgery lemma in a 3-ball,
\cite[Lemma~5.2]{K}, with \(U=N_i\), we may perform further neck-pinch surgeries in
\(N_i\) so that the cut-open pieces become \(n_i\) disks whose boundaries are isotopic in
\(\partial N_i\) to these \(n_i\) parallel curves.

After re-gluing the faces of the cut-open tubular neighborhood, we obtain the following
topological model. If \(\Gamma^i\) is orientable, each disk gives one copy of
\(\Gamma^i\) after the boundary identifications. Hence the contribution of this component
is
$n_i\,\operatorname{genus}(\Gamma^i).$
If \(\Gamma^i\) is non-orientable, then after the boundary identifications the disks pair
off to form \(n_i/2\) copies of the orientable double cover of \(\Gamma^i\). The orientable
double cover of a non-orientable surface of genus \(\operatorname{genus}(\Gamma^i)\) has
genus \(\operatorname{genus}(\Gamma^i)-1\). Hence the contribution of this component is
$ {n_i}\bigl(\operatorname{genus}(\Gamma^i)-1\bigr)/{2}.$

If \(\Gamma^i\simeq S^2\), its contribution to the left-hand side is zero. For completeness,
one repeats Ketover's spherical case: cut \(T_{\sigma_{i,j}}(\Gamma^i)\) along the
preimage of the auxiliary curve \(\tilde\alpha^i_1\). The complement consists of two
3-balls, and the same 3-ball surgery lemma applies to each of them. After re-gluing, only
spherical components are produced.

Adding the contributions over all components and using that neck-pinch surgeries,
isotopies, and discarding connected components do not increase genus, we obtain
\[
    \sum_{i\in \mathcal O}
    n_i\,\operatorname{genus}(\Gamma^i)
    +
    \frac12
    \sum_{i\in \mathcal U}
    n_i\bigl(\operatorname{genus}(\Gamma^i)-1\bigr)
    \le
    \liminf_{j\to\infty}\operatorname{genus}(\Sigma^j).
\]
Finally, by the definition of the generalized family, the genus of the slice \(\Sigma^j=\Sigma^j_{t_j}\) is bounded above by the
lower limiting genus of nearby smooth slices, hence
\[
    \operatorname{genus}(\Sigma^j)
    \le
    \liminf_{\tau\to t_j}\operatorname{genus}(\Sigma^j_\tau).
\]
Therefore, we have
\[
    \sum_{i\in \mathcal O}
    n_i\,\operatorname{genus}(\Gamma^i)
    +
    \frac12
    \sum_{i\in \mathcal U}
    n_i\bigl(\operatorname{genus}(\Gamma^i)-1\bigr)
    \le
    \liminf_{j\to\infty}\,\liminf_{\tau\to t_j}
    \operatorname{genus}(\Sigma^j_\tau).
\]
This proves the theorem.
\end{proof}

\section{Removing singularities via Bernstein}\label{sec:removable-singularity-bernstein}

We now turn to the removal of isolated singularities for stable
anisotropic minimal surfaces. Let
\(\Sigma\subset \B_1\setminus\{0\}\) be a properly embedded smooth
surface which is \(\F\)-stationary and \(\F\)-stable away
from the origin, which may represent a possible singularity. For the area functional, the monotonicity formula immediately implies that the density ratio remains bounded at this point. This implies that the point has
zero \(W^{1,2}\)-capacity on the surface. A standard logarithmic cutoff argument then allows the stationarity and stability inequalities, initially valid for test functions supported away from the point, to be
extended to test functions whose supports cross the point. Thus, the closure of the surface is stationary and stable across the point. The
Schoen--Simon--Yau curvature estimates, or equivalently the Schoen--Simon regularity theory in this two-dimensional setting, then
imply that the isolated singularity is removable.

In the anisotropic setting, however, no such monotonicity formula is available, and this strategy breaks down. Thus, the main issue is to recover, by other means, enough
control near the puncture to run the same capacity and curvature-estimate
argument. We give two independent criteria. In this section, we use a
Bernstein-type argument: under an explicit ellipticity-ratio bound for
the frozen integrand at the singular point, blow-ups of \(\Sigma\) have
flat leaves, which yields the required density bound. In the next section, we prove a different criterion based on the separation between nearby
sheets and a lower bound for the anisotropic Jacobi operator applied to
\(|x|\).

Recall that for \(x_0\in \B_1\), let \(G_{x_0}(\nu):=G(x_0,\nu)\), we denote
\[
    \langle \Psi_F(x_0,\nu)v,w\rangle
    =
    D^2_{\nu\nu}G_{x_0}(\nu)[v,w],
    \qquad v,w\in\nu^\perp .
\]
as the ellipticity matrix associated with the
frozen integrand \(G_{x_0}\), restricted to \(\nu^\perp\).

\begin{thm}[Removing singularities via Bernstein]
\label{thm:removable-singularity-via-bernstein}
Let \(F\in C^{2,\alpha}(G_2(\B_1))\) be a positive uniformly elliptic
integrand. Let
$\Sigma\subset \B_1\setminus\{0\}\subset\mathbb R^3$
be a smooth properly embedded \(\F\)-stationary surface which is
\(\F\)-stable in \(\B_1\setminus\{0\}\). Assume that
\(0\in\overline{\Sigma}\) and
\begin{align}\label{eq:ellipticity-bound-bernstein}
    \frac{\max_{\nu\in\mathbb S^2}\lambda_{\max}\Psi_F(0,\nu)}
    {\min_{\nu\in\mathbb S^2}\lambda_{\min}\Psi_F(0,\nu)}
    < 8 .
\end{align}
Then the origin is a removable singularity. More precisely,
\(\overline{\Sigma}\cap\B_1\) is a smooth embedded
\(\F\)-stationary surface in \(\B_1\).
\end{thm}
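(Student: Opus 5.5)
The plan follows the four-step outline from the introduction: curvature decay, then blow-up laminations, then a Bernstein theorem for their leaves, and finally a topology and density bound followed by capacity.

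\emph{Curvature bound and blow-ups.} The starting point is the stability curvature estimate of \cite[Theorem 2.8]{DePhilippisDeRosa}, applied on balls $B_{|x|/2}(x)$ that avoid the origin. It gives
\[
|A_\Sigma|(x)\le C|x|^{-1}.
\]
For $r_k\to0$ we consider the rescalings $\Sigma_k:=r_k^{-1}\Sigma$. These are stationary and stable for the integrands $F_k(x,\nu):=F(r_kx,\nu)$, which converge in $C^2_{\rm loc}$ to the frozen integrand $F_0:=F(0,\cdot)$. The rescaled curvature bound is uniform on compact subsets of $\R^3\setminus\{0\}$. Arguing as in \cite{CM}, a subsequence converges to an $F_0$-minimal lamination $\mathcal L$ of $\R^3\setminus\{0\}$. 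Each leaf is stationary for $F_0$ and, by the standard limit argument (on the two-sided cover if needed), also $F_0$-stable.

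\emph{Bernstein theorem for leaves.} This is the heart of the proof. I would first prove the following statement: under \eqref{eq:ellipticity-bound-bernstein}, every complete $F_0$-stable $F_0$-minimal surface in $\R^3\setminus\{0\}$ (or in $\R^3$) is flat. We write the stability inequality as
\[
\int \langle \Psi_{F_0}(\nu)\nabla\phi,\nabla\phi\rangle \ge \int \operatorname{tr}(\Psi_{F_0}(\nu)\,A\,A)\,\phi^2 .
\]
Then we compare both sides with the Euclidean ones using $\lambda_{\min}$ and $\lambda_{\max}$. This yields an inequality of the form
\[
\int|\nabla\phi|^2\ge \Lambda^{-1}\int |A|^2\phi^2,
\]
with $\Lambda$ the ellipticity ratio, so $-\Delta-\Lambda^{-1}|A|^2\ge0$. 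Since $|A|^2\ge -2K$, the operator $-\Delta+ 2\Lambda^{-1}K$ is nonnegative. The condition $\Lambda<8$ gives a coefficient $2\Lambda^{-1}>1/4$. By the conformal-type results of \cite{C}, this forces the universal cover to be conformally $\C$ or $\C\setminus\{0\}$ (parabolic). The puncture is handled by passing to the universal cover, where the inequality persists.

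Parabolicity, combined with the stability operator, then gives flatness by the argument of \cite{FcS}. In that argument one tests with logarithmic cutoffs, which is possible on a parabolic surface, and this shows $|A|\equiv0$. I expect the main obstacle to be checking that the constant $8$ exactly matches the threshold of \cite{C}. In particular one must track the factor of $2$ between $|A|^2$ and $-2K$, and the fact that the anisotropic Laplacian-type operator is not the Laplace--Beltrami operator. I would first try a conformal change of metric adapted to $\Psi_{F_0}$, and fall back on crude eigenvalue comparison if that fails.

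\emph{Density bound and capacity.} Since every blow-up lamination is flat, a contradiction argument upgrades the curvature estimate to $|A|(x)=o(|x|^{-1})$. Next, consider $f=|x|^2$ restricted to $\Sigma$. At a critical point we have $\nabla^\Sigma f=0$, and the Hessian there is $2g+2\langle x,\nu\rangle A$, which is positive definite for small $|x|$. Hence there are no saddles or maxima near $0$. Therefore $\Sigma\cap(B_{r_0}\setminus\{0\})$ is a finite union of annuli $\partial B_r\times(0,r_0)$, whose number is controlled by the (bounded) number of boundary curves on $\partial B_{r_0}$. Each annulus is almost flat at every scale, so the area in $B_r$ is at most $Cr^2$, which is the desired density bound. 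Finite density together with $|A|\le C|x|^{-1}$ gives $\int_{\Sigma\cap B_r}|A|^2<\infty$, and the point has zero $W^{1,2}$-capacity on $\Sigma$. A logarithmic cutoff then extends stationarity and stability across $0$. Each annular end is then a punctured graph with an $L^2$ second fundamental form, and elliptic regularity for the anisotropic minimal surface equation shows it extends smoothly. Finally, the strong maximum principle \cite{SW-maximum-principle} rules out several sheets meeting at $0$ unless they coincide, so $\overline\Sigma\cap\B_1$ is a smooth embedded surface.
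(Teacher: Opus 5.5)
Your overall architecture matches the paper's: a stability curvature bound, lamination blow-ups via \cite{CM}, a Bernstein theorem for leaves through Castillon's criterion with coefficient $2/\Lambda>1/4$, Morse theory for $|x|^2$, and capacity. Your route to the constant is a valid variant. You keep $\tr(\Psi A^2)\ge m_F|A|^2$ and then use $|A|^2\ge -2K$, whereas the paper uses Cayley--Hamilton, $\tr(\Psi A^2)=-\tr(\Psi)K$ and $K\le 0$. Likewise, your log-cutoff argument for flatness replaces the paper's positive solution of $-\Delta u+c_0Ku=0$.

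\textbf{The gap: completeness in the Bernstein step.} The leaves you must treat are only complete \emph{outside the origin}; for example, a blow-up leaf can be a punctured plane through $0$, and a priori a leaf may accumulate at $0$ in an uncontrolled way. With the induced metric such a leaf is incomplete. Castillon's Theorem~A genuinely needs completeness: a flat disk satisfies $-\Delta\ge 0$ with $K\equiv 0$, yet it is conformally hyperbolic. Passing to the universal cover does not help, since the lifted metric is exactly as incomplete.

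The missing idea, used in the paper, is to pass to $\tilde g:=|x|^{-2}g$. This metric is complete because $(\R^3\setminus\{0\},|x|^{-2}\delta)$ is isometric to $\mathbb S^2\times\R$. You must then check that the inequality survives the change. The Dirichlet energy is conformally invariant, but $\tilde K\,dA_{\tilde g}=(K+\Delta_\Sigma\log|x|)\,dA_g$. For anisotropic surfaces, $\Delta_\Sigma\log|x|=2|x^\perp|^2|x|^{-4}+\vec H\cdot x\,|x|^{-2}$ has no sign, since $\vec H\neq 0$.

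Your sharper inequality $\int|\nabla\phi|^2\ge\Lambda^{-1}\int|A|^2\phi^2$ is exactly what handles this. Let $H=\kappa_1+\kappa_2$ be the Euclidean mean curvature. Minimizing $2y^2-|H|y$ over $y=|x^\perp|/|x|^2$ gives $\Delta_\Sigma\log|x|\ge -H^2/8$, and $|A|^2+2K=H^2$. Hence
\[
\Lambda^{-1}|A|^2+\frac{2}{\Lambda}\bigl(K+\Delta_\Sigma\log|x|\bigr)\ \ge\ \frac{H^2}{\Lambda}-\frac{H^2}{4\Lambda}\ \ge\ 0 .
\]
So $-\Delta_{\tilde g}+\frac{2}{\Lambda}\tilde K\ge 0$ on the complete surface $(\Sigma,\tilde g)$, with the same threshold $\Lambda<8$. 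Castillon then gives that the leaf itself, not its universal cover, is conformally $\C$ or $\C\setminus\{0\}$. Your log-cutoff argument then goes through, because parabolicity and Dirichlet energy depend only on the conformal structure.

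\textbf{A smaller error in the last step.} Finite density and $|A|\le C|x|^{-1}$ do not give $\int_{\Sigma\cap B_r}|A|^2<\infty$. Every dyadic annulus contributes $O(1)$, so the sum grows like $|\log r|$; even $|A|=o(|x|^{-1})$ is not enough. Finite total curvature would need a Gauss--Bonnet argument on the annular ends. Your ``punctured graph'' claim also presupposes a unique tangent plane at $0$, which you have not shown.

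None of this is needed. Once the log cutoff extends stationarity and stability across $0$, the paper applies the stability curvature estimate \cite[Theorem 2.8]{DePhilippisDeRosa} to get $|A|$ bounded near $0$, which removes the singularity. Your maximum-principle remark then rules out several sheets through $0$.
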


We use a \textit{lamination}
approach, inspired by the ideas of
\cite{MPR-removable-singularity} and \cite{FcS}. The result is not tied
to the min--max construction and may be applied independently in other
settings.

First we prove that under the condition \cref{eq:ellipticity-bound-bernstein} a complete $\F$-stationary and $\F$-stable surface in $\R^{3}\setminus\{0\}$ must be flat.
\begin{thm}[{\cite[Lemma 3.3]{MPR-removable-singularity}}]\label{thm:anisotropic-bernstein}
    Let $F(\cdot)$ be a fixed elliptic anisotropic integrand on $G_2(\R^3)$ with the ellipticity bound \cref{eq:ellipticity-bound-bernstein} and let $\Sigma\subset \R^3\setminus \{0\}$ be an $\F$-stationary, $\F$-stable, immersed surface which is complete outside of the origin. Then $\overline{\Sigma}$ is a plane.
\end{thm}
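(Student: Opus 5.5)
The plan follows the sketch in the introduction. We pass to the conformal structure, use stability to obtain a Schrödinger-type operator with nonnegative spectrum, and show that the surface is parabolic. Parabolicity then forces vanishing Gaussian curvature.

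\emph{Step 1 (stability in anisotropic form).} For a two-sided $\F$-stationary surface with frozen integrand $G$ (constant in $x$), the second variation in a normal direction $\varphi\nu$ reads
\[
\delta^2_{\F}\Sigma(\varphi\nu)=\int_\Sigma \langle \Psi_F(\nu)\nabla\varphi,\nabla\varphi\rangle-\operatorname{tr}\bigl(\Psi_F(\nu)A^2\bigr)\varphi^2\,\mathrm d\hau^2\ge 0 .
\]
If $\Sigma$ is one-sided, we pass to the orientable double cover, which is still stable. Write $\Lambda:=\max\lambda_{\max}\Psi_F$ and $\mu:=\min\lambda_{\min}\Psi_F$. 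Then
\[
\int_\Sigma|A|^2\varphi^2\le \frac{\Lambda}{\mu}\int_\Sigma|\nabla\varphi|^2 .
\]
The anisotropic mean curvature $\operatorname{tr}(\Psi_F(\nu)A)=0$ together with ellipticity gives a pinching of the principal curvatures. The eigenvalue ratio of $\Psi_F(\nu)$ bounds $|\kappa_1/\kappa_2|$ between $\Lambda/\mu$ and $\mu/\Lambda$. Hence $-K=|\kappa_1\kappa_2|\ge c(\Lambda/\mu)|A|^2$, and in particular $K\le 0$. The combination then yields $-\Delta-aK\ge0$ in the spectral sense, with an explicit $a=a(\Lambda/\mu)$. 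Checking the numerology is the point where the threshold $8$ must appear. My guess is that one needs $a>1/4$, the sharp constant in the result of \cite{C}.

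\emph{Step 2 (conformal type).} Since $\Sigma$ is complete away from $0$, the metric is complete on $\Sigma$ minus possibly an end accumulating at the origin. We rescale by the conformal factor $|x|^{-2}$, or equivalently pass to the universal cover with the metric $|x|^{-2}g$. This metric is complete on the whole surface, because the ends at $0$ and at infinity both become complete cylindrical ends. The operator inequality $-\Delta-aK\ge 0$ transforms in a controlled way under this change, since $\Delta$ is conformally covariant in dimension two and the curvature changes by $\Delta\log|x|$. We then apply \cite{C}: a complete surface carrying $-\Delta-aK\ge0$ with $a>1/4$ is conformally $\C$ or $\C\setminus\{0\}$, so it is parabolic.

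\emph{Step 3 (flatness).} On a parabolic surface, a nonnegative-spectrum operator $-\Delta+q$ with $q=-aK\ge 0$ admits a positive solution $u$. Following \cite{FcS}, a positive superharmonic function on a parabolic surface is constant. Testing the stability inequality with log-cutoffs, available by parabolicity, gives $\int_\Sigma |K|=0$. Therefore $K\equiv0$, and by the pinching $|A|^2\le C|K|$ we get $A\equiv0$. So $\Sigma$ is a piece of a plane, and completeness away from $0$ together with properness forces $\overline{\Sigma}$ to be a full plane.

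The main obstacle is Step 2. We must verify that the constant coming from the pinching estimate exceeds the sharp threshold of \cite{C} exactly when $\Lambda/\mu<8$, and that the puncture does not spoil completeness or the spectral condition after the conformal change. If the conformal change is problematic, the first thing I would try is working directly on the universal cover of $\Sigma$, treating the puncture as an end at which log-cutoffs cost nothing, as in \cite{MPR-removable-singularity}.
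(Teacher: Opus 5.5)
Your route is the same as the paper's. Stability together with $\tr_\Sigma(\Psi_FA_\Sigma)=0$ gives a scalar inequality $\int_\Sigma|\nabla\varphi|^2+aK\varphi^2\ge0$ with $a>1/4$. You then pass to the complete metric $\tilde g=|x|^{-2}g$, use \cite[Theorem A]{C} to get conformal type $\C$ or $\C\setminus\{0\}$, and parabolicity forces $K\equiv0$, hence $A\equiv0$. Your log-cutoff ending is a valid substitute for the positive-solution argument via \cite{FcS}. However, two things in your Step 1 are wrong as written.

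\emph{The sign.} Since $K\le0$, your operator $-\Delta-aK=-\Delta+a|K|$ is trivially nonnegative. So the hypothesis you feed to Castillon carries no information. Also, a positive solution of $-\Delta u+qu=0$ with $q=-aK\ge0$ is subharmonic, not superharmonic. What stability actually gives is $-\Delta+aK\ge0$, i.e.\ $\int_\Sigma|\nabla\varphi|^2\ge a\int_\Sigma|K|\varphi^2$, and then $\Delta u=aKu\le0$.

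\emph{The constant.} It is not a guess, and it does not come from the pinching $-K\ge c|A|^2$, which points the wrong way. That bound is only needed at the end, to pass from $K\equiv0$ to $A\equiv0$. Combine your bound $\int_\Sigma|A|^2\varphi^2\le(\Lambda/\mu)\int_\Sigma|\nabla\varphi|^2$ with the trivial $|K|\le\frac12|A|^2$. This gives $a=2\mu/\Lambda$, which exceeds $1/4$ exactly when $\Lambda/\mu<8$. This is the paper's $c_0=2m_F/M_F$, obtained there from the Cayley--Hamilton identity $\tr(\Psi_FA^2)=-\tr(\Psi_F)K$ and $\tr\Psi_F\ge2m_F$.

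Your Step 2 is the delicate point, and conformal covariance alone does not settle it. The Dirichlet integral is conformally invariant, but $\tilde K\,dA_{\tilde g}=(K+\Delta_g\log|x|)\,dA_g$. Here $\Delta_g\log|x|=2(x\cdot\nu)^2|x|^{-4}+(x\cdot\vec H)|x|^{-2}$ with $\vec H=\Delta_g x$. For minimal surfaces this correction is nonnegative, which is why the transfer is harmless in the isotropic case. Here $\vec H\neq0$ and the correction has no sign. The paper's proof passes over this as well: it records that $\tilde g$ is complete and applies \cite[Theorem A]{C} directly to \eqref{interm1}, which is written for $g$.

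The gap closes without losing the threshold:
\begin{itemize}
\item Minimizing the quadratic expression in $x\cdot\nu$ gives $\Delta_g\log|x|\ge-H^2/8$.
\item Since $H^2=|A|^2+2K$, one has $|K|+\frac18H^2=\frac34|K|+\frac18|A|^2\le\frac1{2\mu}\tr(\Psi_FA^2)$. This follows by averaging $\tr(\Psi_FA^2)=\tr(\Psi_F)|K|\ge2\mu|K|$ with $\tr(\Psi_FA^2)\ge\mu|A|^2$.
\item The anisotropic stability inequality then yields $\int_\Sigma|\tilde\nabla\varphi|^2\,dA_{\tilde g}\ge\frac{2\mu}{\Lambda}\int_\Sigma(-\tilde K)\varphi^2\,dA_{\tilde g}$ for all $\varphi\in C^\infty_c(\Sigma)$.
\end{itemize}
This is exactly Castillon's hypothesis on the complete surface $(\Sigma,\tilde g)$.

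Finally, drop the claim that the orientable double cover of a one-sided stable surface is stable. Stability only controls test functions that are odd under the deck involution, so this is false in general. The paper tacitly works with a global unit normal.
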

\begin{proof}
    Since $\Sigma$ is $\F$-stable outside the origin,
    the second variation inequality (see, for example, \cite[(A.4)]{CL}) gives
    \begin{equation}\label{eq:ineq-stability-A}
        \int_{\Sigma}
    \left\{
        \langle \Psi_F\nabla_\Sigma \phi,\nabla_\Sigma\phi\rangle
        - \tr_\Sigma(\Psi_F A_\Sigma^2)\phi^2
    \right\}\geq 0 \quad \text{for all }\phi \in C^{\infty}_c(\Sigma\setminus\{0\})\,.
    \end{equation}
    Since $\Sigma$ is $\F$-stationary, it satisfies the equation $\tr_\Sigma(\Psi_FA_\Sigma) = 0$. On the other hand, the Cayley--Hamilton identity for the two-dimensional endomorphism \(A_\Sigma\) is
    \[
        A_\Sigma^2 - H_\Sigma A_\Sigma + K_\Sigma I=0,
    \]
    where \(H_\Sigma=\tr_\Sigma A_\Sigma\) and \(K_\Sigma=\det A_\Sigma\). Composing with \(\Psi_F\) and taking the trace gives
    \[
        \tr_\Sigma(\Psi_F A_\Sigma^2)
    - H_\Sigma \tr_\Sigma(\Psi_F A_\Sigma)
    + K_\Sigma \tr_\Sigma(\Psi_F)=0.
    \]
    Using the stationary equation, the middle term vanishes, and hence
    \[
        \tr_\Sigma(\Psi_F A_\Sigma^2)
    = - \tr_\Sigma(\Psi_F) K_\Sigma.
    \]
    Substituting this identity into \eqref{eq:ineq-stability-A} yields
    \begin{align}
        \int_{\Sigma} \left\{\ang{\nabla_\Sigma\phi,\Psi_F\nabla_\Sigma\phi} + \tr_\Sigma(\Psi_F) K_\Sigma \phi^2\right\}\geq 0\quad \text{for all }\phi \in C^{\infty}_c(\Sigma\setminus\{0\})\,.\label{eq:ineq-stability-K}
    \end{align}
    Moreover, the stationary equation also implies \(K_\Sigma\leq 0\). Indeed, since \(\Psi_F\) is positive definite, the symmetric endomorphism \(\Psi_F^{1/2}A_\Sigma \Psi_F^{1/2}\) has trace
    \[
        \tr\bigl(\Psi_F^{1/2}A_\Sigma \Psi_F^{1/2}\bigr)
    = \tr_\Sigma(\Psi_F A_\Sigma)=0.
    \]
    Therefore its two eigenvalues are of the form \(\mu\) and \(-\mu\), so its
    determinant is non-positive. Since
    \[
        \det\bigl(\Psi_F^{1/2}A_\Sigma \Psi_F^{1/2}\bigr)
    = \det(\Psi_F)\det(A_\Sigma)
    = \det(\Psi_F)K_\Sigma
    \]
    and \(\det(\Psi_F)>0\), we obtain \(K_\Sigma\leq 0\). 
    
    Set
\[
    m_F:=\min_{\nu\in\mathbb S^2}\lambda_{\min}(\Psi_F(0,\nu)),
    \qquad
    M_F:=\max_{\nu\in\mathbb S^2}\lambda_{\max}(\Psi_F(0,\nu)).
\]
Then \(M_F\geq m_F>0\), and the ellipticity assumption \eqref{eq:ellipticity-bound-bernstein} becomes
\[
    \frac{M_F}{m_F}<8.
\]
In particular,
\[
    \langle \Psi_F\nabla_\Sigma\phi,\nabla_\Sigma\phi\rangle
    \leq M_F|\nabla_\Sigma\phi|^2,
    \qquad
    \tr_\Sigma(\Psi_F)\geq 2m_F.
\]
Using \(K_\Sigma\leq 0\), we have pointwise
\[
    \langle \Psi_F\nabla_\Sigma\phi,\nabla_\Sigma\phi\rangle
    + \tr_\Sigma(\Psi_F)K_\Sigma\phi^2
    \leq
    M_F |\nabla_\Sigma\phi|^2
    + 2m_F K_\Sigma\phi^2 .
\]
Combining this with \eqref{eq:ineq-stability-K} yields
\[
    \int_\Sigma
    \left\{
        M_F |\nabla_\Sigma\phi|^2
        + 2m_F K_\Sigma\phi^2
    \right\}\geq 0 .
\]
Dividing by \(M_F\), we obtain
\[
    \int_\Sigma
    \left\{
        |\nabla_\Sigma\phi|^2
        + c_0 K_\Sigma \phi^2
    \right\}\geq 0,
    \qquad
    c_0:=\frac{2m_F}{M_F}.
\]
Since \(M_F/m_F<8\), one has \(c_0>1/4\). Hence
\begin{equation}\label{interm1}
    \int_{\Sigma}
    \left\{
        |\nabla_\Sigma\phi|^2
        + c_0 K_\Sigma \phi^2
    \right\}
    \geq 0\quad
\text{for all }\phi\in C_c^\infty(\Sigma\setminus\{0\}),\text{ where
}c_0>1/4.
\end{equation}

    Let \(g\) be the induced metric on \(\Sigma\), and set
\[
    \widetilde g:=\frac{1}{r^2}g,
    \qquad r=|x|.
\]
The ambient conformal metric
\[
    \widehat g:=\frac{1}{r^2}\langle\cdot,\cdot\rangle
\]
on \(\mathbb R^3\setminus\{0\}\) is isometric to
\(\mathbb S^2\times\mathbb R\). Since \(\Sigma\) is complete outside the
origin, \((\Sigma,\widetilde g)\) is complete. It is also non-compact.
By \cite[Theorem A]{C}, the inequality
\eqref{interm1} with \(c_0>1/4\) implies that the conformal
type of \(\Sigma\) is either \(\mathbb C\) or
\(\mathbb C\setminus\{0\}\).

The quadratic form in \eqref{interm1} lifts to the
conformal cover. By \cite[Theorem 1(iii)]{FcS}, there exists a positive
solution \(u\) of 
\[-\Delta_\Sigma u+c_0K_\Sigma u=0.\]
Since \(K_\Sigma\le0\), this equation gives \[\Delta_\Sigma u=c_0K_\Sigma u\le0.\]
Thus, \(u\) is a positive superharmonic function on a surface conformally
equivalent to \(\mathbb C\) or \(\mathbb C\setminus\{0\}\). These
Riemann surfaces are parabolic, so \(u\) is constant. Hence $c_0K_\Sigma u=0$.
Since \(c_0>0\) and \(u>0\), we conclude that $K_\Sigma\equiv0$. We now show that \(A_\Sigma\equiv0\). Indeed, \(K_\Sigma=0\) means that one of the principal curvatures is zero. Since \(\operatorname{tr}(\Psi_F(\nu)A_\Sigma)=0\), and \(\Psi_F(\nu)\) is positive definite, the other principal curvature must also vanish. Hence, \(A_\Sigma\equiv0\).

Thus, \(\Sigma\) is locally contained in an affine plane. Since
\(\Sigma\) is connected, its image is contained in a single affine
plane \(P\). Moreover, completeness outside the origin rules out any
proper open subset of \(P\) with boundary away from the origin. Hence
\(\overline{\Sigma}=P\).
\end{proof}

\begin{rmk}
\label{rmk:ellipticity-threshold}
The ellipticity-ratio assumption should be viewed as a quantitative way
of placing the anisotropic problem in the range where Lin's scalar
spectral argument applies. Lin \cite{LinEigenvalue} proved a Bernstein-type result for stable
elliptic parametric integrals with constant coefficients under a
\(C^{2,\alpha}\)-closeness assumption to the area integrand. In our argument this closeness is replaced by the
explicit ellipticity condition
${M_F}/{m_F}<8$.
Indeed, the anisotropic stability inequality \eqref{eq:ineq-stability-A} is reduced to the inequality \eqref{interm1} with coefficient \(c_0=2m_F/M_F\), and Lin's
spectral theorem applies exactly in the range \(c_0>1/4\).

The constant \(1/4\) is sharp for this spectral method, as
reflected both in Lin's eigenvalue theorem and in Castillon's inverse
spectral result \cite{LinEigenvalue,C}, with the Poincar\'e disk realizing the endpoint. Thus the number \(8\) is the explicit threshold
produced by our comparison with inequality \eqref{interm1}. Of course an anisotropic stable Bernstein theorem may still hold beyond this range; however it is not attainable with this technique.
\end{rmk}

\subsection{The blow-up lamination is flat}

we recall the definition of a lamination:
\begin{definition}[{\cite[Definition 2]{MPR-limit-leaf}}]
    A surface lamination of a Riemannian $3$-manifold $N$ is the union of a collection of pairwise disjoint, connected, injectively immersed surfaces with a certain local product structure. More precisely, it is a pair $(\mathcal{L},\mathcal{A})$ satisfying:
    \begin{enumerate}
        \item $\mathcal{L}$ is a closed subset of $N$.
        \item $\mathcal{A} = \{\phi_\beta: \textbf{B}_1^2\times (0,1)\to U_\beta \}_\beta$ is a collection of coordinate charts of $N$.
        \item For each $\beta$, there exists a closed subset $C_\beta$ of $(0,1)$ such that $\phi_\beta^{-1}(U_\beta \cap \mathcal{L}) = \textbf{B}_1^2\times C_\beta$.
    \end{enumerate}
\end{definition}

Like in \cite{MPR-limit-leaf} we denote laminations by $\mathcal{L}$, omitting the charts. A lamination $\mathcal{L}$ is said to be a foliation if $\mathcal{L} = N$. Every lamination $\mathcal{L}$ naturally decomposes into a collection of disjoint connected surfaces, called the leaves of $\mathcal{L}$. We call a lamination $\mathcal{L}$ an \textit{$\F$-minimal lamination} if all its leaves are $\F$-minimal ($\F$-stationary) surfaces. In the next proposition we prove properties of sequences of these laminations with curvature bounds as in \cite{CM}:
\begin{proposition}[{\cite[Proposition B.1]{CM}}]\label{prop:lamination-convergence}
    Let $N^3$ be a fixed manifold and let $\mathcal{L}_i\subset B_{2R} \subset N$ be a sequence of $\F_i$-minimal laminations with finitely many leaves (possibly depending on $i$) and uniformly bounded curvature on each leaf. Moreover assume that $F_i$s are a sequence of elliptic integrands converging in $C^{2,\beta}$ to a limit $F$. Then (up to a subsequence) $\mathcal{L}_i$ converges in $C^{\alpha}$ on $B_{R}$ to an $\F$-minimal (Lipschitz) lamination $\mathcal{L}$ for any $\alpha<1$. Moreover if the leaves of $\mathcal{L}_i$ are $\F$-stable, then the leaves of the limit lamination $\mathcal{L}$ will be $\F$-stable as well.
\end{proposition}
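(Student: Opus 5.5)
The plan is to follow the scheme of \cite[Proposition B.1]{CM}, replacing the minimal surface equation by the anisotropic one. The key input is that, for $\F$-minimal surfaces with uniformly bounded second fundamental form $|A|\le \Lambda$, there is a radius $r_0=r_0(\Lambda,N)>0$, independent of $i$, with the following property. Around every point $p$ of a leaf $L$ of $\mathcal L_i$, the component of $L\cap B_{r_0}(p)$ through $p$ is a graph over its tangent plane $T_pL$. The graph function $u$ has $|\nabla u|\le 1$ and $|\nabla^2 u|\le C\Lambda$. This is purely geometric and uses only the curvature bound.

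Each such graph $u$ solves the anisotropic minimal surface equation for $F_i$. This is a quasilinear uniformly elliptic equation in divergence form, $\div\bigl(D_\nu G_i(x,u,\nabla u)\bigr)+(\text{lower order})=0$. Its coefficients are controlled in $C^{1,\beta}$ uniformly in $i$, since $F_i\to F$ in $C^{2,\beta}$ and the ellipticity constant $\lambda$ is uniform. The $C^{1,1}$ bound on $u$ lets us view the equation as linear with Hölder coefficients. Schauder theory then gives uniform $C^{2,\beta}$ bounds on the sheets on $B_{r_0/2}$.

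Next we build the limit. Cover $B_R$ by finitely many balls $B_{r_0/4}(q_k)$. In each ball, the plaques of $\mathcal L_i$ through points of $B_{r_0/4}(q_k)$ are disjoint graphs. They are graphs over planes that are $C\Lambda r_0$-close to one another, hence over a common plane $P_k$ after shrinking $r_0$. Since they are disjoint, they are ordered. By Arzelà--Ascoli applied to this family of uniformly $C^{2,\beta}$ graphs, we pass to a subsequence. Then the closure in $C^2$ of all limits of plaques is a closed family of disjoint or coinciding graphs, again ordered. Two distinct limit graphs may touch, but both solve the $F$-minimal equation. The difference of two ordered solutions satisfies a linear uniformly elliptic equation, so the strong maximum principle \cite{SW-maximum-principle} shows they either coincide or are disjoint. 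This gives the local product structure, with $C_\beta$ the closed set of heights of the limit graphs over a fixed transversal.

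The transversal coordinate map is only Lipschitz in general. This is because the vertical separation between ordered solutions is controlled via the Harnack inequality for the linearized equation, exactly as in \cite{CM}. The convergence of $\mathcal L_i$ to $\mathcal L$ then holds in $C^\alpha$ for all $\alpha<1$ by interpolation between $C^0$ convergence and uniform Lipschitz bounds on the charts. A diagonal argument over the finite cover yields a global subsequence and a closed limit set $\mathcal L$. Each leaf is $\F$-minimal because $C^2$ limits of solutions of the $F_i$-equation solve the $F$-equation.

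For stability, take a compactly supported $\phi$ on a leaf $L$ of $\mathcal L$. Cover $\spt\phi$ by finitely many plaques of $L$. Each is a $C^2$ limit of plaques of leaves $L_i$ of $\mathcal L_i$. We transplant $\phi$ to $L_i$ via the graphical parametrization. When $L$ is a limit of several sheets, we use a single chosen approximating sheet, whose plaques are consistent along the compact support for large $i$. The second variation \eqref{eq:ineq-stability-A} involves only $\Psi_{F_i}$, $A_{L_i}$, and $\nabla\phi$, all of which converge. Passing to the limit gives $\delta^2_{\F}L(\phi)\ge0$.

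The main obstacle is this last step. It requires selecting a consistent approximating sheet along $\spt\phi$, which may wrap around the leaf. We handle it by using the fact that a compact subset of $L$ is covered by finitely many plaques. Starting from one plaque, we follow the nearest approximating sheets by continuation, which is well-defined for large $i$ because the graphs converge in $C^2$.
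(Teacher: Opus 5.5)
Your construction of the limit lamination follows the Colding--Minicozzi argument \cite{CM} that the paper invokes, with the anisotropic equation in place of the minimal surface equation. The ingredients are a uniform graphical scale from the curvature bound, Schauder estimates for the quasilinear $\F_i$-equation, ordering of plaques plus the strong maximum principle, and Harnack for the Lipschitz transversal structure. That part is fine.

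The gap is in the stability step. The paper disposes of it in one line: each leaf of $\mathcal L$ is a $C^{1,\alpha}$ limit of leaves of $\mathcal L_i$. You rightly single it out as the delicate point, but your resolution does not work. Local $C^2$ convergence of plaques only lets you continue an approximating sheet along paths. By the monodromy theorem, the transplant of $\phi$ is therefore well defined over simply connected pieces of $\spt\phi$. It gives no control on the holonomy around a non-contractible loop $\sigma\subset\spt\phi$. Suppose a leaf of $\mathcal L_i$ spirals onto $L$ with more and more turns. Then, for every $i$, following the sheet once around $\sigma$ lands on a different sheet. No approximating sheet closes up, and $\phi$ has no single-valued, compactly supported transplant to $L_i$. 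This is not an artificial scenario. In the paper's application the leaves live in $\mathbb R^3\setminus\{0\}$ and $\spt\phi$ may encircle the origin. Sheets spiralling around the singular point are exactly the multi-sheeted accumulation that cannot be excluded a priori.

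What your argument proves is weaker. Continuing a sheet over a compact domain $\Omega\subset L$ yields, for large $i$, a possibly infinite covering $\hat\Omega_i\to\Omega$. This covering is embedded in $L_i$ as a $C^2$-small normal graph.
\begin{itemize}
\item If the degree $k$ is finite, the second variation of $L_i$ at the transplanted function $\phi\circ\pi$ converges to $k$ times $\delta^2_{\F}L(\phi)$. Stability of $L$ then follows.
\item In general you only get stability of $\hat\Omega_i$. Lifting a positive solution of the Jacobi equation then gives stability of the universal cover of $L$.
\end{itemize}
Stability does not descend from a cover to the base in general: the bottom of the spectrum can strictly increase under non-amenable coverings. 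To conclude that $L$ itself is $\F$-stable you need an additional ingredient. One option is a F{\o}lner-type cutoff along the covering when the holonomy is amenable. Another is an adaptation to $\mathcal J_F$ of the Meeks--P\'erez--Ros argument that limit leaves of a lamination are stable (cf.\ \cite{MPR-limit-leaf}). For the use the paper makes of this proposition, stability of the universal cover would in fact suffice. That use is the Bernstein step, which applies Castillon's theorem \cite{C} and Fischer-Colbrie--Schoen \cite{FcS} to complete leaves.
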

\begin{proof}
    The proof is almost identical to \cite[Proposition B.1]{CM} with anisotropic equations in place of the minimal surface PDE. In the end also each leaf of $\mathcal{L}$ is the $C^{1,\alpha}$ limit of some leaf of $\mathcal{L}_i$, hence the leaves of $\mathcal{L}$ inherit $\F$-stationary/stability from $\mathcal{L}_i$.
\end{proof}

Then we show that the blow-up around any singularity in an $\F$-stationary and $\F$-stable surface, produces a lamination whose all leaves are flat.
\begin{proposition}[Flatness of limit laminations]\label{prop:limit-lamination-is-flat}
    Under the conditions of \cref{thm:removable-singularity-via-bernstein} and \cref{eq:ellipticity-bound-bernstein}, any blowup of $\Sigma$ around the origin (up to a subsequence) converges to a lamination $\mathcal{L}\subset \R^3\setminus \{0\}$ with flat leaves. Moreover for any $\epsilon>0$ small, there exists $r_\epsilon>0$ such that for all $|x|\leq r_\epsilon$ we have $|x||A_\Sigma(x)| \leq \epsilon$.
\end{proposition}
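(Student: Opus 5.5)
The plan is to combine the curvature estimate for stable surfaces with the lamination compactness of \cref{prop:lamination-convergence} and the Bernstein theorem \cref{thm:anisotropic-bernstein}.

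\textbf{Step 1: scale-invariant curvature bound.} By \cite[Theorem 2.8]{DePhilippisDeRosa}, applied on balls $B_{|x|/2}(x)\subset \B_1\setminus\{0\}$ where $\Sigma$ is $\F$-stable, we get
\[
|A_\Sigma|(x)\le \frac{C}{|x|}
\qquad\text{for } |x|\le 1/2,
\]
with $C$ depending only on the ellipticity and regularity constants of $F$. This estimate should be local and not require area bounds in dimension two.

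\textbf{Step 2: blow-up laminations.} Take any sequence $r_i\to0$ and set $\Sigma_i:=r_i^{-1}\Sigma$. Each $\Sigma_i$ is $\F_i$-stationary and stable in $r_i^{-1}\B_1\setminus\{0\}$, where
\[
F_i(x,T):=F(r_ix,T)\to F(0,\cdot)
\]
in $C^{2,\beta}$ on compact sets (here we use $F\in C^{2,\alpha}$). By Step 1, $|A_{\Sigma_i}|(x)\le C/|x|$, so on every compact $K\subset\R^3\setminus\{0\}$ the curvatures are uniformly bounded. The closures of $\Sigma_i$ are laminations, since a properly embedded surface is a lamination with one leaf per component. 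After restricting to compact annuli (finitely many leaves meeting, or approximating by finitely many sheets), \cref{prop:lamination-convergence} and a diagonal argument over an exhaustion of $\R^3\setminus\{0\}$ give subsequential convergence to an $\F_0$-minimal lamination $\mathcal L\subset\R^3\setminus\{0\}$, with $\F_0$ the frozen integrand. Its leaves are $\F_0$-stable, and each leaf is complete outside the origin because the curvature bound gives uniform local graph radii.

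Each leaf $L$ of $\mathcal L$ then satisfies the hypotheses of \cref{thm:anisotropic-bernstein}: the frozen integrand satisfies \eqref{eq:ellipticity-bound-bernstein}, and $L$ is stable, stationary and complete outside $0$. Hence $\overline L$ is a plane. One subtlety is that a leaf may be a limit leaf rather than a limit of single sheets. Stability of limit leaves is nonetheless guaranteed by \cref{prop:lamination-convergence}, or alternatively by the standard fact that limit leaves are stable.

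\textbf{Step 3: curvature decay by contradiction.} Suppose there are $\epsilon>0$ and $x_i\to0$ with $|x_i||A_\Sigma(x_i)|\ge\epsilon$. Rescale by $r_i=|x_i|$, so the points $x_i/r_i$ lie on the unit sphere and converge to some $y$. Convergence of leaves in $C^{1,\alpha}$ upgrades to $C^2$ convergence, because the uniform curvature bounds and the anisotropic elliptic equation give higher-order Schauder estimates on sheets. Hence $y$ lies on a leaf $L$ with $|A_L(y)|\ge\epsilon$, contradicting flatness. This yields $r_\epsilon$.

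\textbf{Main obstacle.} I expect the hardest step to be the lamination convergence near accumulation, namely verifying that the limit of the rescaled surfaces really is a lamination whose leaves are complete in $\R^3\setminus\{0\}$ and inherit stability. Here possibly infinitely many sheets accumulate, so the finite-leaf hypothesis of \cref{prop:lamination-convergence} must be handled by local truncation: on each small ball the sheets are graphs with bounded $C^2$ norm, and there are finitely many of them in each $\Sigma_i$ locally. I would also need the improvement from $C^{1,\alpha}$ to $C^2$ convergence for the contradiction in Step 3.
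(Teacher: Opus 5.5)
Your proposal is correct and follows the paper's proof essentially step for step: the curvature estimate $|A_\Sigma|\le C/|x|$ from \cite[Theorem 2.8]{DePhilippisDeRosa}, blow-up and lamination compactness via \cref{prop:lamination-convergence} with stability inherited by the leaves, \cref{thm:anisotropic-bernstein} applied to each leaf, and curvature decay from $C^{2}$ convergence of the sheets, which you spell out as a contradiction argument where the paper only asserts it. The one point to tighten is your justification of Step 1: an area-free stable curvature estimate is not a general two-dimensional fact for anisotropic integrands (the paper itself lets this constant depend on $\mathcal{H}^2(\Sigma)$ in the proof of \cref{prop:C3-pinching-gives-jacobi-lower-bound}), but here you can obtain it from \cref{thm:anisotropic-bernstein} by a standard point-picking blow-up in the balls $B_{|x|/2}(x)$, which is legitimate because the rescaled integrands freeze at the origin, where \eqref{eq:ellipticity-bound-bernstein} holds.
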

\begin{proof}
    First take any point $x\in\Sigma$, since $\Sigma$ is stable in $B_{\dist(x,p)/2}(x)$, hence by \cite[Theorem 2.8]{DePhilippisDeRosa} we can assert that:
    \begin{align}\label{curvature-estimate-lamination-blowup}
        |A_\Sigma|^2 \leq \frac{C}{\dist(x,p)^2}\,.
    \end{align}
    Take the map $\phi_\rho:\Sigma \supset \B_{1}(x)\to \mathbf{B}_{\rho^{-1}}(0)$ to be the blow-up maps and define $\Sigma_\rho = \phi_\rho(\Sigma)$ and push forward the elliptic integrand to $F_\rho$. Then the estimate \cref{curvature-estimate-lamination-blowup} asserts that for any compact $K\subset \R^3\setminus\{0\}$ we have $\sup_{K}|A_{\Sigma_{\rho}}| \leq C_K$. Now we are in a position to apply \cref{prop:lamination-convergence} and see that $\Sigma_\rho$ converges, as laminations, in $C^{\alpha}$ for any $\alpha<1$ to a limit lamination $\mathcal{L}$ and each leaf of $\mathcal{L}$ is the $C^{1,\beta}$ limit of a leaf of $\Sigma_\rho$. Hence each leaf of $\mathcal{L}$ is $\F$-stationary and $\F$-stable, where $F(0,\cdot)$ is the frozen integrand at the origin. Now each leaf $L\subset \mathcal{L}$ is an $\F$-stationary/stable smooth surface in $\R^3\setminus \{0\}$. Then by \cref{thm:anisotropic-bernstein}, we see that $\overline{L}$ is in fact a flat plane. Since this holds for each leaf and they all are disjoint, the lamination $\overline{\mathcal{L}}$ is a fully flat lamination in $\R^3$. Since the blow-up was arbitrary and the convergence is $C^{2,\alpha}$ on each leaf, we conclude the bound on the second fundamental form.
\end{proof}

\subsection{The density bound: proof of \cref{thm:removable-singularity-via-bernstein}}

We use a similar idea to \cite[Lemma 4.1]{MPR-removable-singularity} and prove the following proposition:
\begin{proposition}\label{prop:bounded-density-bernstein}
    Under the assumptions of \cref{thm:removable-singularity-via-bernstein}, the origin has finite density:
    \begin{align}
        \limsup_{r\to 0} \frac{\hau^2(\Sigma\cap \B_r)}{r^2} < \infty\,.
    \end{align}
\end{proposition}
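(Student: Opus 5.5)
The plan is to follow the Morse-theoretic strategy sketched in the introduction, modelled on \cite[Lemma 4.1]{MPR-removable-singularity}. By \cref{prop:limit-lamination-is-flat}, for every $\epsilon>0$ there is $r_\epsilon>0$ with $|x||A_\Sigma(x)|\le\epsilon$ on $\Sigma\cap\B_{r_\epsilon}$. Our first goal is to show that on $\Sigma\cap\B_{r_\epsilon}$ the function $f(x)=|x|^2$ has no critical points, or at least no saddle points. At a critical point $x$ we have $x\perp T_x\Sigma$, so $\Sigma$ is tangent to the sphere $\partial\B_{|x|}$ there. The Hessian of $f|_\Sigma$ at $x$ equals $2\,\mathrm{Id}-2\langle x,\nu\rangle A_\Sigma$, and its eigenvalues are bounded below by $2-2|x||A_\Sigma|\ge 2-2\epsilon>0$. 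Hence every critical point is a strict local minimum. Equivalently, the rescaled surfaces $\Sigma_\rho$ are $C^2$-close on the annulus $\{1/2\le|x|\le2\}$ to unions of planes, and a plane not through the origin has only one critical point of $|x|^2$, namely a minimum.

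The second step uses this to control topology. Since $f|_\Sigma$ has only local minima, passing downward through a critical value only removes disc components. Therefore each component of $\Sigma\cap(\B_{r}\setminus\B_{s})$, for $s<r\le r_\epsilon$, is either an annulus $\gamma\times[s,r]$ over a level curve or a disc capping off a level curve. Next we bound the number of components of $\Sigma\cap\partial\B_r$ uniformly in $r$. Fix $r_0=r_\epsilon$; the set $\Sigma\cap\partial\B_{r_0}$ has finitely many components by transversality and properness. Flowing inward along $\nabla f/|\nabla f|^2$, which is well defined away from the minima, each level curve at radius $r<r_0$ flows back out to a curve at radius $r_0$. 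Since no saddles exist, level curves never split as $r$ decreases. Hence $\#(\Sigma\cap\partial\B_r)\le n_0:=\#(\Sigma\cap\partial\B_{r_0})$ for all $r<r_0$.

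The third step converts the bound on the number of curves into a bound on length, and then on area. Each level curve $\gamma\subset\Sigma\cap\partial\B_r$ rescales to a curve on $\partial\B_1$ that is $C^1$-close to a great-circle arc or small circle cut out by a plane leaf of the flat limit lamination. Its length is therefore at most $(2\pi+o(1))r$; this uses that a closed curve on the unit sphere lying near a single flat leaf is close to one circle, with multiplicity one. Consequently $\hau^1(\Sigma\cap\partial\B_r)\le n_0(2\pi+1)r$. Along $\Sigma$ we have $|\nabla_\Sigma|x||\ge\sqrt{1-\epsilon^2}$, because the tangent planes are almost radial wherever the surface is almost flat at scale $|x|$ and not tangent to spheres; the tangential regions consist only of small caps near minima, which contribute bounded area. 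The coarea formula then gives $\hau^2(\Sigma\cap\B_r)\le C\int_0^r n_0 s\,\mathrm{d}s\le Cn_0r^2$. As an alternative to the coarea argument, one can apply Gauss--Bonnet on the annular pieces, together with the boundary geodesic-curvature control coming from near-flatness, to obtain $\int|A|^2<\infty$ and hence the density bound. I will choose whichever is cleaner.

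The main obstacle is the third step: excluding the possibility that a single level curve winds many times around $\partial\B_r$, or spirals as $r\to0$, which would give length much larger than $2\pi r$. Flatness of every blow-up lamination only gives local graphicality, whereas winding is a global effect on the sphere. I would first argue by contradiction. Suppose level curves $\gamma_{r_i}$ of unbounded rescaled length exist. Pass to the limit lamination; the rescaled $\gamma_{r_i}$ then accumulate on a compact set of leaf-circles in $\partial\B_1$. Embeddedness forces these circles to be disjoint, so a closed embedded curve $C^1$-close to a union of disjoint circles must lie near a single circle and traverse it once. If that argument fails, I would instead run a monotone count of the winding number along the flow of $f$ from $r_0$ inward, using the angle bound $|x||A|\le\epsilon$ to control how the winding changes per dyadic scale.
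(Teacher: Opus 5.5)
Your route is the paper's. The decay $|x||A_\Sigma|\le\epsilon$ from the flat blow-up laminations makes every critical point of $|x|^2|_\Sigma$ a nondegenerate minimum; your Hessian bound $2-2|x||A_\Sigma|>0$ is cleaner than the paper's umbilicity remark. Hence the number of components of $\Sigma\cap\partial\B_r$ is non-increasing as $r\downarrow0$, and one bounds dyadic annuli and sums. The paper goes from this count to the area bound in one sentence (``using the bound on the second fundamental form''), so you are right that the work lies there.

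Your third step, however, has a gap as written. The bound $|\nabla_\Sigma|x||\ge\sqrt{1-\epsilon^2}$ does not follow from being almost flat at scale $|x|$ and not tangent to spheres. A sheet $C^2$-close to a plane at distance $d=|x|/2$ from the origin satisfies both, yet $|\nabla_\Sigma|x||\approx\sqrt{1-d^2/|x|^2}=\sqrt3/2$ on a whole annular piece, not on a small cap. Similarly, your contradiction argument for the length bound uses $C^1$-closeness of rescaled level curves to leaf circles, which fails where a leaf is nearly tangent to $\partial\B_1$. As it stands, neither the length of level curves nor the coarea weight $|\nabla_\Sigma|x||^{-1}$ is controlled.

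The missing observation fixes both problems.
\begin{itemize}
\item Each minimum removes one level curve as $r$ decreases and none are created, so there are at most $n_0$ minima. Hence there is $r_1>0$ such that $|x|^2$ has no critical points on $\Sigma\cap\B_{r_1}$.
\item Every blow-up at scales $s_i\to0$ then has only leaves through the origin. If a leaf $P$ had distance $d>0$ from $0$, with foot point $p$, a sheet of $s_i^{-1}\Sigma$ converging to $P$ near $p$ (mere $C^0$-closeness suffices) would carry a local minimum of $|x|^2$ near $p$. That is a critical point of $|x|^2$ on $\Sigma$ at radius $\approx s_i d<r_1$, a contradiction.
\item Distinct planes through $0$ meet away from $0$, so disjointness of the leaves in $\R^3\setminus\{0\}$ leaves a single plane through $0$.
\item By compactness, for all small $s$ the tangent planes of $\Sigma$ in $\B_{2s}\setminus\B_{s/4}$ are close to a plane $P_s$ through the origin. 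Thus $|\nabla_\Sigma|x||\ge 1-o(1)$ there.
\item For $t\in[s/2,s]$, each component of $\Sigma\cap\partial\B_t$ is then an embedded closed curve in a thin band around the great circle $P_s\cap\partial\B_t$, transverse to the meridians. Your multigraph/embeddedness argument gives degree one and length at most $(2\pi+o(1))t$.
\item The coarea formula gives $\hau^2(\Sigma\cap(\B_s\setminus\B_{s/2}))\le n_0\tfrac{3\pi}{4}s^2(1+o(1))$. Summing over dyadic annuli yields $\limsup_{r\to0}\hau^2(\Sigma\cap\B_r)/(\pi r^2)\le n_0$.
\end{itemize}
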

\begin{proof}
    We show that the square distance function $f(x) = |x|^2$ on $\Sigma$ is a Morse function on $\Sigma\cap\B_\rho$ provided that $\rho>0$ is chosen small enough. Indeed, take a point $p\in\Sigma\cap\B_\rho$ such that $\nabla_\Sigma f(p) =0$, which means that at $p$ we have $x = (x\cdot\nu)\nu$ and calculate:
    \begin{align}
        \nabla^2_\Sigma f(p) = 2g_\Sigma - 2(x\cdot\nu)A_\Sigma\,.
    \end{align}
    The Hessian will vanish at a critical point, i.e. $\nabla^2_\Sigma f(p)=0$, if and only if $p$ is an umbilic point meaning at $p$ the two principal curvatures will satisfy $\kappa_1(p) = \kappa_2(p) = -{1}/{|x|}$. Now, since $F$ is smooth and elliptic and $\Sigma$ is $\F$-stationary, we know that $|\tr_\Sigma(\Psi_FA_\Sigma)|\leq C$, which would be impossible with $\kappa_1(p) = \kappa_2(p) = -{1}/{|x|}$ provided that $|x|\leq\rho$ is small enough.

    Now \cref{prop:limit-lamination-is-flat} asserts that for any $\epsilon>0$ small enough, there is $r_\epsilon>0$ such that for all $r<r_\epsilon$, we have $|A||x| \leq \epsilon$. This means that $\nabla^2_\Sigma f(p) = 2g_\Sigma - 2(x\cdot\nu)A_\Sigma$ cannot have negative eigenvalues, provided $\epsilon>0$ is chosen small enough. This in turn implies that the Morse complex induced by $f(x)$ on $\Sigma$ cannot have saddle or local maxima points in sufficiently small radii. This means that for all $\rho<r_\epsilon$ the number of curves in the intersection $\Sigma \cap \de\B_\rho$ is at most the number of curves in $\Sigma\cap \de\B_{r_\epsilon}$, which further implies that on each annulus $\Sigma\cap(\B_\rho\setminus\B_{\rho/2})$ the number of connected components is uniformly bounded. Then using the bound on the second fundamental form, we see that the area density on each annulus must be uniformly upper bounded. Taking a sum over annuli, we conclude.
\end{proof}
Now we proceed to the proof of the main result of this section.
\begin{proof}[Proof of \cref{thm:removable-singularity-via-bernstein}]
    Since the density ratio is bounded in a neighborhood of $p$ by \cref{prop:bounded-density-bernstein}, a standard capacity argument shows that the $\mathbf{F}$-stability inequality extends across the singularity. Applying \cite[Theorem~2.8]{DePhilippisDeRosa}, we conclude that the second fundamental form is pointwise bounded in the interior of $U$. Consequently, $p$ is a removable singularity. See also \cite[Proof of Theorem~5.1, Step~3]{DDL}.
\end{proof}

\section{Removing singularities via separation}\label{sec:remove-singularity-separation}

We now prove the second removability criterion. The
argument does not rely on the Bernstein theorem, nor on the ellipticity
ratio. Instead, it assumes that the extrinsic distance \(|x|\) is a
suitable subsolution for the anisotropic Jacobi operator, in the sense of
\cref{eq:Jacobi-lower-bound-condition}. The idea is
that if the density were to blow up, then on small dyadic annuli the
surface would contain many nearby sheets. The separation between adjacent
sheets satisfies an approximate Jacobi equation, and the lower bound for
\(\mathcal J_F|x|\) allows us to compare this separation with \(|x|\).
This gives quantitative decay of the region where the separation is very
small, and ultimately yields the logarithmic density bound needed for the
capacity argument. Once this density control is established, the desired removability follows. This condition is satisfied for the area functional.
We also introduce a \(C^3\)-pinching condition for $F$ in \cref{def:C3-pinching}
and show that it implies the required lower bound for \(\mathcal J_F|x|\).

Since the results here are all local in nature, we provide their Euclidean version. Let $\mathcal{J}_F$ be the Jacobi operator associated to the integrand $F$ with the sign convention:
\begin{align}
    \delta^2_\F \Sigma(\phi\nu_\Sigma) = \ang{-\mathcal{J}_F \phi,\phi}\,,
\end{align}
for any compactly supported $\phi \in C_c^\infty(\Sigma)$, where \(\nu_\Sigma\) is a
choice of unit normal. We prove a conditional removable singularity theorem as follows:
\begin{thm}\label{thm:removable-singularity-with-jacobi-lower-bound}
    Let \(F\) be a smooth anisotropic integrand on \(G_2(\B_1)\), and let $\Sigma\subset \B_1\setminus\{0\}$ be a smooth properly embedded surface which is \(\F\)-stationary and \(\F\)-stable in \(\B_1\setminus\{0\}\). Moreover assume that there is some small radius $\rho>0$ and constant $C_0$ such that:
    \begin{align}\label{eq:Jacobi-lower-bound-condition}
        \mathcal{J}_F|x| \geq -C_0\quad\text{ for all } x\in\Sigma\cap\B_\rho\,.
    \end{align}
    Then the origin is a removable singularity. More precisely, \(\overline{\Sigma}\cap\B_1\) is a smooth embedded $\F$-stable \(\F\)-minimal surface in \(\B_1\).
\end{thm}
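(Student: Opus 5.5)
Following the scheme sketched in the introduction, the plan is to reduce removability to a density (capacity) estimate at the origin, and then conclude exactly as in the proof of \cref{thm:removable-singularity-via-bernstein}.

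\textbf{Step 1: localization into graphical sheets.} By \cite[Theorem~2.8]{DePhilippisDeRosa} applied in $B_{|x|/2}(x)$ we have $|A_\Sigma|(x)\le C|x|^{-1}$. Hence on each dyadic annulus $\Sigma\cap(\B_{2r}\setminus\B_r)$, after rescaling by $r$, the surface has bounded curvature. Locally at scale $c|x|$ it therefore decomposes into disjoint graphical sheets over a common plane. Let $N(r)$ be the number of sheets; the density ratio $\hau^2(\Sigma\cap\B_r)/r^2$ is comparable to an average of $N$. Assuming infinite density, $N\to\infty$, so sheets accumulate.

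\textbf{Step 2: the separation function.} At each $x\in\Sigma$ with $|x|<\rho$, define $\mathbf s(x)$ as the (normal) distance to the nearest other sheet, truncated at $c|x|$. Where $\mathbf s\ll |x|$, the two sheets are both $\F$-minimal graphs, and their difference $w$ satisfies a linear uniformly elliptic equation. This equation is $\mathcal J_F$ plus errors of order $|x|^{-1}\mathbf s\,|A|+\dots$, obtained by subtracting the two quasilinear equations and using $C^3$ bounds on $G$ from \eqref{H:regularity}. Consequently $\mathbf s$ is a positive approximate Jacobi field. The quantity $u:=\log(|x|/\mathbf s)\ge0$ then satisfies, from $\mathcal J_F\mathbf s\approx0$ and the hypothesis $\mathcal J_F|x|\ge -C_0$, a weighted differential inequality of the form
\begin{align}
-\operatorname{div}(\Psi_F\nabla u)+\langle\Psi_F\nabla u,\nabla u\rangle\le \frac{C}{|x|^{2}}+C\frac{|\nabla u|}{|x|}
\end{align}
in the weak sense (the log of a positive supersolution is a supersolution of the Riccati-type equation). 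Testing against $\phi^2$ with $\phi$ a cutoff on annuli and using stability gives $u\in L^p(|x|^{-2}d\hau^2)$ on dyadic annuli with bounds uniform in the scale. Here we use the Hardy-type inequality on $\F$-stationary surfaces, $\int \phi^2|x|^{-2}\lesssim\int|\nabla\phi|^2$ for $\phi$ vanishing near $0$ and far away, derived by testing stationarity with $\phi^2 x/|x|^2$.

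\textbf{Step 3: super-exponential decay of the bad set.} We run a De Giorgi/Moser iteration on the truncations $(u-k)_+$, with scale-invariant weights. Each level gains a fixed factor from the Hardy inequality, and the quadratic gradient term gives absorption. The target is that the weighted measure of $\{\mathbf s\le e^{-k}|x|\}$ within each annulus decays like $e^{-ce^{ck}}$. This is the step I expect to be the main obstacle. The difficulties are making the errors in the approximate Jacobi equation genuinely lower order on the region $\mathbf s\ll|x|$, and handling the non-smoothness of $\mathbf s$ where the nearest sheet switches. Lipschitz/viscosity comparison should handle the latter, since $\mathbf s$ is a minimum of smooth approximate Jacobi fields and hence a supersolution.

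\textbf{Step 4: density and capacity.} Because there are $N$ sheets stacked in a slab of width $\sim r$, some adjacent pair has separation at most $r/N$. This is the key counting lemma: $\#\{\text{sheets}\}\lesssim 1+\sum_k e^{k}\,|\{\mathbf s\approx e^{-k}|x|\}|$ in a weighted sense. Combining this with the super-exponential decay, the sum is dominated by $k\lesssim\log\log(1/r)$, which gives
\begin{align}
\hau^2(\Sigma\cap\B_r)\le C r^2\log(1/r).
\end{align}
For a log-log cutoff $\eta$ equal to $0$ near the origin, we then get $\int|\nabla\eta|^2\to0$, so the origin has zero $W^{1,2}$-capacity on $\Sigma$. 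Therefore $\F$-stationarity and the stability inequality extend across $0$. Applying \cite[Theorem~2.8]{DePhilippisDeRosa} on balls containing $0$ bounds $|A|$ near the origin, and then elliptic regularity shows that $\overline\Sigma$ is a smooth stable $\F$-minimal surface, as in \cite[Proof of Theorem~5.1, Step~3]{DDL}.
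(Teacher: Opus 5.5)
Your outline (curvature bound, graphical sheets, separation $\mathbf s$, $\log(|x|/\mathbf s)$, iteration, counting, $\log\log$ cutoff) follows the paper's, but there is a genuine gap: Steps 2--3 as written cannot produce the super-exponential decay. The inequality you derive for $u$ has source $C|x|^{-2}$ with no factor $e^{-u}$. On a plane it is satisfied, for small $c>0$, by $u=c\log(1/|x|)$, i.e.\ $\mathbf s\approx|x|^{1+c}$. For that $u$, $\{\mathbf s\le e^{-k}|x|\}=\{|x|\le e^{-k/c}\}$ shrinks only exponentially in $k$, so no iteration based on your inequality can reach double-exponential decay. Your mechanism, ``a fixed factor per level'', gives only $\mathcal H^2(S_k)\lesssim\alpha^k$. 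Through the counting lemma that yields $\mathcal H^2(\Sigma\cap\mathrm{An}_r)\lesssim r^{2-c(\alpha)}$, which is too weak for the $\log\log$ cutoff. The culprit is your error term $|x|^{-1}\mathbf s|A|\sim\mathbf s/|x|^2$, which is as large as the potential $\operatorname{tr}_\Sigma(\Psi_FA_\Sigma^2)\mathbf s$ itself. The correct structure is that the scale-invariant error is quadratic and the linear one carries an extra $|x|$: $\mathcal J_F\mathbf s\le C(\mathbf s^2|x|^{-3}+\mathbf s|x|^{-1})$. Insert this and $\mathcal J_F|x|\ge -C_0$ into the ground-state identity
\[
\operatorname{div}_\Sigma\bigl(|x|^2\Psi_F\nabla_\Sigma(\mathbf s/|x|)\bigr)=|x|\,\mathcal J_F\mathbf s-\mathbf s\,\mathcal J_F|x| .
\]
This gives $-\operatorname{div}_\Sigma(|x|^2\Psi_F\nabla_\Sigma\mathbf v)\le Ce^{-\mathbf v}+C|x|$ for $\mathbf v=\log(|x|/\mathbf s)$. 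The factor $e^{-\mathbf v}\le e^{-k}$ on $S_k=\{\mathbf v\ge k\}$ yields $\|(\mathbf v-k)_+\|_{L^p}\lesssim pe^{-k}$, up to a term controlled by $\mathcal H^2(S_k)$. Choosing $p\approx e^k$ and iterating then gives $\mathcal H^2(S_k)\le(1+\theta_0)^{-e^k}$.

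Second, the Hardy inequality you invoke, $\int\phi^2|x|^{-2}\lesssim\int|\nabla\phi|^2$, is the critical two-dimensional one, and it is false. On any surface smooth through $0$ (exactly the situation the theorem asserts), the functions $\phi=\eta(\log|x|/L)$ give a ratio of order $L^2$. Testing stationarity with $\phi^2x/|x|^2$ does not produce it either: in the isotropic case $\operatorname{div}_\Sigma(x/|x|^2)=2|x^\perp|^2/|x|^4$, which vanishes on cones. What does hold is the non-scale-invariant inequality $\int f^2\le C\int|x|^2|\nabla_\Sigma f|^2$. It comes from testing with $f^2W_G$, where $W_G=G(x,\nu)x^T-(x\cdot\nu)\nabla_{\mathbb S^2}G(x,\nu)$ has divergence $2G+x\cdot D_xG\ge\lambda$ near $0$. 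Consequently you cannot obtain $L^p(|x|^{-2}\,d\mathcal H^2)$ bounds for $u$ uniform over dyadic annuli. Such bounds would already bound the number of sheets, and hence the density: on an annulus carrying $N$ sheets, most have $\mathbf s\lesssim|x|/N$, so $u\gtrsim\log N$ there. What one actually gets are global bounds, for $\mathbf v$ in $L^p(\Sigma\cap\B_\rho,\mathcal H^2)$ and for $\mathcal H^2(S_k)$. Comparing these with the per-annulus bound $\mathcal H^2(\mathrm{An}_r\cap(S_k\setminus S_{k+1}))\le Ce^kr^2$ produces only $|\log r|$ growth of the density. That is why the $\log\log$ cutoff is needed rather than a bounded-density argument.
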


Condition \cref{eq:Jacobi-lower-bound-condition} is always satisfied for the area functional and minimal surfaces.

To put this condition into better context, we show that under a $C^3$-pinching condition on the integrand $F$, \cref{eq:Jacobi-lower-bound-condition} will be satisfied on $\F$-stable surfaces.

\begin{definition}[\(C^3\)-pinching condition]
\label{def:C3-pinching}
Let \(\Psi_F(x,\nu)\) be defined as in \eqref{eq:def-Psi_F}, and define the pointwise ellipticity ratio
\[
    \beta(x,\nu)
    :=
    \frac{\lambda_{\max}(\Psi_F(x,\nu))}
         {\lambda_{\min}(\Psi_F(x,\nu))},
\]
where the eigenvalues are taken on \(T_\nu\mathbb S^2\).

We say that \(F\) satisfies the \textit{\(C^3\)-pinching condition} if,
after possibly an affine normalization, for every
\((x,\nu)\in\B_1\times\mathbb S^2\),
\[
    \|\nabla_{\mathbb S^2}\Psi_F\|(x,\nu)
    \le
    2\,\lambda_{\min}(\Psi_F(x,\nu))
    \sqrt{
        \frac{3\beta(x,\nu)(\beta(x,\nu)+1)}
             {3\beta(x,\nu)^2+1}
    } .
\]
\end{definition}
We observe that 
$$\sqrt{
        \frac{3\beta(x,\nu)(\beta(x,\nu)+1)}
             {3\beta(x,\nu)^2+1}
    }\geq 1,$$
    so in particular the $C^3$-pinching condition is satisfied if $\|\nabla_{\mathbb S^2}\Psi_F\|(x,\nu)
    \le
    2\,\lambda_{\min}(\Psi_F(x,\nu))$.

We now show the following.
\begin{proposition}\label{prop:C3-pinching-gives-jacobi-lower-bound}
    If  the integrand \(F\) satisfies the \(C^3\)-pinching condition in \cref{def:C3-pinching} and $\Sigma\subset \B_1\setminus\{0\}$ is a smooth embedded surface which is \(\F\)-stationary and \(\F\)-stable in \(\B_1\setminus\{0\}\), then there are two constants $\rho_\F,C_\F>0$ such that:
    \begin{align}
        \mathcal J_F |x| \ge -C_\F
    \qquad
    \text{on } \Sigma\cap \B_{\rho_\F}.
    \end{align}
\end{proposition}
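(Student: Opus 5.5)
We compute $\mathcal J_F|x|$ directly on $\Sigma$ and show that the only potentially unbounded terms, which scale like $|A_\Sigma|^2|x|$ and $|A_\Sigma|$, combine into a nonnegative quadratic form under the pinching hypothesis. The remaining terms are $O(1)$ (after using $|A_\Sigma|\le C|x|^{-1}$) or carry a favorable sign.

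\textbf{Setup.} Write the Jacobi operator, in normal coordinates centered at the origin (after the affine normalization in \cref{def:C3-pinching}), as
\[
\mathcal J_F\phi=\operatorname{div}_\Sigma(\Psi_F\nabla_\Sigma\phi)+\tr_\Sigma(\Psi_FA_\Sigma^2)\phi+\mathcal R\phi .
\]
Here $\mathcal R$ collects the $x$-derivative terms of $G$ and the curvature of the metric; these are bounded by $C(1+|A_\Sigma|)$ by \eqref{H:regularity}.

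The stability estimate of \cite[Theorem 2.8]{DePhilippisDeRosa}, applied on $B_{|x|/2}(x)$ as in \cref{prop:limit-lamination-is-flat}, gives $|A_\Sigma|(x)\le C/|x|$. Hence $\mathcal R|x|\ge -C(|x|+1)$, which is harmless. It therefore remains to bound the principal part applied to $r=|x|$.

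\textbf{Computing the principal part.} We use $\nabla_\Sigma r=x^T/r$ and
\[
\nabla^2_\Sigma r=\frac{1}{r}\Big(g-\frac{x^T\otimes x^T}{r^2}\Big)+\frac{\langle x,\nu\rangle}{r}A_\Sigma .
\]
Expanding the divergence gives
\[
\operatorname{div}_\Sigma(\Psi_F\nabla_\Sigma r)=\Psi_F:\nabla^2_\Sigma r+(\nabla_\Sigma\Psi_F)[\nabla_\Sigma r],
\]
where $\nabla_\Sigma\Psi_F=D_x\Psi_F+D_\nu\Psi_F\circ d\nu=D_x\Psi_F-(\nabla_{\mathbb S^2}\Psi_F)\circ A_\Sigma$. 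Substituting, the principal part of $\mathcal J_F r$ equals $O(1)$ plus the following three terms:
\begin{enumerate}
\item[(a)] $r^{-1}\Psi_F:\big(g-e\otimes e\big)\ge r^{-1}\lambda_{\min}(1-|x^T|^2/r^2)\ge 0$, where $e=x^T/r$;
\item[(b)] $\frac{\langle x,\nu\rangle}{r}\,\tr(\Psi_FA_\Sigma)$, which vanishes by stationarity (up to $O(1)$ terms from the $x$-dependence of $F$);
\item[(c)] $r\,\tr(\Psi_FA_\Sigma^2)-\langle(\nabla_{\mathbb S^2}\Psi_F)[A_\Sigma e],e\rangle$-type terms, of size $|A|^2r$ and $|A|\,|e|$.
\end{enumerate}

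\textbf{Absorbing the cross term.} This is the key step. For the positive term, the stationarity constraint $\tr(\Psi_FA)=0$ lets us diagonalize $\Psi_F^{1/2}A\Psi_F^{1/2}$ with eigenvalues $\pm\mu$, as in the proof of \cref{thm:anisotropic-bernstein}. This gives
\[
\tr(\Psi_FA^2)\ge c(\beta)\lambda_{\min}|A|^2
\]
with an explicit $c(\beta)$ depending on $\beta$. We then aim to show, pointwise,
\[
r\,\tr(\Psi_FA^2)-\|\nabla_{\mathbb S^2}\Psi_F\|\,|A|\,|e|^2+\frac{\lambda_{\min}}{r}(1-|e|^2)\ge -C .
\]
Viewing the left side as a quadratic in $s=r|A|$, it is nonnegative whenever its discriminant satisfies
\[
\|\nabla_{\mathbb S^2}\Psi_F\|^2\le 4\,c(\beta)\lambda_{\min}^2\times(\text{the available zero-order coefficient}).
\]

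The main obstacle is that coefficient (a) degenerates when $x$ is nearly tangent ($|e|\to1$). At the same time, the cross term carries $|e|^2$ and is maximal there. So one cannot simply use (a). The first thing to try is to exploit the trace-free structure more finely: compute the optimal lower bound of $\tr(\Psi_FA^2)$ minus the linear term over trace-free (with respect to $\Psi_F$) symmetric $A$ in the worst eigenbasis. This is the constrained minimization that should produce exactly the factor $\frac{3\beta(\beta+1)}{3\beta^2+1}$ in \cref{def:C3-pinching}. The hypothesis is then precisely that this constrained quadratic in $A$ is nonnegative, so that (c) is bounded below by $-C$ using only the bounded remainder terms.

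If this pointwise estimate falls short in the regime $|e|\approx1$ with no help from (a), the fallback is to use $|A|\le \epsilon/r$ near points where the estimate is tight. Collecting all the pieces then yields $\mathcal J_F r\ge -C_\F$ on $\Sigma\cap\B_{\rho_\F}$, with $\rho_\F$ chosen so that the $O(1)$ and $O(r|A|)$ lower-order errors are controlled.
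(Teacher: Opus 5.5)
Your overall plan matches the paper's: bound $\mathcal R|x|$ using $|A_\Sigma|\le C/|x|$, remove (b) by stationarity, and complete the square in $s=r|A_\Sigma|$. The proposal does not close, however, because term (a) is mis-estimated, and the "main obstacle" you build the rest of the argument around does not exist.

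You bound $\Psi_F:(g-e\otimes e)$ below by $\lambda_{\min}(1-|e|^2)$, which keeps only the smaller eigenvalue of $g-e\otimes e$. On the two-dimensional tangent plane, $g-e\otimes e$ has eigenvalue $1$ orthogonal to $e$ and $1-|e|^2$ along $e$. So, with $\lambda_1\le\lambda_2$ the eigenvalues of $\Psi_F$ on $T\Sigma$,
\[
\Psi_F:(g-e\otimes e)=\tr_\Sigma\Psi_F-\langle\Psi_F e,e\rangle\ \ge\ \lambda_1+\lambda_2(1-|e|^2)\ \ge\ \lambda_{\min}
\]
uniformly for $|e|\le 1$. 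As a sanity check, for the area functional on a plane through the origin one has $|e|\equiv1$ and $\Delta r=1/r$, not $0$.

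With your degenerate bound the method genuinely fails. At $|e|\approx1$, $r$ times your lower bound is a quadratic $a s^2-bs$ with no constant term. Its minimum $-b^2/(4a)$ yields only $\mathcal J_F r\ge -C/r$. No constrained minimization over $\Psi_F$-trace-free $A$ repairs this, because the linear term does not vanish.

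The fallback $|A_\Sigma|\le\epsilon/r$ is not available either. It comes from \cref{prop:limit-lamination-is-flat}, which rests on the Bernstein theorem under the ratio bound $<8$. The $C^3$-pinching condition only gives $\beta\le e^{2\sqrt3\pi}$ (\cref{lem:prelim-info-C3-pinching}).

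Once (a) is bounded below by $\lambda_{\min}/r$, your discriminant condition becomes $k\,\|\nabla_{\mathbb S^2}\Psi_F\|\le 2\lambda_{\min}\sqrt{c(\beta)}$. Here:
\begin{itemize}
\item $c(\beta)=\frac{\beta(\beta+1)}{\beta^2+1}$ comes from Cayley--Hamilton together with $\tr_\Sigma(\Psi_FA_\Sigma)=O(1)$.
\item $k$ is the constant multiplying $|x|\,\|\nabla_{\mathbb S^2}\Psi_F\|\,|A_\Sigma|$ in the bound for $\langle\operatorname{div}_\Sigma\Psi_F,x^T\rangle$. This term carries a single factor of $e$, not $|e|^2$.
\end{itemize}
The crude choice $k=1$ does not reach the stated hypothesis, since $\frac{3\beta(\beta+1)}{3\beta^2+1}\ge\frac{\beta(\beta+1)}{\beta^2+1}$. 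The paper uses the trace-free constraint and the symmetry of $\nabla_{\mathbb S^2}\Psi_F$ once more to obtain $k=\sqrt{(3\beta^2+1)/(3\beta^2+3)}$. Then $\sqrt{c(\beta)}/k=\sqrt{3\beta(\beta+1)/(3\beta^2+1)}$, which is exactly \cref{def:C3-pinching}. Completing the square gives $\mathcal J_F|x|\ge -C(1+|x||A_\Sigma|)\ge -C(1+K)$.
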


As a direct corollary we then can show that under this condition, singularities are removable:
\begin{corollary}\label{cor:removable-singularity-with-C3-pinching}
    If $F$ satisfies the $C^3$-pinching condition in \cref{def:C3-pinching} and $\Sigma\subset\B_1\setminus\{0\}$ is smooth, embedded, $\F$-stationary and $\F$-stable away from the origin, then the origin is a removable singularity and $\overline\Sigma$ is smooth in all of $\B_1$.
\end{corollary}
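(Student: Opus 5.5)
The corollary follows by chaining the two results stated just above. Since $F$ satisfies the $C^3$-pinching condition of \cref{def:C3-pinching}, and $\Sigma$ is a smooth embedded surface that is $\F$-stationary and $\F$-stable in $\B_1\setminus\{0\}$, \cref{prop:C3-pinching-gives-jacobi-lower-bound} applies. It gives constants $\rho_\F, C_\F>0$ with $\mathcal J_F|x|\ge -C_\F$ on $\Sigma\cap\B_{\rho_\F}$. This is exactly hypothesis \eqref{eq:Jacobi-lower-bound-condition} of \cref{thm:removable-singularity-with-jacobi-lower-bound}, with $\rho=\rho_\F$ and $C_0=C_\F$.

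Two points need checking before the theorem can be applied. First, \cref{thm:removable-singularity-with-jacobi-lower-bound} asks for $\Sigma$ to be properly embedded in $\B_1\setminus\{0\}$; this is implicit in the corollary's setting, and I will state it explicitly. Second, the smoothness of $F$ it requires is covered by the baseline hypotheses \eqref{H:regularity} together with the standing smoothness convention of \cref{sec:mainresults}. The theorem then gives that $\overline\Sigma\cap\B_1$ is a smooth embedded $\F$-stable $\F$-minimal surface in $\B_1$. In particular the origin is removable, which is the claim.

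If $0\notin\overline\Sigma$, there is nothing to prove, since $\Sigma$ is already smooth in a neighbourhood of $\overline\Sigma$. The affine normalization allowed in \cref{def:C3-pinching} is also harmless: the Jacobi lower bound is established in the normalized coordinates, and removability is invariant under affine changes of coordinates.

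The corollary itself involves no real difficulty. All the substance lies in its two inputs: the computation behind \cref{prop:C3-pinching-gives-jacobi-lower-bound}, which uses stationarity, stability and the pinching inequality to control the third-derivative terms of $\mathcal J_F|x|$, and the separation and De Giorgi argument behind \cref{thm:removable-singularity-with-jacobi-lower-bound}. Both are taken as given here.
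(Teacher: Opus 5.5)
Your proposal is correct and matches how the paper obtains this corollary: it is a direct consequence of chaining \cref{prop:C3-pinching-gives-jacobi-lower-bound} with \cref{thm:removable-singularity-with-jacobi-lower-bound}. The first result supplies $\mathcal J_F|x|\ge -C_\F$ near the origin, via the curvature bound $|A_\Sigma|\le K/|x|$ and \cref{prop:C3-pinching-implies-subsolution-F-stationary}. Your additional remarks on proper embeddedness, the case $0\notin\overline\Sigma$, and the affine normalization are harmless bookkeeping that the paper leaves implicit.
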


\subsection{The $C^3$-pinching condition and consequences}

The main consequence of \cref{def:C3-pinching} is that on embedded $\F$-minimal surfaces in $\R^3$,the extrinsic distance $|x|$ is a \textit{sub-solution} (in a sense) to the anisotropic Jacobi operator on the surface. Define the $\F$-Jacobi operator:
\begin{align}
    \mathcal{J}_F v :=
    \operatorname{div}_\Sigma
    \bigl(\Psi_F(x,\nu_\Sigma)\nabla_\Sigma v\bigr)
    +
    \operatorname{tr}_\Sigma
    \bigl(\Psi_F(x,\nu_\Sigma)A_\Sigma^2\bigr)v
    +
    R_F(x,\nu_\Sigma,A_\Sigma)v.
\end{align}
Here \(R_F\) denotes the lower-order coefficient coming from the
\(x\)-dependence of the integrand. In the local coordinates under
consideration, the uniform \(C^3\)-bounds in \eqref{H:regularity} imply
\[
    |R_F(x,\nu_\Sigma,A_\Sigma)|
    \le
    C_F\bigl(1+|A_\Sigma|\bigr),
\]
where \(C_F\) depends only on the local ellipticity constants and the
local \(C^3\)-bounds for \(G\).

\begin{proposition}\label{prop:C3-pinching-implies-subsolution-F-stationary}
    Assume $F$ satisfies the assumptions in \cref{def:C3-pinching}. Let $\Sigma\subset \textbf{B}_1\setminus\{0\}\subset \R^3$ be an $\F$-minimal surface, then we have that:
    \begin{align}
        \mathcal{J}_F |x| \geq -C_{\F}\bigl(1+|x||A_\Sigma|\bigr)\,,
    \end{align}
    for all $x\in\B_\rho$ for small enough $\rho(\F)>0$.
\end{proposition}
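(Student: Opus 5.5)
The plan is a direct pointwise computation of $\mathcal J_F r$ for $r:=|x|$ restricted to $\Sigma$. Every term carrying $x$-derivatives of $G$ will be treated as an error of size $C_F(1+|x||A_\Sigma|)$. The main terms, those built from the frozen integrand, must combine into a nonnegative quantity, and this is exactly where the $C^3$-pinching condition of \cref{def:C3-pinching} enters. Throughout I use $\nabla_\Sigma r=x^T/r$ and the standard formula
\[
\nabla^2_\Sigma r=\frac{1}{r}\bigl(g_\Sigma-\nabla_\Sigma r\otimes\nabla_\Sigma r\bigr)+\frac{x\cdot\nu}{r}\,A_\Sigma ,
\]
valid up to the sign convention for $A_\Sigma$, which I fix once and for all.

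First I expand the divergence term:
\[
\operatorname{div}_\Sigma\bigl(\Psi_F\nabla_\Sigma r\bigr)=\tr_\Sigma\bigl(\Psi_F\nabla^2_\Sigma r\bigr)+\sum_i\bigl(D_\nu\Psi_F[A_\Sigma e_i]\,e_i\bigr)\cdot\nabla_\Sigma r+\sum_i\bigl(D_x\Psi_F[e_i]\,e_i\bigr)\cdot\nabla_\Sigma r ,
\]
where the chain rule uses $\nabla_{e_i}\nu=A_\Sigma e_i$. The last sum is $O(1)$ by \eqref{H:regularity}. In the Hessian term, the part $\frac{x\cdot\nu}{r}\tr_\Sigma(\Psi_F A_\Sigma)$ is handled by $\F$-stationarity. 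The frozen equation would give $\tr_\Sigma(\Psi_FA_\Sigma)=0$; with $x$-dependence the equation only reads $\tr_\Sigma(\Psi_F A_\Sigma)=O(1)$. So this part is also $O(1)$.

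The remaining part of the Hessian term is $\frac1r\tr\bigl(\Psi_F(g_\Sigma-\nabla r\otimes\nabla r)\bigr)$. Since $|\nabla_\Sigma r|\le1$, the tensor $g_\Sigma-\nabla r\otimes\nabla r$ dominates a rank-one projection onto a unit tangent vector, so this part is at least $\lambda_{\min}(\Psi_F)/r$. The zeroth-order terms are $\tr_\Sigma(\Psi_FA^2_\Sigma)\,r$, which is at least $\lambda_{\min}|A_\Sigma|^2r$, and $R_F\,r$, which is bounded by $C_F r(1+|A_\Sigma|)$ and hence absorbed into the error. Collecting everything,
\[
\mathcal J_F r\ \ge\ \frac{\lambda_{\min}}{r}+\lambda_{\min}|A_\Sigma|^2r-\bigl|D_\nu\Psi_F[A_\Sigma\cdot]\cdot\nabla_\Sigma r\bigr|-C_F\bigl(1+|x||A_\Sigma|\bigr).
\]

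The key step is to absorb the cross term $D_\nu\Psi_F[A_\Sigma e_i]e_i\cdot\nabla r$. The crude bound is $\|\nabla_{\mathbb S^2}\Psi_F\|\,|A_\Sigma|$. By AM--GM,
\[
2\lambda_{\min}|A_\Sigma|\le \frac{\lambda_{\min}}{r}+\lambda_{\min}|A_\Sigma|^2r ,
\]
so the crude bound together with the simplified pinching $\|\nabla_{\mathbb S^2}\Psi_F\|\le2\lambda_{\min}$ already closes the estimate.

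For the sharp factor $\sqrt{3\beta(\beta+1)/(3\beta^2+1)}$ I would exploit the stationarity constraint $\tr(\Psi_FA_\Sigma)=0$. In a frame diagonalizing $\Psi_F$, write $A_\Sigma$ as a symmetric matrix subject to this trace condition and keep the exact quantities instead of the eigenvalue bounds: $\tr(\Psi_F A^2)$ in place of $\lambda_{\min}|A|^2$, and $\tr\bigl(\Psi_F(g-\nabla r\otimes\nabla r)\bigr)$ in place of $\lambda_{\min}$. The absorption then becomes a finite-dimensional optimization over $(A,\nabla r)$, and I would maximize $|D_\nu\Psi_F[A\cdot]\cdot\nabla r|^2/\bigl(\tr(\Psi_FA^2)\,\tr(\Psi_F(g-\nabla r\otimes\nabla r))\bigr)$ under the constraint. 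I expect this step to be the main obstacle: it is a Lagrange-multiplier computation with $\beta$ as the only parameter, and it should produce precisely the stated threshold.

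The conclusion $\mathcal J_F|x|\ge -C_\F(1+|x||A_\Sigma|)$ holds for $|x|\le\rho(\F)$. The radius $\rho(\F)$ is used only so that the $O(1)$ errors, multiplied by $r$ where needed, stay controlled by $C_F$.
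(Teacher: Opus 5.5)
Your expansion of $\mathcal J_F|x|$ has the same skeleton as the paper's proof: the Hessian of $r$, the chain rule for $\Psi_F(x,\nu)$, stationarity to control $\frac{x\cdot\nu}{r}\tr_\Sigma(\Psi_FA_\Sigma)$, the lower bound $\lambda_{\min}/r$ for the first-order term, and completing the square. The trouble is in what you actually prove. You use the crude bounds $\tr_\Sigma(\Psi_FA_\Sigma^2)\ge\lambda_{\min}|A_\Sigma|^2$ and $|\text{cross term}|\le\|\nabla_{\mathbb S^2}\Psi_F\|\,|A_\Sigma|$. These only close under $\|\nabla_{\mathbb S^2}\Psi_F\|\le 2\lambda_{\min}$, which the paper records merely as a \emph{sufficient} condition for \cref{def:C3-pinching}. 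The proposition assumes the full pinching condition. Its bound $2\lambda_{\min}\sqrt{3\beta(\beta+1)/(3\beta^2+1)}$ is strictly larger than $2\lambda_{\min}$ for every finite $\beta$, by a factor $\sqrt{3/2}$ at $\beta=1$. The sharp step is only announced (``I expect''), so the statement as given is not proved.

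The announced optimization also lacks the structural input that makes the sharp constant reachable. The paper gets it from two separate improvements, both driven by $\tr_\Sigma(\Psi_FA_\Sigma)=O(1)$.
\begin{enumerate}
\item Cayley--Hamilton gives $\tr_\Sigma(\Psi_FA_\Sigma^2)=H_\Sigma\tr_\Sigma(\Psi_FA_\Sigma)-\det(A_\Sigma)\tr_\Sigma\Psi_F\ge\lambda_1\frac{\beta(\beta+1)}{\beta^2+1}|A_\Sigma|^2-C|A_\Sigma|$.
\item Up to $O(1)$, $A_\Sigma$ is Frobenius-orthogonal to $\Psi_F$. So only the component of the symmetric matrix $(\nabla_{\mathbb S^2}\Psi_F)[\nabla_\Sigma r]$ orthogonal to $\Psi_F$ pairs with $A_\Sigma$. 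The key fact is that $\nabla_{\mathbb S^2}\Psi_F=D^3_{\nu\nu\nu}G|_{\nu^\perp}$ is \emph{totally symmetric}, because $D^2_{\nu\nu}G(x,\nu)\nu=0$ kills the connection terms. This bounds that component by $\sqrt{(3\beta^2+1)/(3\beta^2+3)}\,\|\nabla_{\mathbb S^2}\Psi_F\|\,|\nabla_\Sigma r|$.
\end{enumerate}
Keeping only $\lambda_1$ for the first-order term and completing the square then gives exactly the threshold of \cref{def:C3-pinching}.

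Your proposal never invokes total symmetry, and without it the optimization cannot reach the stated constant. Suppose the tensor is only known to be symmetric in its two matrix indices. At $\beta=1$, the supremum of the cross term over unit $\Psi_F$-traceless $A_\Sigma$ and unit $\nabla_\Sigma r$ is then $\tfrac{\sqrt3}{2}\|\nabla_{\mathbb S^2}\Psi_F\|$. That yields only the threshold $4\lambda/\sqrt3<2\sqrt{3/2}\,\lambda$. Total symmetry lowers the supremum to $\sqrt{2/3}\,\|\nabla_{\mathbb S^2}\Psi_F\|$, which is exactly what is needed.

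A genuine joint optimization over $(A_\Sigma,\nabla_\Sigma r)$ that does use total symmetry would give a threshold at least as permissive as the stated one. In general it would not equal it, since the paper's constant comes from decoupled bounds. So ``precisely the stated threshold'' is also not what you should expect, although a more permissive threshold would still suffice if you carried the computation out.
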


\begin{proof}[Proof of \cref{prop:C3-pinching-gives-jacobi-lower-bound}]
    It is immediate from stability and boundary estimates using \cite[Theorem 2.8]{DePhilippisDeRosa} that there exists some constant $K>0$, depending on $\hau^2(\Sigma)$ such that:
    \begin{align}\label{eq:curvature-estimate}
        |A_\Sigma(x)| \leq \frac{K}{|x|}\,.
    \end{align}
    Then we apply \cref{prop:C3-pinching-implies-subsolution-F-stationary}.
\end{proof}

Before the proof of \cref{prop:C3-pinching-implies-subsolution-F-stationary} we collect some preliminary information:
\begin{lemma}\label{lem:prelim-info-C3-pinching}
    Given an anisotropy integrand $F$ as in \cref{def:C3-pinching} with $\|F\|_{C^{2,\alpha}}\leq \Lambda$ the following is true:
    \begin{enumerate}
        \item We have the following global ellipticity ratio bound for all $x\in\B_1$:
        \begin{align}
            \beta(x,\nu) \leq  e^{2\sqrt{3}\pi}\,,
        \end{align}
        where $\beta(x,\nu)$ is the ellipticity ratio as defined in \cref{def:C3-pinching}.
        \item There exists small enough $\rho(\Lambda,\F)>0$ such that for all $x\in \B_{\rho}$ we have 
        \begin{align}\label{eq:C3-pinching-lower-bound}
            |\lambda_{\min}\Psi_F(x,\nu)| \geq \epsilon_\F\,,
        \end{align}
        for some positive constant $\epsilon_\F>0$.
    \end{enumerate}
\end{lemma}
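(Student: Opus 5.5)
The plan is to prove the two items separately. Item (1) reduces to a differential inequality for $\log\beta$ along great circles of $\mathbb S^2$, combined with a topological fact that pins down a point where $\beta=1$. Item (2) is essentially a continuity statement.

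\emph{Item (1).} Fix $x\in\B_1$ and regard $\nu\mapsto\Psi_F(x,\nu)$ as a smooth section of the bundle of symmetric endomorphisms of $T\mathbb S^2$.

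\textbf{Step 1: a point with $\beta=1$.} Decompose $\Psi_F=\tfrac12\tr(\Psi_F)\,\mathrm{Id}+\Psi_F^\circ$, where $\Psi_F^\circ$ is the trace-free part. Trace-free symmetric endomorphisms of $T\mathbb S^2$ form a real rank-two bundle, which can be identified with $(T\mathbb S^2)^{\otimes_{\mathbb C}2}$. Its Euler number is $2\chi(\mathbb S^2)=4\neq0$, so every section vanishes somewhere. Hence there is $\nu_0$ with $\Psi_F^\circ(x,\nu_0)=0$, that is, $\beta(x,\nu_0)=1$. Every other $\nu$ is joined to $\nu_0$ by a unit-speed great-circle arc of length at most $\pi$.

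\textbf{Step 2: derivative of $\log\beta$.} Along such an arc $\nu(s)$, the eigenvalues are Lipschitz functions of $s$ (Weyl's inequality), with $|\lambda_{\max}'|,|\lambda_{\min}'|\le\|\nabla_{\mathbb S^2}\Psi_F\|$. A sharper estimate is available: the eigenvalue gap is $\mu:=\lambda_{\max}-\lambda_{\min}=2|\Psi_F^\circ|$, so both $(\lambda_{\max}+\lambda_{\min})'$ and $\mu'$ are controlled by the corresponding components of $\nabla\Psi_F$. Write $\log\beta$ in terms of $\lambda_{\min}$ and $\mu$, optimize over how $|\nabla\Psi_F|^2$ splits between its trace and trace-free parts, and insert the $C^3$-pinching hypothesis. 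This should give
\[
\Bigl|\frac{\mathrm d}{\mathrm ds}\log\beta\Bigr|\le 2\sqrt3 .
\]
The factor $\sqrt{3\beta(\beta+1)/(3\beta^2+1)}$ in \cref{def:C3-pinching} appears to be exactly what makes this optimization close with the constant $2\sqrt3$.

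\textbf{Step 3: integrate.} Integrating from $\nu_0$, where $\log\beta=0$, over length at most $\pi$ gives $\log\beta(x,\nu)\le2\sqrt3\pi$.

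The main obstacle is Step 2, the algebra showing that the pinching constant yields precisely $2\sqrt3$. If the sharp computation fails, I would fall back on the crude bound $|\partial_s\log\beta|\le\|\nabla\Psi_F\|\,(\lambda_{\max}^{-1}+\lambda_{\min}^{-1})$. This still produces a bound of the form $\beta\le e^{C\pi}$, which suffices for all later applications.

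\emph{Item (2).} This is immediate from the standing hypothesis \eqref{H:ellipticity}: $\lambda_{\min}\Psi_F(x,\nu)\ge\lambda$ for all $(x,\nu)$, so we may take $\epsilon_\F=\lambda$ on all of $\B_1$.

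If one prefers to use only $\|F\|_{C^{2,\alpha}}\le\Lambda$ together with ellipticity at the origin, the argument is continuity. The function $(x,\nu)\mapsto\lambda_{\min}\Psi_F(x,\nu)$ is continuous, with modulus controlled by $\Lambda$, and it is positive on the compact set $\{0\}\times\mathbb S^2$. Choosing $\rho$ so that the oscillation over $\B_\rho$ is at most half of $\min_\nu\lambda_{\min}\Psi_F(0,\nu)$ then gives \eqref{eq:C3-pinching-lower-bound}.
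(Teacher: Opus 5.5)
Your plan for item (1) has the same skeleton as the paper's proof. You find a point with $\beta=1$, bound $|\nabla_{\mathbb S^2}\log\beta|\le 2\sqrt3$, and integrate along a great-circle arc of length at most $\pi$. Your Euler-number argument for Step 1 is a cleaner, self-contained version of the paper's one-line appeal to umbilic points of a convex body (implicitly the Wulff shape). Item (2) is fine: \eqref{H:ellipticity} does give $\epsilon_\F=\lambda$ directly, and your continuity alternative is exactly the paper's argument.

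\textbf{The gap is Step 2.} It is the only real computation in item (1), and you assert that it ``should give'' $2\sqrt3$ without deriving it. Your fallback does not prove the stated lemma. Bounding $|\lambda_{\max}'|$ and $|\lambda_{\min}'|$ separately gives $|\partial_s\log\beta|\le 2(1+\beta^{-1})\sqrt{3\beta(\beta+1)/(3\beta^2+1)}$. This equals $2\sqrt6$ at $\beta=1$, so it only yields $\beta\le e^{2\sqrt6\,\pi}$.

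\textbf{The fix.} Combine the two eigenvalue derivatives in $\ell^2$, not $\ell^1$. With $\lambda_1\le\lambda_2$ you have $\partial_s\log\beta=\lambda_2'/\lambda_2-\lambda_1'/\lambda_1$, and $(\lambda_1')^2+(\lambda_2')^2\le\|\partial_s\Psi_F\|^2$. Where the eigenvalues are distinct, the $\lambda_i'$ are the diagonal entries of $\partial_s\Psi_F$ in an eigenbasis; elsewhere the inequality holds a.e.\ along the arc by the Hoffman--Wielandt inequality. Cauchy--Schwarz and the pinching hypothesis then give
\[
|\partial_s\log\beta|\le\frac{\|\nabla_{\mathbb S^2}\Psi_F\|}{\lambda_1}\cdot\frac{\sqrt{1+\beta^2}}{\beta}\le 2\sqrt3\,\sqrt{\frac{(\beta+1)(\beta^2+1)}{\beta(3\beta^2+1)}}\le 2\sqrt3 .
\]
The last step is equivalent to $2\beta^3-\beta^2-1=(\beta-1)(2\beta^2+\beta+1)\ge0$, which holds for $\beta\ge1$.

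Your trace/trace-free route gives the same bound once carried out. With $\sigma=\lambda_1+\lambda_2$ and $\mu=\lambda_2-\lambda_1$, one has $\tfrac12\bigl((\sigma')^2+(\mu')^2\bigr)=(\lambda_1')^2+(\lambda_2')^2$.

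This argument needs $\|\nabla_{\mathbb S^2}\Psi_F\|$ to be a Hilbert--Schmidt norm, as the paper's proof implicitly assumes. With the operator norm, take $\partial_s\Psi_F$ proportional to $\mathrm{diag}(-1,1)$ in the eigenbasis: this attains your crude bound, so $2\sqrt3$ would not follow.
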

\begin{proof}
    At a point $\nu\in\mathbb{S}^2$ take the eigenvalues of $\Psi_F$ to be $0<\lambda_1 \leq \lambda_2$ and take a unit tangent vector $e$ in $T_\nu\mathbb{S}^2$, we know that:
    \begin{align}
        \de_e(\beta) = \de_e\left(\frac{\lambda_2}{\lambda_1}\right) = \frac{\de_e(\lambda_2) - \beta \de_e(\lambda_1)}{\lambda_1}\,.
    \end{align}
    Now note that the condition in \cref{def:C3-pinching} implies:
    \begin{align}
        |\de_e(\beta)| \leq \frac{\sqrt{\de_e(\lambda_1)^2 + \de_e(\lambda_2)^2}}{\lambda_1}\sqrt{1+\beta^2}  \leq \frac{|\nabla_{\mathbb{S}^2}\Psi_F|}{\lambda_1}\sqrt{1+\beta^2} \leq 2\sqrt{3}\beta\,.
    \end{align}
    This in turn implies that:
    \begin{align}
        |\nabla_{\mathbb{S}^2} \log(\beta) | \leq 2\sqrt{3}\,.
    \end{align}
    Now since any smooth convex shape has an Umbilical point, there exists $\nu_0$ such that $\beta(x,\nu_0) = 1$. This means:
    \begin{align}
        \beta \leq e^{2\sqrt{3}\pi}\,.
    \end{align}
    Moreover since every anisotropy has an umbilical point, then this implies a global ellipticity bound on $\mathbb{S}^2$ as well. Now define $c_\F:=\inf_{\nu\in\mathbb{S}^2}\lambda_{\min} (\Psi_F(0,\nu))$. Since $\|F\|_{C^{2,\alpha}}\leq \Lambda$, we see that $|\Psi_F(x,\nu) - \Psi_F(0,\nu)| \leq \Lambda|x|^\alpha$. Taking $|x| \leq \frac{1}{2\Lambda}c_\F^{1/\alpha}$ the second conclusion follows.
\end{proof}

\begin{proof}[Proof of \cref{prop:C3-pinching-implies-subsolution-F-stationary}]
    Since \(\Sigma\) is \(\F\)-stationary, the anisotropic mean curvature vanishes. In the local coordinates under consideration, this gives
    \begin{align}\label{eq:HF-meancurvature}
        |\tr_\Sigma(\Psi_F A_\Sigma)| \leq C_F\,.
    \end{align}
    With this we calculate:
    \begin{align}
    \begin{aligned}
        \mathcal{J}_F |x| \geq &\frac{1}{|x|}\left[\tr_\Sigma(\Psi_F) - \frac{\ang{\Psi_Fx^T,x^T}}{|x|^2} +\tr_\Sigma(\Psi_F A_\Sigma^2)|x|^2+ \ang{\mathrm{div}_\Sigma \Psi_F,x^T}\right] \\[5pt] &+ R_F(x,\nu_\Sigma,A_\Sigma)|x| - C_F.
    \end{aligned}
    \end{align}
    Here \(R_F(x,\nu_\Sigma,A_\Sigma)\) denotes the lower-order contribution coming from the \(x\)-dependence of \(F\), and satisfies
    \begin{equation}\label{eq:QF-bound}
        |R_F(x,\nu_\Sigma,A_\Sigma)|
        \le
        C_F(1+|A_\Sigma|).
    \end{equation}
    First note that using \cref{eq:HF-meancurvature} and the symmetry of $\nabla_{\mathbb{S}^2} \Psi_F$ we gather that:
    \begin{align}
    \begin{aligned}
        \left|\ang{\mathrm{div}_\Sigma \Psi_F,x^T}\right| &\leq |x||\nabla_{\mathbb{S}^2} \Psi_F| \left[ \sqrt{\frac{3\beta^2+1}{3\beta^2 + 3}}|A_\Sigma| + \frac{C_F}{\lambda_{\min}(\Psi_F)}\right]\\
        &\leq |x||\nabla_{\mathbb{S}^2} \Psi_F| \sqrt{\frac{3\beta^2+1}{3\beta^2 + 3}}|A_\Sigma| + C_{\F}|x| \,.
    \end{aligned}
    \end{align}
    Second by Cayley-Hamilton and \cref{eq:HF-meancurvature} we gather that:
    \begin{align}\label{eq:cayley-hamilton-1}
    \begin{aligned}
        \tr_\Sigma(\Psi_F A_\Sigma^2 ) &= \tr_\Sigma(A_\Sigma)\tr_\Sigma(\Psi_F A_\Sigma) -(\det A_\Sigma)\tr_\Sigma(\Psi_F)\\ &\geq -C_F|A_\Sigma| -(\det A_\Sigma)\tr_\Sigma(\Psi_F)\,.
    \end{aligned}
    \end{align}
    With the eigenvalues of $\Psi_F$ as $0<\lambda_1\leq\lambda_2$ and again by \cref{eq:HF-meancurvature} we assert that:
    \begin{align}\label{eq:lower-bound-HF}
        -\det(A_\Sigma) \geq \frac{\lambda_1\lambda_2}{\lambda_1^2 + \lambda_2^2}|A_\Sigma|^2 - \frac{C_F}{\sqrt{\lambda_1^2+\lambda_2^2}}|A_\Sigma|\,.
    \end{align}
    We use \cref{eq:cayley-hamilton-1}, \cref{eq:lower-bound-HF} and \cref{eq:C3-pinching-lower-bound} we see:
    \begin{align}
        \tr_\Sigma(\Psi_F A_\Sigma^2 ) \geq \lambda_1\frac{\beta(\beta+1)}{\beta^2+1}|A_\Sigma|^2 - C_{\F}|A_\Sigma|
    \end{align}
    Then we estimate as follows:
    \begin{align}
    \begin{aligned}
        &\tr_\Sigma(\Psi_F) - \frac{\ang{\Psi_Fx^T,x^T}}{|x|^2} +\tr_\Sigma(\Psi_F A_\Sigma^2)|x|^2+ \ang{\mathrm{div}_\Sigma \Psi_F,x^T}  \\
        &\qquad\geq \lambda_1 + \lambda_1|x|^2\frac{\beta(\beta+1)}{\beta^2+1}|A_\Sigma|^2 - |x||\nabla_{\mathbb{S}^2} \Psi_F|\sqrt{\frac{3\beta^2+1}{3\beta^2 + 3}}|A_\Sigma| - C_\F|x|^2|A_\Sigma|\,.
    \end{aligned}
    \end{align}
    Now if $|\nabla_{\mathbb{S}^2} \Psi_F| \leq 2\lambda_1 \sqrt{\frac{3\beta(\beta+1)}{3\beta^2+1}}$, with a simple completion of squares and note \eqref{eq:QF-bound} we gather that:
    \begin{align}
        \mathcal{J}_F|x| \geq -C_{\F}\bigl(1+|x||A_\Sigma|\bigr)
    \end{align}
\end{proof}

\subsection{The separation function}
First we collect some preliminary information. Name $\mathrm{An}_r = B_{r}(0)\setminus B_{r/2}(0)$. Then the estimate \cref{eq:curvature-estimate} will essentially show that each connected component of $\Sigma\cap \mathrm{An}_{r}$ belongs to a compact family. We will summarizes the information in the next proposition:
\begin{proposition}\label{prop:annuli-structure}
    Fix any $\tau>0$ small and let $\Sigma$ be embedded, smooth, $\F$-stationary and $\F$-stable away from the origin. Then there exists a constant $\delta_0>0$ such that for any $x\in\Sigma$ the intersection $\Sigma \cap B_{\delta_0 |x|}(x)$ is a disjoint union of disks, graphical over some plane with gradient bounded by $\tau>0$.
\end{proposition}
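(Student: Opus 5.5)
The plan is to combine the scale-invariant curvature estimate \eqref{eq:curvature-estimate}, $|A_\Sigma(x)|\le K/|x|$, with a standard graphical-decomposition argument at the natural scale $|x|$. Fix $x\in\Sigma$ with $|x|$ small (for $|x|$ bounded below the statement reduces to a fixed compact region of $\B_1\setminus\{0\}$, where interior curvature bounds are uniform). Set $r:=|x|$ and consider the ball $B_{r/2}(x)$. It stays at distance at least $r/2$ from the origin, so on $\Sigma\cap B_{r/2}(x)$ we have $|A_\Sigma|\le 2K/r$.

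The first step is to rescale by $r^{-1}$. The rescaled surface $\Sigma_r:=r^{-1}\Sigma$ is $\F_r$-stationary and $\F_r$-stable in $B_{1/2}(x/r)$, where the rescaled integrand $F_r(y,T)=F(ry,T)$ satisfies \eqref{H:comparability}--\eqref{H:ellipticity} with the same constants. In addition, $\Sigma_r$ satisfies the uniform bound $|A_{\Sigma_r}|\le 2K$ there. By the standard lemma for embedded surfaces with bounded second fundamental form, there is $\delta_1=\delta_1(K,\tau)>0$ with the following property. For every $y\in\Sigma_r\cap B_{1/4}(x/r)$, the connected component of $\Sigma_r\cap B_{\delta_1}(y)$ through $y$ is a graph over $T_y\Sigma_r$ with gradient at most $\tau/2$. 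This follows by integrating $|\nabla\nu|\le 2K$ along geodesics of length at most $C\delta_1$, together with properness of $\Sigma$ in $\B_1\setminus\{0\}$.

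The main obstacle is a component $D$ of $\Sigma_r\cap B_{\delta_0}(x/r)$ that does not contain $x/r$. The plan is to show that such a component is also a graph, and over the same plane $T_{x/r}\Sigma_r$, rather than only over its own tangent plane. Take $\delta_0\ll\delta_1$ and a point $z\in D$. The component through $z$ of $\Sigma_r\cap B_{\delta_1}(z)$ is a $\tau/2$-graph over $T_z\Sigma_r$, so it is nearly a disk of radius $\delta_1$ centred at $z$. Because $\Sigma_r$ is embedded, this disk cannot cross the graphical disk through $x/r$. Two almost-flat disjoint disks of radius comparable to $\delta_1$, whose centres lie within $2\delta_0$ of each other, must have tilt difference $O(\delta_0/\delta_1+\tau)$; otherwise they would intersect. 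Hence, choosing $\delta_0/\delta_1$ small relative to $\tau$, every component meeting $B_{\delta_0}(x/r)$ is a graph over $T_{x/r}\Sigma_r$ with gradient at most $\tau$. The same choice also ensures that each such component meets $B_{\delta_0}(x/r)$ in a single disk.

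Finally, the number of sheets is finite because $\Sigma$ is proper in $\B_1\setminus\{0\}$, though not uniformly so; the statement does not require a uniform count. Scaling back by $r$ gives that $\Sigma\cap B_{\delta_0|x|}(x)$ is a disjoint union of disks, each graphical over $T_x\Sigma$ with gradient at most $\tau$. The constant $\delta_0$ depends only on $K$ and $\tau$, and is therefore independent of $x$.
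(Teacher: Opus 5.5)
Your proposal is correct and follows the same route as the paper. The paper's proof is one line: it invokes the scale-invariant bound $|A_\Sigma(x)|\le K/|x|$ from \eqref{eq:curvature-estimate} together with standard graphical estimates at scale $|x|$. Your rescaling, single-sheet graph lemma, and embeddedness/tilt comparison spell this out, and they place all sheets over the common plane $T_x\Sigma$, which is how the statement is later used in \cref{lem:sublevelset-density-estimate}.

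The one thing to tidy is the constants. Since the tilt between sheets is only bounded by $O(\delta_0/\delta_1+\tau)$, run the single-sheet lemma with gradient $c\tau$ instead of $\tau/2$, and take $\delta_0\le c\tau\delta_1$. Then each sheet's gradient over $T_x\Sigma$ genuinely stays below $\tau$.
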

\begin{proof}
    The proof follows from the curvature estimate \cref{eq:curvature-estimate} and standard estimate for normal coordinates.
\end{proof}

\begin{proposition}\label{prop:separation-function-definition}
    Let $\Sigma$ be embedded, smooth , $\F$-stationary and $\F$-stable away from the origin. Then there exists a constant $\kappa_0(K)>0$ such that for any $\kappa<\kappa_0$ there exists an open set $U\subset\Sigma\cap B_1$ and a \textit{separation function} $\mathbf{s}:U\to\R^+$ with the following properties:
    \begin{enumerate}
        \item On $U$ we have $\mathbf{s}(x) \leq \kappa|x|$ and for $z\in \de U\setminus  \de B_1$ we have $\mathbf{s}(z) = \kappa|z|$.
        \item For any $x\in U$, the unique connected component $\Sigma\cap B_{\delta_0|x|}(x)$ that passes through $x$ has distance with the nearest component at least $\frac{1}{2}\mathbf{s}(x)$ and at most $2\mathbf{s}(x)$.
        \item The separation function is continuous and locally Lipschitz on $U$.
        \item $\mathbf{s}$ satisfies the following differential inequality:
        \begin{align}\label{eq:separation-PDE}
            \mathcal{J}_F\mathbf{s} \leq C(K,\F) \left[\frac{\mathbf{s}^2}{|x|^3} + \frac{\mathbf{s}}{|x|}\right]\,.
        \end{align}
        \item The following bound is true on $U$:
        \begin{align}\label{eq:log-separation-bound}
            |\nabla_\Sigma \log(\mathbf{s})(x)|\leq \frac{C(K,\F)}{|x|}\,,
        \end{align}
        provided $\delta_0,\kappa_0>0$ is chosen small enough.
        \item For any point $x\in\Sigma\setminus U$, the intersection $\Sigma\cap B_{\kappa|x|}(x)$ is made of just one smooth graphical disk (with respect to some plane).
    \end{enumerate}
\end{proposition}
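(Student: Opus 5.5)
We construct $\mathbf{s}$ locally from the sheet structure of \cref{prop:annuli-structure} and then check the listed properties one at a time. Fix $\tau$ small and let $\delta_0$ be as in \cref{prop:annuli-structure}. Every $x\in\Sigma$ then has a neighborhood $B_{\delta_0|x|}(x)$ in which $\Sigma$ is a union of disjoint graphs of functions $u_1<u_2<\dots$ over a common plane $P_x$. Each $u_i$ has gradient at most $\tau$, and by \cref{eq:curvature-estimate} $|D^2u_i|\lesssim K/|x|$. Let $u_{i_0}$ be the sheet through $x$ and define the raw separation
\[
d(x):=\min\bigl\{u_{i_0+1}-u_{i_0},\,u_{i_0}-u_{i_0-1}\bigr\}
\]
evaluated at the projection of $x$, restricted to the sheets that actually exist. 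Set $U:=\{x: d(x)<\kappa|x|\}$ and $\mathbf{s}:=d$ on $U$. If $\kappa\ll\delta_0$, whenever a neighbor sheet is within $\kappa|x|$ it is a full graph over a disk of radius $\sim\delta_0|x|$, so the minimum is well defined.

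Items (1), (2), (3) and (6) then follow from the definition.

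\emph{Item (2).} Distance between nearly parallel graphs with slope $\le\tau$ is comparable to vertical distance within factor $1+O(\tau)$, which gives the factor $2$ after small Lipschitz adjustments when the plane is changed from $P_x$ to $P_y$.

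\emph{Item (1).} The boundary condition $\mathbf{s}=\kappa|z|$ on $\partial U\setminus\partial B_1$ is continuity of $d$ together with openness of $\{d<\kappa|x|\}$.

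\emph{Item (3).} $\mathbf{s}$ is a minimum of two smooth functions, hence locally Lipschitz.

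\emph{Item (6).} Outside $U$ no other sheet lies within $\kappa|x|$, so $\Sigma\cap B_{\kappa|x|}(x)$ is a single graphical disk.

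\emph{Item (5).} Differentiate $w=u_{i_0+1}-u_{i_0}$. Since both are graphs solving the same quasilinear equation, the difference solves a linear elliptic equation. Moreover $w>0$ on $B_{\delta_0|x|/2}$, with $|Dw|\le\tau$ and $\|D^2 w\|\lesssim K/|x|$. Rescale to unit size at scale $|x|$ and apply the Harnack inequality to the positive solution $w$; this gives $|Dw|\le C w/|x|$, i.e.\ $|\nabla\log\mathbf{s}|\le C/|x|$. The minimum of two such functions obeys the same bound a.e.

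\emph{Item (4) (main obstacle).} Write the $\F$-minimal equation for graphs as $\mathcal{Q}(x,u,Du,D^2u)=0$. Subtract the equations for $u_{i_0+1}$ and $u_{i_0}$ and linearize along the segment $u_{i_0}+t w$. The linearized operator at $u_{i_0}$ is exactly $\mathcal{J}_F$ transported to graph coordinates over $\Sigma$, with the factor relating vertical and normal displacement contributing only $O(\tau)$-controlled terms. The Taylor remainder is quadratic in $(w/|x|,\,Dw,\,|x|D^2w)$ times derivatives of $\mathcal Q$. The second-order term $D^2w$ remainder is the delicate one: it must be controlled by the Schauder bound $|D^2w|\lesssim w/|x|^2$, obtained from interior estimates applied to the linear equation for $w$ after rescaling. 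Substituting, the remainder is bounded by $C w^2/|x|^3$. The explicit $x$-dependence of $F$ and the lower-order coefficient $R_F$ yield errors $C w/|x|$, using $|A|\lesssim K/|x|$ and rescaling. This gives \eqref{eq:separation-PDE} for each smooth candidate $w$.

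Finally, for $\mathbf{s}=\min$ of the candidates we need the inequality in the distributional sense. A minimum of supersolution-type functions, here $\mathcal{J}_F w\le f$, is again of that type in the viscosity/distributional sense, since the minimum of functions satisfying an upper bound on $\mathcal{J}_F$ preserves that bound. This is the one-sided inequality stated in \eqref{eq:separation-PDE}, so the minimum concludes the proof.
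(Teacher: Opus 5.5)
Your overall structure matches the paper's: Harnack for item (5), Schauder control of the quadratic remainder, and a minimum over the two sides. But the key identification in item (4) is false, and it breaks the inequality. You assert that the linearization of the graph operator at $u_{i_0}$ equals $\mathcal{J}_F$ up to $O(\tau)$-controlled terms.

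For graphs over a fixed plane $P$ with unit normal $e$, the linearization of $\mathcal{Q}$ has no zeroth-order coefficient of size $|A_\Sigma|^2$. Its zeroth-order part comes only from $\partial_u\mathcal{Q}$, i.e.\ from the $x$-dependence of $F$. What it equals, up to a nonvanishing factor, is $\mathcal{J}_F(\eta w)$ with $\eta=\langle e,\nu_\Sigma\rangle$. In other words, it is $\mathcal{J}_F$ applied to the \emph{normal} component of the vertical variation $we$. Since $\eta$ is the translation Jacobi field up to $x$-dependence errors,
\[
\mathcal{J}_F(\eta w)=\eta\bigl(\mathcal{J}_F w-\tr_\Sigma(\Psi_FA_\Sigma^2)\,w\bigr)+2\langle\Psi_F\nabla_\Sigma\eta,\nabla_\Sigma w\rangle+O\bigl((1+|A_\Sigma|)\,w\bigr).
\]
So converting between vertical and normal displacement removes the entire potential term. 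This is not an $O(\tau)$ effect: $\tau$ bounds $|Du_i|$, whereas $|x||A_\Sigma|$ is only bounded by $K$ in this section.

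Consequently your $w$ satisfies $\mathcal{J}_F w\approx\tr_\Sigma(\Psi_FA_\Sigma^2)\,w$, up to cross terms of size $|A_\Sigma||\nabla w|$. Both are of order $K^2w/|x|^2$, and the potential term has the bad sign. They cannot be absorbed into $C[w^2/|x|^3+w/|x|]$, because $w^2/|x|^3\le\kappa w/|x|^2$. Concretely, take a translation-invariant integrand and suppose the neighbouring sheet is a vertical translate of the sheet through $x$. Then $w$ is constant and $\mathcal{J}_Fw=\tr_\Sigma(\Psi_FA_\Sigma^2)w\ge\lambda|A_\Sigma|^2w$. This violates \eqref{eq:separation-PDE} wherever $|x||A_\Sigma|$ is bounded below and $\kappa,|x|$ are small.

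The remedy is the paper's definition of $\mathbf{s}$. Write the neighbouring sheet as a normal graph $p\mapsto p+u(p)\nu_D(p)$ over the sheet $D$ through $x$, and let $\mathbf{s}$ be the smaller of the (at most two) normal heights. Linearizing the $\F$-stationarity equation for this normal graph gives $\div_\Sigma(\Psi_F\nabla u)+\tr_\Sigma(\Psi_FA_\Sigma^2)u+Q_F+R_F=0$, which is exactly $\mathcal{J}_Fu$ plus remainders. Your bounds $|\nabla u|\lesssim u/|x|$ and $|\nabla^2u|\lesssim u/|x|^2$, together with $|\nabla A_\Sigma|\lesssim K/|x|^2$, then give $|Q_F|\lesssim u^2/|x|^3$ and $|R_F|\lesssim u/|x|$. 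Your minimum-of-supersolutions step then correctly finishes (4).

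This choice also repairs a definitional problem. Your vertical distance depends on the non-canonical plane $P_x$, so $d$ is not a well-defined continuous function as $x$ varies, and item (3) tacitly assumes a fixed plane. The canonical choice $P_y=T_y\Sigma$ at every $y$ gives precisely the normal distance. Near $x$, however, that function is not $w\circ\pi_{P_x}$: it is roughly $\langle\nu_\Sigma(x),\nu_\Sigma\rangle\,w$. The Hessian of this factor at $x$ is $-A_\Sigma^2$, which is exactly where the missing potential term comes from.
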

\begin{proof}
    
    Take $\kappa_0>0$ small. Then for each $x\in\Sigma$ consider the unique disk in $\Sigma \cap B_{\delta_0|x|}(x)$ passing through $x$ to be $D$. Now take $U$ to be the set where $\Sigma \cap B_{\delta_0|x|}(x)$ has at least two connected components and the nearest disk in $\Sigma \cap B_{\delta_0|x|}(x)$ other than $D$ can be expressed as a normal graph $x+u\nu_D(x)$ over $D$ with $u < \kappa$ and define $\mathbf{s}(x)$ as the minimum height of the normal graph at $x$ (since there might be two on both sides).
    
    First by definition $U$ is open, since for any point $x\in U$ for a neighborhood nearby, all points have distance less than $\kappa|x|$. Continuity follows by similar arguments and using the structure given by \cref{prop:annuli-structure}.

    It remains to show the differential inequality. Take a point $x\in U$ and take the disk passing through $x$ in the ball of $B_{\delta_0|x|}(x)$ to be $D$ and take one of the normal graphs on one side as $(x,x+u\nu_\Sigma)$. Then we linearize the $\F$-stationary equation to see:
    \begin{align}
        \div_\Sigma (\Psi_F \nabla u) + \tr_\Sigma(\Psi_F A_\Sigma^2)u + Q_F(u,A) + R_F(u)= 0\,.
    \end{align}
    While the nonlinear term is bounded by:
    \begin{align}
        Q_F(u,A) \leq C_{\Lambda}\left[(|u||A| + |\nabla u|)\left(|\nabla^2 u| + |A||\nabla u| + |A|^2|u|\right) + |u||\nabla A||\nabla u|\right]\,,
    \end{align}
    and the divergence:
    \begin{align}
        |\div_{\Sigma}(\Psi_F)\cdot\nabla u| \leq C_{F}|A||\nabla u|\,.
    \end{align}
    Moreover the remainder $|R_F(u)| \leq {|u|}/{|x|}$. We treat $(|u||A| + |\nabla u|)|\nabla^2 u|$ as a perturbation for the non-divergence form operator and by scaling and compactness we see that $|\nabla A| \leq C_K/|x|^2$. Then with standard elliptic estimates and the Harnack inequality, we can deduce that locally:
    \begin{align}
        |\nabla u|(x) \leq \sup_{B_{\delta_0|x|}(x)} |\nabla u| \leq C\sup_{B_{\delta_0|x|}(x)}\frac{u}{|x|} \leq C\inf_{B_{\delta_0|x|}(x)}\frac{u}{|x|} \leq C\frac{u(x)}{|x|}\,.
    \end{align}
    With constants $C(K,\F)>0$ depending on the bound on $|x|A$ and the anisotropy. This means that in fact:
    \begin{align}
        |\nabla \log(u)(x)| \leq \frac{C(K,\F)}{|x|}\,.
    \end{align}
    Now we define $\mathbf{s}$ to be the minimum of the graphs that possibly exist on two sides. Since $\log(\min\{\cdot,\cdot\}) = \min\{\log(\cdot),\log(\cdot)\}$, we get the same Lipschitz bound on $\log(\mathbf{s})$.

    Moreover, using Schauder estimates for nondivergence form elliptic operators we also gather that \[|\nabla^2u| \leq C\sup_{B_{\delta_0|x|}}\frac{u}{|x|^2} \leq C\frac{u(x)}{|x|^2}.\] Putting all the information together and estimating the nonlinear terms, we gather at the differential inequality \cref{eq:separation-PDE}.
\end{proof}

\subsection{Super-exponential decay of sub-level sets of the separation}
In this section we prove that sub-level sets of the separation decay super-exponentially. We introduce the function:
\begin{align}
    \mathbf{v}(x) = \log\left(\frac{|x|}{\mathbf{s}(x)}\right)\,,
\end{align}
and the super-level sets (sub-level sets of ${\mathbf{s}}/{|x|}$):
\begin{align}
    S_k := \left\{\mathbf{v} \geq k\right\} = \left\{\mathbf{s} \leq e^{-k}|x|\right\}\cap \B_{\rho}\,.
\end{align}
Here $\rho \leq \rho_H$ in \cref{lem:hardy}. We aim to show the following:
\begin{proposition}[Super-exponential decay of sub-level sets]\label{prop:superexponential-decay}
    Let $F$ and $\Sigma$ be as in \cref{thm:removable-singularity-with-jacobi-lower-bound}. Then the following estimate holds:
    \begin{align}
        \hau^2(S_k) \leq (1+\theta_0)^{-e^{k}}\,,
    \end{align}
    for some $\theta_0>0$ and all large enough $k>k_0$.
\end{proposition}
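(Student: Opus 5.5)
The plan is to derive from \eqref{eq:separation-PDE} and \eqref{eq:Jacobi-lower-bound-condition} a differential inequality for $\mathbf v=\log(|x|/\mathbf s)$ on $U$. Then a Moser/De Giorgi iteration, driven by the Hardy inequality \cref{lem:hardy}, will give double-exponential decay of $\hau^2(S_k)$.

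\textbf{Step 1 (PDE for $\mathbf v$).} Write $w=\mathbf s/|x|$. By the quotient rule for $\mathcal J_F$ acting on $\mathbf s=w|x|$, using $\mathcal J_F\mathbf s\le C(\mathbf s^2/|x|^3+\mathbf s/|x|)$ and $\mathcal J_F|x|\ge -C_0$, we get on $U$
\[
\div_\Sigma(|x|^2\Psi_F\nabla w)\le C\bigl(w^2+w\bigr)|x|.
\]
Then $\mathbf v=-\log w$ satisfies
\[
-\div_\Sigma(|x|^2\Psi_F\nabla\mathbf v)+|x|^2\langle\Psi_F\nabla\mathbf v,\nabla\mathbf v\rangle\le C|x|.
\]
Gradient terms of the form $\langle\Psi_F\nabla w,\nabla|x|\rangle$ have to be absorbed. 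For this I use \eqref{eq:log-separation-bound}, which gives $|x||\nabla\mathbf v|\le C$, together with Young's inequality. The output is a weighted Riccati-type supersolution inequality: the good quadratic term $|x|^2|\nabla\mathbf v|^2$ appears with a positive sign. Near $\partial U$ we have $\mathbf v=\log(1/\kappa)$, which is bounded, so $(\mathbf v-k)_+$ vanishes on $\partial U\setminus\partial\B_1$ for $k>\log(1/\kappa)$. Hence $(\mathbf v-k)_+$ can be used as a test function without boundary terms, except at the origin and at $\partial\B_\rho$. The origin is handled with a cutoff, since $\mathbf v\in L^p$ by \cref{prop:log-separation-PDE-Lp}.

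\textbf{Step 2 (energy inequality).} Test with $\phi^2(\mathbf v-k)_+^{q}$, where $\phi$ is a cutoff in $|x|$. Using the quadratic term, this gives a Caccioppoli estimate of the form
\[
\int_{S_k}|x|^{2}|\nabla(\mathbf v-k)_+|^2\,\mathrm{d}\hau^2\;\lesssim\;\int_{S_k}|x|\,(\mathbf v-k)_+\,\mathrm{d}\hau^2 ,
\]
with scale-invariant weights. Next apply the weighted Hardy inequality \cref{lem:hardy} to $f=(\mathbf v-k)_+$. This controls $\int f^2$ (weighted by the natural measure $|x|^{-2}\,\mathrm d\hau^2$ or its analogue in the lemma) by the weighted Dirichlet energy. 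The point is that the ambient density defect is bounded via stability, so no density bound is needed.

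\textbf{Step 3 (iteration).} The Riccati term matters more than the linear one: on $S_k$ we have $\mathbf v\ge k$, so the quadratic term $\int_{S_k}|x|^2|\nabla\mathbf v|^2$ dominates. Combined with the Lipschitz bound \eqref{eq:log-separation-bound}, which says that going from level $k$ to level $k+1$ requires traveling a distance $\gtrsim|x|$, this should give a recursion of the form
\[
\hau^2(S_{k+1})\le\frac{C}{e^{k}}\,\hau^2(S_k)
\quad\text{or}\quad
\hau^2(S_{k+1})\le(1+\theta_0)^{-e^{k}(1-e^{-1})}\hau^2(S_k)^{\,\cdot}.
\]
The exponent gain per level should be proportional to $e^{k}$. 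My heuristic for the source of this growth: on $S_k$ the sheets are $e^{-k}|x|$ apart, so the number of sheets, and hence the local area available, is multiplied by the factor $e^{k}$ (compare \cref{lem:key-lemma}). Iterating from $k_0$ then gives $\hau^2(S_k)\le(1+\theta_0)^{-e^{k}}$.

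The main obstacle is Step 3: converting the energy estimate into a gain whose exponent grows like $e^k$, rather than the geometric decay $\theta^k$ that plain De Giorgi iteration gives. My first attempt would be to exploit the nonlinearity in Step 1. I would apply the iteration to $e^{\mathbf v}=|x|/\mathbf s$ instead of $\mathbf v$, because the equation is closer to linear in that variable and its level sets $\{e^{\mathbf v}\ge e^k\}$ are spaced exponentially. Running the Hardy-based De Giorgi step on those level sets, with level increments of order $e^{k}$, should produce decay of the required double-exponential form.
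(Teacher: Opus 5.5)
Your set-up is the paper's: the quotient identity for $\mathbf s/|x|$, a PDE for $\mathbf v$, testing with powers of $(\mathbf v-k)_+$ and closing with \cref{lem:hardy}. However, Step 1 contains a slip that removes the decisive term, and Step 3 contains no mechanism that produces double-exponential decay.

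\textbf{The slip in Step 1.} From \eqref{eq:separation-PDE} one gets $|x|\mathcal J_F\mathbf s\le C(\mathbf s^2/|x|^2+\mathbf s)=C(w^2+w|x|)$, not $C(w^2+w)|x|$. No gradient cross terms arise: the identity $\mathrm{div}_\Sigma(|x|^2\Psi_F\nabla w)=|x|\mathcal J_F\mathbf s-\mathbf s\,\mathcal J_F|x|$ is exact because $\Psi_F$ is symmetric. Hence the correct inequality is
\[
-\mathrm{div}_\Sigma\bigl(|x|^2\Psi_F\nabla\mathbf v\bigr)+|x|^2\langle\Psi_F\nabla\mathbf v,\nabla\mathbf v\rangle\le C\bigl(e^{-\mathbf v}+|x|\bigr).
\]
The source $e^{-\mathbf v}\le e^{-k}$ on $S_k$ that you dropped is exactly what drives the paper's proof.

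\textbf{The real gap in Step 3.} Your recursion $\hau^2(S_{k+1})\le Ce^{-k}\hau^2(S_k)$ only gives $\hau^2(S_k)\le e^{-k^2/2+O(k)}$. Fed into the argument of \cref{lem:key-lemma}, this yields a density ratio of order $e^{C\sqrt{|\log r|}}$, which is too weak for the $\log\log$ cutoff of \cref{cor:capacity-zero}. Your second recursion is never actually written down. None of your heuristics supplies the missing gain:
\begin{itemize}
\item The quantity $|x|^2|\nabla\mathbf v|^2$ is not large on $S_k$ merely because $\mathbf v\ge k$.
\item The sheet-counting bound is the \emph{upper} bound of \cref{lem:sublevelset-density-estimate}. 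The paper uses it afterwards to convert decay into a density bound; it cannot create decay.
\item A single Caccioppoli--Hardy step for $z=e^{\mathbf v}$ from level $h=e^k$ to $h+L$ with $L\sim e^k$ gives at best $\hau^2(\{z>h+L\})\lesssim L^{-1}\hau^2(\{z>h\})$. That is the same factor $e^{-k}$.
\end{itemize}
To make the $z$-route work, you would need a \emph{fixed} contraction per unit increment of $z$, i.e.\ roughly $e^k$ steps between $e^k$ and $e^{k+1}$. But in the $z$-variable the inequality reads $-\mathrm{div}_\Sigma(|x|^2\Psi_F\nabla z)\le C(1+|x|z)$. The term $|x|z$ is not a priori bounded on $S_k$, and you do not address it.

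\textbf{What the paper does instead.} It gets the gain from a Moser-type bound in $L^p$ rather than from a level-by-level contraction. Test the $\mathbf v$-inequality with $\phi^2(\mathbf v-k)_+^{p-1}$ and apply \cref{lem:hardy}. The $e^{-\mathbf v}$-term is then bounded using $e^{-\mathbf v}\le e^{-k}$ on $S_k$. The $|x|$-term is handled by Cauchy--Schwarz, Michael--Simon, \eqref{eq:curvature-estimate} and \eqref{eq:log-separation-bound}. Together these give
\[
\|(\mathbf v-k)_+\|_{L^p}\le C_0\,p\,e^{-k}+C_1\,p^2\,\hau^2(S_k)^{\frac12+\frac1p}.
\]
The key is the linear growth in $p$ with prefactor $e^{-k}$: choosing $p=2e^k$ makes the first term $O(1)$. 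One then runs an induction on $a_k=\|(\mathbf v-\pi_k-\beta)_+\|_{L^{2e^k}}$, with $\pi_k=k+O(1)$, started from $\mathbf v\in L^p$ (\cref{prop:log-separation-PDE-Lp}). This gives $a_k\le 1$, and Chebyshev then gives $\hau^2(S_{k+C})\le e^{-e^k}$. This is the rigorous form of your intuition that $e^{\mathbf v}$ should behave like $\log(1/|x|)$.
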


First, we show that $\mathbf{v}$ satisfies a suitable differential inequality and that $S_k$ decays exponentially.
\begin{proposition}\label{prop:log-separation-PDE-Lp}
    Let $F$ and $\Sigma$ be as in \cref{thm:removable-singularity-with-jacobi-lower-bound}, then:
    \begin{align}\label{eq:PDE-log-separation}
        -\mathrm{div}_\Sigma\left(|x|^2 \Psi_F\nabla_\Sigma \mathbf{v}\right) \leq Ce^{-\mathbf{v}} + C|x|\,,
    \end{align}
    for all $x\in\Sigma\cap\B_\rho$ for some $0<\rho \leq \rho_H$ small enough ($\rho_H$ defined in \cref{lem:hardy}). Moreover, for any $1\leq p < \infty$ we have:
    \begin{align}\label{eq:integrability-log-separation}
        \mathbf{v} \in L^p(\Sigma\cap \B_\rho)\,.
    \end{align}
\end{proposition}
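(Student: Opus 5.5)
We first derive the differential inequality \eqref{eq:PDE-log-separation} from \eqref{eq:separation-PDE} by a pointwise computation on $U$, and then prove \eqref{eq:integrability-log-separation} by testing it against cutoffs.

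\textbf{Deriving \eqref{eq:PDE-log-separation}.} Write $\mathbf v=\log|x|-\log\mathbf s$, so that
\[
\nabla_\Sigma\mathbf v=\frac{x^T}{|x|^2}-\frac{\nabla_\Sigma\mathbf s}{\mathbf s}.
\]
For the $\mathbf s$-part, divide \eqref{eq:separation-PDE} by $\mathbf s$ and use $\operatorname{div}(\Psi\nabla\mathbf s)/\mathbf s=\operatorname{div}(\Psi\nabla\log\mathbf s)+\langle\Psi\nabla\log\mathbf s,\nabla\log\mathbf s\rangle$. This gives
\[
\operatorname{div}_\Sigma(\Psi_F\nabla\log\mathbf s)\le C\Big(\frac{\mathbf s}{|x|^3}+\frac1{|x|}\Big)-\operatorname{tr}(\Psi_FA^2)-R_F-\langle\Psi_F\nabla\log\mathbf s,\nabla\log\mathbf s\rangle .
\]
For the $\log|x|$-part, compute $\operatorname{div}_\Sigma(\Psi_F\nabla\log|x|)$ in terms of $\mathcal J_F|x|$:
\[
\operatorname{div}(\Psi\nabla\log|x|)=\frac{\mathcal J_F|x|-(\operatorname{tr}(\Psi A^2)+R_F)|x|}{|x|}-\frac{\langle\Psi\nabla|x|,\nabla|x|\rangle}{|x|^2}.
\]
Subtracting the two identities, the zeroth-order terms $\operatorname{tr}(\Psi_FA^2)+R_F$ cancel exactly. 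This cancellation is the reason both $\mathbf s$ and $|x|$ are compared through the same Jacobi operator. We are left with
\[
-\operatorname{div}(\Psi\nabla\mathbf v)\le C\frac{\mathbf s}{|x|^3}+\frac{C}{|x|}+\frac{C_0}{|x|}+\frac{\langle\Psi\nabla|x|,\nabla|x|\rangle}{|x|^2}-\langle\Psi\nabla\log\mathbf s,\nabla\log\mathbf s\rangle,
\]
where \eqref{eq:Jacobi-lower-bound-condition} was used for the term $\mathcal J_F|x|\ge -C_0$.

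Next multiply by $|x|^2$ and expand
\[
\operatorname{div}(|x|^2\Psi\nabla\mathbf v)=|x|^2\operatorname{div}(\Psi\nabla\mathbf v)+2\langle\Psi\nabla\mathbf v,x^T\rangle.
\]
The gradient terms then combine. Write $a=\nabla|x|$ and $b=|x|\nabla\log\mathbf s$, so that $|x|\nabla\mathbf v=a-b$. The remainder becomes
\[
\langle\Psi a,a\rangle-\langle\Psi b,b\rangle-2\langle\Psi(a-b),a\rangle=-\langle\Psi(a-b),a-b\rangle\le0 .
\]
Hence
\[
-\operatorname{div}(|x|^2\Psi\nabla\mathbf v)\le C\frac{\mathbf s}{|x|}+C|x|=Ce^{-\mathbf v}+C|x|.
\]
Off $U$ we set $\mathbf v$ to its boundary value $\log(1/\kappa)$, i.e.\ we work with $\max\{\mathbf v,\log(1/\kappa)\}$. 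Since the maximum of subsolutions is a subsolution, the inequality holds weakly on all of $\Sigma\cap\mathbf B_\rho$; this uses \cref{prop:separation-function-definition}(1),(6). The constant $C|x|$ absorbs the lower-order $x$-dependence.

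\textbf{Integrability \eqref{eq:integrability-log-separation}.} Test the inequality with $\varphi=\mathbf v^{p-1}\eta^2$, where $\eta$ is a cutoff vanishing near $\partial\mathbf B_\rho$. The difficulty is the origin, where the test function is not admissible and $\mathbf v$ may be unbounded. There are two ways to handle it.

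First, $\mathbf v$ is locally finite on $\Sigma\setminus\{0\}$. By \eqref{eq:log-separation-bound}, $|\nabla\mathbf v|\le C/|x|$, so on each dyadic annulus $\mathbf v$ oscillates by at most $C$ along each sheet. However, $\mathbf v$ can grow from annulus to annulus.

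Second, we use the weighted Hardy inequality \cref{lem:hardy}, together with the weight $|x|^2$ in the divergence. Testing with $\mathbf v^{p-1}\eta_\epsilon^2$, where $\eta_\epsilon$ cuts off at scale $\epsilon$ around $0$, the cutoff error is
\[
\int|x|^2|\nabla\eta_\epsilon|^2\mathbf v^p\lesssim\int_{\mathrm{An}_\epsilon}\mathbf v^p .
\]
This is controlled because the weight $|x|^2$ cancels the factor $\epsilon^{-2}$.

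The Caccioppoli inequality then gives
\[
\int|x|^2|\nabla\mathbf v^{p/2}|^2\lesssim_p\int\mathbf v^{p-1}(e^{-\mathbf v}+|x|)+\text{boundary terms},
\]
and the Hardy inequality converts the left side into $\int\mathbf v^p$, at least once $\mathbf v$ is normalized (say by subtracting $\log(1/\kappa)$, or by restricting to where $\mathbf v$ is large). An absorption argument then closes the estimate.

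\textbf{Main obstacle.} The hardest step is making the integrability argument rigorous when we do not yet know $\mathbf v\in L^p$ a priori. I would first work with the truncations $\min\{\mathbf v,M\}$, which remain supersolutions in the right sense because the right-hand side is bounded. I would derive bounds uniform in $M$ and $\epsilon$ via Hardy, and conclude by monotone convergence. One also needs the induction $p\to p+1$ starting from $p=1$, and the finiteness of $\mathcal H^2(\Sigma\cap\mathbf B_\rho)$ to control the lower-order terms; if that finiteness is not yet known, I would run the argument with weights $|x|^2$ on the areas, consistent with the curvature estimate.
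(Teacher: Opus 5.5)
Your derivation of \eqref{eq:PDE-log-separation} is the paper's argument written in logarithmic variables. The paper uses the ground-state identity $\mathrm{div}_\Sigma\bigl(|x|^2\Psi_F\nabla_\Sigma(\mathbf s/|x|)\bigr)=|x|\mathcal J_F\mathbf s-\mathbf s\,\mathcal J_F|x|$ for $\mathbf u=\mathbf s/|x|=e^{-\mathbf v}$. Your completed square $-\langle\Psi_F(a-b),a-b\rangle=-|x|^2\langle\Psi_F\nabla_\Sigma\mathbf v,\nabla_\Sigma\mathbf v\rangle$ is exactly the term discarded when passing from $\mathbf u$ to $\mathbf v$.

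The integrability part is a genuinely different route. The paper does not test with powers of $\mathbf v$ at this stage. It tests the $\mathbf u$-inequality with $\phi^2(e^{-k}-\mathbf u)_+$, which is bounded by $e^{-k}$, so the cutoff at the origin costs only $Ce^{-2k}\hau^2(\Sigma\cap\B_{2\epsilon}\setminus\B_\epsilon)\to0$ and no a priori integrability of $\mathbf v$ is needed. Hardy and Chebyshev then give a recursive inequality for $\hau^2(S_k)$, and hole-filling yields $\hau^2(S_k)\le C\alpha^k$, which is then summed.

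Your Moser-type testing with $\mathbf v^{p-1}$ is what the paper does only afterwards, in \cref{prop:superexponential-decay}. There, the present proposition is exactly what justifies letting $\epsilon\to0$. Your route is more direct and gives $\|\mathbf v\|_{L^p}\lesssim p$ (the same exponential integrability), using only that the right-hand side is bounded. The price is that you must break the circularity yourself.

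That is where your write-up is imprecise. $\min\{\mathbf v,M\}$ is not a subsolution, since minima of subsolutions are not subsolutions, and boundedness of the right-hand side is not the relevant mechanism. What works is to test the \emph{untruncated} inequality with a truncated test function:
\begin{itemize}
\item Set $w=(\mathbf v-k)_+$, with $k$ large enough that $w$ vanishes near $\partial U$ and near $\partial\B_\rho$.
\item Test with $\varphi=w_M^{p-1}\eta_\epsilon^2$, where $w_M=\min\{w,M\}$ and $p\ge2$.
\item Almost everywhere $\langle\Psi_F\nabla\mathbf v,\nabla w_M\rangle=\langle\Psi_F\nabla w_M,\nabla w_M\rangle$.
\item The only term not controlled by the left side is $2\int|x|^2w_M^{p-1}\eta_\epsilon\langle\Psi_F\nabla\mathbf v,\nabla\eta_\epsilon\rangle$. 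By \eqref{eq:log-separation-bound} and $|x||\nabla\eta_\epsilon|\le C$, it is at most $CM^{p-1}\hau^2(\Sigma\cap\B_{2\epsilon}\setminus\B_\epsilon)$, which tends to $0$ for fixed $M$.
\item The Hardy cutoff error is likewise at most $CM^{p}\hau^2(\Sigma\cap\B_{2\epsilon}\setminus\B_\epsilon)$.
\end{itemize}
Hardy then gives $\int w_M^p\le C_p\int w_M^{p-1}\le C_p\,\hau^2(\Sigma\cap\B_\rho)^{1/p}\|w_M\|_{L^p}^{p-1}$, uniformly in $M$. No induction in $p$ is needed.

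Finally, finite area of $\Sigma\cap\B_\rho$ is genuinely required; the paper uses it tacitly too. A weighted fallback cannot replace it: since $\mathbf v\ge\log(1/\kappa)>0$ on $U$, the conclusion $\mathbf v\in L^1$ already forces $\hau^2(U\cap\B_\rho)<\infty$.
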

Before proceeding to the proof, we note that in the toy case of the PDE \cref{eq:PDE-log-separation} in $\R^2$, $\log\log(1/|x|)$ is a canonical solution. This motivates further the Super-exponential decay of the super level-sets in \cref{prop:superexponential-decay}.
\begin{proof}[Proof of \cref{prop:log-separation-PDE-Lp}]
    We use the ground state identity and use \cref{eq:separation-PDE} and \cref{eq:Jacobi-lower-bound-condition}:
    \begin{align}\label{eq:separation-ratio-PDE}
        \mathrm{div}_\Sigma\left(|x|^2 \Psi_F\nabla_\Sigma \left(\frac{\mathbf{s}}{|x|}\right)\right) = |x|\mathcal{J}_F\mathbf{s} - \mathbf{s}\mathcal{J}_F|x| \leq C_\F\left[\left(\frac{\mathbf{s}}{|x|}\right)^2 + \mathbf{s}\right]
    \end{align}
    Then \cref{eq:PDE-log-separation} follows immediately.
    Define:
    \begin{align}
        \mathbf{u} = \frac{\mathbf{s}}{|x|}\,.
    \end{align}
    For large enough $k\geq k_0$ we can assert that $\overline{S_k}\subset \B_\rho\cap\Sigma$, hence $(e^{-k} - \mathbf{u})_+$ is compactly supported in $B_\rho\cap\Sigma$. Since $|\mathbf{u}| \leq \kappa$, we test $\cref{eq:separation-ratio-PDE}$ with $\phi^2(e^{-k} - \mathbf{u})_+$ and integrate by parts and use the inequality \cref{lem:hardy} (note that $S_k\subset \B_\rho\subset \B_{\rho_H}$) to see that:
    \begin{align}
        \int_{S_k} \phi^2|(e^{-k}-\mathbf{u})_+|^2 \leq C\int_{S_k} \phi^2\mathbf{u}(\mathbf{u}+C|x|)(e^{-k} - \mathbf{u})_+ + |x|^2|\nabla\phi|^2(e^{-k}-\mathbf{u})_+^2\,,
    \end{align}
    for a function $\phi$ which is one outside of $B_{2\epsilon}$ and vanishes in $B_\epsilon$. Indeed $|x||\nabla\phi| \leq C$ on its support and $\mathbf{u}$ is integrable, so we can take $\epsilon\to0$ and see:
    \begin{align}
        \int_{S_k} |(e^{-k}-\mathbf{u})_+|^2 \leq C\int_{S_k} \mathbf{u}(\mathbf{u}+C|x|)(e^{-k} - \mathbf{u})_+\,.
    \end{align}
    Using Chebyshev, we estimate (bounding $\mathbf{u}+C|x| \leq C$):
    \begin{align}
        \hau^2(S_{k+1}) &\leq Ce^{2k}\int_{S_k}|(e^{-k}-\mathbf{u})_+|^2 \leq Ce^{2k}\int_{S_k} \mathbf{u}(e^{-k} - \mathbf{u})_+\\
        &\leq Ce^{2k}\sum_{i=k}^{\infty} e^{-i-k}\hau^2(S_i\setminus S_{i+1})\,.
    \end{align}
    The above means that there is some fixed $\beta$ such that:
    \begin{align}
        \hau^2(S_{k+\beta}) \leq C\hau^2(S_{k}\setminus S_{k+\beta})\,.
    \end{align}
    Then with a hole-filling argument we gather that there exists some $0<\theta < 1$ such that:
    \begin{align}
        \hau^2(S_{k+\beta}) \leq \theta \hau^2(S_k),
    \end{align}
    which implies that there exists another $0<\alpha<1$ such that for all $k>0$:
    \begin{align}
        \hau^2(S_k) \leq C\alpha^{k}\,.
    \end{align}
    From this we can estimate:
    \begin{align}
        \int_{\Sigma\cap\B_\rho} \mathbf{v}^p \leq  \sum_{k=0}^\infty k^p\hau^2(S_{k}\setminus S_{k+1}) \leq C\sum_{k=0}^\infty k^p\alpha^k < \infty\,.
    \end{align}
    This is indeed the desired conclusion.
\end{proof}

\begin{proof}[Proof of \cref{prop:superexponential-decay}]
    Take $k_0>0$ large enough such that for all $k\geq k_0$, $(\mathbf{v} - k)_+$ is compactly supported in $\B_\rho$. Then test \cref{eq:PDE-log-separation} with $\phi^2(\mathbf{v} - k)_+^{p-1}$ and integrate by parts and use \cref{lem:hardy} to see:
    \begin{align}
    \begin{aligned}
        \int_{\B_\rho\cap\Sigma} \phi^2(\mathbf{v}-k)_+^p \leq Cp\int_{\B_\rho\cap\Sigma} \phi^2(e^{-\mathbf{v}}+|x|)(\mathbf{v}-k)_+^{p-1} + |x|^2|\nabla\phi|^2(\mathbf{v}-k)^{p}_+\,,
        \end{aligned}
    \end{align}
    for a function $\phi$ vanishing in $B_{\epsilon}$ and one outside $B_{2\epsilon}$. Since $|x||\nabla\phi| \leq C$ on its support and $(\mathbf{v}-k)_+^p \in L^1(\Sigma\cap \B_\rho)$, we can take $\epsilon\to0$ and conclude:
    \begin{align}
        \int_{\B_\rho\cap\Sigma}(\mathbf{v}-k)_+^p \leq Cp\int_{\B_\rho\cap\Sigma} \phi^2(e^{-\mathbf{v}}+|x|)(\mathbf{v}-k)_+^{p-1}\,.
    \end{align}
    Now we deal with each term separately:
    \begin{align}
        \int_{\B_\rho\cap\Sigma} e^{-\mathbf{v}}(\mathbf{v}-k)_+^{p-1} \leq e^{-k}\left(\int_{\B_\rho\cap\Sigma} (\mathbf{v}-k)_+^{p}\right)^{\frac{p-1}{p}}\,.
    \end{align}
    Then we estimate the second term in turn using Michael--Simon inequality, \cref{eq:curvature-estimate} and \cref{eq:log-separation-bound}:
    \begin{align}
    \begin{aligned}
        \int_{\B_\rho\cap\Sigma} |x|&(\mathbf{v}-k)_+^{p-1} \leq \hau^2(S_k)^{\frac12}\left(\int_{\B_\rho\cap\Sigma} |x|^2(\mathbf{v}-k)_+^{2p-2}\right)^{\frac12}\\
        &\leq Cp\hau^2(S_k)^{\frac12}\int_{\B_\rho\cap\Sigma} (\mathbf{v}-k)_+^{p-1} \leq Cp\hau^2(S_k)^{\frac12+\frac1p}\left(\int_{\B_\rho\cap\Sigma} (\mathbf{v}-k)_+^{p}\right)^{\frac{p-1}{p}}\,.
    \end{aligned}
    \end{align}
    Putting the last three displays together we gather that:
    \begin{align}\label{eq:recursive-estimate-1}
        \left(\int_{\B_\rho\cap\Sigma} (\mathbf{v}-k)_+^p\right)^{\frac1p} \leq C_0pe^{-k} + C_1p^2\hau^2(S_k)^{\frac12+\frac1p}\,.
    \end{align}
    Now consider the numbers:
    \begin{align}
        \pi_k = \sum_{i=1}^{k} \left(8C_1 e^{2i}\right)^{\frac{e}{e^i+1}}\,
    \end{align}
    and define the sequence:
    \begin{align}
        a_k = \left(\int_{\B_\rho\cap\Sigma} (\mathbf{v}-\pi_k - \beta)_+^{2e^{k}}\right)^{\frac{1}{2e^{k}}},
    \end{align}
    for some $\beta>0$ to be chosen later. By Chebyshev's inequality we see that:
    \begin{align}
        \hau^2(S_{\pi_{k}+\beta}) \leq (\pi_k - \pi_{k-1})^{-2e^{k-1}}\int_{\B_\rho\cap\Sigma}(\mathbf{v}-\pi_{k-1}-\beta)_+^{2e^{k-1}} \leq \left(8C_1 e^{2i}\right)^{-\frac{e}{e^k+1}} a_{k-1}^{2e^{k-1}}\,.
    \end{align}
    Take $\beta \geq \log(4C_0) + \sup_{k\geq 1}(k-\pi_k)$. Putting this into \cref{eq:recursive-estimate-1}, we gather:
    \begin{align}
        a_k \leq \frac{1 + a_{k-1}^{e^{k-1} + \frac1e}}{2}\,.
    \end{align}
    With a simple induction we can see that if $a_k < 1$ we see that:
    \begin{align}
        a_{k+1} \leq \frac{1 + a_{k}^{e^{k}+\frac1e}}{2} < 1\,.
    \end{align}
    Since $\mathbf{v} \in L^{2e^{k}}(\B_\rho\cap\Sigma)$, we can take $\beta>0$ even larger so that $a_{k_0} < 1$ (since by \cref{prop:log-separation-PDE-Lp} $\mathbf{v}$ is integrable) and induction implies that for all $k>k_0$ we have $a_k \leq 1$. Now note that by a simple calculation:
    \begin{align}
        |\pi_k - k| \leq C_2\quad \text{ for all } k > 1\,.
    \end{align}
    This means that:
    \begin{align}
        \int_{\B_\rho\cap\Sigma}(\mathbf{v} - k- \beta - C_2)_+^{2e^{k}} \leq 1\,.
    \end{align}
    Now we apply Chebyshev again to see that:
    \begin{align}
        \hau^2(S_{k + \beta + e^{1/2} + C_2}) \leq e^{-e^{k}} \int_{\B_\rho\cap\Sigma} (\mathbf{v} - k - \beta - C_2)^{2e^{k}} \leq e^{-e^{k}}\,.
    \end{align}
    This is indeed the desired conclusion.
\end{proof}

\subsection{The logarithmic density blowup: proof of \cref{thm:removable-singularity-with-jacobi-lower-bound}}
Before proceeding to the proof we need a simple lemma:
\begin{lemma}[Key lemma]\label{lem:key-lemma}
    There exists $C(\theta_0),r_0>0$ such that for $0<r<r_0$:
    \begin{align}
        \sum_{k=1}^{\infty} \min\left\{e^k , \frac{(1+\theta_0)^{-e^k}}{r^2}\right\} \leq C(\theta_0)|\log(r)|\,.
    \end{align}
\end{lemma}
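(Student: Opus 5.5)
Fix $r\in(0,r_0)$ with $r_0<1$ small, and split the sum at the index where the two competitors inside the minimum cross. The first term $e^k$ grows only exponentially in $k$, while the second decays doubly exponentially in $k$. Hence for $k$ below a threshold $k_*(r)$ we bound the minimum by $e^k$, and above it by $(1+\theta_0)^{-e^k}r^{-2}$. The natural choice is
\[
k_*(r):=\Big\lceil \log\Big(\frac{2|\log r|}{\log(1+\theta_0)}\Big)\Big\rceil ,
\]
so that for $k\ge k_*$ we have $e^k\log(1+\theta_0)\ge 2|\log r|$, i.e. $(1+\theta_0)^{-e^k}\le r^{2}$.

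For the lower range, the geometric sum gives
\[
\sum_{k=1}^{k_*-1}e^k\le \frac{e}{e-1}\,e^{k_*-1}\le \frac{e}{e-1}\cdot\frac{2|\log r|}{\log(1+\theta_0)},
\]
which is of the form $C(\theta_0)|\log r|$.

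For the upper range, write $e^k=e^{k_*}e^{k-k_*}$ and set $a:=e^{k_*}\log(1+\theta_0)\ge 2|\log r|$. Then
\[
(1+\theta_0)^{-e^k}r^{-2}=\exp\big(2|\log r|-a\,e^{k-k_*}\big).
\]
Since $e^{j}\ge 1+j$ for $j=k-k_*\ge0$, this is at most
\[
\exp\big(2|\log r|-a-aj\big)\le e^{-aj}\le e^{-2|\log r| j}.
\]
The tail $\sum_{j\ge0}e^{-2|\log r|j}$ is then bounded by $1/(1-r^2)\le 2$ for $r<r_0<1/2$. Because $|\log r|\ge|\log r_0|>0$, this constant is absorbed into $C(\theta_0)|\log r|$ after enlarging $C$.

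Adding the two ranges gives the claim. The only delicate point is choosing the crossing index so that the upper tail sums to a constant independent of $r$, rather than to another logarithm. The inequality $e^{j}\ge 1+j$ handles this by making the tail geometric with ratio at most $r^2$. No earlier results of the paper are needed: the lemma is purely a calculus estimate. In the paper it is then applied with $\mathcal H^2(S_k)\le(1+\theta_0)^{-e^k}$ from \cref{prop:superexponential-decay} to bound the density ratio logarithmically.
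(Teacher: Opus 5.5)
Your proof is correct and follows the same route as the paper: split the sum at the index where $(1+\theta_0)^{-e^k}$ crosses $r^2$, bound the lower range by a geometric sum of size $C(\theta_0)|\log r|$, and show that the doubly exponential tail is $O(1)$. Your use of $e^{j}\ge 1+j$, which makes the tail geometric with ratio $r^2$, is a cleaner justification of the paper's remark that a super-exponential series is controlled by its first term.
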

\begin{proof}
    First note that $e^k$ is increasing and $(1+\theta_0)^{-e^{k}}$ is decreasing. Now take the largest $k_0$ such that:
    \begin{align}
        r^{2} \leq (1+\theta_0)^{-e^{k_0}}\Rightarrow 
        k_0 \leq \log(\log(1/r)) + \log(2) - \log|\log(1+\theta_0)| \,.
    \end{align}
    provided $k_0>0$ is large and $r_0>0$ is small enough. Now plugging this in to the equation we gather:
    \begin{align}
        \sum_{k=1}^{\infty} \min\left\{e^k , \frac{(1+\theta_0)^{-e^k}}{r^2}\right\} &= \sum_{k=1}^{k_0} e^k  + \sum_{k=k_0+1}^{\infty} \frac{(1+\theta_0)^{-e^k}}{r^2}\\
        &\leq C|\log(r)| + 2 \,.
    \end{align}
    The last bound follows the fact that the sum of super-exponential series is controlled by their first term. The desired conclusion follows once $r_0>0$ is chosen small enough.
\end{proof}

We also need a simple and important lemma:
\begin{lemma}\label{lem:sublevelset-density-estimate}
    There exists $C(K,\F)>0$ such that for any $k\geq0$ and $0<r\leq 1$ we have:
    \begin{align}
        \hau^2\left(\mathrm{An}_{r}\cap (S_k\setminus S_{k+1}) \right) \leq Ce^{k} r^2\,.
    \end{align}
\end{lemma}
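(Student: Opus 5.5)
We estimate the area of $\mathrm{An}_r\cap(S_k\setminus S_{k+1})$ by cutting the annulus into balls of radius comparable to $r$ and counting how many graphical sheets can meet each ball. By \cref{prop:annuli-structure}, every $x\in\Sigma\cap\mathrm{An}_r$ has a neighbourhood $\Sigma\cap B_{\delta_0|x|}(x)$ that is a disjoint union of disks, each graphical over a plane with small gradient. We cover $\mathrm{An}_r$ by $N_0=N_0(\delta_0)$ ambient balls $B_j:=B_{\delta_0 r/4}(y_j)$. It is then enough to prove, for each fixed $j$,
\[
\hau^2\bigl(B_j\cap\Sigma\cap(S_k\setminus S_{k+1})\bigr)\le C e^{k} r^2 .
\]

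Fix $j$ and let $D_1,\dots,D_M$ be the sheets of $\Sigma\cap B_{\delta_0 r/2}(y_j)$ that meet $B_j\cap S_k\setminus S_{k+1}$. Each $D_i$ is a small-gradient graph over a common plane (the tilt is controlled by $\tau$), so $\hau^2(D_i\cap B_j)\le C r^2$ and the sheets are stacked in the normal direction. The whole problem reduces to showing $M\le Ce^k$.

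Pick $x_i\in D_i\cap(S_k\setminus S_{k+1})$. By the definition of $S_{k+1}$, $\mathbf s(x_i)\ge e^{-k-1}|x_i|\ge e^{-k-1}r/2$. By \cref{prop:separation-function-definition}(2), the nearest other sheet is at distance at least $\frac12\mathbf s(x_i)\gtrsim e^{-k}r$ from $x_i$. The log-Lipschitz bound \eqref{eq:log-separation-bound} gives $\mathbf s\simeq\mathbf s(x_i)$ on the whole sheet $D_i\cap B_{\delta_0 r/2}(y_j)$, with a constant depending on $\delta_0$ and $C(K,\F)$. Consequently, over the central region of $B_j$, consecutive sheets among the $D_i$ keep a normal separation of at least $c\,e^{-k}r$. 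The full stack has height at most $Cr$, so $M\le C e^{k}$.

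One case needs separate treatment: points outside $U$, where $\mathbf s$ is undefined. By \cref{prop:separation-function-definition}(6), such a point has only one sheet within distance $\kappa|x|$, so these points are $\kappa r$-separated from all other sheets and contribute at most $C\kappa^{-1}$ sheets. This is harmless; alternatively one defines $\mathbf s=\kappa|x|$ there, in which case they lie in $S_0\setminus S_1$ for small $k$.

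The main obstacle is making the stacking argument rigorous. The sheets $D_i$ are graphs over different but nearly parallel planes, and the separation is only comparable (within factors $2$ and $e$) along a sheet. We plan to handle this by restricting to a smaller concentric ball, so that all $D_i$ cross a fixed normal segment through $y_j$. The crossing points are then ordered along that segment, and the spacing bound for consecutive crossings follows from the lower bound on $\mathbf s$ transported via \eqref{eq:log-separation-bound}. Summing over the $N_0$ balls gives the lemma with $C=C(K,\F)$.
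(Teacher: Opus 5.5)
Your argument is correct, but it is organized differently from the paper's proof.

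\textbf{The paper's route.} The paper does not count whole sheets. In a ball $B_{\delta_0|x|}(x)$, where $\Sigma$ consists of small-gradient graphs over a plane $P$, it projects the set $B_{\delta_0|x|}(x)\cap(S_k\setminus S_{k+1})$ itself onto $P$. If this area exceeded $Ce^k|x|^2$, some fibre over a point of $P$ would contain $\gg e^k$ points of $S_k\setminus S_{k+1}$ stacked on a normal segment of length $\lesssim\delta_0|x|$. Two of them would then be closer than $e^{-k-1}|x|$, contradicting the separation lower bound at those very points. Since the stacked points already lie in $S_k\setminus S_{k+1}$, the bound $\mathbf s>e^{-k-1}|x|$ never has to be moved along a sheet. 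So neither the log-gradient estimate \eqref{eq:log-separation-bound} nor the question of sheets leaving $U$ ever arises.

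\textbf{Your route.} You bound the number $M$ of sheets that touch the band, so you need exactly that transport. You integrate \eqref{eq:log-separation-bound} along a path of length $\lesssim\delta_0 r$ in $D_i$ to get $\mathbf s\ge e^{-C\delta_0}\min\{\mathbf s(x_i),\kappa|x|\}$ at the crossing point with your fixed normal segment. You should state explicitly that the $\min$ with $\kappa|x|$, justified via \cref{prop:separation-function-definition}(6) and $\mathbf s=\kappa|z|$ on $\partial U$, handles paths that leave and re-enter $U$. In exchange you prove something slightly stronger: the full area of all sheets meeting $S_k\setminus S_{k+1}$ in the ball is $\lesssim e^k r^2$. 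You are also more explicit than the paper about the geometric bookkeeping: a common reference plane, all sheets crossing one normal segment after shrinking the ball, and re-centring at a point of $\Sigma$ so that \cref{prop:annuli-structure} applies to balls about the ambient centres $y_j$.

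\textbf{A minor slip.} If you set $\mathbf s=\kappa|x|$ off $U$, those points have $\mathbf v=\log(1/\kappa)$. They therefore lie in the single band $k=\lfloor\log(1/\kappa)\rfloor$, not in $S_0\setminus S_1$. This is harmless, because their area is $O_\kappa(r^2)$.
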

\begin{proof}
    The set $S_k\setminus S_{k+1}$ is the set where the separation is between $e^{-k}$ and $e^{-k-1}$. Now assume by contradiction that at some ball $B_{\delta_0|x|}(x)$ we have:
    \begin{align}
        \hau^2(B_{\delta_0|x|}(x) \cap S_k \setminus S_{k+1})/|x|^2 \gg e^k\,.
    \end{align}
    By \cref{prop:annuli-structure}, $\Sigma\cap B_{\delta_0|x|}(x)$ is made up of finitely many sheets with small Lipschitz bound over some plane $P$. Now if the density of $S_k \setminus S_{k+1}$ at this ball is $\gg e^{k}$, then this means there exists a point in $P$ such that the projection on $P$ of $B_{\delta_0|x|}(x) \cap S_k \setminus S_{k+1}$ is $\gg e^k$. This means that there is at least one point in its preimage that has separation $\ll e^{-k}$ which is a clear contradiction.
\end{proof}

Now we are in a position to prove \cref{thm:removable-singularity-with-jacobi-lower-bound}. First we show:
\begin{proposition}[Logarithmic blowup of density]\label{prop:log-bound-density}
    Under the assumptions of \cref{thm:removable-singularity-with-jacobi-lower-bound}, there is some constant $C(K,\F)>0$ such that:
    \begin{align}\label{eq:log-bound-density}
        \frac{\hau^2(\Sigma \cap \mathrm{An}_r)}{r^2} \leq C|\log(r)|\,,
    \end{align}
    for small enough $0<r<r_0$.
\end{proposition}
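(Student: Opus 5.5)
We decompose $\Sigma\cap\mathrm{An}_r$ according to the value of the separation and sum the pieces. Write
\[
\Sigma\cap \mathrm{An}_r=\bigl(\mathrm{An}_r\setminus U\bigr)\cup\bigl(\mathrm{An}_r\cap U\setminus S_{k_1}\bigr)\cup\bigcup_{k\geq k_1}\bigl(\mathrm{An}_r\cap (S_k\setminus S_{k+1})\bigr),
\]
where $U$ and $\mathbf s$ are from \cref{prop:separation-function-definition} and $k_1=\lceil\log(1/\kappa)\rceil$. On $U$ we have $\mathbf s\le\kappa|x|$, so $U\subset S_{\log(1/\kappa)}$. Hence only finitely many (independent of $r$) level bands lie between $\{\mathbf v=\log(1/\kappa)\}$ and $S_{k_1}$.

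\textbf{Step 1: the well-separated part.} We first show $\hau^2(\mathrm{An}_r\setminus U)\le Cr^2$ and $\hau^2(\mathrm{An}_r\cap U\setminus S_{k_1})\le Cr^2$. Take $x\in\mathrm{An}_r$ outside $S_{k_1}$ (or outside $U$). By item (6) of \cref{prop:separation-function-definition}, or item (2) with $\mathbf s\gtrsim\kappa|x|$, any two graphical sheets of $\Sigma\cap B_{\delta_0|x|}(x)$ given by \cref{prop:annuli-structure} are at distance $\gtrsim\kappa r$. Projecting onto the plane $P$, the number of such sheets through a ball of radius $\delta_0 r$ is therefore at most $C(\kappa,\delta_0)$, which is the same counting as in \cref{lem:sublevelset-density-estimate} with $k$ fixed. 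Each sheet is a Lipschitz graph with area $\le C r^2$. Covering $\mathrm{An}_r$ by $C(\delta_0)$ balls $B_{\delta_0 r}(x)$ then gives the bound $Cr^2$.

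\textbf{Step 2: the bands.} For $k\ge k_1$ we use two bounds on each band. \cref{lem:sublevelset-density-estimate} gives
\[
\hau^2(\mathrm{An}_r\cap (S_k\setminus S_{k+1}))\le Ce^k r^2 .
\]
\cref{prop:superexponential-decay} gives $\hau^2(\mathrm{An}_r\cap S_k)\le(1+\theta_0)^{-e^k}$ for $k>k_0$; this is valid because $\mathrm{An}_r\subset\B_\rho$ for $r<\rho$. Taking the minimum band by band, dividing by $r^2$, and summing,
\[
\frac{\hau^2(\Sigma\cap\mathrm{An}_r)}{r^2}\le C+C\sum_{k\ge1}\min\Bigl\{e^k,\frac{(1+\theta_0)^{-e^k}}{r^2}\Bigr\}+C(k_0),
\]
where $C(k_0)$ collects the finitely many $k<k_0$, each of size $Ce^{k_0}$. \cref{lem:key-lemma} bounds the sum by $C|\log r|$ for $r<r_0$, which is \eqref{eq:log-bound-density}.

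\textbf{Main obstacle.} The delicate step is Step 1 together with the band estimate, because both rely on converting separation into a sheet count. This needs every sheet meeting $B_{\delta_0|x|}(x)$ to be graphical over a common plane with small gradient, so that sheets are ordered along the normal and consecutive ones are separated by roughly $\mathbf s$. \cref{prop:annuli-structure} supplies this once $\tau$ is small, via the curvature bound \eqref{eq:curvature-estimate}. One also has to check that $\mathbf s$ varies by at most a bounded factor across a ball $B_{\delta_0|x|}(x)$. This follows from the log-Lipschitz bound \eqref{eq:log-separation-bound}, which keeps the count $N\lesssim e^{k}$ uniform on the ball.
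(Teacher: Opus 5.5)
Your proposal is correct and follows essentially the same route as the paper's proof. You split $\Sigma\cap\mathrm{An}_r$ into the part outside $U$, bounded by $Cr^2$, and the bands $S_k\setminus S_{k+1}$; you bound each band by the minimum of \cref{lem:sublevelset-density-estimate} and \cref{prop:superexponential-decay}, and sum using \cref{lem:key-lemma}. Your Step~1 only makes explicit the $Cr^2$ bound on the well-separated part (and on the short transitional band), which the paper uses without comment.
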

\begin{proof}
    We estimate using \cref{prop:superexponential-decay}, \cref{lem:sublevelset-density-estimate} and \cref{lem:key-lemma}:
    \begin{align}
        \frac{\hau^2(\Sigma \cap \mathrm{An}_r)}{r^2} &= \frac{\hau^2((\Sigma \cap \mathrm{An}_r)\setminus U) + \sum_{k=k_0}^\infty \hau^2(\Sigma \cap \mathrm{An}_r\cap  (S_k\setminus S_{k+1}))}{r^2}\\
        &\leq C + C\sum_{k=k_0}^{\infty} \min\left\{e^k , \frac{(1+\theta_0)^{-e^k}}{r^2}\right\} \leq C|\log(r)|\,,
    \end{align}
    provided $r>0$ is chosen small enough.
\end{proof}

\begin{corollary}\label{cor:capacity-zero}
    Under the conditions of \cref{thm:removable-singularity-with-jacobi-lower-bound}, the singularity at the origin is of zero capacity.
\end{corollary}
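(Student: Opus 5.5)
The plan is a log-log cutoff on $\Sigma$, using the logarithmic density bound of \cref{prop:log-bound-density} as the only quantitative input. For $0<\epsilon<r_0$ small, define on $\Sigma$ the Lipschitz function
\[
    \phi_\epsilon(x):=
    \begin{cases}
        1, & |x|\le \epsilon^2,\\
        \dfrac{\log\log(1/|x|)-\log\log(1/\epsilon)}{\log\log(1/\epsilon^2)-\log\log(1/\epsilon)}, & \epsilon^2\le |x|\le \epsilon,\\
        0, & |x|\ge \epsilon .
    \end{cases}
\]
The denominator equals $\log 2$, and $|\nabla_\Sigma\phi_\epsilon|\le \frac{C}{|x|\,|\log|x||}$ on the transition region. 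The claim to prove is
\[
\int_\Sigma|\nabla_\Sigma\phi_\epsilon|^2\,\mathrm{d}\mathcal H^2\to0
\qquad\text{as } \epsilon\to 0,
\]
which, together with $\phi_\epsilon\equiv1$ near $0$ and $\spt\phi_\epsilon\subset \B_\epsilon$, is the statement that $\{0\}$ has zero $W^{1,2}$-capacity relative to $\Sigma$.

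To estimate the energy, decompose the transition region into the dyadic annuli $\mathrm{An}_{r}$ with $r=2^{-j}$, where $j$ ranges from about $\log_2(1/\epsilon)$ to $2\log_2(1/\epsilon)$. On $\mathrm{An}_r$ the gradient bound gives $|\nabla\phi_\epsilon|^2\le C r^{-2}|\log r|^{-2}$. \Cref{prop:log-bound-density} gives $\mathcal H^2(\Sigma\cap\mathrm{An}_r)\le C r^2|\log r|$. Hence each annulus contributes at most $C|\log r|^{-1}\approx C/(j\log 2)$. Summing over $j\in[\log_2(1/\epsilon),2\log_2(1/\epsilon)]$ yields
\[
\int_\Sigma|\nabla\phi_\epsilon|^2\le C\sum_{j=J}^{2J}\frac1j\approx C\log 2 .
\]
This is bounded but does not tend to zero. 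Dealing with this marginal divergence is the main obstacle.

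The fix is to stretch the cutoff over a longer range of scales: let $\phi_\epsilon$ interpolate in $\log\log(1/|x|)$ between $|x|=\epsilon$ and $|x|=\epsilon^{M}$ instead of $\epsilon^2$. The denominator becomes $\log M$, so the gradient bound improves to $|\nabla\phi_\epsilon|\le C/(|x|\,|\log|x||\log M)$. The dyadic sum is now $\frac{C}{(\log M)^2}\sum_{j=J}^{MJ}\frac1j\approx \frac{C\log M}{(\log M)^2}=\frac{C}{\log M}$. Choosing $M=M(\epsilon)\to\infty$ (for instance $M=1/\epsilon$ is admissible, or simply any $M$ followed by letting $M\to\infty$) sends the energy to zero while keeping $\spt\phi_\epsilon\subset\B_\epsilon$. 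It also gives $\int\phi_\epsilon^2\le\mathcal H^2(\Sigma\cap\B_\epsilon)\to0$, since the density bound summed over annuli gives $\mathcal H^2(\Sigma\cap\B_\epsilon)\le C\epsilon^2|\log\epsilon|$.

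One remaining check is that \cref{prop:log-bound-density} applies uniformly to every annulus with $r<r_0$; it does, with the constant $C(K,\F)$ fixed.
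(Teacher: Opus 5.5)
Your proposal is correct and is essentially the paper's proof: the paper uses a $\log\log$ cutoff interpolating between $|x|=2^{-j}$ and $|x|=1/2$ (your stretched cutoff with $M\sim j$). It then combines the same dyadic decomposition with \cref{prop:log-bound-density} to bound the energy by $(\log j)^{-2}\sum_{i<j} i^{-1}\lesssim 1/\log j$, exactly your $C/\log M$. Placing the whole transition inside $\B_\epsilon$ with $\epsilon<r_0$ is, if anything, a bit more direct than the paper's closing rescaling remark, since the density bound is only available for $r<r_0$ and is not scale invariant.
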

\begin{proof}
Fix \(j\geq 2\). We use the following \(\log\log\) cut-off:
\begin{align}
\phi_j(x)
=
\begin{cases}
0,
& 0\leq |x| \leq 2^{-j}, \\[4pt]
\dfrac{\log(j\log 2)-\log\log\frac{1}{|x|}}{\log j},
& 2^{-j} \leq |x| \leq 2^{-1}, \\[8pt]
1,
& 2^{-1}\leq |x|.
\end{cases}
\end{align}
Indeed, at the two transition radii one has
\[
\phi_j(2^{-j})=0,
\qquad
\phi_j\left(2^{-1}\right)=1.
\]
Moreover, on the annular region \(B_{1/2}\setminus B_{2^{-j}}\),
\[
|\nabla \phi_j(x)|
=
\frac{1}{\log j}\,
\frac{1}{|x|\log\frac{1}{|x|}}.
\]
Hence
\[
|\nabla_\Sigma \phi_j|
\leq
|\nabla \phi_j|
\leq
\frac{1}{\log j}\,
\frac{1}{r|\log r|},
\qquad r=|x|.
\]
Therefore,
\begin{align}
\begin{aligned}
\int_{\Sigma} |\nabla_\Sigma \phi_j|^2\,\mathrm{d}\mathcal H^2
&\leq
\frac{1}{|\log j|^2}
\int_{\Sigma \cap (B_{1/2}\setminus B_{2^{-j}})}
\frac{1}{r^2|\log r|^2}\,\mathrm{d}\mathcal H^2  \\
&=
\frac{1}{|\log j|^2}
\sum_{i=1}^{j-1}
\int_{\Sigma \cap \mathrm{An}_{2^{-i}}}
\frac{1}{r^2|\log r|^2}\,\mathrm{d}\mathcal H^2 .
\end{aligned}
\end{align}
On \(\mathrm{An}_{2^{-i}}=B_{2^{-i}}\setminus B_{2^{-i-1}}\), we have
$r\sim 2^{-i}$, $|\log r|\sim i$.
Consequently,
\[
\frac{1}{r^2|\log r|^2}
\leq
\frac{C}{2^{-2i}i^2}
\qquad
\text{on } \mathrm{An}_{2^{-i}}.
\]
Using \cref{eq:log-bound-density}, we obtain
\begin{align}
\begin{aligned}
\int_{\Sigma} |\nabla_\Sigma \phi_j|^2\,\mathrm{d}\mathcal H^2
&\leq
\frac{C}{|\log j|^2}
\sum_{i=1}^{j-1}
\frac{\mathcal H^2(\Sigma \cap \mathrm{An}_{2^{-i}})}
{2^{-2i}i^2} \\
&\leq_{\cref{eq:log-bound-density}}
\frac{C}{|\log j|^2}
\sum_{i=1}^{j-1}
\frac{|\log(2^{-i})|}{i^2} \leq
\frac{C}{|\log j|^2}
\sum_{i=1}^{j-1}
\frac{1}{i}\leq
\frac{C}{\log j}.
\end{aligned}
\end{align}
Letting \(j\to\infty\), we get
\[
\int_{\Sigma} |\nabla_\Sigma \phi_j|^2\,\mathrm{d}\mathcal H^2
\to 0.
\]
Since \(\phi_j\equiv 0\) in a neighbourhood of the origin and
\(\phi_j\equiv 1\) outside \(B_{1/2}\), this gives a sequence of admissible
cut-off functions whose Dirichlet energy tends to zero. Since the construction is local and scale-invariant, replacing \(x\) by
\(x/\rho\) gives the same conclusion in \(B_{\rho}\setminus B_{\rho 2^{-j}}\)
for every sufficiently small \(\rho>0\). Therefore the origin has zero
capacity.
\end{proof}

\begin{proof}[Proof of \cref{thm:removable-singularity-with-jacobi-lower-bound}]
    By \cref{cor:capacity-zero} we gather that $\Sigma$ is $\F$-stable in all of $\B_1$. Then \cite[Theorem 2.8]{DePhilippisDeRosa} implies that $\Sigma$ is smooth at the origin and we are done. 
    We can also argue as in \cite[Proof of Theorem 5.1, Step 3]{DDL}: one obtains the finite total curvature bound near the puncture and then applies \cite[Theorem 2, Page~250]{White} to remove the singularity.
\end{proof}

\appendix

\crefalias{section}{appendix}

\section{Auxiliary lemmas}
Here we prove a simple lemma regarding density of surfaces near boundary points:
\begin{lemma}\label{lem:density-boundary-estimate}
    Let $A\subset N$ be a smooth open simply connected domain and let $\{M_k\}_{k=1}^{\infty}\subset \mathcal{M}$ be a sequence of surfaces with $\de M_k\subset N\setminus A$. Moreover, take a relatively open subset $S\subset \de A$ such that $\Gamma_k = \de M_k \cap S$ is a simple smooth arc and $\Gamma_k$ converges smoothly to a limit simple arc $\Gamma\subset S$ as $k\to\infty$. Then, for every \(x\in\Gamma\), there exist constants \(\epsilon_0=\epsilon_0(x,\Gamma,A,S,N)>0\) and \(C>0\) such that, for all \(0<r\le \epsilon_0\) and all sufficiently large \(k\),
    \[
        \mathcal H^2(M_k\cap B_r(x)\cap A)\ge C r^2 .
    \]
\end{lemma}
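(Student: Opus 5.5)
The plan is a purely topological/metric lower bound, using only that $M_k$ is a smooth embedded disk whose boundary near $x$ is the arc $\Gamma_k\subset S\subset\partial A$. No minimality is needed. First I fix boundary normal coordinates near $x$. Choose $\epsilon_0>0$ so small that in $B_{2\epsilon_0}(x)$ the boundary $\partial A$ is a graph with small gradient and $\Gamma\cap B_{2\epsilon_0}(x)$ is a single nearly straight arc through $x$. Smooth convergence $\Gamma_k\to\Gamma$ then gives, for $k$ large and every $r\le\epsilon_0$, that $\Gamma_k\cap B_{r}(x)$ contains an arc $\sigma_k$ passing within $r/10$ of $x$ and crossing the annulus $B_r(x)\setminus B_{r/2}(x)$ on both ends. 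Since $\Gamma_k$ is a proper part of $\partial M_k$ and $M_k\setminus\partial M_k\subset A$ locally, the disk $M_k$ lies on the $A$-side near $\sigma_k$.

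The key step is a projection argument. Let $\pi$ denote the nearest-point projection (in these coordinates, essentially orthogonal projection) onto the tangent plane $P$ of $\partial A$ at $x$. Let $\ell$ be the straightened image of $\Gamma$ in $P$, and let $e$ be the inward normal to $\partial A$ at $x$. For each $s$ in a set of parameters of measure $\sim r$, consider the curve $c_s:=\partial B_{s}(x)\cap A$, essentially a half-circle whose endpoints lie on $\partial A$ on either side of $\ell$. Because $M_k$ meets $\partial B_s(x)$ transversely for a.e. $s$, the set $M_k\cap \partial B_s(x)\cap A$ is a union of arcs and circles. At least one arc has an endpoint on $\sigma_k$. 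Such an arc cannot end in the interior of $A\cap B_r(x)$, since the only boundary of $M_k$ in $B_r(x)$ is $\Gamma_k$. It therefore either returns to $\sigma_k$ or exits $B_r(x)$. More robustly, I would use a linking count: on $\partial B_s(x)$, the two points of $\Gamma_k\cap\partial B_s(x)$ lie near the two ends of $\ell$, and the curve $\partial M_k$ crosses $\partial B_s(x)$ there. A parity argument on the disk $M_k$, using Jordan separation in $M_k$ via the coarea slicing, then yields an arc of $M_k\cap\partial B_s(x)\cap A$ joining these two points. Its length is at least comparable to the distance between them, which is $\gtrsim s$.

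Integrating via the coarea formula, I get
\[
\mathcal H^2(M_k\cap B_r(x)\cap A)\ \ge\ \int_{r/2}^{r}\mathcal H^1(M_k\cap\partial B_s(x)\cap A)\,\mathrm{d}s\ \ge\ \int_{r/2}^{r} c\,s\,\mathrm{d}s\ \ge\ C r^2 .
\]

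The main obstacle is the topological step: showing that on a.e. sphere the slice of the disk genuinely connects the two crossing points of $\Gamma_k$, rather than escaping through $\partial A\setminus\Gamma_k$. This is ruled out because $\partial M_k\cap S=\Gamma_k$ and $M_k$ lies in $\overline A$ near $S$. I would first try the following argument. Let $\tau_s$ be the sub-arc of $\Gamma_k$ inside $B_s(x)$. The component of $M_k\cap B_s(x)$ adjacent to $\tau_s$ has relative boundary in $\partial B_s(x)$, and in the disk $M_k$ this boundary together with $\tau_s$ must form a closed loop. This forces a slice arc joining the endpoints of $\tau_s$.
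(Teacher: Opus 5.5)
Your proposal is correct, and it is a dual version of the paper's argument rather than the same one.

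\textbf{The paper's slicing.} The paper slices by the planes $x+t\tau+\tau^\perp$ orthogonal to $T_x\Gamma$, for $|t|\le 3r/4$. Each such plane meets $\Gamma_k\cap B_r(x)$ in a single point. The slice arc of $M_k$ starting there cannot return to $\Gamma_k$, so it must run out to $\partial B_r(x)$. This gives length at least $cr$, and one integrates in $t$.

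\textbf{Your slicing.} You slice by the spheres $\partial B_s(x)$, $r/2<s<r$. Each meets $\Gamma_k$ in two nearly antipodal points $a_s,b_s$. The slice cannot leave $B_r(x)$, so it must join $a_s$ to $b_s$, giving length $\gtrsim s$. The coarea formula for the distance to $x$, whose tangential gradient is at most $1$, then finishes.

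What your choice buys is that the topological step becomes a one-liner, simpler than you present it. For a.e. $s$ (Sard applied to the distance function on $M_k$ and on $\partial M_k$), $M_k\cap\partial B_s(x)$ is a compact $1$-manifold whose boundary is exactly $\partial M_k\cap\partial B_s(x)=\{a_s,b_s\}$. Hence one of its components is an arc from $a_s$ to $b_s$, and the interior of that arc lies in $A$. No linking, parity or Jordan-separation argument is needed, and escape through $\partial A\setminus\Gamma_k$ never arises.

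Both proofs rest on hypotheses that are only implicit in the statement:
\begin{enumerate}
\item $M_k\setminus\partial M_k\subset A$, so that $\partial M_k\cap B_r(x)=\Gamma_k\cap B_r(x)$. The paper phrases this as ``the only contact of $M_k$ with $\partial A$ in $B_r(x)$ is along $\Gamma_k$''.
\item $k\ge k_0(r)$. Your phrase ``for $k$ large and every $r\le\epsilon_0$'' cannot be uniform in $r$, since $\Gamma_k$ need not pass through $x$.
\end{enumerate}

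Two harmless slips: $\partial B_s(x)\cap A$ is essentially a hemisphere, not a half-circle, and the projection onto $T_x\partial A$ that you introduce is never used.
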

\begin{proof}
Fix \(x\in\Gamma\). We work in normal coordinates centered at \(x\),
and choose \(\epsilon_0>0\) sufficiently small so that the metric is
uniformly comparable to the Euclidean metric in \(B_{\epsilon_0}(x)\).
Let \(\tau\) be the unit tangent vector to \(\Gamma\) at \(x\). After
shrinking \(\epsilon_0\), we may assume that, for every
\(0<r\le \epsilon_0\), the arc \(\Gamma\cap B_r(x)\) is contained in a
slab of thickness \(r/10\) around the line \(x+\mathbb R\tau\), and that
\(\partial A\cap B_r(x)\) is contained in a slab of thickness \(r/10\)
around \(T_x\partial A\).
For \(|t|<r\), let $\pi^{(t)}:=x+t\tau+\tau^\perp$
be the affine plane orthogonal to \(\tau\). Since
\(\Gamma_k\to\Gamma\) smoothly, for all large \(k\) and for all
\(|t|\le 3r/4\), the plane \(\pi^{(t)}\) intersects
\(\Gamma_k\cap B_r(x)\).
For a.e. such \(t\), the intersection
$\pi^{(t)}\cap M_k\cap A\cap B_r(x)$
is a finite union of smooth curves. At least one of these curves has an
endpoint on \(\Gamma_k\). Since, in \(B_r(x)\), the only contact of
\(M_k\) with \(\partial A\) is along \(\Gamma_k\), and since \(M_k\) is
embedded, this curve must either leave \(B_r(x)\) or meet another point
of \(\Gamma_k\). For \(\epsilon_0\) sufficiently small, the second
possibility is excluded for \(|t|\le 3r/4\), because
\(\pi^{(t)}\) intersects the arc \(\Gamma_k\) locally in a single point.
Hence the curve must reach
$\partial B_r(x)\cap A\cap \pi^{(t)}$.
By the slab estimates above, its length is bounded from below by
\(c r\), where \(c>0\) depends only on \(A,\Gamma,S\), and the local
geometry of \(N\). Therefore
\[
    \mathcal H^1
    \bigl(
        M_k\cap \pi^{(t)}\cap B_r(x)\cap A
    \bigr)
    \ge
    c r\quad\text{for a.e. } |t|\le \frac{3r}{4}.
\]
Applying the coarea formula to the projection onto the \(\tau\)-direction,
we get
\[
\begin{aligned}
    \mathcal H^2(M_k\cap A\cap B_r(x))
    \ge
    c
    \int_{-3r/4}^{3r/4}
    \mathcal H^1
    \bigl(
        M_k\cap \pi^{(t)}\cap B_r(x)\cap A
    \bigr)\,\mathrm{d}t \ge
    C r^2 .
\end{aligned}
\]
This proves the lemma.
\end{proof}

In the next lemma we prove a weighted estimate on $\F$-stationary surfaces:

\begin{lemma}[Anisotropic Hardy-type inequality]
\label{lem:hardy}
Let \(F\) be a smooth uniformly elliptic integrand on
\(G_2(\B_1)\), and let \(G=G(x,\nu)\) be the associated even
one-homogeneous normal integrand. Assume that \(G\) satisfies the local
versions of \eqref{H:comparability}--\eqref{H:ellipticity} in \(\B_1\).
Then there exist constants \(\rho_H>0\) and \(C_G<\infty\), depending
only on the local ellipticity and \(C^1\)-bounds of \(G\), such that the
following holds:

If $\Sigma\subset \B_{\rho_H}\setminus\{0\}$ is a smooth \(\F\)-minimal surface, then for every
\(f\in C_c^1(\Sigma\setminus\{0\})\),
\[
    \int_\Sigma |f|^2
    \le
    C_G
    \int_\Sigma |x|^2|\nabla_\Sigma f|^2 .
\]
Equivalently, the same conclusion holds for any smooth
\(\F\)-minimal surface \(\Sigma\subset\B_1\setminus\{0\}\), provided $\operatorname{spt} f\subset \Sigma\cap\B_{\rho_H}$.
\end{lemma}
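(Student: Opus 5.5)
The plan is to prove the Hardy inequality with a vector-field argument adapted to the anisotropic first variation, using the radial field $X(x)=x$ in place of the divergence-free identity $\operatorname{div}_\Sigma x = 2$ that one has for minimal surfaces. Fix $f\in C_c^1(\Sigma\setminus\{0\})$ and test the stationarity identity \eqref{eq:1st-variation}, written for the smooth surface $\Sigma$, against the compactly supported field $g(x)=f(x)^2\,x$. This is admissible because $f$ vanishes near the origin and $\Sigma$ is $\F$-stationary in $\B_{\rho_H}\setminus\{0\}$. Since $\nabla g = f^2\,\mathrm{Id} + 2f\,x\otimes\nabla_\Sigma f$, the identity becomes
\[
0=\int_\Sigma f^2\Bigl(B_F(x,T_x\Sigma):\mathrm{Id} + \langle \mathrm d_xF(x,T_x\Sigma),x\rangle\Bigr) + 2\int_\Sigma f\,B_F(x,T_x\Sigma):\bigl(x\otimes\nabla_\Sigma f\bigr).
\]

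The key point is that the coefficient of $f^2$ is bounded below. By \cite[Lemma A.2]{DDG}, in normal form $B_F(x,T):\mathrm{Id}$ equals $F(x,T)$ times the tangential trace minus the normal correction. Using one-homogeneity of $G$, this can be rewritten as $2G(x,\nu)-\langle D_\nu G(x,\nu),\nu\rangle = G(x,\nu)$, up to a term that vanishes after tracing against the identity. Either way, I expect
\[
B_F(x,T):\mathrm{Id} = F(x,T) \ge \lambda.
\]
I would verify this carefully from the explicit formula for $B_F$; if the identity instead turns out to be $2F-\langle D_\nu G,\nu\rangle$, the same bound still follows from Euler's identity \eqref{integrand:homogen}. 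The horizontal term satisfies $|\langle \mathrm d_xF,x\rangle|\le \lambda^{-1}|x|$ by \eqref{H:regularity}. Choosing $\rho_H\le \lambda^2/2$ then gives a coefficient of at least $\lambda/2$ on $\operatorname{spt} f$. The cross term is controlled by $|B_F|\le C(\lambda)$, which follows from \eqref{H:regularity}.

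Combining these bounds yields
\[
\frac{\lambda}{2}\int_\Sigma f^2\le C\int_\Sigma |f|\,|x|\,|\nabla_\Sigma f| \le \frac{\lambda}{4}\int_\Sigma f^2 + C'\int_\Sigma |x|^2|\nabla_\Sigma f|^2 .
\]
Here the second inequality is Cauchy–Schwarz followed by Young's inequality. Absorbing the first term on the right into the left gives the claim with $C_G=4C'/\lambda$. The constant depends only on $\lambda$ and the $C^1$ bounds, as stated.

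The equivalent formulation for $\Sigma\subset\B_1\setminus\{0\}$ with $\operatorname{spt} f\subset\B_{\rho_H}$ is immediate, because the argument only uses $\Sigma$ on $\operatorname{spt} f$.

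I expect the main obstacle to be the precise algebraic identification of $B_F(x,T):\mathrm{Id}$ and its positivity. This is the anisotropic replacement for $\operatorname{div}_\Sigma x=2$. If the exact formula is awkward in a Riemannian chart, I would work in Euclidean coordinates, as the section does, and absorb the metric error into the $O(|x|)$ term.
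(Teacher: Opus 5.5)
Your argument is correct and is essentially the paper's proof: testing stationarity with $g=f^2x$ is the weak form of the paper's pointwise identity $\operatorname{div}_\Sigma W_G=2G+x\cdot D_xG$ for the tangential field $W_G=G\,x^T-(x\cdot\nu)\nabla_{\mathbb S^2}G$, and your cross term $B_F:(x\otimes\nabla_\Sigma f^2)$ is exactly $\langle W_G,\nabla_\Sigma f^2\rangle$. The computation you left open resolves as follows: in codimension one $B_F(x,T)=G(x,\nu)\,\mathrm{Id}-\nu\otimes D_\nu G(x,\nu)$, so by Euler's identity $B_F:\mathrm{Id}=3G-\langle D_\nu G,\nu\rangle=2G\ge 2\lambda$ (not $F$), which only strengthens the lower bound you need.
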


\begin{proof}
Let \(\nu\) be a local choice of unit normal on \(\Sigma\). Define the
tangential vector field
\[
    W_G
    :=
    G(x,\nu)x^T
    -
    (x\cdot\nu)\nabla_{\mathbb S^2}G(x,\nu),
\]
where \(x^T\) denotes the tangential projection of the Euclidean position
vector onto \(T\Sigma\). Since \(G\) is even, this vector field is
independent of the choice of \(\nu\).

On an \(\F\)-minimal surface, we have the identity
\[
    \operatorname{div}_\Sigma W_G
    =
    2G(x,\nu)+x\cdot D_xG(x,\nu).
\]
By \eqref{H:comparability} and \(C^1\)-bounds for \(G\) (i.e. \eqref{H:regularity}), we know
\[
    G(x,\nu)\ge \lambda,
    \qquad
    |D_xG(x,\nu)|\le C_G .
\]
Hence, after choosing \(\rho_H>0\) sufficiently small, we have
\[
    2G(x,\nu)+x\cdot D_xG(x,\nu)
    \ge
    \lambda
\]
for all \(|x|<\rho_H\) and all \(\nu\in\mathbb S^2\).

Now let \(f\in C_c^1(\Sigma\setminus\{0\})\) with
\(\operatorname{spt}f\subset \B_{\rho_H}\). Since \(W_G\) is tangential
to \(\Sigma\), integration by parts gives
\[
\begin{aligned}
    \lambda\int_\Sigma f^2
    \le
    \int_\Sigma f^2\operatorname{div}_\Sigma W_G       =
    -2\int_\Sigma f\langle W_G,\nabla_\Sigma f\rangle \le
    2\int_\Sigma |f|\,|W_G|\,|\nabla_\Sigma f|.
\end{aligned}
\]
Moreover, by the \(C^1\)-bounds for \(G\),
we have $|W_G|
    \le
    C_G|x|$.
Therefore, by Cauchy--Schwarz,
\[
\begin{aligned}
    \lambda\int_\Sigma f^2
    &\le
    C_G
    \left(\int_\Sigma f^2\right)^{1/2}
    \left(\int_\Sigma |x|^2|\nabla_\Sigma f|^2\right)^{1/2}.
\end{aligned}
\]
Dividing by \(\left(\int_\Sigma f^2\right)^{1/2}\), unless \(f\equiv0\),
gives
\[
    \int_\Sigma f^2
    \le
    C_G
    \int_\Sigma |x|^2|\nabla_\Sigma f|^2 .
\]
The case \(f\equiv0\) is trivial. This proves the lemma.
\end{proof}

\section{Proof of the anisotropic optimal lifting lemma}
\label{app:optimal-lifting-proof}

In this appendix, we provide a detailed proof of \cref{prop:optimal-curve-lifting}. The argument follows Ketover's proof of \cite[Proposition~2.2]{K} closely. Apart from two modifications required in the anisotropic setting, namely a smoothing lemma for $\mathbf{F}$-almost minimizing surfaces and an anisotropic no-folding estimate in the overlap wedge, the proof is unchanged.

\begin{proof}[Proof of \cref{prop:optimal-curve-lifting}]
The proof follows Ketover's proof of \cite[Proposition~2.2]{K}. We give the details of
the reduction and indicate the precise anisotropic changes.

\smallskip

\noindent\emph{Reduction to one component and one curve.}
Since the limiting components \(\Sigma^{(k)}\) are pairwise disjoint, we may choose
disjoint tubular neighborhoods of them and work component by component. Thus, after restricting to a fixed tubular neighborhood of one component and suppressing
the other components from the notation, we may assume that in this neighborhood
\[
    \Sigma_j \to n\Gamma
\]
as varifolds, where \(\Gamma\) is connected, smooth, embedded and \(\F\)-minimal. We also
first treat a single simple closed curve \(\gamma\subset \Gamma\).

By \cref{prop:aniso-am-minmax}, after passing to a subsequence, the surfaces
\(\Sigma_j\) are almost minimizing in sufficiently small annuli centered at points of
\(\Gamma\). As in \cite[Section~4.2]{K}, a finite covering argument converts this annular
almost minimizing property into the following local statement away from finitely many
points: there is a finite set \(\mathcal S\subset \Gamma\) such that for every
\(x\in \Gamma\setminus\mathcal S\), there is \(\rho(x)>0\) such that \(\Sigma_j\) is
\(1/j\)-almost \(\F\)-minimizing in \(B_\tau(x)\) for every \(\tau\le \rho(x)\) and all large
\(j\). Indeed, if \(x\) is not one of the centers of the finite annular cover, then for some
annulus \(A\) in which \(\Sigma_j\) is \(1/j\)-almost \(\F\)-minimizing one can choose
\(\rho(x)>0\) so that \(B_{\rho(x)}(x)\subset A\). Since almost minimizing is inherited by
subsets, the claim follows.

By the genus collapse lemma \cite[Lemma~I.0.14]{CM-genus-collapse-ref}, after passing to
a further subsequence there is another finite set \(\mathcal G\subset \Gamma\) such that,
for every \(x\in\Gamma\setminus\mathcal G\), there exists \(r'(x)>0\) with
\[
    \operatorname{genus}\bigl(\Sigma_j\cap B_\tau(x)\bigr)=0
\]
for all \(\tau\le r'(x)\) and all large \(j\). Set
\[
    \mathcal B:=\mathcal S\cup \mathcal G.
\]
We perturb \(\gamma\) inside \(\Gamma\cap T_\epsilon(\gamma)\), preserving its homotopy
class in \(\Gamma\), so that \(\gamma\cap\mathcal B=\emptyset\).
We still denote the perturbed curve by \(\gamma\).

Choose \(\epsilon_0>0\) smaller than the normal injectivity radius of every limiting
component, smaller than one tenth of the mutual distance between distinct limiting
components, smaller than the ambient convexity radius, and smaller than the scales needed
for the anisotropic replacement and boundary regularity theorems. Fix \(0<\epsilon<\epsilon_0\).

\smallskip

\noindent\emph{The chain of balls.}
For \(r>0\) sufficiently small, choose \(2q\) points
$p_0,p_1,\ldots,p_{2q-1}$
ordered cyclically along \(\gamma\), with \(p_{2q}=p_0\), such that consecutive points are
separated by arclength comparable to \(r\) along \(\gamma\). After a further small homotopy supported in
\(T_\epsilon(\gamma)\), we may assume that the segment of \(\gamma\) between \(p_i\) and
\(p_{i+1}\) is the geodesic segment in \(\Gamma\) joining them. By choosing $r$ small enough, these large balls
are all contained still in $T_\epsilon(\gamma)$.

Set
\[
    r_1:=\frac{3r}{4},
    \qquad
    r_2:=\frac{15r}{8},
\]
and write
\[
    B_i:=B_{r_1}(p_i),
    \qquad
    B_i(1-\delta):=B_{(1-\delta)r_1}(p_i).
\]
For \(r\) sufficiently small, the following properties hold:
\begin{enumerate}
    \item \(B_i\) intersects only \(B_{i-1}\) and \(B_{i+1}\), with indices modulo \(2q\).
    \item For every \(s=0,\ldots,q-1\),
    $\overline B_{2s}\cup \overline B_{2s+1}\cup \overline B_{2s+2}
        \subset B_{r_2}(p_{2s+1})$.
    \item If \(i\) is even and \(B_i\cap B_{r_2}(p_{2s+1})\neq\emptyset\), then
    \(i=2s\) or \(i=2s+2\).
    \item \(\Sigma_j\) is \(1/j\)-almost \(\F\)-minimizing in \(B_{r_2}(p_{2s+1})\) for
    every \(s\) and all large \(j\).
    \item \(\Gamma\) intersects \(\partial B_t(p_i)\) transversely for every \(i\) and
    every \(t\) in a small neighborhood of \(r_1\).
    \item
    $\operatorname{genus}\bigl(\Sigma_j\cap B_{r_2}(p_{2s+1})\bigr)=0$
    for every \(s\) and all large \(j\).
\end{enumerate}
We define
\[
    B_E:=\bigcup_{s=0}^{q-1} B_{2s},
    \qquad
    B_O:=\bigcup_{s=0}^{q-1} B_{2s+1}.
\]
Moreover, by shrinking \(r\) further, we may assume that for every \(i\),
\[
    \exp_{p_i}^{-1}(\gamma\cap B_i)
    \subset
    T_{r/100}(L_i)
\]
for some line \(L_i\subset T_{p_i}N\). In other words, on the scale \(r\), the curve
\(\gamma\) cuts each ball \(B_i\) approximately along a straight line.

\smallskip

\smallskip

\noindent\emph{Step 1: first replacements in the even balls.}
By the choice of the balls, each even ball \(B_{2s}\) is contained in a large ball
\(B_{r_2}(p_{2s+1})\) where \(\Sigma_j\) is \(1/j\)-almost \(\F\)-minimizing. Since the
even balls are pairwise disjoint, we may perform the simultaneous replacement in
\[
    B_E=\bigcup_{s=0}^{q-1}B_{2s}.
\]
More precisely, let \(V_j\) be the limit of a minimizing sequence for
$\mathrm{Problem}\bigl(\Sigma_j,\mathrm{Is}_j(\Sigma_j,B_E)\bigr)$,
with respect to the anisotropic functional \(\F\). By the anisotropic Meeks--Simon--Yau
theory, namely
\cref{thm:Meeks-Simon-Yau-without-boundary} and \cref{thm:Meeks-Simon-Yau-with-boundary},
and by the replacement theory from \cref{prop:aniso-minmax-replacements}, \(V_j\) is
induced in \(B_E\) by a smooth embedded \(\F\)-stationary and \(\F\)-stable surface,
with multiplicity one for each fixed \(j\), still denoted by \(V_j\). Moreover \(V_j\) has
the same varifold limit as \(\Sigma_j\), and
$V_j\to n\Gamma$
smoothly and graphically on compact subsets of \(B_E\).

Hence, for fixed sufficiently small \(\delta>0\) and all sufficiently large \(j\),
$V_j\cap B_0(1-\delta)$
is a union of \(n\) normal graphs over \(\Gamma\cap B_0(1-\delta)\). By the transversality
assumption on \(\Gamma\cap\partial B_1\) and the smooth convergence \(V_j\to n\Gamma\),
the intersection
$ V_j\cap \partial B_1\cap B_0(1-\delta)$
is transverse for all large \(j\), and consists of \(n\) smooth arcs, denoted by
$\{\alpha_j^{i}\}_{i=1}^{n}$.
Without loss of generality, it suffices to explain how to pass from \(B_0\) to \(B_1\); the same argument then
propagates along the chain of balls.

\smallskip

\smallskip

\noindent\emph{Step 2: topological relation with the original surfaces.}
We next record the topological information carried by the first replacements. We shall use
the following anisotropic analogue of \cite[Proposition~4.7]{K}.

Let \(B\) be a sufficiently small ball and let \(S\subset B\) be a smooth embedded surface
with \(\partial S\subset \partial B\). Let \(S^\ell\) be a minimizing sequence for
$\mathrm{Problem}\bigl(S,\mathrm{Is}_j(S,B)\bigr)$
with respect to the anisotropic functional \(\F\), and suppose that
$S^\ell \to \Delta$
as varifolds, where \(\Delta\) is a smooth embedded multiplicity-one \(\F\)-minimal surface
with \(\partial\Delta=\partial S\). Then
\[
    \operatorname{genus}(\Delta)\leq \operatorname{genus}(S).
\]
Moreover, for every sufficiently small \(\eta>0\), after passing to \(\ell\) large, one can
perform finitely many \(\gamma\)-reductions, equivalently neck-pinch surgeries, supported
in \(B\), on \(S^\ell\) to obtain a surface \(\widehat S^\ell\) such that
\[
    \widehat S^\ell\subset T_\eta(\Delta),
    \qquad
    \widehat S^\ell=S^\ell
    \quad\text{in }T_{\eta/2}(\Delta).
\]

Indeed, the proof is the same as the proof of \cite[Proposition~4.7]{K}. The only analytic
input needed there is that the limit \(\Delta\) is smooth, embedded, multiplicity one and
has the same boundary as the minimizing sequence. In the present setting this is supplied
by the anisotropic Meeks--Simon--Yau theorem with boundary,
\cref{thm:Meeks-Simon-Yau-with-boundary}, together with the replacement theory from
\cref{prop:aniso-minmax-replacements}. Once this regularity is known, the rest of the
argument is purely topological: using varifold convergence and the coarea formula, one
performs neck-pinch surgeries to push the approximating surfaces into \(T_\eta(\Delta)\);
then the nearest-point projection \(T_\eta(\Delta)\to\Delta\) and the degree argument give
the genus inequality.

We now apply this statement to the minimizing sequence used in Step~1 to construct
\(V_j\). More precisely, in each even ball \(B_{2s}\), the replacement is obtained as the
limit of a local isotopy-minimizing sequence with the same boundary on \(\partial B_{2s}\).
Hence, after choosing a sufficiently far term in this minimizing sequence and relabeling,
we may regard \(V_j\) as a smooth surface obtained from \(\Sigma_j\) by finitely many
\(\gamma\)-reductions and isotopies inside the even balls, while preserving the graphical
properties obtained in Step~1.

Since every even ball \(B_{2s}\) is contained in the large ball
\(B_{r_2}(p_{2s+1})\), and since
$\Sigma_j\cap B_{r_2}(p_{2s+1})$
is planar by the setup, these surgeries cannot create genus in any of the relevant large
balls. Therefore \(V_j\) contains no local genus in the balls comprising \(B_E\cup B_O\).
Equivalently, for every \(i=0,\ldots,2q-1\),
$ V_j\cap B_i$
is planar, after passing to the chosen post-surgery representatives. This is the topological
information needed in the later Gauss--Bonnet estimate.

\smallskip

\smallskip

\noindent\emph{Step 3: curvature bounds on the first boundary arcs.}
Since
$\alpha_j^i\subset B_0(1-\delta)$,
we have
$ \operatorname{dist}(\alpha_j^i,\partial B_0)\ge \delta r_1.$.
The anisotropic stable curvature estimates for replacements, therefore, give
\[
    \sup_{\alpha_j^i}|A_{V_j}|
    \le
    C(\delta,r_1,\F,N).
\]
By transversality of \(\Gamma\) with \(\partial B_1\), and by the smooth convergence
\(V_j\to n\Gamma\) in \(B_0(1-\delta)\), the angle \(\vartheta_j\) between \(V_j\) and
\(\partial B_1\) along \(\alpha_j^i\) satisfies
\[
    |\sin\vartheta_j|\ge c>0
\]
for all sufficiently large \(j\). Thus, using the standard curvature estimate for the
intersection of two transverse surfaces,
\[
    |k_N(\alpha_j^i)|
    \le
    \frac{|A_{V_j}|+|A_{\partial B_1}|}{|\sin\vartheta_j|}.
\]
Since \(\partial B_1\) is a fixed geodesic sphere of radius \(r_1\), its second fundamental
form is uniformly bounded by a constant depending only on \(r_1\) and \(N\). Hence
\[
    \sup_{\alpha_j^i}|k_N|
    \le C,
    \qquad i=1,\ldots,n.
\]

\smallskip

\smallskip

\noindent\emph{Step 4: smoothing \(V_j\) and preserving almost minimizing.}
The surfaces \(V_j\) may fail to be smooth across the artificial boundary
\(\partial B_E\). We now smooth them in small collars of \(\partial B_E\), and show that
the resulting surfaces are still almost \(\F\)-minimizing in the odd large balls. This is
the anisotropic analogue of \cite[Lemma~4.1]{K}.

We first record the local smoothing statement needed below. Let \(B\) be a sufficiently
small ball, and let \(A\Subset B\) be a finite disjoint union of balls. Suppose that
\(S\) is a smooth embedded surface with \(\partial S\subset \partial B\), and that \(S\) is
\(\varepsilon_j\)-almost \(\F\)-minimizing in \(B\). Let
$\phi^\ell(1,S)$
be a minimizing sequence for
$\mathrm{Problem}\bigl(S,\mathrm{Is}_{\varepsilon_j}(S,A)\bigr)$
with respect to \(\F\), and suppose that
$ \phi^\ell(1,S)\to V$
as varifolds. Then, after smoothing \(V\) in an arbitrarily small collar of
\(\partial A\), one obtains a smooth surface \(\widehat V\) such that
\[
    \widehat V=V \quad\text{away from the smoothing collar},
\]
\[
    \F(\widehat V)-\F(V)\quad\text{is arbitrarily small},
\]
and \(\widehat V\) is \(\widetilde\varepsilon_j\)-almost
\(\F\)-minimizing in \(B\), for some \(\widetilde\varepsilon_j\to0\). After reindexing, we
shall again denote this sequence by \(1/j\).

Indeed, the proof is exactly the concatenation argument of \cite[Lemma~4.1]{K}, with
\(\mathcal H^2\) replaced by \(\F\). Let
$S^\ell:=\phi^\ell(1,S)$.
By definition of the minimizing problem, the isotopies \(\phi^\ell\) satisfy
\begin{equation}\label{eq:isotopy-minimizing}
     \F(\phi^\ell(t,S))
    \le
    \F(S)+\frac{\varepsilon_j}{8}
    \qquad \text{for all }t\in[0,1].
\end{equation}
The anisotropic Meeks--Simon--Yau theorem with boundary, \cref{thm:Meeks-Simon-Yau-with-boundary}, gives that \(V\) is smooth and
multiplicity one in \(A\), with the same boundary as the minimizing sequence on
\(\partial A\). Therefore one can smooth across \(\partial A\) by isotopies
\(\eta^\ell(t,\cdot)\), supported in an arbitrarily small collar of \(\partial A\), so that
\[
    (\eta^\ell\circ\phi^\ell)(1,S)\to \widehat V
\]
as varifolds, and the \(\F\)-energy increase along \(\eta^\ell\) is as small as prescribed.
In particular, by choosing the collar and then \(\ell\) sufficiently large, the concatenated
isotopy
$\eta^\ell\circ \phi^\ell$
still belongs to \(\mathrm{Is}_{\varepsilon_j}(S,B)\), up to an error which tends to zero
with \(j\).

Suppose, toward a contradiction, that \(\widehat V\) is not almost \(\F\)-minimizing in
\(B\). Then, for a sequence \(\sigma_j\to0\) chosen much larger than all smoothing and
approximation errors, there exists an isotopy \(\psi\) supported in \(B\) such that
\begin{equation}\label{eq:almost-minimizing-hatV-1}
    \F(\psi(t,\widehat V))
    \le
    \F(\widehat V)+\frac{\sigma_j}{8}
    \qquad\text{for all }t\in[0,1],
\end{equation}
while
\begin{equation}\label{eq:almost-minimizing-hatV-2}
    \F(\psi(1,\widehat V))
    \le
    \F(\widehat V)-\sigma_j.
\end{equation}
Since \((\eta^\ell\circ\phi^\ell)(1,S)\to\widehat V\) and \(\psi\) is smooth, the
continuity of \(\F\) under smooth isotopies gives, for \(\ell\) sufficiently large,
\begin{equation}\label{eq:isotopy-minimizing-2}
    \F\bigl(\psi(t,(\eta^\ell\circ\phi^\ell)(1,S))\bigr)
    \le
    \F(\psi(t,\widehat V))+o(\sigma_j)
    \qquad\text{for all }t\in[0,1].
\end{equation}
Combining \cref{eq:isotopy-minimizing,eq:almost-minimizing-hatV-1,eq:almost-minimizing-hatV-2,eq:isotopy-minimizing-2}, the concatenated isotopy
$ \psi\circ\eta^\ell\circ\phi^\ell$
is supported in \(B\), its intermediate \(\F\)-energy stays within the allowed
almost-minimizing height, and its final \(\F\)-energy decreases by more than the allowed
amount. This contradicts the almost \(\F\)-minimizing property of \(S\) in \(B\). Hence
\(\widehat V\) is almost \(\F\)-minimizing in \(B\), with a parameter tending to zero.

We now apply this local statement to the present situation. Fix an odd large ball
$B:=B_{r_2}(p_{2s+1}).$
By the choice of the ball system, the only even balls which meet \(B\) are
\(B_{2s}\) and \(B_{2s+2}\). Set
\[
    A:=B_{2s}\cup B_{2s+2}\subset B.
\]
Since \(\Sigma_j\) is \(1/j\)-almost \(\F\)-minimizing in \(B\), the preceding local
statement applies to the replacement of \(\Sigma_j\) in \(A\). Applying it successively for
\(s=0,\ldots,q-1\), and choosing the smoothing collars sufficiently small, we obtain
smooth surfaces \(\widehat V_j\) such that
$\widehat V_j\to n\Gamma$
as varifolds, \(\widehat V_j\) agrees with \(V_j\) away from arbitrarily small collars of
\(\partial B_E\), and \(\widehat V_j\) is almost \(\F\)-minimizing in each odd large ball
\(B_{r_2}(p_{2s+1})\). After reindexing the almost-minimizing parameter, we shall say
that \(\widehat V_j\) is \(1/j\)-almost \(\F\)-minimizing in these balls.

\smallskip

\smallskip

\noindent\emph{Step 5: second replacements in the odd balls.}
By Step~4, after reindexing, the surfaces \(\widehat V_j\) are \(1/j\)-almost
\(\F\)-minimizing in each large odd ball \(B_{r_2}(p_{2s+1})\). Since the almost minimizing
property is inherited by subsets, \(\widehat V_j\) is \(1/j\)-almost \(\F\)-minimizing in
each odd ball \(B_{2s+1}\). The odd balls are pairwise disjoint, and therefore we may
perform the simultaneous replacement in
\[
    B_O=\bigcup_{s=0}^{q-1}B_{2s+1}.
\]
More precisely, let \(W_j\) be the limit of a minimizing sequence for
$\mathrm{Problem}\bigl(\widehat V_j,\mathrm{Is}_j(\widehat V_j,B_O)\bigr)$
with respect to the anisotropic functional \(\F\).

By the anisotropic replacement theory from \cref{prop:aniso-minmax-replacements},
\(W_j\) is induced in \(B_O\) by a smooth embedded \(\F\)-stationary and \(\F\)-stable
surface, with multiplicity one for each fixed \(j\), still denoted by \(W_j\). Moreover
the replacement has the same varifold limit as \(\widehat V_j\), and hence
$W_j\to n\Gamma$
smoothly and graphically on compact subsets of \(B_O\). In particular, for all large \(j\),
$W_j\cap B_1(1-\delta)$
is a union of \(n\) normal graphs over \(\Gamma\cap B_1(1-\delta)\).

By the transversality assumption and the smooth convergence in \(B_1(1-\delta)\), the
intersection
$W_j\cap B_0(1-\delta)\cap\partial B_1(1-\delta)$
is transverse for all large \(j\), and consists of \(n\) smooth arcs
$\{\beta_j^i\}_{i=1}^{n}$.
As in Step~3, the anisotropic stable curvature estimates and the transversality of
\(\Gamma\) with \(\partial B_1(1-\delta)\) imply
\[
    \sup_{\beta_j^i}|k_N|\le C,
    \qquad i=1,\ldots,n.
\]

We summarize the information obtained so far. After choosing the smoothing collars in
Step~4 sufficiently small, the odd replacement does not modify the region
\(B_0(1-\delta)\setminus B_1\), and therefore \(W_j\) agrees there with the first
replacement \(V_j\). Hence \(W_j\) converges smoothly as \(n\) graphs to \(\Gamma\) on
$B_0(1-\delta)\setminus B_1$.
On the other hand, by the second replacement, \(W_j\) converges smoothly as \(n\) graphs
to \(\Gamma\) on
$B_1(1-\delta)$.
Moreover,
\[
    \sup_{\alpha_j^i}|k_N|\le C,
    \qquad
    \sup_{\beta_j^i}|k_N|\le C,
    \qquad i=1,\ldots,n.
\]
It remains to obtain control in the overlap wedge
\[
    \mathcal W
    :=
    B_0(1-\delta)\cap\bigl(B_1\setminus B_1(1-\delta)\bigr).
\]

\smallskip

\smallskip

\noindent\emph{Step 6: crossing the wedge.}
We prove the anisotropic analogue of Ketover's no-folding estimate. The only analytic
change is that the identity \(H=0\), used in the isotropic proof, is replaced by the
elliptic relation coming from \(\F\)-stationarity.

Let \(S\) be a smooth \(\F\)-stationary surface in a coordinate ball and choose a local
unit normal \(\nu\). The Euler--Lagrange equation for \(\F\) implies an elliptic
Weingarten-type relation
\begin{equation}
    \tr_S(\Psi_F A_S)+E_F=0,
    \qquad
    |E_F|\le C_\F,
    \label{eq:anisotropic-stationarity-relation}
\end{equation}
where \(\Psi_F\) is the matrix associated with the integrand in \eqref{eq:1st-variation}, restricted to
\(TS\). By the uniform convexity assumption, \(\Psi_F|_{TS}\) is uniformly positive definite:
\[
    \lambda I\le \Psi_F|_{TS}\le \lambda^{-1}I.
\]

We claim that
\begin{equation}
    |A_S|^2\le C(1-\det A_S).
    \label{eq:anisotropic-A-det-estimate}
\end{equation}
Indeed, at a fixed point diagonalize \(\Psi_F|_{TS}\) and write
\[
    \Psi_F|_{TS}=
    \begin{pmatrix}
        \alpha&0\\
        0&\beta
    \end{pmatrix},
    \qquad
    \lambda\le \alpha,\beta\le \lambda^{-1},
\]
and
\[
    A_S=
    \begin{pmatrix}
        a&b\\
        b&c
    \end{pmatrix}.
\]
Then \eqref{eq:anisotropic-stationarity-relation} gives $|\alpha a+\beta c|\le C_\F$.
Thus
\[
    c=-\frac{\alpha}{\beta}a+O(1),
\]
and hence
\[
    -\det A_S
    =
    b^2-ac
    =
    b^2+\frac{\alpha}{\beta}a^2+O(|a|).
\]
Absorbing the lower-order term gives
\[
    a^2+b^2+c^2
    \le
    C(1+C_\F^2-\det A_S).
\]
Since \(C_\F\) is fixed, this proves \eqref{eq:anisotropic-A-det-estimate}. By the
Riemannian Gauss equation,
\[
    K_S=\sec_N(TS)+\det A_S,
\]
we obtain
\begin{equation}
    |A_S|^2
    \le
    C\bigl(1+\sec_N(TS)-K_S\bigr).
    \label{eq:anisotropic-gauss-bound}
\end{equation}

Let \(B\) be a relatively compact domain in the wedge with piecewise smooth boundary,
chosen transversely to \(W_j\). Applying \eqref{eq:anisotropic-gauss-bound} to
\(S=W_j\), and then using Gauss--Bonnet on \(W_j\cap B\), gives
\begin{equation}
    \begin{aligned}
    \int_{W_j\cap B}|A_{W_j}|^2
    \le C\bigg(
        \mathcal H^2(W_j\cap B)
        + &\operatorname{genus}(W_j\cap B)
        + e(W_j\cap B)\\
        &\quad+
        \int_{\partial B\cap W_j}|k_N|
        +
        |T(W_j\cap\partial B)|
    \bigg).
\end{aligned}
    \label{eq:aniso-gb-wedge}
\end{equation}
Here \(e(W_j\cap B)\) denotes the number of boundary components and
\(T(W_j\cap\partial B)\) denotes the total exterior angle at the nonsmooth part of
\(\partial B\). The area term is uniformly bounded by \eqref{H:comparability} and the uniform
\(\F\)-energy bound. The genus term is also uniformly bounded; in fact it is zero in the
present balls, because the original surfaces are planar in the corresponding large balls,
and the first and second replacements are obtained through \(\gamma\)-reductions, which
do not create genus.

We now parametrize the wedge. Use exponential coordinates centered at \(p_1\), and
spherical coordinates \((\rho,\theta,\varphi)\) in \(T_{p_1}N\), chosen so that
\(\gamma\cap B_1\) is contained in a small tubular neighborhood of the \(\theta=0\) axis.
For small \(\theta_0,\varphi_0>0\), set
\[
    R_{\theta_0,\varphi_0}
    :=
    \exp_{p_1}
    \left\{
    (\rho,\theta,\varphi):
    (1-\delta)r_1\le \rho\le r_1,\,
    |\theta|\le \theta_0,\,
    |\varphi-\pi/2|\le \varphi_0
    \right\}.
\]
Choose \(\theta_0,\varphi_0\) so that
\[
    \gamma\cap\mathcal W\subset R_{\theta_0/2,\varphi_0/2},
    \qquad
    \Gamma\cap R_{\theta_0,\varphi_0}
    \subset R_{\theta_0,\varphi_0/2}.
\]

In Ketover's isotropic proof, the monotonicity formula is used to show that the surfaces
do not meet the top and bottom faces of the wedge. We avoid this step, since no
monotonicity formula is available for a general anisotropic integrand. Instead we cut off
in both angular variables.

Choose smooth cutoff functions \(\psi=\psi(\theta)\) and \(\chi=\chi(\varphi)\) satisfying
\[
    0\le \psi,\chi\le 1,
\]
\[
    \psi\equiv1 \quad\text{on } |\theta|\le \theta_0/2,
    \qquad
    \chi\equiv1 \quad\text{on } |\varphi-\pi/2|\le \varphi_0/2,
\]
\[
    \operatorname{spt}\psi\subset \{|\theta|<\theta_0\},
    \qquad
    \operatorname{spt}\chi\subset \{|\varphi-\pi/2|<\varphi_0\},
\]
and
\[
    \frac{|\nabla\psi|^2}{\psi}\le C,
    \qquad
    \frac{|\nabla\chi|^2}{\chi}\le C.
\]
Set $\zeta:=\psi\chi$.
Then \(\zeta\equiv1\) on \(R_{\theta_0/2,\varphi_0/2}\), \(\zeta\) is supported away from
the top, bottom, left and right angular faces of \(R_{\theta_0,\varphi_0}\), and
\begin{equation}
    \frac{|\nabla\zeta|^2}{\zeta}\le C.
    \label{eq:zeta-gradient-bound}
\end{equation}
Indeed,
\[
    |\nabla(\psi\chi)|^2
    \le
    2\chi^2|\nabla\psi|^2+2\psi^2|\nabla\chi|^2
    \le
    C\psi\chi
    =
    C\zeta.
\]

For a regular value \(t\in(0,1)\), define
$    B_t:=\{\zeta>t\}\cap R_{\theta_0,\varphi_0}$.
Then \(B_t\) has piecewise smooth boundary. Its fixed boundary part lies on the two
radial faces $\rho=r_1$, $\rho=(1-\delta)r_1$,
and its remaining boundary part lies on the angular level surface
$\{\zeta=t\}$.
Since \(\zeta\) is compactly supported in the angular variables, no top or bottom face of
\(R_{\theta_0,\varphi_0}\) appears as a boundary face of \(B_t\).

Applying \eqref{eq:aniso-gb-wedge} with \(B=B_t\), the boundary curvature terms on the
two radial faces are controlled by the bounds for \(\alpha_j^i\) and \(\beta_j^i\) from
Steps~3 and~5. More precisely, the total curvature and length of all radial boundary arcs
are uniformly bounded.

We now control the terms \(e(W_j\cap B_t)\) and \(T(W_j\cap\partial B_t)\). Boundary
components of \(W_j\cap B_t\) are of two types. The first type meets one of the radial
faces. The number of such components is bounded, after integrating in \(t\), by the coarea
formula along the finitely many radial curves \(\alpha_j^i,\beta_j^i\), whose lengths and
curvatures are uniformly bounded. The second type is contained entirely in the angular
level surface \(\{\zeta=t\}\). For such a closed component \(\sigma\), the ambient total
curvature satisfies
\[
    \int_\sigma |k_N|\,\mathrm{d}s \ge c_0>0,
\]
where \(c_0\) depends only on the coordinate wedge. This is the same elementary
total-curvature estimate used in Ketover's proof. Hence, for a.e. \(t\),
\begin{equation}
    e(W_j\cap B_t)
    \le
    C+ C\,N_j(t)
    +
    C\int_{W_j\cap\{\zeta=t\}} |k_N|\,\mathrm{d}s,
    \label{eq:e-bound-zeta}
\end{equation}
where \(N_j(t)\) denotes the number of intersection points of the radial boundary curves
\(\alpha_j^i,\beta_j^i\) with \(\{\zeta=t\}\). Moreover, the total corner term satisfies
the analogous estimate
\begin{equation}
    |T(W_j\cap\partial B_t)|
    \le
    C+ C\,N_j(t)
    +
    C\int_{W_j\cap\{\zeta=t\}} |k_N|\,\mathrm{d}s.
    \label{eq:T-bound-zeta}
\end{equation}
Here corners occur only where the radial boundary arcs meet the angular level surface.

Combining \eqref{eq:aniso-gb-wedge}, \eqref{eq:e-bound-zeta}, and
\eqref{eq:T-bound-zeta}, we obtain for a.e. \(t\)
\begin{equation}
    \int_{W_j\cap B_t}|A_{W_j}|^2
    \le
    C
    + C N_j(t)
    + C\int_{W_j\cap\{\zeta=t\}} |k_N|\,\mathrm{d}s.
    \label{eq:levelset-ineq-zeta}
\end{equation}

We now integrate in \(t\). By the layer-cake formula,
\[
    \int_{W_j}\zeta |A_{W_j}|^2
    =
    \int_0^1
    \int_{W_j\cap\{\zeta>t\}}|A_{W_j}|^2\,\mathrm{d}t.
\]
Using \eqref{eq:levelset-ineq-zeta}, we get
\begin{equation}
    \int_{W_j}\zeta |A_{W_j}|^2
    \le
    C
    +
    C\int_0^1N_j(t)\,\mathrm{d}t
    +
    C\int_0^1
    \int_{W_j\cap\{\zeta=t\}} |k_N|\,\mathrm{d}s\mathrm{d}t.
    \label{eq:layercake-zeta}
\end{equation}

The radial intersection term is uniformly bounded:
\[
    \int_0^1N_j(t)\,\mathrm{d}t\le C.
\]
Indeed, by the coarea formula on the finitely many curves \(\alpha_j^i,\beta_j^i\),
\[
    \int_0^1N_j(t)\,\mathrm{d}t
    \le
    \sum_i\int_{\alpha_j^i}|\nabla_{\alpha_j^i}\zeta|\,\mathrm{d}s
    +
    \sum_i\int_{\beta_j^i}|\nabla_{\beta_j^i}\zeta|\,\mathrm{d}s
    \le C,
\]
because these curves have uniformly bounded length and \(\zeta\) has uniformly bounded
gradient.

It remains to estimate the angular level curvature term. For regular levels, the curvature
of the curve \(W_j\cap\{\zeta=t\}\) satisfies, in the coarea sense,
\begin{equation}
    \int_0^1
    \int_{W_j\cap\{\zeta=t\}} |k_N|\,\mathrm{d}s\,\mathrm{d}t
    \le
    C\int_{W_j}(1+|A_{W_j}|)|\nabla_{W_j}\zeta|\,\mathrm{d}\mathcal H^2.
    \label{eq:coarea-curvature-zeta}
\end{equation}
To see this, write the level surface \(\{\zeta=t\}\) as \(L_t\). The curvature of the
transverse intersection \(W_j\cap L_t\) is controlled by
\[
    |k_N|
    \le
    \frac{|A_{W_j}|+|A_{L_t}|}{\sin\vartheta},
\]
where \(\vartheta\) is the angle between \(W_j\) and \(L_t\). Multiplying by the coarea
factor \(|\nabla_{W_j}\zeta|\) cancels \(\sin\vartheta\), and
$|A_{L_t}|\,|\nabla \zeta|$
is uniformly bounded by the \(C^2\)-norm of \(\zeta\) in the coordinate wedge. This gives
\eqref{eq:coarea-curvature-zeta}.

Using \eqref{eq:zeta-gradient-bound}, we have
\[
    |\nabla_{W_j}\zeta|^2
    \le
    |\nabla \zeta|^2
    \le
    C\zeta.
\]
Therefore, by Cauchy's inequality and the uniform area bound,
\[
\begin{aligned}
    \int_{W_j}(1+|A_{W_j}|)|\nabla_{W_j}\zeta|
    &\le
    C
    +
    C\int_{W_j}|A_{W_j}|\,\zeta^{1/2}  \le
    C
    +
    \frac12\int_{W_j}\zeta |A_{W_j}|^2 .
\end{aligned}
\]
Combining this with \eqref{eq:layercake-zeta} and
\eqref{eq:coarea-curvature-zeta}, and absorbing the last term into the left-hand side,
we conclude that
\begin{equation}
    \sup_j
    \int_{W_j\cap R_{\theta_0/2,\varphi_0/2}}
    |A_{W_j}|^2
    <\infty.
    \label{eq:aniso-no-folding}
\end{equation}
This is the desired anisotropic no-folding estimate.

\smallskip

\smallskip

\noindent\emph{Step 7: graphical convergence through the wedge.}
By the \(\F\)-energy bound and \eqref{H:comparability}, the areas of \(W_j\) in
\(R_{\theta_0/2,\varphi_0/2}\) are uniformly bounded. Together with the no-folding estimate
\eqref{eq:aniso-no-folding}, this gives a uniform area and total curvature bound in
\(R_{\theta_0/2,\varphi_0/2}\). Hence the compactness theorem for smooth
\(\F\)-stationary surfaces with bounded area and total curvature implies that, after
passing to a subsequence, \(W_j\) converges smoothly and graphically to \(n\Gamma\) in
\(R_{\theta_0/2,\varphi_0/2}\), away from a finite set of points
$\mathcal P\subset R_{\theta_0/2,\varphi_0/2}.$

The parameters in the wedge construction were chosen with room to spare. Therefore we
may perturb \(\gamma\cap\mathcal W\), inside
$\Gamma\cap R_{\theta_0/2,\varphi_0/2},$
preserving its endpoints on the two controlled sides of the wedge and preserving its
homotopy class, so that
$\gamma\cap \mathcal P=\emptyset.$
We still denote the perturbed curve by \(\gamma\). Since \(\gamma\) is compact and avoids
\(\mathcal P\), there is a tubular neighborhood \(U_\gamma\subset
R_{\theta_0/2,\varphi_0/2}\) of \(\gamma\) disjoint from \(\mathcal P\). On \(U_\gamma\),
the convergence is smooth. Because the varifold limit is \(n\Gamma\), the convergence has
multiplicity \(n\). Therefore, for all sufficiently large \(j\),
$W_j\cap U_\gamma$
is a union of \(n\) normal graphs over \(\Gamma\cap U_\gamma\).

\smallskip

\smallskip

\noindent\emph{Step 8: lifting with full multiplicity.}
Steps~1--7 imply the following. After perturbing \(\gamma\) inside
\(\Gamma\cap T_\epsilon(\gamma)\), preserving its homotopy class, there exists a tubular
neighborhood \(U_\gamma\subset T_\epsilon(\Gamma)\) of the perturbed curve such that the
second replacement \(W_j\) has full multiplicity \(n\) over \(\Gamma\) in \(U_\gamma\). More
precisely, over every sufficiently small subarc of \(\gamma\), the surface \(W_j\) is a
union of \(n\) normal graphs converging smoothly to \(n\Gamma\). If the normal bundle of
\(\Gamma\) along \(\gamma\) is trivial, these local sheets close globally as \(n\) sheets over
\(\gamma\); if the normal bundle over \(\gamma\) is nontrivial, the sheets are permuted by
the monodromy of the M\"obius band.

We now pass from the replacement \(W_j\) to a surface obtained from the original
\(\Sigma_j\) by finitely many \(\gamma\)-reductions. For each fixed \(j\), the surface
\(W_j\) is the varifold limit of the local isotopy-minimizing sequence used in the second
replacement. By the anisotropic analogue of Ketover's Proposition~4.7, as used in
Step~2, this minimizing sequence may be chosen so that its sufficiently far final slices
are obtained from \(\widehat V_j\) by finitely many \(\gamma\)-reductions and isotopies.
Moreover, \(\widehat V_j\) was obtained from the first replacement \(V_j\), and \(V_j\)
itself is obtained from \(\Sigma_j\) by finitely many \(\gamma\)-reductions and isotopies.
Thus, after choosing a sufficiently far term in the whole approximating sequence and
diagonalizing in \(j\), we obtain a smooth surface \(\tilde\Sigma_j\) such that:

\begin{enumerate}
    \item \(\tilde\Sigma_j\) is obtained from \(\Sigma_j\) by finitely many
    \(\gamma\)-reductions and isotopies;
    \item \(\tilde\Sigma_j\) is varifold-close to \(W_j\);
    \item after possibly shrinking \(U_\gamma\), the surface
    \(\tilde\Sigma_j\cap U_\gamma\) has the same \(n\)-sheet graphical structure as
    \(W_j\cap U_\gamma\).
\end{enumerate}

We denote the final perturbed curve by \(\tilde\gamma\). Since \(W_j\to n\Gamma\) as
varifolds and \(\tilde\Sigma_j\) is chosen varifold-close to \(W_j\), we have
$ \tilde\Sigma_j\to n\Gamma$
as varifolds in the chosen tubular neighborhood.

Let
\[
    \pi:T_\epsilon(\Gamma)\to\Gamma
\]
be the nearest point projection. If \(\pi^{-1}(\tilde\gamma)\) is an annulus, then the
normal bundle of \(\Gamma\) over \(\tilde\gamma\) is trivial. Hence the \(n\) local sheets
close up separately, and
$\pi^{-1}(\tilde\gamma)\cap \tilde\Sigma_j$
is a union of \(n\) closed curves. Each of these curves projects to \(\tilde\gamma\) with
degree one.

If \(\Gamma\) is non-orientable and \(\pi^{-1}(\tilde\gamma)\) is a M\"obius band, then
the monodromy of the normal interval bundle exchanges the two sides after one circuit
around \(\tilde\gamma\). Consequently the local sheets close only after passing twice
around \(\tilde\gamma\). Hence \(n\) is even and
$\pi^{-1}(\tilde\gamma)\cap \tilde\Sigma_j$
is a union of \(n/2\) closed curves, each projecting to \(\tilde\gamma\) with degree two.

Indeed, if \(n\) were odd in the M\"obius case, then one of the lifted curves would be
isotopic to the core of the M\"obius band \(\pi^{-1}(\tilde\gamma)\). The normal bundle of
the ambient tubular neighborhood over such a curve is nontrivial. Therefore the component
of \(\tilde\Sigma_j\) containing this curve would be non-orientable. This is impossible,
because \(\tilde\Sigma_j\) is obtained from the orientable sweepout surface \(\Sigma_j\)
by neck-pinch surgeries and isotopies, which preserve orientability of the remaining
components. This proves the lifting conclusion for the single curve \(\gamma\) on the
component \(\Gamma\).

\smallskip

\smallskip

\noindent\emph{Step 9: several curves and several components.}
It remains to explain how to perform the construction for a finite collection of curves.
We first assume that all curves lie on the same connected component
\(\Gamma\), with multiplicity \(n\), and that
$\gamma_1,\ldots,\gamma_q\subset \Gamma$
intersect only at one prescribed point \(p\in\Gamma\), and do so transversely there. If
\(p\) or one of the curves meets the finite bad set \(\mathcal B\) introduced in the setup,
we first perturb the whole curve system inside an arbitrarily small neighborhood in
\(\Gamma\), preserving the homotopy classes and the prescribed intersection pattern, so
that the common point and all curves avoid \(\mathcal B\).

We now choose the ball systems as in \cite[Section~4.9]{K}. Start with \(\gamma_1\).
Choose a scale \(\rho_1>0\) sufficiently small and choose the small and large balls along
\(\gamma_1\), of radii ${3\rho_1}/{4}$ and ${15\rho_1}/{8}$,
as in the one-curve construction. We arrange the labeling so that the small ball centered
at the common intersection point \(p\) is one of the odd balls, denoted
$B_1=B_{3\rho_1/4}(p).$
The large ball centered at \(p\) is then
$B_{15\rho_1/8}(p).$
By taking \(\rho_1\) sufficiently small, we may assume that for every
\(i=2,\ldots,q\), the curve \(\gamma_i\) intersects
$\partial B_{3\rho_1/4}(p)$
in two distinct points \(a_i,b_i\), and intersects
$\partial B_{15\rho_1/8}(p)$
in two distinct points \(c_i,d_i\), with all these points distinct as \(i\) varies. We also
arrange that, for \(i\ge2\), the curve \(\gamma_i\) is disjoint from all small balls
associated to \(\gamma_1\) except \(B_1\), and is disjoint from all large balls associated
to \(\gamma_1\) except \(B_{15\rho_1/8}(p)\).

For each \(i=2,\ldots,q\), choose a scale \(\rho_i>0\) much smaller than \(\rho_1\), and
construct the corresponding small and large balls along \(\gamma_i\), with radii
${3\rho_i}/{4}$ and ${15\rho_i}/{8}$.
The scales \(\rho_i\) are chosen inductively so small that the large balls associated to
\(\gamma_i\) are disjoint from the large balls associated to \(\gamma_\ell\) whenever
\(\ell\ne i\) and both indices are at least \(2\), except for the controlled common
configuration inside \(B_{15\rho_1/8}(p)\). Moreover, for each \(i\ge2\), we choose the
labeling so that:
\begin{enumerate}
    \item the two small balls associated to \(\gamma_i\) which contain \(a_i\) and \(b_i\)
    are even balls;
    \item the two points \(c_i,d_i\) are contained in odd small balls associated to
    \(\gamma_i\).
\end{enumerate}
Finally, for each \(i\ge2\), we discard all small balls associated to \(\gamma_i\) which
are contained entirely in \(B_1\). Then the remaining small balls associated to
\(\gamma_i\), together with \(B_1\), still cover \(\gamma_i\).

We now perform the first replacement simultaneously in all even small balls associated to
all curves \(\gamma_i\). The above choice of scales guarantees that these even balls are
pairwise disjoint, except for the controlled situation in which the even balls meeting the
large ball \(B_{15\rho_1/8}(p)\) are all contained in that large ball. Applying the
anisotropic smoothing lemma from Step~4 in this large ball, the resulting smoothed first
replacement remains almost \(\F\)-minimizing in \(B_{15\rho_1/8}(p)\), and hence in the
odd small balls contained in it. For the remaining odd small balls, the corresponding
large balls are disjoint from all other large balls by the inductive choice of the scales,
so the almost minimizing property follows directly from the one-curve setup.

Therefore the second replacement can be performed simultaneously in all odd small balls
associated to all curves. The local no-folding argument from Steps~6--7 applies in every
overlap wedge. Thus each curve \(\gamma_i\) can be lifted with the full multiplicity \(n\),
after an arbitrarily small perturbation preserving the prescribed intersection pattern.

This proves the proposition for finitely many curves lying on one limiting component
\(\Gamma\).

We now return to the general case
\[
    \Sigma_j\to \sum_{k=1}^m n_k\Sigma^{(k)}.
\]
Since the limiting components are pairwise disjoint, choose pairwise disjoint tubular
neighborhoods
$T_\epsilon(\Sigma^{(k)}).$
The preceding construction can be carried out independently inside each such
neighborhood, using the multiplicity \(n_k\) of the component \(\Sigma^{(k)}\). The
supports of the corresponding replacements, smoothings and \(\gamma\)-reductions are
disjoint for different \(k\), so the operations commute. Taking a diagonal subsequence in
\(j\), we obtain curves \(\tilde\gamma_i\) and surfaces \(\tilde\Sigma_j\), obtained from
the original \(\Sigma_j\) by finitely many \(\gamma\)-reductions and isotopies, such that
all three conclusions of the proposition hold for every curve on every component. This
completes the proof of \cref{prop:optimal-curve-lifting}.
\end{proof}

\bibliographystyle{acm}
\bibliography{references.bib}

\end{document}